\ifdefined\directlua
\DocumentMetadata{
tagging=on,
pdfstandard=ua-2,
lang=en,
tagging-setup = {math/setup={mathml-AF,mathml-SE}}
}
\tagpdfsetup{math/alt/use}
\ExplSyntaxOn
\int_set:Nn \l__tag_loglevel_int { -1 }
\msg_redirect_name:nnn { tag } { role-parent-child-forbidden } { none }
\msg_redirect_name:nnn { tag } { role-struct-parent-child-forbidden } { none }
\msg_redirect_name:nnn { tag } { role-parent-child-unresolved } { none }
\ExplSyntaxOff
\fi

\documentclass[12pt]{amsart}

\usepackage{stackengine}

\usepackage{amssymb}
\usepackage{amscd}
\usepackage[noadjust]{cite}
\usepackage{booktabs}
\usepackage{url}
\usepackage{hyphenat}
\usepackage{mathtools}
\usepackage[T1]{fontenc}
\usepackage{enumitem}
\usepackage{mathrsfs}
\usepackage{xr-hyper}
\usepackage{xcolor}

\definecolor{darkblue}{rgb}{0,0,0.4}
\usepackage[lmargin=1in, rmargin=1in, tmargin=1in, bmargin=1in]{geometry}
  \usepackage[bookmarks=true, bookmarksopen=true,%
    bookmarksdepth=3,bookmarksopenlevel=2,%
    colorlinks=true,%
    linkcolor=darkblue,%
    citecolor=darkblue,%
    filecolor=darkblue,%
    menucolor=darkblue,%
    urlcolor=darkblue]{hyperref}
  \hypersetup{pdftitle={Real bordered Floer homology}}
  \hypersetup{pdfauthor={Robert Lipshitz and Peter S. Ozsváth}}
\usepackage{color}
\usepackage{tikz}
\usetikzlibrary{matrix,arrows,cd,shapes.geometric,patterns}
\ifdefined\directlua\else
  \tikzset{alt/.code={\relax}}
  \pgfkeys{/tikzcd/alt/.code={\relax}}
\fi

\newcommand{\st}{^\text{st}}
\def\mathcenter#1{%
  \vcenter{\hbox{$#1$}}%
}

\newcommand{\RR}{\mathbb R}
\newcommand{\CC}{\mathbb C}
\newcommand{\ZZ}{\mathbb Z}

\newcommand{\FF}{\mathbb F}

\newcommand{\Image}{\mathrm{Im}}

\newcommand{\wt}{\widetilde}
\newcommand{\ul}{\underline}

\newcommand{\co}{\nobreak\mskip2mu\mathpunct{}\nonscript
  \mkern-\thinmuskip{:}\penalty300\mskip6muplus1mu\relax}

\newcommand{\OneHalf}{{\textstyle\frac{1}{2}}}
\newcommand{\OneQuarter}{{\textstyle\frac{1}{4}}}

\newcommand{\bdy}{\partial}
\newcommand{\lra}{\longrightarrow}

\newcommand{\lbracket}{[}
\newcommand{\rbracket}{]}

\newcommand{\spinc}{\mathfrak s}

\DeclareMathOperator{\Hom}{Hom}

\DeclareMathOperator{\spin}{spin}
\newcommand{\SpinC}{\spin^c}

\DeclareMathOperator{\Pin}{Pin}

\DeclareMathOperator{\ind}{ind}

\DeclareMathOperator{\ev}{ev}
\DeclareMathOperator{\gr}{gr}

\theoremstyle{plain}

\numberwithin{equation}{section}
\newtheorem{theorem}[equation]{Theorem}

\newtheorem{proposition}[equation]{Proposition}
\newtheorem{lemma}[equation]{Lemma}
\newtheorem{corollary}[equation]{Corollary}

\newtheorem{definition}[equation]{Definition}

\theoremstyle{definition}

\theoremstyle{remark}
\newtheorem{example}[equation]{Example}
\newtheorem{remark}[equation]{Remark}

\newcommand{\HF}{\mathit{HF}}
\newcommand{\HFa}{\widehat {\HF}}
\newcommand{\SFH}{\mathit{SFH}}

\newcommand{\CFa}{\widehat {\mathit{CF}}}

\newcommand{\HFKa}{\widehat{\mathit{HFK}}}

\newcommand{\x}{\mathbf x}
\newcommand{\y}{\mathbf y}
\newcommand{\z}{\mathbf z}
\newcommand{\w}{\mathbf w}

\newcommand\HH{\mathit{HH}}

\newcommand\Hochschild\HH

\newcommand{\Alg}{\mathcal{A}}

\newcommand{\Idem}{\mathcal{I}}
\newcommand{\alphas}{{\boldsymbol{\alpha}}}
\newcommand{\betas}{{\boldsymbol{\beta}}}

\newcommand{\rhos}{{\boldsymbol{\rho}}}

\newcommand{\cM}{\mathcal{M}}

\newcommand{\DD}{\textit{DD}}

\newcommand{\AAm}{\textit{AA}} 
\newcommand{\CFD}{\mathit{CFD}}
\newcommand{\CFDD}{\mathit{CFDD}}
\newcommand{\CFA}{\mathit{CFA}}

\newcommand{\CFDA}{\mathit{CFDA}}
\newcommand{\CFDAa}{\widehat{\CFDA}}
\newcommand{\CFAA}{\mathit{CFAA}}
\newcommand{\CFAAa}{\widehat{\CFAA}}
\newcommand{\CFDa}{\widehat{\CFD}}
\newcommand{\CFDRa}{\widehat{\mathit{CFDR}}}
\newcommand{\CFARa}{\widehat{\mathit{CFAR}}}

\newcommand{\CFK}{\mathit{CFK}}
\newcommand{\CFKa}{\widehat{\CFK}}
\newcommand{\CFKm}{\CFK^-}

\newcommand{\CFDDa}{\widehat{\CFDD}}

\newcommand{\CFAa}{\widehat{\CFA}}

\newcommand{\Source}{{S^{\mspace{1mu}\triangleright}}}

\newcommand{\cZ}{\mathcal{Z}}
\newcommand{\PtdMatchCirc}{\cZ}
\newcommand{\PMC}{\PtdMatchCirc}

\newcommand{\CircPts}{{\mathbf{a}}}
\newcommand{\bbpt}{\mathbf{z}}

\newcommand\Id{\mathbb{I}}

\newcommand\DT{\boxtimes}
\newcommand\Gen{\mathfrak{S}}

\newcommand{\Field}{\FF_2}

\DeclareMathOperator{\nbd}{nbd}

\newcommand{\dbar}{\bar{\partial}}
\newcommand{\Heegaard}{\mathcal{H}}
\newcommand{\HD}{\Heegaard}

\renewcommand{\th}{^\text{th}}
\renewcommand{\st}{^\text{st}}
\newcommand{\bigGroup}{G'}

\newcommand{\grb}{\gr'}

\DeclareMathOperator{\Mor}{Mor}

\newcommand{\AZ}{\mathsf{AZ}}
\newcommand{\AZbar}{\overline{\AZ}}

\newcommand{\op}{\mathrm{op}}

\newcommand{\Chord}{\mathrm{Chord}}

\newcommand{\ol}[1]{\overline{#1}{}}
\newcommand{\HB}{\mathsf{H}}

\newcommand{\sgHB}{\HB_{\mathit{sg}}}
\newcommand{\Wh}{\mathit{Wh}}

\makeatletter
\newcommand\honestalg[3]{\bigl\lbracket
\begin{smallmatrix} #1\@ifempty{#3}{}{&#3} \\ #2 \end{smallmatrix}
\bigr\rbracket}

\makeatother
\newcommand{\lab}[1]{$\scriptstyle #1$}

\newcommand{\lsup}[2]{{}^{#1}\mskip-.6\thinmuskip#2}

\newcommand{\kotimes}[1]{\otimes}

\newcommand\goesto\mapsto

\newcommand\dotimes\times 

\newcommand{\HFR}{\mathit{HFR}}
\newcommand{\HFRa}{\widehat{\mathit{HFR}}}
\newcommand{\CFR}{\mathit{CFR}}
\newcommand{\CFRa}{\widehat{\CFR}}

\definecolor{darkgreen}{rgb}{0,0.4,0} 
\definecolor{darkbrown}{rgb}{.48,0.33,.24}  

\begin{document}
\title[Real bordered Floer homology]{Real bordered Floer homology}
\author[Lipshitz]{Robert Lipshitz}
\thanks{RL was supported by NSF grant DMS-2505715, a Simons Foundation travel grant, and the Fernholz Foundation and Princeton University.}
\address{Department of Mathematics, University of Oregon\\
  Eugene, OR 97403}
\email{lipshitz@uoregon.edu}

\author[Ozsv\'ath]{Peter Ozsv\'ath}
\thanks{PSO was supported by NSF grants DMS-1708284 and DMS-2104536.}
\address {Department of Mathematics, Princeton University\\ New
  Jersey, 08544}
\email {petero@math.princeton.edu}

\begin{abstract}
  Fix a 3-manifold $Y$ with boundary $F\amalg F$ and an orientation-preserving involution $\tau\co Y\to Y$ exchanging the boundary components, with nonempty fixed set. To an appropriate kind of Heegaard diagram for $Y$, we describe how to associate a module over the bordered Heegaard Floer algebra $\Alg(F)$. These modules satisfy a gluing, or pairing, theorem, and extend Guth-Manolescu's real Heegaard Floer homology $\HFRa$. Using these modules, we give a practical algorithm to compute $\HFRa(Y,\tau)$ for real 3-manifolds $(Y,\tau)$ with connected fixed set.
\end{abstract}

\maketitle 

\tableofcontents


\section{Introduction}

Many recent applications of low-dimensional Floer theory have relied on invariants incorporating the setting's symmetries. Notably, these include the internal symmetries of the invariants, particularly the conjugation symmetry on $\SpinC$-structures considered in 
$\Pin(2)$-equivariant monopole Floer homology~\cite{Manolescu16:triangulation,Lin18:Pin2} and
involutive Heegaard Floer homology~\cite{HM17:involutive} (with applications, for instance, to the Triangulation Conjecture~\cite{Manolescu16:triangulation} and the structure of the homology cobordism group~\cite{DHST23:homol-cob-summand}); and external symmetries of the manifolds, such as the $\ZZ/2$-action exchanging the sheets in branched double covers. Combining these symmetries has allowed these invariants to resolve problems that seemed previously inaccessible, such as proving that cables of the figure-8 knot are not slice~\cite{DKM24:cable-of-8} or giving exotic surfaces that do not dissolve after a single stabilization~\cite{Guth:exotic-stab} (see also~\cite{Kang:exotic-stab,LinMukherjee25:exotic-surfs,KPT:two-stabs}, among others).

Li introduced another way of studying $\ZZ/2$-symmetries in Seiberg-Witten theory, for a three-manifold equipped with an involution, by composing the involution of the manifold with $\SpinC$-conjugation to define real monopole Floer homology~\cite{Li:real-monopole}. Inspired by his work, Guth-Manolescu defined real Heegaard Floer homology groups $\HFR^\circ$, equivariant versions of the various variants of Heegaard Floer homology~\cite{GM:real-HF}. (See~\cite{GM:real-HF} for a more thorough review of the literature. In particular, 4-dimensional constructions that these Floer theories intend to extend have had particularly striking applications, such as~\cite{Miyazawa:exotic-RP2}.)

Real Heegaard Floer homology groups are invariants of a 3-manifold $Y$ with an orientation-preserving, smooth involution $\tau\co Y\to Y$ with nonempty (hence 1-dimensional) fixed set.
Unlike involutive Heegaard Floer homology, real Heegaard Floer homology is defined via a strict symmetry of a Heegaard diagram, which
exchanges the $\alpha$- and $\beta$-curves and reverses the orientation of the Heegaard surface.

Although its construction is concrete, in terms of a specific diagram, real Heegaard Floer homology groups remain somewhat mysterious, and have been computed in relatively few cases. The main goal of this paper is to start to ameliorate that, by giving a practical algorithm to compute $\HFRa(Y,\tau)$, at least when the fixed set of $\tau$ is connected. Our strategy is to give an extension of $\HFRa$ to 3-manifolds with boundary, as a variant of bordered Heegaard Floer homology, and then extend the algorithm for computing bordered Heegaard Floer homology from our earlier work (with D.~Thurston)~\cite{LOT4}. We have implemented this algorithm, as part of Bohua Zhan's bordered Floer homology python software package~\cite{Zhan:code}. 

(To date, both real Heegaard Floer homology and bordered Floer homology have only been constructed with $\FF_2$-coefficients, so all Floer complexes in this paper are over $\FF_2$.)

A real bordered 3-manifold is an arced bordered $3$-manifold $Y$ with two boundary components,
together with an involution $\tau\co Y\to Y$ exchanging the two boundary components while respecting their parametrizations
by a pointed matched circle $\PMC$.
(See Definition~\ref{def:real-bord-mfld}.) In the interests of brevity, this paper develops only the aspects of the bordered invariants of such $(Y,\tau)$ needed to compute $\HFRa(Y,\tau)$. We summarize the structure of our invariants, as developed in this paper, in the following: 
\begin{theorem}
  Given a real bordered 3-manifold $(Y,\tau)$ with boundary $-F(\PMC)\amalg -F(\PMC)$, represented by a real bordered Heegaard diagram $(\HD,\tau)$, there is an associated type $D$ structure $\CFDRa(Y,\tau)$ over the (usual) bordered algebra $\Alg(\PMC)$. Moreover, 
  for any bordered $3$-manifold $Y'$ with boundary $-F(\PMC')\amalg F(\PMC)$, we have
  \[
  \CFDRa(Y'\cup_{F(\PMC)}Y\cup_{F(\PMC)}Y',\wt{\tau})\simeq \CFDAa(Y')\DT\CFDRa(Y),  
  \]
  where $\wt{\tau}$ is the evident extension of $\tau$, exchanging the two copies of $Y'$.
  Finally, if $Y$ is closed (or the boundary consists of two copies of $S^2$), then $\CFDRa(Y,\tau)\simeq \CFRa(Y,\tau)$, Guth-Manolescu's real Heegaard Floer complex.
\end{theorem}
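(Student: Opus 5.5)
The plan is to build $\CFDRa$ from a \emph{real bordered Heegaard diagram}. This is a bordered Heegaard diagram $\HD$ for $Y$ with two boundary circles, together with an orientation-reversing involution $\tau$ of the Heegaard surface. The involution exchanges the $\alpha$- and $\beta$-curves, exchanges the two boundary circles, and identifies the $\alpha$-arcs at one boundary with the $\beta$-arcs at the other, compatibly with $\PMC$.

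Following Guth--Manolescu, the generators are the $\tau$-invariant generators of the ordinary Heegaard Floer complex. Holomorphic curves are replaced by $\tau$-invariant curves, which are equivalently curves in the quotient $\Sigma/\tau$ with real boundary conditions along the fixed set. Because the two boundary circles are exchanged, a $\tau$-invariant curve has its Reeb chords at one boundary determined by those at the other. Recording the chords at a single boundary therefore gives the coefficients in $\Alg(\PMC)$, and we define
\[
\delta^1(\x)=\sum \#\cM^{\tau}(\x,\y;\rhos)\, a(\rhos)\otimes \y .
\]
We restrict, as in $\CFDa$, to curves with chords in distinct levels at the east boundary.

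The proof then has three steps, taken in order.

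\textbf{Step 1: $\CFDRa$ is a type $D$ structure.} We analyze the ends of one-dimensional moduli spaces of $\tau$-invariant curves. The ends are two-story buildings, which give $\delta^1\circ\delta^1$, together with collisions and splittings of east chords, which give the algebra differential and the multiplication. Invariant ends that come in $\tau$-exchanged pairs cancel. The remaining ends are the ``real'' degenerations along the fixed set, and we must show these cancel or occur an even number of times, as in Guth--Manolescu. We expect this to be the main obstacle. The difficulty is transversality for invariant curves, since equivariant transversality can fail for curves lying in the fixed set, and the extra boundary degenerations near the fixed circle must be controlled.

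To handle it, we first try working in the quotient $\Sigma/\tau$, whose boundary is the fixed locus. Invariant curves then become curves with one Lagrangian $\alphas$ and a totally real boundary condition, so generic perturbations are available. We would also use the hypothesis that $\tau$ has nonempty fixed set to rule out bad boundary degenerations, as Guth--Manolescu do. Invariance of the homotopy type under real Heegaard moves follows from the same analysis, applied to moduli spaces of invariant triangles.

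\textbf{Step 2: the pairing theorem.} We glue a bordered diagram $\HD'$ for $Y'$ to one side of $\HD$, and its mirror $\tau(\HD')$ to the other side. The resulting diagram is real for $\wt\tau$. Invariant generators of the glued diagram correspond to pairs (generator of $\HD'$, invariant generator of $\HD$), with the copy in $\tau(\HD')$ determined by $\tau$. An invariant curve is determined by its restriction to $\HD'\cup\HD$, and the restriction to $\HD'$ is an arbitrary curve. We then neck-stretch along the $\tau$-invariant pair of circles $F(\PMC)$, following the proof of the pairing theorem in LOT using time dilation. This identifies the invariant moduli spaces with matched pairs, that is, with $\CFDAa(\HD')\DT\CFDRa(\HD)$. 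Nice or admissible diagrams guarantee that the sums are finite.

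\textbf{Step 3: the closed case.} When the boundary is empty or two copies of $S^2$, the algebra is $\FF_2$ and there are no chords. The definition of $\CFDRa$ then coincides literally with Guth--Manolescu's $\CFRa$, up to the identification of invariant curves with curves in the quotient. For the $S^2$ case, we fill in the two balls equivariantly to obtain the identification.
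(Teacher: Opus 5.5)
Your outline matches the paper's: real states, symmetric $J$, counts of $\ul{\tau}$-invariant curves with the discrete partition, a codimension-one analysis, and neck-stretching with matched triples $(u_1,u_2,\tau(u_1))$ plus time dilation. Steps~2 and~3 are essentially what the paper does. The gap is in Step~1, which relies on a mechanism that does not exist and omits the ingredient that actually drives the argument.

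\textbf{The quotient reduction fails in the cylindrical setting.} The involution $\ul{\tau}(p,s,t)=(\tau(p),1-s,t)$ also reflects the strip. Its fixed set $C\times\{1/2\}\times\RR$ is a $2$-dimensional totally real surface, along which the quotient of $\Sigma\times[0,1]\times\RR$ is singular (locally it is complex conjugation on $\CC^2$). Moreover, real sources are not doubles along a separating fixed locus. For exchanged pairs of bigons, rectangles or half-strips, and for the free annuli, $\eta$ has no fixed points at all. The contributing punctured tori have M\"obius-band quotients. So invariant curves do not become curves with boundary on $\alphas$ and a totally real submanifold. That folding works for Guth--Manolescu only in $\mathrm{Sym}^g(\Sigma)$, with boundary on $\mathrm{Fix}(R)$, and that setting is unavailable once Reeb chords force you into the cylindrical picture.

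Nor is the reduction needed. Since the fixed locus is totally real, no nonconstant curve lies in it, and the paper obtains equivariant transversality directly. The linearization commutes with $\xi\mapsto d\tau\circ\xi\circ d\eta$, and its restriction to $+1$-eigenspaces is Fredholm. If an invariant $\zeta$ annihilated the image, take any nonequivariant variation of $u$, $J$, or $j$ pairing nontrivially with $\zeta$. Symmetrizing it gives an equivariant variation with twice the pairing, a contradiction.

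\textbf{The real index formula is missing.} Without it, neither ``rigid'' in your definition of $\delta^1$ nor ``one-dimensional'' in your end analysis has content. The real index is not half the ordinary index. The paper proves, by splicing along $\tau$-exchanged pairs of circles and arcs down to bigons, exchanged pairs, and real spheres, that
$\ind^R(B,\vec\rho)=\OneHalf\bigl(e(B)+n_\x(B)+n_\y(B)\bigr)+|\vec\rho|+\iota(\vec\rho)+\OneQuarter\bigl(\sigma(\alphas,\y)-\sigma(\alphas,\x)\bigr)$.
The chord terms are \emph{not} halved, because each chord occurs at both boundary components. That unhalved term is what excludes non-composable collisions and invalid splittings.

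The source-dependent version of the formula does the rest of the end analysis. There, $e\infty$ curves come in symmetric pairs, so $|P|-|P'|$ is even. It follows that the ends of index-$2$ real moduli spaces are exactly two-story buildings, join ends and collisions. Boundary and nodal degenerations are excluded as in the unreal case.

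Consequently there are no extra ``fixed-set degenerations'' to control. Your mechanism that $\tau$-exchanged pairs of ends cancel is also wrong: limits of real curves are real combs, and a symmetric pair of $e\infty$ curves forms a single end contributing one term. The matched version $\ind(B_1,S_1,P_1)+\ind^R(B_2,S_2,P_2)-m$ of the same formula is what your Step~2 needs as well.

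\textbf{Invariance.} Your claim of invariance via invariant triangles is unsupported: no complete set of real bordered Heegaard moves is known, and the paper proves only independence of $J$. It is also unnecessary, since the theorem concerns the module of a given diagram.
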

This is proved below as Theorems~\ref{thm:CFDR-defined} and~\ref{thm:gen-pairing}. (We also give a more direct proof of the pairing theorem for nice diagrams, Theorem~\ref{thm:nice-pairing}.) As some readers may notice, the form of real bordered Floer homology is pleasantly simpler than the form of involutive bordered Floer homology~\cite{HL19:bord-involutive}; this leads also to a more efficient computer implementation. (A special case of the type $A$ invariant, Proposition~\ref{prop:CFAR-AZ}, also makes the computer implementation more efficient.)

To compute $\HFRa(Y,\tau)$, we start from a real Heegaard splitting $Y=H\cup_\Sigma \tau(H)$. There is a particularly attractive real bordered Heegaard diagram for $\nbd(\Sigma)$, coming from the Auroux-Zarev diagram $\AZ(\PMC)$~\cite{Auroux10:ICM,Zarev:JoinGlue,LOTHomPair} associated to a symmetric pointed matched circle $(\PMC,\tau)$, and the associated module $\CFDRa(\AZ(\PMC),\tau)$ has a simple description. (See, for instance, Corollary~\ref{cor:AZ-simplest-model}, which suggests that perhaps analogous constructions would make sense in other Fukaya categories or in purely algebraic settings.) Tensoring this with the bordered invariant $\CFAa(H)$ (with appropriate framing), which we computed in earlier work~\cite{LOT4}, gives $\CFRa(Y,\tau)$.

Although we develop mainly the aspects of real bordered Floer homology needed for this algorithm, the construction has other computational and conceptual applications as well. The existence of real bordered Floer homology implies a surgery exact triangle for $\HFRa$. Identifications between the real bordered modules for real tori and certain bordered-sutured invariants imply that if a real 3-manifold $(Y,\tau)$ admits a separating symmetric torus containing the fixed set, then $\HFRa(Y,\tau)$ agrees with a familiar sutured Floer homology group. In a similar direction, if $(Y,\tau)$ admits a separating symmetric surface with orientable quotient, then $\HFRa(Y,\tau)$ agrees with a real knot Floer homology group. We also obtain a proof of a structural conjecture of Hendricks's, about the first differential in her spectral sequence to real Heegaard Floer homology~\cite{Hendricks:HFR-loc}, at least if the fixed set is connected. Finally, we observe that real bordered Floer homology can be used to compute the real Floer homology of branched covers of even winding number satellites, and illustrate this with the Whitehead pattern and $(2,1)$ cabling pattern. This gives, for instance:
\begin{theorem}\label{thm:Whitehead-nontriv}
  Given a knot $K\subset S^3$, let $\Wh(K)$ denote the (positive or negative) Whitehead double of $K$. 
  If $K$ is a nontrivial alternating knot, then there is a strict inequality 
  \[
  \dim\HFRa(\Sigma(\Wh(K)),\tau)<\dim\HFa(\Sigma(\Wh(K))).
  \] 
\end{theorem}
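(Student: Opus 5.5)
The strategy is to compute both sides through the satellite decomposition and compare. The branched double cover $\Sigma(\Wh(K))$ of the Whitehead double decomposes along the preimage of the companion torus. Since the Whitehead pattern has winding number $0$ (even), the preimage of the solid torus $V\supset \Wh(K)$ is a manifold $Y_P$ with two torus boundary components. These are exchanged by the covering involution $\tau$, and the fixed set (the lift of $\Wh(K)$) lies in $Y_P$. Its complement is two copies of the knot exterior $S^3\setminus\nbd(K)$, swapped by $\tau$. So $(\Sigma(\Wh(K)),\tau)$ has the form $Y'\cup Y_P\cup Y'$ of the pairing theorem, with $Y'$ the bordered exterior of $K$, suitably framed. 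The pairing theorem then gives
\[
\CFRa(\Sigma(\Wh(K)),\tau)\simeq \CFDAa(Y')\DT\CFDRa(Y_P,\tau),
\]
where $Y'$ is viewed as a bimodule via the identity cobordism of the torus. Equivalently, one can absorb one copy as type $A$: write $\CFDAa(Y')$ as a product of a torus mapping-cylinder bimodule with $\CFAa$ of the exterior, appropriately arranged.

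The first concrete step is to compute $\CFDRa(Y_P,\tau)$ for the Whitehead pattern. I would draw an explicit real bordered Heegaard diagram for $Y_P$ and extract its type $D$ structure over $\Alg(T^2)$ by the algorithm (Auroux–Zarev piece tensored with a handlebody). Since the boundary is two tori, I expect a small module. Next, for alternating $K$ I use that $K$ is thin. Then $\CFDa$ of the exterior is determined by $\tau(K)$, the signature, and Alexander data, via Petkova's thin computation together with the LOT knot-exterior formula. This reduces $\CFDAa(Y')$ to a standard staircase plus an unstable chain.

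Pairing these gives $\dim\HFRa$ as an explicit function of the knot Floer data, roughly linear in $\det K$ or $\dim\HFKa(K)$. On the other side, $\dim\HFa(\Sigma(\Wh(K)))$ is computed, or bounded below, by the ordinary pairing theorem: $\Sigma(\Wh(K))$ is two copies of the exterior glued to $Y_P$. The dimension there grows roughly quadratically, like $(\dim\HFKa)^2$, because two independent copies of $\CFDa$ are tensored. By contrast, in the real setting only one copy appears, after "folding." For a nontrivial alternating knot, $\dim\HFKa(K)\ge 3$, so the quadratic count strictly exceeds the linear one. Alternatively, $\dim\HFa\ge|H_1|$, and $|H_1(\Sigma(\Wh(K)))|=\det\Wh(K)=1$, which is too weak. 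So the quadratic lower bound must come from the bordered computation, for instance by a Künneth-type estimate on a rank invariant of the tensor product.

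The main obstacle is obtaining the rank of the homology of the real pairing precisely enough, without cancellations that spoil the comparison. The differential in the box tensor product can cancel many generators. I would first handle the homology of the reduced thin model explicitly, tracking the grading and Alexander filtration so that the differentials are forced. I would then compare with a lower bound for $\HFa$ obtained from the $\HFKa$ of the two exterior copies, using $\Image$-rank arguments and the fact that $\HFa$ is at least the rank of the $E_\infty$ page of a filtration. If direct comparison is messy, a fallback is to exhibit an explicit generator count: $\dim\HFRa\le\#$(generators of the reduced real complex), against a lower bound for $\dim\HFa$ from the product structure.
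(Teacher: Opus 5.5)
Your treatment of the real side is essentially the paper's. The paper computes a small $\CFDRa$ for the branched cover of the pattern directly from an explicit real diagram, not via the Auroux--Zarev algorithm, which is set up for closed manifolds. It then reframes by tensoring twice with $\CFDAa(\tau_\mu)$, which yields a three-generator model. For alternating $K$, $\CFKm(K)$ is one staircase of length $2|\tau(K)|+1$ plus $1\times1$ boxes, so pairing with $\CFAa$ of the exterior gives an exact linear count: $2\det(K)+4\tau(K)-3$ for $\tau>0$ and $2\det(K)+4|\tau(K)|-1$ for $\tau\le 0$.

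The genuine gap is the unreal side. You give no way to compute, or even bound, $\dim\HFa(\Sigma(\Wh(K)))$.
\begin{itemize}
\item ``Quadratic beats linear'' is a heuristic, and the box tensor product can cancel generators.
\item Saying that $\HFa$ is at least the rank of an $E_\infty$ page is vacuous. That rank equals $\dim\HFa$, and spectral sequences only give upper bounds from earlier pages.
\item The comparison is delicate, so growth rates will not do. In the $\tau=0$ case the difference turns out to be exactly $(\det(K)-1)^2$, which vanishes for the unknot. Strictness therefore depends on exact constants and on $\det(K)\ge 3$.
\end{itemize}

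The missing idea is the Montesinos trick: $\Sigma(\Wh(K))$ is $\tfrac12$-surgery (up to sign conventions for the clasp) on $K\# r(K)$.
\begin{itemize}
\item $K\# r(K)$ is again alternating, with determinant $\det(K)^2$ and $\tau=2\tau(K)$. Its knot Floer complex is therefore again one staircase plus boxes.
\item The rational surgery formula then gives $\dim\HFa(\Sigma(\Wh(K)))=\det(K)^2+12|\tau(K)|+c$, with $c=-6,0,-4$ according as $\tau(K)>0$, $=0$, $<0$.
\item Compare this with the real count using $\det(K)\ge 2|\tau(K)|+1$ and $\det(K)\ge 3$ for nontrivial alternating $K$. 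This gives the strict inequality.
\end{itemize}

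Your bordered route to $\HFa$ could in principle be completed instead. You would compute $\CFDDa$ of the pattern's branched cover and evaluate $\CFAa(Y)\DT\CFDDa\DT\CFAa(Y)$ summand by summand over pairs (box, box), (box, staircase), (staircase, staircase). But that computation is the actual content of the argument, and nothing in the proposal supplies it.
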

\noindent
In other words, Hendricks's spectral sequence is nontrivial in these cases.

This paper is organized as follows. We describe the kinds of Heegaard diagrams used in the construction, as well as states and domains connecting them, in Section~\ref{sec:HD}. The analysis of the moduli spaces of holomorphic curves is in Section~\ref{sec:moduli}, starting with the definition of moduli spaces and then establishing their Fredholm theory (transversality, expected dimensions) and codimension-1 boundary. Section~\ref{sec:CFDR} defines the bordered type $D$ structure associated to a real bordered 3-manifold; the gradings for these modules are deferred to Section~\ref{sec:gradings}. In Section~\ref{sec:nice-and-AZ} we study nice real bordered Heegaard diagrams and their invariants in general, and then the Auroux-Zarev diagrams specifically. Section~\ref{sec:pairing} proves the pairing theorem for nice real bordered Heegaard diagrams, Theorem~\ref{thm:nice-pairing}.

Applications begin in Section~\ref{sec:compute-HFRa}, with the algorithm to compute $\HFRa$. Other applications are in Section~\ref{sec:applications}, to Hendricks's spectral sequence, a surgery exact triangle, and the real Heegaard Floer homology of certain classes of manifolds, like branched covers of satellite knots. We conclude, in Section~\ref{sec:not-here}, with a discussion of results not in this paper---topics we omit for brevity, but which would be part of a thorough development of real bordered Floer theory.

\subsection*{Acknowledgments} We thank Fraser Binns, Gary Guth, Kristen Hendricks, Adam Levine, Semon Rezchi\-kov, and Yong\-han Xiao for helpful conversations and corrections. This research was conducted while RL was visiting Princeton University, supported by the Fernholz Foundation, and he is grateful for their support.

The first author used Anthropic's Claude chatbot for late-stage proofreading and to help with drawing Figure~\ref{fig:tori-domain-types}. The second author used xfig.

\section{Real bordered Heegaard diagrams}\label{sec:HD}
In Section~\ref{sec:real-bord-mflds}, we spell out the class of 3-dimensional cobordisms with real involutions that we will work with, the kinds of bordered Heegaard diagrams we use for them, the gluing operation we consider, and some examples. In Section~\ref{sec:states-and-domains}, we explain the topological framework for bordered Floer homology of these diagrams, discussing states (generators) and domains connecting them.

\subsection{Real bordered 3-manifolds and their Heegaard diagrams}\label{sec:real-bord-mflds}

Recall that a \emph{pointed matched circle} is a tuple
$\PMC=(Z,\CircPts,M,z)$, where $Z$ is a circle,
$\CircPts\subset Z$ consists of $4k$ points,
$M\co\CircPts\to\CircPts$ is a fixed point-free involution (the
\emph{matching}), and $z\in Z\setminus \CircPts$ is a basepoint~\cite[Definition 3.16]{LOT1}. We require that performing surgery on $Z$ along $(\CircPts,M)=\coprod_{2k}S^0$ results in a single circle. When
considering bordered Heegaard diagrams, the points $\CircPts$ will be
the endpoints of the $\alpha$-arcs. A \emph{$\beta$-pointed matched
circle} consists of exactly the same data as a pointed matched circle,
but we think of $\CircPts$ as the endpoints of the $\beta$-arcs~\cite[Section 3.1]{LOTHomPair}. In
particular, given a pointed matched circle $\PMC$ there is a
corresponding $\beta$-pointed matched circle $\PMC^\beta$.

A pointed matched circle specifies a surface $F(\PMC)$ by thickening $Z$ to $Z\times [1,2]$ and attaching 2-dimensional 1-handles to $\CircPts\times\{1\}$ as specified by $M$. A $\beta$-pointed matched circle also specifies a surface, but now the surface is obtained from $Z\times[1,2]$ by attaching 2-dimensional 1-handles to $\CircPts\times\{2\}$. So, there is a canonical orientation-reversing diffeomorphism $K_{\alpha,\beta}\co F(\PMC)\to F(\PMC^\beta)$, or equivalently an orientation-preserving diffeomorphism $\ol{K}_{\alpha,\beta}\co F(\PMC)\to -F(\PMC^\beta)$. 
A pointed matched circle $\PMC$ also specifies a bordered algebra $\Alg(\PMC)$~\cite[Chapter 3]{LOT1}. By convention, $\Alg(\PMC^\beta)$ is the opposite algebra to $\Alg(\PMC)$~\cite[Section 3.2]{LOTHomPair}.

A \emph{real 3-manifold} is a closed 3-manifold with an orientation-preserving, smooth involution with nonempty fixed set. We will consider the following extension to 3-manifolds with parameterized boundary:

\begin{definition}\label{def:real-bord-mfld}
  A \emph{real bordered 3-manifold} consists of a compact, connected, oriented $3$-manifold $Y$ with boundary; a diffeomorphism 
  \[
  \phi_L\amalg \phi_R \co F(\PMC)\amalg F(\PMC)\to \bdy Y,
  \]
  for some pointed matched circle $\PMC$; a framed arc $\gamma$ in $Y$ connecting the two boundary components; and
  an orientation-preserving, smooth involution $\tau\co Y\to Y$. These data are required to satisfy:
  \begin{itemize}
    \item The involution $\tau$ commutes with the parametrization of the boundary, in the sense that
    \begin{equation}\label{eq:real-bord-bdy-compat-V1}
      \tau\circ \phi_L=\phi_R.
    \end{equation}
    \item The fixed set of $\tau$ is nonempty (hence also $1$-dimensional and closed).
    \item The involution $\tau$ preserves $\gamma$, $\tau(\gamma)=\gamma$, and if we view the framing as a normal vector field $\nu$ along $\gamma$, then $d\tau\circ\nu=-\nu$.
  \end{itemize}
\end{definition}

It is equivalent, but more natural later, to think of $\phi_R$ as a map $F(-\PMC^\beta)\to \bdy Y$, from the surface associated to $\PMC^\beta$ with its orientation reversed. Then Formula~\eqref{eq:real-bord-bdy-compat-V1} becomes
\begin{equation}\label{eq:real-bord-bdy-compat-V2}
  \tau\circ \phi_L = \phi_R\circ \ol{K}_{\alpha,\beta}.
\end{equation}

We will often suppress $\phi_L\amalg\phi_R$ and $\gamma$, and write a real bordered 3-manifold just as $(Y,\tau)$.

\begin{figure}
  \centering
  \includegraphics[alt={Real surfaces}, scale=.75]{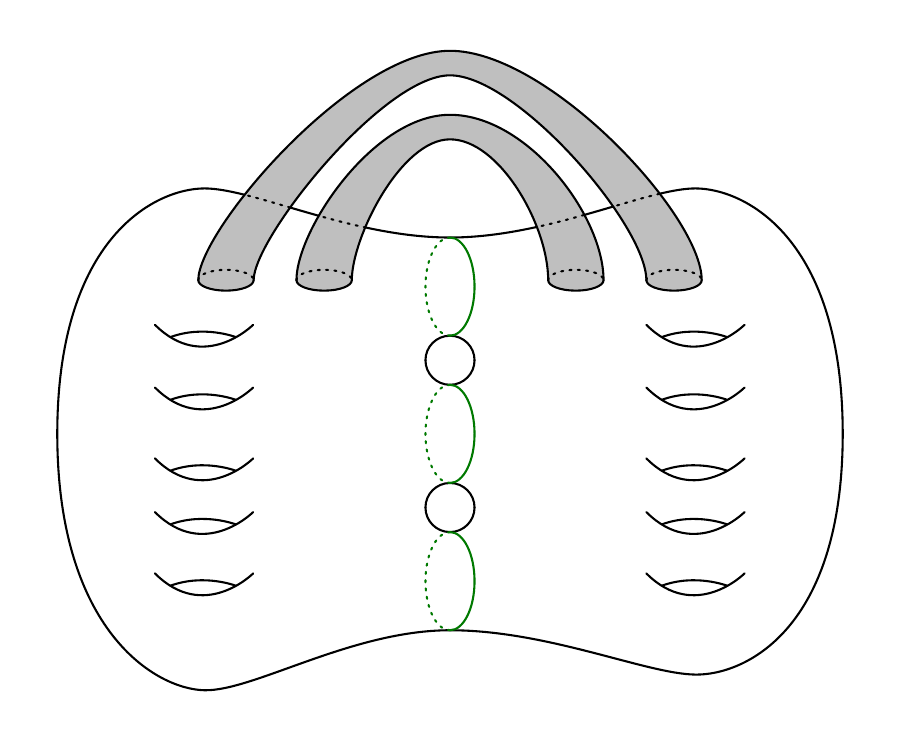}
  \caption{\textbf{Real surfaces}. The involution on the surface is given by reflection across the \textcolor{darkgreen}{vertical circles in the middle}, except that the involution on the thin, shaded cylinders is the free involution with quotient a M\"obius band. This surface has $|C|=3$, genus $14$, and nonorientable quotient; the quotient is orientable if and only if there are no shaded cylinders.}
  \label{fig:real-surfaces}
\end{figure}

\begin{example}\label{eg:real-surf-times-I}
  For us, a \emph{real surface} is a closed, connected, oriented surface $\Sigma$ together with an orientation-reversing involution $\tau\co \Sigma\to\Sigma$ with nonempty fixed set. Real surfaces $(\Sigma,\tau)$ and $(\Sigma',\tau')$ are diffeomorphic if there is a diffeomorphism $\psi\co \Sigma\to \Sigma'$ so that $\tau'\circ \psi=\psi\circ\tau$. Up to diffeomorphism, a real surface is determined by:
  \begin{itemize}
    \item The genus $g$ of $\Sigma$,
    \item The number of components $|C|$ of the fixed set $C$ of $\tau$, and
    \item Whether $\Sigma/\tau$ is orientable or nonorientable.
  \end{itemize}
  Further, the only conditions these data must satisfy are that $1\leq |C|\leq g+1$; if $\Sigma/\tau$ is nonorientable then $|C|\leq g$; and if $\Sigma/\tau$ is orientable, then $|C|\equiv g+1 \pmod{2}$.
  (See, for example,~\cite{Liu20:moduli} and the references there.) Figure~\ref{fig:real-surfaces} indicates how to realize any such tuple of data.

  Because we will use it later, note that there is a basis for the homology of $\Sigma$ so that the action of $\tau$ is block-diagonal, with $|C|-1$ blocks of the form
  \[
  \begin{bmatrix}
    -1 & 0\\
    0 & 1
  \end{bmatrix}
  \]
  and the remaining blocks of the forms
  \[
  \begin{bmatrix}
    0 & 0 & 1 & 0\\
    0 & 0 & 0 & 1\\
    1 & 0 & 0 & 0\\
    0 & 1 & 0 & 0
  \end{bmatrix}
  \qquad\qquad\text{or}\qquad\qquad
  \begin{bmatrix}
    0 & 1\\
    1 & 0
  \end{bmatrix}.
  \]
  (One can trade the former for two copies of the latter, but the former seem a little more natural for orientable quotients.)

  Fix a real surface $(\Sigma,\tau)$, a point $z$ on the fixed set of $\tau$, a vector $v\in T_z\Sigma$, and an orientation-preserving diffeomorphism $\phi_L\co -F(\PMC)\to \Sigma$ for some pointed matched circle $\PMC$. Then $\bigl([0,1]\times\Sigma,\phi_L\amalg\tau\circ\phi_L,(t,p)\mapsto(1-t,\tau(p)),[0,1]\times\{z\},[0,1]\times v\bigr)$, which we call a \emph{real thick surface}, is a prototypical real bordered $3$-manifold. We call $(\Sigma,\tau)$ the \emph{central real surface} of the real thick surface.

  The orientations are, perhaps, a little confusing. The boundary orientation identifies $\{0\}\times\Sigma=-\Sigma$, so $\phi_L$ is an orientation-preserving map $F(\PMC)\to \{0\}\times \Sigma$. On the other hand, $\{1\}\times\Sigma=\Sigma$, $\phi_L$ is an orientation-reversing map from $F(\PMC)$ to $\Sigma$, and $\tau$ is an orientation-reversing map from $\Sigma$ to $\Sigma$, so $\tau\circ\phi_L$ is an orientation-preserving map from $F(\PMC)$ to $\{1\}\times\Sigma$.
\end{example}

\begin{definition}
  A \emph{real bordered Heegaard diagram} is an $\alpha$-$\beta$-bordered Heegaard diagram $\HD=(\Sigma,\alphas,\betas,\bbpt)$ (in the sense of~\cite[Section 3.3]{LOTHomPair}) together with an orientation-reversing involution $\tau\co\Sigma\to\Sigma$ such that
  \begin{itemize}
    \item $\tau$ exchanges the two boundary components of $\Sigma$,
    \item $\tau(\alphas)=\betas$, and
    \item $\tau(\bbpt)=\bbpt$.
  \end{itemize}
\end{definition}
In particular, the boundary of a real bordered Heegaard diagram is an $\alpha$-pointed matched circle $\PMC=\bdy_L\HD$ and the $\beta$-pointed matched circle $\bdy_R\HD=-\PMC^\beta$ obtained by reversing the orientation of $Z$ and viewing the points as $\beta$-endpoints rather than $\alpha$-endpoints.
Some examples of real bordered Heegaard diagrams are shown in Figures~\ref{fig:T2-diag-two-fixed-circles},~\ref{fig:AZ} and~\ref{fig:AZ-bar}.

An $\alpha$-$\beta$-bordered Heegaard diagram $\HD$ specifies an arced bordered 3-manifold $Y(\HD)$ by capping off the boundary components of $\Sigma$ to obtain $\ol{\Sigma}$, thickening the result to $\ol{\Sigma}\times[1,2]$ and attaching 2-handles along the $\alpha$-circles in $\Sigma\times\{1\}$ and the $\beta$-circles in $\Sigma\times\{2\}$. The $\alpha$-arcs (respectively $\beta$-arcs) induce a parametrization of the part of the boundary corresponding to $\Sigma\times\{1\}$ (respectively $\Sigma\times\{2\}$).
(Compare~\cite[Construction 3.17]{LOTHomPair}.)
If we start with a real bordered Heegaard diagram, then the corresponding 3-manifold inherits an involution $\tau$ induced from the involution $\tau(p,t)=(\tau(p),3-t)$ on $\Sigma\times[1,2]$. The fixed sets of $\tau\co \Sigma\to \Sigma$ and $\tau\co Y(\HD)\to Y(\HD)$ are identified.

\begin{figure}
  \centering
  \includegraphics[alt={A real bordered Heegaard diagram}]{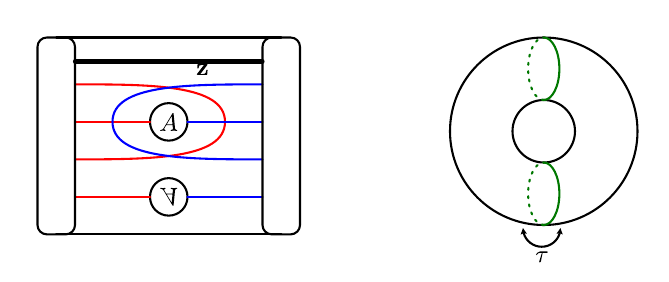}
  \caption{\textbf{A real bordered Heegaard diagram.} This diagram (on the left) represents $[0,1]\times T^2$ with the involution induced by the reflection of $T^2$ with two fixed circles (as indicated on the right).}
  \label{fig:T2-diag-two-fixed-circles}
\end{figure}

\begin{example}\label{eg:T2-two-fixed-circles-HD}
  Figure~\ref{fig:T2-diag-two-fixed-circles} shows a real bordered Heegaard diagram for $([0,1]\times T^2,\tau)$ where $\tau$ is induced from the involution $\tau\co T^2\to T^2$ as in Example~\ref{eg:real-surf-times-I} given by reflection across two circles (with orientable quotient).

  Figures~\ref{fig:AZ} and~\ref{fig:AZ-bar} both show bordered Heegaard diagrams for $[0,1]\times T^2$ with connected fixed set and nonorientable quotient, as well as bordered Heegaard diagrams for $[0,1]\times\Sigma_2$ with connected fixed set and orientable quotient. Changing the pointed matched circle from the split one to the antipodal one gives Heegaard diagrams for $[0,1]\times\Sigma_2$ with connected fixed set and nonorientable quotient. See Section~\ref{sec:AZ-diagrams} for more discussion of these diagrams.
\end{example}

We do not glue pairs of real bordered 3-manifolds together. Rather, given a real bordered 3-manifold $Y=(Y,\phi_L\amalg\phi_R,\gamma,\tau)$, where $\phi_L\co F(\PMC)\to \bdy Y$, and an (ordinary) arced bordered 3-manifold $Y'=(Y',\phi'_L,\phi'_R,\gamma')$ with $\phi'_R\co -F(\PMC)\to \bdy Y'$, we can form a new real bordered 3-manifold
\[
\bigl(Y'\cup_{F(\PMC)}Y\cup_{F(\PMC)}Y',\phi'_L\amalg \phi'_L,\gamma'\cup\gamma\cup\gamma',\wt{\tau}\bigr),
\]
where $\wt{\tau}|_{Y}=\tau$ and $\wt{\tau}|_{Y'}$ is the identity map exchanging the two copies of $Y'$. (Note that, on both sides, we are gluing $\bdy_R Y'$ to $Y$.) That is, we think of $Y$ as a cobordism $-F(\PMC)\amalg -F(\PMC) \to\emptyset$, and compose with $Y'\amalg Y'$.

Similarly, given a real bordered Heegaard diagram $(\HD,\tau)=(\Sigma,\alphas,\betas,\bbpt,\tau)$ and an (ordinary) arced bordered Heegaard diagram $\HD'=(\Sigma',\alphas',\betas',\bbpt')$ with $\bdy_L\HD=\PMC=-\bdy_R\HD'$, we can form a new real bordered Heegaard diagram
\[
(\HD'\cup_{\PMC}\HD\cup_{\PMC}(-\HD')^\beta,\wt{\tau})=\bigl(\Sigma'\cup\Sigma\cup(-\Sigma'), \alphas'\cup\alphas\cup\betas', \betas'\cup\betas\cup\alphas',\bbpt'\cup\bbpt\cup\bbpt',\wt{\tau} \bigr),
\] 
where $(-\HD')^\beta$ is the $\beta$-$\beta$-bordered Heegaard diagram obtained by reversing the orientation of $\Sigma$ and relabeling the $\alpha$-curves in $\HD$ as $\beta$-curves and the $\beta$-circles as $\alpha$-circles. The map $\wt{\tau}$ is given by $\tau$ on $\Sigma$ and the (orientation-reversing) identity map $\Sigma'\to -\Sigma'$.

\begin{lemma}
  If $(\HD,\tau)$ is a real bordered Heegaard diagram representing a real bordered 3-manifold $(Y(\HD),\tau)$ with boundary $F(\PMC)\amalg F(\PMC)$ and $\HD'$ is an arced bordered Heegaard diagram representing an arced bordered 3-manifold $Y(\HD')$, then $(\HD'\cup_{\PMC}\HD\cup_{\PMC}(-\HD')^\beta,\wt{\tau})$ represents $\bigl(Y'\cup_{F(\PMC)}Y\cup_{F(\PMC)}Y',\wt{\tau}\bigr)$.
\end{lemma}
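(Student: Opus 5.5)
The plan is to reduce the statement to the known gluing result for ordinary bordered Heegaard diagrams, and then check by hand that the involutions agree. By \cite[Construction 3.17]{LOTHomPair} and the corresponding gluing lemma there, gluing $\alpha$-$\beta$-bordered diagrams along a common pointed matched circle represents gluing the bordered 3-manifolds. Concretely, $\Sigma\times[1,2]$ with its 2-handles decomposes along the seams as the union of the pieces built from each sub-diagram.

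First, apply this twice. Gluing $\HD'$ to $\HD$ along $\PMC$ gives a diagram for $Y'\cup_{F(\PMC)}Y$. Next, observe that $(-\HD')^\beta$ represents the same arced bordered manifold $Y'$. Indeed, reversing the orientation of $\Sigma'$ and swapping $\alpha$ and $\beta$ corresponds to flipping the interval, via $(p,t)\mapsto(p,3-t)$ on $\ol{\Sigma'}\times[1,2]$. This carries the handles attached at level $1$ to level $2$ and vice versa, and is orientation-preserving on the 3-manifold because both factors reverse. Under this flip, the old right boundary of $Y'$ becomes a boundary parametrized by $\beta$-arcs, i.e.\ by $\PMC^\beta$. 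The identification of $F(\PMC)$ with $-F(\PMC^\beta)$ is via $\ol{K}_{\alpha,\beta}$, which is exactly how $\phi_R$ is reinterpreted in Formula~\eqref{eq:real-bord-bdy-compat-V2}. Gluing this piece along $\bdy_R\HD=-\PMC^\beta$ therefore attaches a second copy of $Y'$ along $\bdy_R Y'$ to the second boundary of $Y$. The arcs glue in the same way: $\gamma'\cup\gamma\cup\gamma'$ comes from the arcs through the basepoints in each piece.

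Second, identify the involution. On the glued thickened surface, the involution $(p,t)\mapsto(\wt{\tau}(p),3-t)$ restricts to $(p,t)\mapsto(\tau(p),3-t)$ over $\Sigma$, which induces $\tau$ on $Y$. Over $\Sigma'\cup(-\Sigma')$ it restricts to the identity map $\Sigma'\to-\Sigma'$ composed with the flip $t\mapsto 3-t$. Under the identification of the previous paragraph, this is precisely the identity map exchanging the two copies of $Y'$. The 2-handles and capping disks match up because $\wt{\tau}$ exchanges $\alphas'\cup\alphas\cup\betas'$ with $\betas'\cup\betas\cup\alphas'$. It remains to check the framing condition on $\gamma'\cup\gamma\cup\gamma'$: on $\gamma$ it holds by hypothesis, and on the $\gamma'$ pieces the involution simply swaps the two copies, so there is nothing further to check.

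The main obstacle is orientation and parametrization bookkeeping. The key point is verifying that $(-\HD')^\beta$, glued along $-\PMC^\beta$, reproduces $Y'$ with the boundary map matching $\phi_R=\tau\circ\phi_L$ rather than its mirror. I would handle this directly from the definition of $\ol{K}_{\alpha,\beta}$, which is the flip of $Z\times[1,2]$ exchanging the levels where the 1-handles are attached, following the orientation conventions spelled out in Example~\ref{eg:real-surf-times-I}.
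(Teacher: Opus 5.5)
Your proposal is correct and takes the same approach as the paper, just written out in full: the paper says only that the lemma is immediate from the definitions, and the content is your flip $(p,t)\mapsto(p,3-t)$ identifying $Y((-\HD')^\beta)$ with $Y(\HD')$, under which $\wt{\tau}$ becomes the identity map exchanging the two copies of $Y'$. One small correction: the framing condition is not vacuous on the $\gamma'$ pieces, because $d\wt{\tau}\circ\nu=-\nu$ forces the framing on the second copy of $\gamma'$ to be the negative of the transported framing on the first copy; it does hold for the framing induced by the glued diagram, for the same reason it holds over $\Sigma$, since $\wt{\tau}$ is given by the single formula $(p,t)\mapsto(\wt{\tau}(p),3-t)$ on all of $(\Sigma'\cup\Sigma\cup(-\Sigma'))\times[1,2]$.
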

\begin{proof}
  This is immediate from the definitions.
\end{proof}

\subsection{States, domains, and \texorpdfstring{$\SpinC$-}{spin-c }decomposition}\label{sec:states-and-domains}

Recall that a \emph{generator} or \emph{state} in a bordered Heegaard diagram $\HD=(\Sigma,\alphas,\betas,z)$ (with one boundary component), where $\alphas=\{\alpha_1^c,\dots,\alpha_{g-k}^c,\alpha_1^a,\dots,\alpha_{2k}^a\}$ and $\betas=\{\beta_1,\dots,\beta_g\}$, is a $g$-tuple $\x=\{x_i\}\subset\alphas\cap\betas$ so that one point of $\x$ lies on each $\beta$-circle, one point on each $\alpha$-circle, and at most one point on each $\alpha$-arc. In particular, exactly half of the $\alpha$-arcs are occupied by points in $\x$. (In terms of $\SpinC$-structures, this is the condition that $\langle c_1(\spinc),[\bdy Y]\rangle=0$.) We denote the set of states for $\HD$ by $\Gen(\HD)$.

For $\alpha$-$\beta$-bordered Heegaard diagrams, one can consider states where any number of $\alpha$-arcs are occupied, but most turn out to be irrelevant for Heegaard Floer homology of closed 3-manifolds. In a real $\alpha$-$\beta$ bordered Heegaard diagram $(\HD,\tau)$, $\alphas=\{\alpha_1^c,\dots,\alpha_{g-k}^c,\alpha_1^a,\dots,\alpha_{2k}^a\}$ and $\betas=\{\beta_1^c,\dots,\beta_{g-k}^c,\beta_1^a,\dots,\beta_{2k}^a\}$ contain the same number of arcs. If we glue an $\alpha$-bordered Heegaard diagram $\HD'$ (with connected boundary) to the $\alpha$-boundary of $\HD$, then in a state for $\HD'$, half of the $\alpha$-arcs are occupied. Hence, only states for $\HD$ in which $k$ $\alpha$-arcs (that is, half of them) and $k$ $\beta$-arcs are occupied contribute to the (real) Heegaard Floer homology of the closed diagram.

So, by a \emph{real generator} or \emph{real state} for $(\HD,\tau)$ we mean a $g$-tuple of points $\x=\{x_i\}\subset\alphas\cap\betas$ so that:
\begin{itemize}
  \item Every $\alpha$-circle and every $\beta$-circle contain a point in $\x$.
  \item Exactly half of the $\alpha$-arcs and half of the $\beta$-arcs contain a point in $\x$ (and each curve contains at most one point in $\x$).
  \item The point $\x$ is fixed by $\tau$, $\tau(\x)=\x$.
\end{itemize} 
Let $\Gen_R(\HD,\tau)$ denote the set of real states for $(\HD,\tau)$.

Given $\x,\y\in\Gen(\HD)$ in a real bordered Heegaard diagram $(\HD,\tau)$, a \emph{domain} from $\x$ to $\y$ is a linear combination $B$ of components of $\Sigma\setminus(\alphas\cup\betas)$ so that 
\[
\bdy [(\bdy B)\cap (\betas\cup\bdy_R\Sigma)]
=-\bdy [(\bdy B)\cap (\alphas\cup\bdy_L\Sigma)]=\y-\x
\]
and the region containing the arc $\bbpt$ has coefficient $0$ in $B$. As usual, we denote the set of domains from $\x$ to $\y$ by $\pi_2(\x,\y)$.
If $B\in\pi_2(\x,\y)$, then $\tau_*(B)\in\pi_2(\tau(\y),\tau(\x))$. (Note that, since $\tau$ is orientation-reversing on $\Sigma$, the coefficients of $\tau_*(B)$ are obtained by negating the coefficients of $B$ and applying $\tau$ to them.) See Figure~\ref{fig:genus-1-domains}.

Given real states $\x,\y\in\Gen_R(\HD,\tau)$, a \emph{real domain} from $\x$ to $\y$ is a domain $B\in\pi_2(\x,\y)$ so that $B=-\tau_*(B)$. Let $\pi_2^R(\x,\y)$ denote the set of real domains from $\x$ to $\y$.

A domain (or real domain) $B$ is \emph{provincial} if the coefficients of the regions adjacent to $\bdy\Sigma$ are all $0$.

A \emph{real periodic domain} is an element of $\pi_2^R(\x,\x)$; the real periodic domains form a subgroup of $\pi_2(\x,\x)\cong H_2(Y,\bdy Y)$. Similarly, the provincial real periodic domains $\pi_2^{\bdy,R}$ form a subgroup of the provincial periodic domains $\pi_2^\bdy(\x,\x)\cong H_2(Y)$. As in the closed case~\cite[Lemma 3.19]{GM:real-HF}, these groups have simple topological interpretations:
\begin{lemma}
  Fix a real bordered Heegaard diagram $(\HD,\tau)$ for a real bordered 3-manifold $(Y,\phi_L\amalg\phi_R,\gamma,\tau)$. There are canonical isomorphisms
  \begin{align*}
    \pi_2^R(\x,\x) &\cong \ker\bigl((1+\tau_*)\co H_2(Y,\bdy Y)\to H_2(Y,\bdy Y)\bigr)\\
    \pi_2^{\bdy,R}(\x,\x) &\cong \ker\bigl((1+\tau_*)\co H_2(Y\setminus\gamma)\to H_2(Y\setminus\gamma)\bigr)
  \end{align*}
\end{lemma}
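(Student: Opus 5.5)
We will reduce to the non-real statements and then check that the identifications intertwine the two involutions. The non-real statements are the standard bordered identifications from~\cite{LOT1,LOTHomPair}: $\pi_2(\x,\x)\cong H_2(Y,\bdy Y)$ and $\pi_2^\bdy(\x,\x)\cong H_2(Y)$, where for the arced $\alpha$-$\beta$ setting the latter is naturally $H_2(Y\setminus\gamma)$. The map is as follows. A periodic domain $B$ has boundary equal to a sum of full $\alpha$-circles, $\beta$-circles, arcs and boundary segments. We cap off $\bdy B$ with the cores of the $\alpha$-handles in $\Sigma\times\{1\}$ and the $\beta$-handles in $\Sigma\times\{2\}$, and place the pieces along $\bdy\Sigma$ in $\bdy Y$. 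This gives a relative class, and the map is an isomorphism. In the provincial case the surface misses a neighborhood of $\bdy\Sigma$, so it is closed, and since $\bbpt$ has multiplicity $0$ it misses $\gamma$.

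Since $\pi_2^R(\x,\x)=\{B : B=-\tau_*B\}$, it suffices to show that under this isomorphism $\tau_*$ on domains corresponds to $\tau_*$ on $H_2$. The subtlety is orientation. The involution on $Y$ is $(p,t)\mapsto(\tau(p),3-t)$. It is orientation preserving on $Y$, since it reverses both $\Sigma$ and the $[1,2]$ factor, and it swaps the $\alpha$-handles with the $\beta$-handles. Let $S_B$ be the surface associated to $B$, formed from $B\times\{3/2\}$ plus cylinders and handle cores. Then $\tau(S_B)$ is built from $\tau(B)$ with the handle caps exchanged appropriately.

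For the orientation, note that the surface $B\times\{3/2\}\subset Y$ is oriented using the orientation of $\Sigma$ on $B$. The diffeomorphism $\tau$ maps it to $\tau(B)\times\{3/2\}$, and it pushes forward the orientation of $B$ to the opposite of the $\Sigma$-orientation on $\tau(B)$. So as a homology class $\tau_*[S_B]=[S_{B'}]$, where $B'$ is the domain with the coefficients of $\tau(B)$ negated. That is the paper's convention for $\tau_*(B)$, but as noted, the sign flip is already built into $\tau_*(B)$, so we must track this carefully. The domain-level condition $B=-\tau_*(B)$ means that the geometric image with the $\Sigma$-orientation has equal coefficients. The class identity then gives $\tau_*[S_B]=-[S_B]$ as written in the domain convention, which is equivalently $(1+\tau_*)[S_B]=0$ on homology.

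The main obstacle is pinning down this sign consistently. To do so, we will verify on a simple example, such as a $\tau$-anti-invariant periodic domain in a genus-one diagram like Figure~\ref{fig:T2-diag-two-fixed-circles}, which sign convention makes the identification $\tau$-equivariant up to the stated sign. We will also check that the capping choices, namely the cylinders $\bdy B\times[1,3/2]$ and their images, glue compatibly, using $\tau(\alphas)=\betas$.

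Once equivariance (with the sign) is established, injectivity and surjectivity of the restricted maps follow formally from the non-real isomorphisms. For the provincial version, the additional point is that $\tau$ preserves $\gamma$ and the provincial condition, so the same argument applies to $H_2(Y\setminus\gamma)$.
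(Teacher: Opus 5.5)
Your proposal is correct and follows the same route as the paper: the standard identification of (provincial) periodic domains with $H_2(Y,\bdy Y)$ (respectively $H_2(Y\setminus\gamma)$) intertwines the involution of the Heegaard surface with the involution of $Y$, so the kernel of $1+\tau_*$ on domains is carried to the kernel of $1+\tau_*$ on homology. Your chain-level computation already shows $\tau_*[S_B]=[S_{\tau_*(B)}]$ on the nose, with the paper's sign convention for $\tau_*(B)$ built in, so the proposed check on an example is unnecessary; the paper simply asserts this equivariance without further comment.
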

\begin{proof}
  Observe that the identification between periodic domains and $H_2(Y,\bdy Y)$ intertwines the action of $\tau_*$ on $\pi_2(\x,\x)$ (from the involution of the Heegaard surface) and the action of $\tau_*$ on $H_2(Y,\bdy Y)$ (from the involution of the 3-manifold). By definition, $\pi_2^R(\x,\x)$ is the kernel of $1+\tau_*$ on $\pi_2(\x,\x)$, and hence the corresponding statement also holds for $H_2(Y,\bdy Y)$. The proof for $\pi_2^{\bdy,R}$ is similar.
\end{proof}

Following Guth-Manolescu, we will sometimes write
\[
H_2(Y,\bdy Y)^{-\tau_*} = \ker\bigl((1+\tau_*)\co H_2(Y,\bdy Y)\to H_2(Y,\bdy Y)\bigr)
\]
since this is the fixed set for the action of $-\tau_*$; and similarly for other homology groups.

Recall that a bordered Heegaard diagram is \emph{admissible} (respectively \emph{provincially admissible}) if every periodic domain (respectively provincial periodic domain) has both positive and negative coefficients. Following Guth-Manolescu~\cite[Section 3.8]{GM:real-HF}:
\begin{definition}
  A real bordered Heegaard diagram $(\HD,\tau)$ is \emph{admissible} (respectively \emph{provincially admissible}) if $\HD$ is admissible (respectively provincially admissible) as an ordinary bordered Heegaard diagram.
\end{definition}

\begin{figure}
  \centering
  \includegraphics[alt={Domains and the obstruction zeta}]{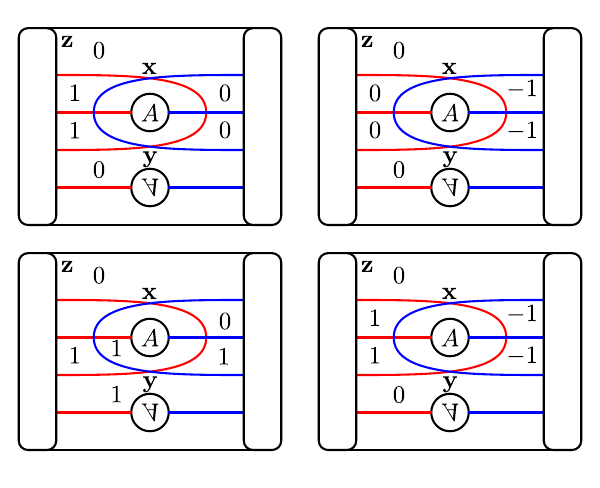}
  \caption{\textbf{Domains and the obstruction $\zeta$.} Top left: a domain $B\in \pi_2(\x,\y)$. Top right: the domain $\tau_*(B)$. Bottom left: a state for the space of real periodic domains in this diagram; as an element of $\pi_2(\y,\y)$, in fact, this domain has a holomorphic representative. Bottom right: the obstruction $\zeta(\x,\y)$ is the periodic domain $B+\tau_*(B)$. Note that this is not a real periodic domain: it is fixed by $\tau_*$, not $-\tau_*$.}
  \label{fig:genus-1-domains}
\end{figure}

There is a familiar obstruction 
\[
\epsilon(\x,\y)\in H_1(Y(\HD))/H_1(\bdy Y(\HD))=H_1\bigl(\Sigma\setminus\nbd(\bbpt),\bdy(\Sigma\setminus\nbd(\bbpt))\bigr)/[H_1(\alphas)+H_1(\betas)]
\] 
to the existence of a domain from $\x$ to $\y$, obtained by connecting $\x$ to $\y$ by a 1-chain $b$ in $\betas\cup\bdy_R\Sigma$ and a 1-chain $a$ in $\alphas\cup\bdy_L\Sigma$ and considering the difference $b-a$. As Guth-Manolescu observe~\cite[Section 3.4]{GM:real-HF}, in the real case there are two refinements of this construction:
\begin{enumerate}
\item We may take $a=\tau_*(b)$, so $\epsilon(\x,\y)$ lies in the kernel of $(1+\tau_*)\co H_1(Y)/H_1(\bdy Y)\to H_1(Y)/H_1(\bdy Y)$. (In fact, they argue that, in the closed case, this lies in a further subgroup.)
\item If $\epsilon(\x,\y)=0$, then there is a domain $B\in\pi_2(\x,\y)$. The element $B+\tau_*(B)$ is a periodic domain ($\tau$-invariant, not real), so we obtain an element
\begin{align*}
\zeta(\x,\y)&=B+\tau_*(B)\\
&\in \ker\bigl((1-\tau_*)\co H_2(Y,\bdy Y)\to H_2(Y,\bdy Y)\bigr)/\Image\bigl((1+\tau_*)\co H_2(Y,\bdy Y)\to H_2(Y,\bdy Y)\bigr).
\end{align*}
See Figure~\ref{fig:genus-1-domains}.
\end{enumerate}
Generalizing the closed case~\cite[Section 3.4]{GM:real-HF}, we have:
\begin{lemma}
  The element $\zeta(\x,\y)$ is independent of the choice of $B$.
  Moreover, $\pi_2^R(\x,\y)\neq\emptyset$ if and only if
  $\epsilon(\x,\y)=0$ and $\zeta(\x,\y)=0$.
\end{lemma}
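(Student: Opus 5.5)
Independence of the choice of $B$ comes first. Any two domains $B,B'\in\pi_2(\x,\y)$ differ by a periodic domain $P=B'-B\in\pi_2(\x,\x)\cong H_2(Y,\bdy Y)$. Then
\[
B'+\tau_*(B')=B+\tau_*(B)+(1+\tau_*)P,
\]
so $\zeta(\x,\y)$ changes by an element of $\Image(1+\tau_*)$. This is exactly the subgroup we quotient by.

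We also need $B+\tau_*(B)$ to be a periodic domain at all, and to lie in $\ker(1-\tau_*)$. Since $\x,\y$ are real states, $\tau_*(B)\in\pi_2(\tau(\y),\tau(\x))=\pi_2(\y,\x)$. Hence $B+\tau_*(B)\in\pi_2(\x,\x)$. It is visibly fixed by $\tau_*$, because $\tau_*$ is an involution on domains. Its class therefore lies in $\ker(1-\tau_*)$ under the identification of periodic domains with $H_2(Y,\bdy Y)$. As in the proof of the preceding lemma, that identification intertwines the two actions of $\tau_*$.

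For the characterization, the forward direction is direct. If $B\in\pi_2^R(\x,\y)$ then $\pi_2(\x,\y)\neq\emptyset$, so $\epsilon(\x,\y)=0$. Also $B=-\tau_*(B)$, so $B+\tau_*(B)=0$ and $\zeta(\x,\y)=0$.

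For the converse, $\epsilon(\x,\y)=0$ gives some $B\in\pi_2(\x,\y)$, by the standard fact for ordinary bordered diagrams. Then $\zeta(\x,\y)=0$ means $B+\tau_*(B)=(1+\tau_*)P$ for some periodic domain $P\in\pi_2(\x,\x)$. Set $B'=B-P\in\pi_2(\x,\y)$. Then
\[
B'+\tau_*(B')=B+\tau_*(B)-(1+\tau_*)P=0,
\]
so $B'=-\tau_*(B')$, and $B'$ is a real domain.

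The main point to get right is the bookkeeping of conventions. First, $\tau_*$ on domains includes the sign from the orientation reversal, and this must match the map induced on $H_2(Y,\bdy Y)$ by the involution of $Y$. Second, $\tau_*(B)$ must again have coefficient $0$ at the basepoint region; this holds since $\tau(\bbpt)=\bbpt$. Third, equalities of homology classes must lift to equalities of domains. This last point holds because periodic domains (with multiplicity zero at $\bbpt$) inject into $H_2(Y,\bdy Y)$, so $B+\tau_*(B)=(1+\tau_*)P$ holds on the nose as domains, not merely in homology. With these checked, the argument is purely formal.
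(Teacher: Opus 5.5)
Your proof is correct and matches the paper's argument essentially step for step: independence of $B$ from $(B+P)+\tau_*(B+P)=[B+\tau_*(B)]+(1+\tau_*)P$, and the converse by writing $B+\tau_*(B)=(1+\tau_*)P$ and taking $B-P\in\pi_2^R(\x,\y)$. The paper leaves your extra bookkeeping implicit, and it is accurate: $B+\tau_*(B)\in\pi_2(\x,\x)$ is $\tau_*$-fixed, $\tau(\bbpt)=\bbpt$ preserves the basepoint condition, and periodic domains inject into $H_2(Y,\bdy Y)$.
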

\begin{proof}
  For the first statement, any other choice of domain can be written as $B+P$ where $P$ is a periodic domain. Then
  \[
  (B+P)+\tau_*(B+P)=[B+\tau_*(B)]+[P+\tau_*(P)],
  \]
  and hence differs from $B+\tau_*(B)$ by an element of the image of $1+\tau_*$.

  If $\pi_2^R(\x,\y)\neq\emptyset$, then $\pi_2(\x,\y)\neq\emptyset$, so $\epsilon(\x,\y)=0$ and, by using an element of $\pi_2^R(\x,\y)$ to define $\zeta(\x,\y)$, $\zeta(\x,\y)=0$. Conversely, suppose $\epsilon(\x,\y)=0$ and $\zeta(\x,\y)=0$. Write $B+\tau_*(B)=(1+\tau_*)(P)$ for some periodic domain $P$. Then $(1+\tau_*)(B-P)=0$, so $B-P\in\pi_2^R(\x,\y)$, as desired.
\end{proof}

Call states $\x$ and $\y$ \emph{real $\SpinC$-equivalent} if $\pi_2^R(\x,\y)\neq\emptyset$.

\begin{example}\label{eg:Sigma-times-I-zeta}
  Fix a real surface $(\Sigma,\tau)$ and an orientation-preserving diffeomorphism $\phi_L\co -F(\PMC)\to\Sigma$, and consider the real bordered manifold $[0,1]\times\Sigma$ from Example~\ref{eg:real-surf-times-I}. Fix a real bordered Heegaard diagram $(\HD,\tau)$ representing $([0,1]\times\Sigma,\tau)$. We have $H_1([0,1]\times\Sigma)/H_1(\{0,1\}\times\Sigma)=0$, so the obstruction class $\epsilon(\x,\y)$ vanishes identically and every pair of states can be connected by a domain. (Recall that we are only considering the central $\SpinC$-structure---the one with $c_1(\spinc)=0$.) The long exact sequence for the pair gives
  \[
  0\to H_2(\Sigma\times[0,1],\Sigma\times\{0,1\})\to H_1(\Sigma\times\{0,1\})\to H_1(\Sigma\times[0,1]),
  \]
  i.e.,
  \[
  \begin{tikzcd}[ampersand replacement=\&, alt={Exact sequence in homology}]
    0\arrow[r] \& H_2(\Sigma\times[0,1],\Sigma\times\{0,1\})\arrow[r] \& H_1(\Sigma)\oplus H_1(\Sigma)\arrow[r,"{\begin{bsmallmatrix}1 & 1\end{bsmallmatrix}}"] \& H_1(\Sigma).
  \end{tikzcd}
  \]
  Thus, $H_2(\Sigma\times[0,1],\Sigma\times\{0,1\})=\{(x,-x)\mid x\in H_1(\Sigma)\}$, with the action given by $\tau_*(x,-x)=(-\tau_*(x),\tau_*(x))$. So, the space of real periodic domains is
  \[
  \ker\bigl((1+\tau_*)\co H_2(Y,\bdy Y)\to H_2(Y,\bdy Y)\bigr)\cong\ker\bigl((1-\tau_*)\co H_1(\Sigma)\to H_1(\Sigma)\bigr).
  \]
  From the form of $\tau_*$ from Example~\ref{eg:real-surf-times-I}, this is $\ZZ^{g}$. That is, $\pi_2^R(\x,\x)\cong \ZZ^g$.

  Finally, the obstruction $\zeta$ lies in $\ker(1-\tau_*)/\Image(1+\tau_*)$. Again, from Example~\ref{eg:real-surf-times-I}, this is $(\ZZ/2\ZZ)^{|C|-1}$. For example, for the diagrams $\AZ(\PMC)$ and $\AZbar(\PMC)$ discussed in Section~\ref{sec:AZ-diagrams}, $\zeta$ lies in a trivial group, hence vanishes identically. For the diagram in Figure~\ref{fig:genus-1-domains}, $\zeta(\x,\y)\in\ZZ/2\ZZ$ (and, for the states marked in that diagram, is the nontrivial element).
\end{example}

\section{Moduli spaces}\label{sec:moduli}

\subsection{Definitions of the moduli spaces}\label{sec:def-of-moduli}
We briefly recapitulate the definitions of the bordered moduli spaces, but now imposing the symmetry relevant to the real case. The material in this section is a straightforward synthesis of the original bordered moduli spaces~\cite{LOT1} and Guth-Manolescu's real Floer moduli spaces, though we give somewhat complementary details to the ones given in their paper~\cite{GM:real-HF}.

Fix a real bordered Heegaard diagram $(\HD,\tau)=(\Sigma,\alphas,\betas,\bbpt,\tau)$. There is an induced map 
$$\ul{\tau}\co \Sigma\times[0,1]\times\RR\to\Sigma\times[0,1]\times\RR$$ 
given by
\[
\ul{\tau}(p,s,t)=(\tau(p),1-s,t).
\]
We consider almost complex structures $J$ on $\Sigma\times[0,1]\times\RR$ satisfying the usual conditions from bordered Floer theory~\cite[Definition 5.1]{LOT1} and the additional condition that $\ul{\tau}$ is $J$-anti-holomorphic, i.e.,
\[
J\circ d\ul{\tau}=-d\ul{\tau}\circ J.
\]
We call such almost complex structures $J$ \emph{symmetric almost complex structures}.

Write $\bdy_L\HD=\PMC$, so $\bdy_R\HD=-\PMC^\beta$.
Fix a symmetric almost complex structure $J$. We consider two kinds of moduli spaces of $J$-holomorphic maps:
\begin{itemize}
\item Moduli spaces $\cM^B_R(\x,\y;\Source;P)$, where $\Source$ is a given decorated Riemann surface (see~\cite[Definition 5.2]{LOT1}) and $P$ is an ordered partition of the $e$ punctures of $\Source$. This is the subspace of the bordered moduli space $\cM^B(\x,\y;\Source;P)$~\cite[Definition 5.5]{LOT1} given by holomorphic maps $u\co S\to \Sigma\times[0,1]\times\RR$ such that there exists an anti-holomorphic involution $\eta\co S\to S$ with $\ul{\tau}\circ u=u\circ\eta$. (Since $u$ is nonconstant on every component of $S$~\cite[Condition (M-5)]{LOT1}, if $\eta$ exists, then it is unique.) 

For real curves, punctures necessarily come in pairs, of one puncture mapped to $\bdy_L\HD$ and one mapped to $\bdy_R\HD$. If we are imposing no (other) height constraints, we could view the real moduli space as a subspace of $\cM^B(\x,\y;\Source;P)$ where $P$ is discrete, or where each part in $P$ has a pair of punctures exchanged by $\eta$. We will take the former view, and in general view $P$ as a pair of partitions, one of the punctures mapped to $\bdy_L\HD$ and the other of punctures mapped to $\bdy_R\HD$. (This is similar to the construction of bordered bimodules~\cite{LOT2}, where we do not impose constraints between punctures on different boundary components.)
In particular, for the construction of $\CFDRa$, we are only interested in the case that $P$ is the discrete partition. (The proof of the structure equation---specifically, Case~\ref{item:collision} of Proposition~\ref{prop:codim-1}---involves partitions $P$ where one part has two elements. If we were to construct $\CFARa$, more general partitions would be needed.)
\item Moduli spaces $\cM^B_R(\x,\y;\vec{\rho})$ of embedded holomorphic curves. These are subspaces of the moduli spaces $\cM^B(\x,\y;\vec{\rhos})$~\cite[Definition 5.68]{LOT1}. Here, $\vec{\rho}=(\rho_1,\dots,\rho_n)$ is a sequence of chords in $\PMC$ and $\vec{\rhos}=(\{\rho_1,\tau(\rho_1)\},\dots,\{\rho_n,\tau(\rho_n)\})$. The subspace $\cM^B_R(\x,\y;\vec{\rho})$ consists of curves $u$ so that the image of $u$ is preserved by $\ul{\tau}$ (or, equivalently, there is an anti-holomorphic automorphism of the source that $u$ intertwines with $\ul{\tau}$). More consistent with viewing $P$ as discrete in the previous point, we can view $\cM^B_R(\x,\y;\vec{\rho})$ as a subspace of $\cM^B(\x,\y;\vec{\rho};\vec{\rho})$ from the construction of bimodules~\cite[Section 6.3]{LOT2}. (We will not, in fact, make use of either embedding below.)
\end{itemize}

Given a decorated surface $\Source$, we can also consider maps $\phi\co S\to \Sigma\times[0,1]\times\RR$ respecting the decorations, but which are merely smooth, not $J$-holomorphic. By a \emph{real smooth map} from $\Source$ we mean such a smooth map $\phi$ together with an orientation-reversing, smooth involution $\eta\co S\to S$ so that $\phi\circ\eta=\ul{\tau}\circ\phi$. Call a symmetric almost complex structure $J$ \emph{generic} if the moduli spaces $\cM^B_R(\x,\y;\Source;P)$ are transversely cut out inside the space of all real smooth maps respecting the decorations and the partition $P$, for all choices of decorated Riemann surface $\Source$ and all ordered partitions $P$.

Of course, we can also talk about real maps of other regularity, and in particular real versions of the weighted Sobolev spaces $W^{1,p}_\delta$ used in~\cite[Section 5.2]{LOT1}. Fix a smooth surface $S$ (with boundary and punctures) and a smooth involution $\eta\co S\to S$. We have a space $\mathcal{B}^R$, with objects triples $(j,J,u)$ of a symmetric almost complex structure $j$ on $(S,\eta)$, a symmetric almost complex structure $J$ on $(\Sigma\times[0,1]\times\RR,\tau)$ (as above), and a real $W^{1,p}_\delta$-map $u\co S\to \Sigma\times[0,1]\times\RR$. Dropping the equivariance conditions gives a map $\mathcal{B}^R\to \mathcal{B}$. Over $\mathcal{B}^R$, we have a bundle $\mathcal{E}^R$ of $(0,1)$-forms $\alpha$ on $S$ valued in $u^*T(\Sigma\times[0,1]\times\RR)$, satisfying 
\begin{equation}\label{eq:1-form-invariant}
  \alpha=d\tau\circ \alpha\circ d\eta.
\end{equation}
There is a projection from $\mathcal{B}^R$ to the space of (real) almost complex structures on $\Sigma\times[0,1]\times\RR$; let $\mathcal{B}_J^R$ denote the fiber over $J$ (i.e., the space of tuples $(S,\eta,j,u)$). Similarly, if we hold $j$ and $J$ fixed, we have a subspace we denote $\mathcal{B}_{J,j}^R$. The $\dbar$-operator is a section $\mathcal{B}\to \mathcal{E}$, and its linearization is a bundle map $T\mathcal{B}^R\to \mathcal{E}^R$. More precisely, this linearization is defined canonically when $\dbar(u)=0$. At other points in $\mathcal{B}^R$, it depends on a choice of projection $T\mathcal{E}^R\to \mathcal{E}^R$. We can use $J$ and the standard symplectic form to induce a metric and then such a projection, which gives the formula
\begin{equation}\label{eq:MS-linearization}
  (D_u\dbar)(\xi) = \OneHalf\left(\nabla\xi + J\nabla\xi\circ j\right)-\OneHalf J(\nabla_\xi J)\bdy_J(u)
\end{equation}
\cite[Proposition 3.1.1]{MS04:HolomorphicCurvesSymplecticTopology}, where $\nabla$ is the Levi-Civita connection, and $$\bdy_J(u)=\OneHalf\left(du-J\circ du\circ j\right).$$

The space $\mathcal{E}^R$ is a subspace of the space $\mathcal{E}|_{\mathcal{B^R}}$ where we drop Condition~\ref{eq:1-form-invariant}, and $\mathcal{E}|_{\mathcal{B^R}}$ inherits an involution $\alpha\mapsto d\tau\circ\alpha\circ d\eta$. Similarly, when $u$ is a real curve, the space $T_u\mathcal{B}^R_{J,j}$ inherits an involution $\xi\mapsto d\tau\circ \xi\circ d\eta$. Denote both of these involutions by $\ul{\tau}$.

\begin{lemma}
  For any real map $u$, the operator $D_u\dbar\co T_u\mathcal{B}\to \mathcal{E}$ from Equation~\eqref{eq:MS-linearization} satisfies $D_u\dbar=\ul{\tau}\circ D_u\dbar\circ\ul{\tau}$. In particular, $D_u\dbar$ sends $T\mathcal{B}^R_{J,j}$ to $\mathcal{E}^R$. Moreover, for any real $u$ and symmetric $J$, this induced map $D_u\dbar\co T_u\mathcal{B}^R_{J,j}\to \mathcal{E}^R_{J,j,u}$ is Fredholm.
\end{lemma}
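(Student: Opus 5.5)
The plan has three steps, in order: prove the equivariance identity, deduce the restriction statement, then use a splitting into $\pm1$ eigenspaces for the Fredholm property.

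\textbf{Equivariance.} We check $D_u\dbar=\ul{\tau}\circ D_u\dbar\circ\ul{\tau}$ term by term in Formula~\eqref{eq:MS-linearization}. We use four facts: $\ul{\tau}\circ u=u\circ\eta$; $\eta$ is $j$-antiholomorphic, so $d\eta\circ j=-j\circ d\eta$; $\ul{\tau}$ is $J$-antiholomorphic; and $\ul{\tau}$ is an isometry of the metric induced by $J$ and the standard symplectic form.

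The last fact needs a remark. $\ul{\tau}$ pulls the symplectic form back to its negative, since $\tau$ reverses orientation on $\Sigma$ and $(s,t)\mapsto(1-s,t)$ reverses orientation on the strip. Combined with $J\circ d\ul{\tau}=-d\ul{\tau}\circ J$, this gives $\ul{\tau}^*g_J=g_J$, where $g_J=\omega(\cdot,J\cdot)$. Hence $\ul{\tau}$ preserves the Levi-Civita connection: $d\ul{\tau}(\nabla_X Y)=\nabla_{d\ul{\tau}X}d\ul{\tau}Y$. Differentiating $J\circ d\ul{\tau}=-d\ul{\tau}\circ J$ gives $\nabla_{d\ul{\tau}\xi}J=-d\ul{\tau}(\nabla_\xi J)d\ul{\tau}^{-1}$.

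With these in hand, substitute $\ul{\tau}\xi=d\tau\circ\xi\circ\eta$:
\begin{itemize}
\item In the first term, $\nabla(\ul{\tau}\xi)=d\ul{\tau}\circ\nabla\xi\circ d\eta$. The two sign changes, from $J$ and from $j$, cancel in $J\nabla\xi\circ j$, so the complex-linear part transforms correctly.
\item In the zeroth-order term, $\bdy_J(u)$ satisfies $d\ul{\tau}\circ\bdy_J(u)\circ d\eta=\bdy_J(u)$, since $u$ is real. The sign from $\nabla J$ cancels against the sign from the leading $J$.
\end{itemize}
The second claim, that $D_u\dbar$ sends $T\mathcal{B}^R_{J,j}$ to $\mathcal{E}^R$, follows at once: $T\mathcal{B}^R_{J,j}$ and $\mathcal{E}^R$ are exactly the $+1$-eigenspaces of the involutions $\ul{\tau}$.

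\textbf{Fredholm property.} The standard bordered theory~\cite[Section 5.2]{LOT1} shows that $D_u\dbar\co T_u\mathcal{B}_{J,j}\to\mathcal{E}$ is Fredholm on the weighted Sobolev spaces $W^{1,p}_\delta\to L^p_\delta$. Here it is relevant that $u$ satisfies the usual boundary conditions, since $\ul{\tau}(\alphas\times\{1\}\times\RR)=\betas\times\{0\}\times\RR$, and the usual asymptotics. We then need the following points.
\begin{itemize}
\item The involution $\ul{\tau}$ acts boundedly on both spaces. It does, provided the weights $\delta$ at the punctures are chosen $\eta$-symmetrically, so that $\ul{\tau}$ preserves the weighted norms. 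Since $\ul{\tau}$ is an involution, both spaces then split as direct sums of their $\pm1$-eigenspaces, via the projections $\frac12(1\pm\ul{\tau})$.
\item By the equivariance established above, $D_u\dbar$ commutes with $\ul{\tau}$, so it is block diagonal with respect to these splittings.
\item A block-diagonal operator is Fredholm exactly when each block is. Its kernel is the direct sum of the blocks' kernels, and its cokernel is the direct sum of the blocks' cokernels. Closed range also passes to the blocks, since the $+1$ block is the compression $\frac12(1+\ul{\tau})\,D\,\frac12(1+\ul{\tau})$, whose range is the intersection of the range of $D$ with the $+1$-eigenspace.
\end{itemize}
Therefore the $+1$ block, $T_u\mathcal{B}^R_{J,j}\to\mathcal{E}^R_{J,j,u}$, is Fredholm.

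\textbf{Main obstacle.} I expect the delicate step to be the sign bookkeeping in the equivariance of the zeroth-order term, including confirming that the metric used in Formula~\eqref{eq:MS-linearization} is genuinely $\ul{\tau}$-invariant. The rest of the argument is formal.
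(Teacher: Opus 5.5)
Your proposal is correct and follows essentially the same route as the paper. The paper also gets equivariance of $D_u\dbar$ from $\ul{\tau}$ negating both $\omega$ and $J$, so that it preserves the induced metric, and notes that one could instead check the formula directly, as you do. It also gets the Fredholm property by splitting domain and target into $\pm1$-eigenspaces of $\ul{\tau}$, which $D_u\dbar$ preserves.

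Your remarks on symmetric weights and closed range fill in details the paper leaves implicit. One small caveat: orientation reversal alone does not give $\tau^*\omega_\Sigma=-\omega_\Sigma$. The area form on $\Sigma$ must be chosen $\tau$-anti-invariant (e.g.\ by averaging), which the paper also tacitly assumes.
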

\begin{proof}
  We can check the first statement directly, or observe that this linearization depends only on the Riemannian metrics induced by the symplectic forms and almost complex structures on $S$ and $\Sigma\times[0,1]\times\RR$. Since
  \[
  \omega(v,\ol{J}w)=\omega(J^2v,J^2\ol{J}w)=\omega(-v,Jw)=-\omega(v,Jw),
  \]
  $(\omega,J)$ and $(-\omega,\ol{J})$ induce the same Riemannian metrics, hence the linearization is necessarily $\ul{\tau}$-equivariant.

  The fact that $D_u\dbar$ sends $T_u\mathcal{B}^R_{J,j}$ to $\mathcal{E}^R_{J,j,u}$ follows: $T_u\mathcal{B}^R_{J,j}$ and $\mathcal{E}^R_{J,j,u}$ are the $1$-eigenspaces of $\ul{\tau}$ on $T_u\mathcal{B}_{J,j}$ and $\mathcal{E}_{J,j,u}$, respectively, and since $D_u\dbar$ intertwines the operators $\ul{\tau}$, it respects the $\ul{\tau}$ eigenspaces.

  Finally, to see this induced map is Fredholm, the kernel of $D_u\dbar\co T_u\mathcal{B}^R_{J,j}\to \mathcal{E}^R_{J,j,u}$ is a subspace of the kernel of $D_u\dbar \co T_u\mathcal{B}_{J,j}\to \mathcal{E}_{J,j,u}$, and since the latter map is Fredholm, the kernel is finite-dimensional. For the cokernel, since $T_u\mathcal{B}_{J,j}$ is the sum of the $-1$ and $1$-eigenspaces of $\ul{\tau}$, $D_u\dbar$ preserves these eigenspaces, and the cokernel of $D_u\dbar$ is finite-dimensional, the induced map between $1$-eigenspaces must also have finite-dimensional cokernel.
\end{proof}

\begin{proposition}\label{prop:transversality}
  For any real bordered Heegaard diagram $\HD$, there exist generic symmetric almost complex structures.
\end{proposition}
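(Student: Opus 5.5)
We follow the standard Sard--Smale argument from~\cite[Proposition 5.6]{LOT1} and~\cite[Section 3.1]{MS04:HolomorphicCurvesSymplecticTopology}, carried out equivariantly on the real Banach manifolds $\mathcal{B}^R$ and bundle $\mathcal{E}^R$ introduced above; Guth--Manolescu's argument for the closed case~\cite{GM:real-HF} is the model. Fix the decorated source $\Source$ with involution $\eta$ and partition $P$. Let $\mathcal{J}^R$ be the space of symmetric almost complex structures satisfying the bordered conditions; it is a Banach manifold (in a $C^\ell$ or Floer $C^\epsilon$ topology) whose tangent space at $J$ consists of sections $Y$ of $\mathrm{End}(T(\Sigma\times[0,1]\times\RR))$ satisfying the usual bordered constraints together with $Y\circ d\ul{\tau}=-d\ul{\tau}\circ Y$. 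The universal moduli space $\cM^{\mathrm{univ}}_R=\{(j,J,u): \dbar_{j,J}u=0\}\subset\mathcal{B}^R$ is the zero set of the section $\dbar$ of $\mathcal{E}^R$. We will show that this section is transverse to zero. Its zero set is then a Banach manifold, and the projection to $\mathcal{J}^R$ is Fredholm, by the preceding lemma. Sard--Smale then gives a comeager set of regular values. Intersecting over the countably many choices of $(\Source,P)$ and homology classes $B$ produces the desired generic $J$; a Taubes-type argument handles the passage from $C^\ell$ to $C^\infty$.

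The key step is surjectivity of the universal linearization
\[
(\xi,Y)\mapsto D_u\dbar(\xi)+\OneHalf Y\circ du\circ j
\]
onto $\mathcal{E}^R_{j,J,u}$. Suppose $\zeta\in\mathcal{E}^R$, in the $L^q$ dual, annihilates the image. Then $\zeta$ lies in the cokernel of $D_u\dbar$ restricted to the real subspaces, so it is smooth and satisfies a unique continuation property. We need to produce, for any point $q$ of $S$ where $\zeta(q)\neq 0$, a symmetric $Y$ with $\langle Y\circ du\circ j,\zeta\rangle\neq 0$.

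To do this, we choose an \emph{injective point} $q$: one where $du$ is injective, $u^{-1}(u(q))=\{q\}$, and $q$ does not lie in the fixed locus of $\eta$. Near the $\pi_\Sigma$-component, or near $u(q)$ in the relevant factor directions allowed by the bordered conditions on $J$, there is then a non-symmetric $Y_0$ supported near $u(q)$ pairing nontrivially with $\zeta$. Since $u(q)\neq\ul{\tau}(u(q))$, we can take the neighborhood disjoint from its $\ul{\tau}$-image. Set
\[
Y=Y_0-d\ul{\tau}\circ Y_0\circ d\ul{\tau}.
\]
Because $\zeta$ and $du$ are real, the pairing of $\zeta$ with the second summand equals the pairing with the first. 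So the pairing with $Y$ is twice the nonzero pairing with $Y_0$. We are working over $\RR$ for the analysis, so this is still nonzero. The injective points are open and dense away from a discrete set and the fixed curve of $\eta$, so unique continuation forces $\zeta=0$.

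The main obstacle is guaranteeing enough injective points off the fixed locus. One issue is that multiply covered or $\ul{\tau}$-doubled components could have $u(q)\in u(\eta(S\setminus\{q\}))$ everywhere. The other is that the bordered constraints on $J$ (splitting near $\bdy\Sigma$ and near the $\alpha,\beta$ cylinders, and $\pi_{\mathbb{D}}$ holomorphic) restrict which $Y$ are allowed. For the first issue, we would argue as in~\cite[Section 5.2]{LOT1}. Bordered curves are nonconstant on each component and are branched covers of $[0,1]\times\RR$ of bounded degree, which rules out everywhere-non-injective components except trivial strips; those are handled separately, as in~\cite{LOT1}, or are automatically regular. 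A component $S_0$ with $\eta(S_0)\neq S_0$ and $u(S_0)=\ul{\tau}u(S_0)$ would be a component equal to its own real image, which is impossible unless it is a trivial strip, since $\ul{\tau}$ reverses the $s$-coordinate. For the second issue, we perturb only in the $\Sigma$-direction at points projecting to the interior of $\Sigma$ away from the curves, exactly as in~\cite{LOT1}, and check that the symmetrization preserves the allowed class of perturbations.
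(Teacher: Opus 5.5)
Your central mechanism is the same as the paper's. You symmetrize a non-symmetric variation, and because $\zeta$ is real and $D_u\dbar$ is $\ul{\tau}$-equivariant, the pairing doubles. Your sign $Y_0-d\ul{\tau}\circ Y_0\circ d\ul{\tau}$ is the right one for the tangent space to symmetric $J$.

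The gap is in the local step you substitute for the nonequivariant theorem. Admissible $J$ are $\RR$-invariant and make $\pi_D$ holomorphic. So every admissible variation $Y$ is $\RR$-invariant and satisfies $d\pi_D\circ Y=0$, and hence $Y\circ du\circ j$ takes values in $u^*T\Sigma$. Your claim that at any injective $q$ with $\zeta(q)\neq 0$ some admissible $Y_0$ pairs nontrivially with $\zeta$ therefore fails whenever $\zeta(q)$ has no $T\Sigma$-part.

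Variations of $J$, together with unique continuation, only kill the $T\Sigma$-part of $\zeta$. (That part satisfies its own adjoint equation, since $D_u\dbar$ sends vertical vector fields to vertical forms.) What remains lies in the cokernel of the linearized problem for the branched cover $\pi_D\circ u$ of the strip. That problem becomes surjective only once you also vary the complex structure $j$ on $S$: already for annular sources, a branched cover of the strip with the given corners exists only over a codimension-one set of conformal structures. Your linearization $(\xi,Y)\mapsto D_u\dbar(\xi)+\OneHalf Y\circ du\circ j$ omits these variations.

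A related point: since $Y$ must be $\RR$-invariant, the injectivity you need is that $u^{-1}((p,s)\times\RR)$ is a single point. The condition $u^{-1}(u(q))=\{q\}$ is not enough.

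The paper avoids all of this by not reopening the local argument. It applies the nonequivariant result (\cite[Proposition 5.6]{LOT1}, via \cite[Proposition 3.7]{Lipshitz06:CylindricalHF}) to $u$ regarded as an ordinary curve. That result supplies a vector field $\xi$, an admissible variation of $J$, or a variation of $j$ pairing nontrivially with $\zeta$. The paper then symmetrizes whichever one occurs: $\xi\mapsto \xi+d\tau\circ\xi\circ d\eta$, with the variations of $J$ and $j$ projected to the tangent spaces of symmetric structures.

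The doubling identity holds regardless of supports. So your requirements that $q$ avoid the fixed set of $\eta$, and that the support avoid its $\ul{\tau}$-image, are unnecessary. So is your discussion of components with $\eta(S_0)\neq S_0$. Your reason given there, that $\ul{\tau}$ reverses $s$, does not actually exclude anything: real components routinely have $\ul{\tau}$-invariant images. Two distinct components with the same image are ruled out instead by the asymptotics, since each point of $\x$ is the limit of exactly one $-\infty$ puncture.
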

\begin{proof}
  This is a combination of the arguments in the bordered case~\cite[Proposition 5.6]{LOT1} and the closed real case~\cite[Proposition 2.2]{GM:real-HF}. As usual, the main step is to prove that for any $(j,J,u)\in \mathcal{B}^R$ with $\dbar_{J,j}(u)=0$, the linearized map $D_u\dbar\co T_{J,j,u}\mathcal{B}^R\to \mathcal{E}^R_{J,j,u}$ is surjective. The proof proceeds one connected component of $S$ at a time. For trivial strips, the statement is clear. So, let $S_0$ be a connected component which is not a trivial strip.  
  Choose a point $(p,s)\in\Sigma\times[0,1]$ so that $u^{-1}((p,s)\times\RR)$ consists of a single point on $S_0$, and that point is a regular point for $\pi_\Sigma\circ u$ and $\pi_D\circ u$ (cf.~\cite[Lemma 3.3]{Lipshitz06:CylindricalHF}). As in the usual proof of transversality for cylindrical Heegaard Floer homology~\cite[Proposition 3.7]{Lipshitz06:CylindricalHF}, suppose some nontrivial $\zeta$ in $(\mathcal{E}^R_{J,j,u})^*$ vanishes on the image of $D_u\dbar$. (In~\cite[Proposition 3.7]{Lipshitz06:CylindricalHF}, $\zeta$ is denoted $\eta$, but we are using $\eta$ above for the involution on $S$.) Since we can achieve transversality for the $\dbar$-operator via a nonequivariant perturbation, there is either:
  \begin{itemize}
  \item A vector field $\xi$ along $u$ so that $\int_S \langle \zeta, D_u\xi\rangle \neq 0$ or
  \item A perturbation $Y_s$ of $J$ so that $\int_S \langle \zeta, Y_s\circ du\circ j\rangle\neq 0$ or
  \item A perturbation $Y$ of $j$ so that $\int_S \langle \zeta, J\circ du\circ Y\rangle\neq 0$.
  \end{itemize}
  (As in~\cite[Proof of Proposition 3.7]{Lipshitz06:CylindricalHF}, we are thinking of $\zeta$ as a $(0,1)$-form on $S$ valued in $u^*TX$, so the pairing with the image of $D_u\dbar$ is the integral of the dot product.)
  Of course, these perturbations are not symmetric.

  Suppose we are in the first case. Replace $\xi$ by $\xi+d\tau\circ \xi\circ d\eta$. Then
  \begin{align*}
  \int_S \langle \zeta, D_u\xi+d\tau\circ\xi\circ d\eta\rangle &= \int_S\langle \zeta, D_u\xi\rangle + \langle \zeta,D_u d\tau\circ\xi\circ d\eta\rangle\\ &=\int_S \langle \zeta, D_u\xi\rangle + \langle d\tau\circ \zeta\circ d\eta,d\tau\circ(D_u\xi)\circ d\eta\rangle=2\int_S \langle \zeta, D_u\xi\rangle.
  \end{align*}
  This contradicts the fact that $\zeta$ annihilates the image of $D_u$. The second case is similar, replacing $Y_s$ by $Y_s+d\tau\circ Y_s\circ d\tau$, and the third is similar, but replacing $Y$ by $d\eta\circ Y\circ d\eta$. The result follows.
\end{proof}

For computations, it is typically more convenient to work with a split complex structure, and instead perturb the $\alpha$- and $\beta$-curves. Following Oh~\cite{Oh96:perturb-bdy}, Ozsv\'ath-Szab\'o~\cite[Proposition 3.9]{OS04:HolomorphicDisks} gave conditions under which such perturbations are sufficient in the closed case (see also~\cite[Lemma 3.10]{Lipshitz06:CylindricalHF}). Here is a formulation in the real bordered case:
\begin{proposition}\label{prop:bdy-injectivity}
  Fix a real bordered Heegaard diagram $\HD$, a real domain $B\in\pi_2^R(\x,\y)$, and a sequence of Reeb chords $\vec\rho$. Suppose that, for some split complex structure, every holomorphic representative $u\co S\to \Sigma\times[0,1]\times\RR$ of $B$ has the property that for each component $S_0$ of $S$ which is not a trivial strip, there is an open subset $U$ of some $\alpha$-curve in $\Sigma$ so that 
  \[
  (\pi_\Sigma\circ u)\co \bigl[(\bdy S_0)\cap (\pi_\Sigma\circ u)^{-1}(U)\bigr]\to U
  \]
  is a diffeomorphism (equivalently, bijection). Then for a small, generic perturbation of $\alphas$, and corresponding perturbation of $\betas$, the real moduli space $\cM^B_R(\x,\y;\vec\rho)$ is transversely cut out.

  In particular, if $S$ has either a single nontrivial component or two nontrivial components exchanged by $\eta$, and there is an arc in $\alphas$ so that the coefficient of $B$ on one side is $0$ and on the other side is $1$, then this hypothesis holds and, for any generic perturbation of $\alphas$, $\cM^B_R(\x,\y;\vec\rho)$ is transversely cut out.
\end{proposition}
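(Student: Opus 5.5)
We adapt the Oh/Ozsváth--Szabó boundary-injectivity argument to the real setting. Perturbing $\alphas$ by a Hamiltonian isotopy and setting the new $\betas$ to be $\tau$ of the perturbed $\alphas$ keeps the diagram real. We then show that the set of perturbations for which the linearized operator on the real subspace $T_u\mathcal{B}^R\to\mathcal{E}^R$ is surjective is comeager. The structure parallels the proof of Proposition~\ref{prop:transversality}: work one non-trivial component at a time, and symmetrize any non-equivariant perturbation that detects a cokernel element.

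\emph{Step 1: a universal moduli space.} Form the space of pairs $(\phi,u)$, where $\phi$ ranges over small Hamiltonian perturbations of $\alphas$ (with $\betas$ replaced by $\tau(\phi(\alphas))$) and $u$ is a real holomorphic map for the fixed split complex structure. It suffices to prove that the linearization, including the variation of $\phi$, is surjective onto $\mathcal{E}^R$ at every real holomorphic $u$ in class $B$ with chords $\vec\rho$. Sard--Smale then gives a comeager set of perturbations.

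\emph{Step 2: the cokernel argument.} Suppose $\zeta\in(\mathcal{E}^R)^*$ annihilates the image of the linearization. Since $\zeta$ also annihilates the image of $D_u\dbar$ on the real subspace, the equivariance of $D_u\dbar$ (the preceding lemma) lets us view $\zeta$ as an element of the full dual that is $\ul{\tau}$-invariant and annihilates all of $D_u\dbar(T_u\mathcal{B})$. Indeed, it pairs to zero with the $-1$-eigenspace by invariance, and with the $+1$-eigenspace by hypothesis.

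Since the complex structure is split, $\zeta$ satisfies the adjoint equation, and unique continuation applies. Fix a non-trivial component $S_0$ and the open set $U$ of the hypothesis. Varying the $\alpha$-curve over a small bump supported in $U$ produces a boundary variation of $u$ along $\bdy S_0\cap(\pi_\Sigma\circ u)^{-1}(U)$. Injectivity makes this a single arc, so the bump can be chosen to pair nontrivially with the boundary value of $\zeta$ there, as in~\cite[Proposition 3.9]{OS04:HolomorphicDisks} and~\cite[Lemma 3.10]{Lipshitz06:CylindricalHF}.

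The induced perturbation of $\betas$ is supported in $\tau(U)$, and it varies $u$ along the image of this arc under $\eta$. By $\ul{\tau}$-invariance of $\zeta$, that contribution equals the first one, so the two add rather than cancel; this is the same doubling computation as in Proposition~\ref{prop:transversality}. The one exception is when $\tau(U)$ meets $\eta(\bdy S_0)$ in a way that interferes, but since $U\subset\alphas$ and $\tau(U)\subset\betas$, these are disjoint boundary conditions, so there is no interference. Hence $\zeta=0$ on $S_0$, and hence on $\eta(S_0)$. Trivial strips are handled directly.

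\emph{Step 3: the "in particular" clause.} Suppose $S$ has one non-trivial component, or two components exchanged by $\eta$, and $B$ has coefficients $0$ and $1$ on the two sides of some $\alpha$-arc. Then over the interior of the part of that arc where the multiplicity jumps by one, exactly one boundary point of the non-trivial part of $S$ maps to each point. In the two-component case, exactly one of the two components carries that arc. This gives the bijection onto an open $U$, so the hypothesis holds for that component; for the other component, take $\tau$ of the corresponding $\beta$-arc together with equivariance.

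\emph{Main obstacle.} The hardest step is Step 2: ensuring that the symmetrized perturbation does not cancel the nonzero pairing. I would handle this with the $\ul{\tau}$-invariance of $\zeta$ as above. I would also check carefully that the variations respect the bordered asymptotic conditions at the Reeb chords, by taking the support of $U$ away from $\bdy\Sigma$.
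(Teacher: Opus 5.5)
Your proposal matches the paper, which only says this is a simple modification of the unreal boundary-injectivity argument (Ozsv\'ath--Szab\'o, Lipshitz) along the lines of Proposition~\ref{prop:transversality}; your two key steps, extending $\zeta$ to a $\ul{\tau}$-invariant cokernel element of the full operator and noting that the $\tau$-induced $\beta$-perturbation on $\tau(U)$ doubles the pairing rather than cancelling it, are exactly that modification. One small correction to Step~3: in the two-component case, $\eta(S_0)$ only acquires the $\beta$-arc $\tau(U)$, not an $\alpha$-arc, so the literal hypothesis does not hold for it; what actually handles that component is your Step~2 observation that $\zeta$ on $\eta(S_0)$ is determined by $\zeta$ on $S_0$ via $\ul{\tau}$.
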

\begin{proof}
  The proof is a simple modification of the unreal case, along the lines of Proposition~\ref{prop:transversality}, and is left to the reader.
\end{proof}
Proposition~\ref{prop:bdy-injectivity} will be used every time we perform explicit computations of moduli spaces, though we may not always mention it.

\subsection{Expected dimensions}

Recall that there are two versions of the index formula in bordered Heegaard Floer homology: a \emph{source-dependent index formula},
\[
g-\chi(S)+2e(B)+|P|
\]
\cite[Proposition 5.8]{LOT1}
that does not assume curves are embedded and rather depends on the topology of the source $S$, and an \emph{embedded index formula} 
\[
e(B)+n_\x(B)+n_\y(B)+|\vec{\rho}|+\iota(\vec{\rho})
\]
\cite[Definition 5.68]{LOT1}
for the moduli spaces of embedded curves. Here, $P$ is the partition of the $e\infty$ punctures of $S$ induced by the $\RR$-coordinate of $\Sigma\times[0,1]\times\RR$, so $|P|$ is the number of $e\infty$ punctures minus the number of height constraints. Since we are considering type $D$ structures,
$|P|$ agrees with the number of Reeb chords $|\vec{\rho}|$.
(Having two notations for the same quantity is an artifact of how the
properties of these moduli spaces were laid out.)

Let $\ind^R$ denote the expected dimension of the real moduli spaces. Guth and Manolescu show that, for closed Heegaard diagrams, this expected dimension at a real domain $B\in\pi_2^R(\x,\y)$ is given by 
\begin{align}
  \ind^R(B) = \frac{1}{2}\ind(B)+\frac{1}{4}\bigl(\sigma(\alphas,\y)-\sigma(\alphas,\x)\bigr)
\end{align}
where $\ind(B)=e(B)+n_\x(B)+n_\y(B)$ is the expected dimension of the ordinary Heegaard Floer moduli space and $\sigma(\alphas,\y)$ is the sum over the points $y_i$ in $\y$ that are fixed by the involution of a local contribution that depends on the cyclic order of $\alphas$, $\betas$, and the fixed set at $y_i$:
\[
\begin{tikzpicture}[alt={Local configuration at a fixed intersection point with sigma equals plus one}]
\begin{scope}
    \clip (-1.5,0) rectangle (1.5,1.5);
    \filldraw[black!10] (0,0) circle(1);
\end{scope}
\begin{scope}
    \clip (0,0) circle (1);
    \draw[red,thick] (-1,-1) to (1,1);
    \draw[blue,thick] (1,-1) to (-1,1);
\end{scope}
\draw[darkgreen,thick] (-1,0) to (1,0);
\node at (2,0) (fixlabel) {\textcolor{darkgreen}{Fixed set}};
\node at (-1,-.8) (alphalabel) {\textcolor{red}{$\alphas$}};
\node at (1,-.8) (betalabel) {\textcolor{blue}{$\betas$}};
\node at (0,-1.3) (sigmalabel) {$\sigma=+1$};
\end{tikzpicture}\qquad\qquad
\begin{tikzpicture}[alt={Local configuration at a fixed intersection point with sigma equals minus one}]
\begin{scope}
    \clip (-1.5,0) rectangle (1.5,1.5);
    \filldraw[black!10] (0,0) circle(1);
\end{scope}
\begin{scope}
    \clip (0,0) circle (1);
    \draw[red,thick] (1,-1) to (-1,1);
    \draw[blue,thick] (-1,-1) to (1,1);
\end{scope}
\draw[darkgreen,thick] (-1,0) to (1,0);
\node at (2,0) (fixlabel) {\textcolor{darkgreen}{Fixed set}};
\node at (1,-.8) (alphalabel) {\textcolor{red}{$\alphas$}};
\node at (-1,-.8) (betalabel) {\textcolor{blue}{$\betas$}};
\node at (0,-1.3) (sigmalabel) {$\sigma=-1$};
\end{tikzpicture}
\]
\cite[Section 4.2]{GM:real-HF}.
The analogous results in the bordered case are:
\begin{proposition}\label{prop:real-index}
  The expected dimension of the real moduli space with a fixed source $S$ and the expected dimension for embedded curves are given by
  \begin{align}
    \ind^R(B,S,P)&=\OneHalf\bigl(g-\chi(S)+2e(B)+|P|\bigr) + \OneQuarter\bigl(\sigma(\alphas,\y)-\sigma(\alphas,\x)\bigr)\label{eq:index-source}\\
    \ind^R(B,\vec{\rho})&=\OneHalf\bigl(e(B)+n_\x(B)+n_\y(B)\bigr)+|\vec{\rho}|+\iota(\vec{\rho})+\OneQuarter\bigl(\sigma(\alphas,\y)-\sigma(\alphas,\x)\bigr)\label{eq:index-embedded}
  \end{align}
  respectively. Here, $g$ is the number of $-\infty$ corners, $\vec{\rho}$ is the sequence of $\alpha$-Reeb chords (as in the notation $\cM^B_R(\x,\y;\vec{\rho})$ from Section~\ref{sec:def-of-moduli}), and $|P|=2|\vec{\rho}|$ is the number of punctures of $P$ not mapped to $\pm\infty$.
  
  In particular, the Euler characteristic of a real embedded curve is given by the same formula as in the ordinary case,
  \begin{equation}\label{eq:embedded-index}
  \chi_{\mathrm{emb}}(B,\vec{\rho})=g+e(B)-n_\x(B)-n_\y(B)-\iota(\vec{\rho}).
  \end{equation}
\end{proposition}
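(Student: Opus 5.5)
The plan is to compare the real linearized operator with the ordinary one. By the lemma preceding Proposition~\ref{prop:transversality}, for a real map $u$ the operator $D_u\dbar$ commutes with the involution $\ul{\tau}$. It therefore splits as $D^+\oplus D^-$ on the $(+1)$- and $(-1)$-eigenspaces of $\ul{\tau}$ (including the variations of $j$, which split the same way). The real moduli space has expected dimension $\ind^R=\ind(D^+)$, while
\[
\ind(D^+)+\ind(D^-)=\ind(B,S,P)=g-\chi(S)+2e(B)+|P|
\]
by the source-dependent index formula of~\cite[Proposition 5.8]{LOT1}. Hence
\[
\ind^R=\OneHalf\ind(B,S,P)+\OneHalf\bigl(\ind(D^+)-\ind(D^-)\bigr),
\]
and the task reduces to showing that $\ind(D^+)-\ind(D^-)=\OneHalf\bigl(\sigma(\alphas,\y)-\sigma(\alphas,\x)\bigr)$.

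I would compute this difference by localizing. Multiplication by a suitable complex unit (or, equivalently, passing to the quotient by $\eta$) intertwines $D^+$ and $D^-$ away from the fixed locus of $\ul{\tau}$ and the corners. Concretely, $D^-$ is conjugate to $D^+$ with a twisted boundary condition, and the difference of indices is a spectral-flow or Maslov-type quantity concentrated near the strip-like ends. There are three kinds of ends to analyze.
\begin{itemize}
\item The $e\infty$ punctures are exchanged in pairs by $\eta$, since a chord $\rho$ on $\bdy_L\HD$ is paired with $\tau(\rho)$ on $\bdy_R\HD$. A symmetric pair contributes equally to $D^+$ and $D^-$, so these punctures give no correction; this is why $|P|$ simply gets halved.
\item The $\pm\infty$ corners at points of $\x$ or $\y$ not on the fixed set also come in exchanged pairs, and likewise contribute nothing.
\item The corners at points fixed by $\tau$ carry the entire asymmetry.
\end{itemize}
For the last case I would use a gluing/excision argument. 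I would glue on standard local models at the fixed corners and compare with a closed-case configuration. The local contribution is then exactly the one computed by Guth--Manolescu~\cite[Section 4.2]{GM:real-HF}, namely $\OneHalf\sigma$ at $\y$ minus $\OneHalf\sigma$ at $\x$. This gives Formula~\eqref{eq:index-source}. The main obstacle is this step: checking carefully that the bordered boundary (the cylindrical ends at $\bdy\Sigma$ and the Reeb chords) does not introduce an extra correction. I expect it does not, because $\tau$ exchanges $\bdy_L$ and $\bdy_R$ freely, so those ends never meet the fixed set.

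For Formula~\eqref{eq:index-embedded} and the Euler characteristic, I would view a real embedded curve as an ordinary embedded curve with chord sequence $\vec{\rho}$ on the left boundary and $\tau(\vec{\rho})$ on the right, as in the bimodule moduli spaces of~\cite[Section 6.3]{LOT2}. The ordinary embedded index formula then applies verbatim. It gives $\chi_{\mathrm{emb}}$ by the usual argument: an embedded curve satisfies the same Euler characteristic relation as in~\cite[Proposition 5.69]{LOT1}, the two boundaries do not interact, and the contributions from $\vec{\rho}$ and $\tau(\vec{\rho})$ add in the paper's normalization of $\iota(\vec{\rho})$. This yields Formula~\eqref{eq:embedded-index}. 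Finally, I would substitute this value of $\chi(S)$ into Formula~\eqref{eq:index-source}, with $|P|=2|\vec{\rho}|$, and simplify to obtain Formula~\eqref{eq:index-embedded}. This last substitution is also where I would double-check the normalization of $\iota$ for the paired chords, since the factor of $\OneHalf$ must cancel correctly against the doubled chord sequence.
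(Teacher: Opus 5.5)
For Formula~\eqref{eq:index-source} you take a genuinely different route from the paper. The paper never splits $D_u\dbar$ into eigenspaces. It observes that equivariant surgery along a pair of circles (resp.\ arcs) exchanged by $\tau$ raises $\ind$ by $4$ (resp.\ $2$) and $\ind^R$ by exactly half that, so $\ind^R-\OneHalf\ind$ is a surgery invariant. It then checks the base cases: a reflected bigon (quoting Guth--Manolescu), a pair of exchanged components, and spheres with trivial bundle. You instead isolate the correction as $\OneHalf(\ind D^+-\ind D^-)$, localize it at the fixed corners, and calibrate the local term against Guth--Manolescu's closed formula. Your version explains why only fixed corners matter and why Reeb chords and non-fixed corners are simply halved. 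The paper's version needs no asymptotic analysis beyond the bigon, at the cost of needing that every $(S,\eta)$ can be cut down to those base cases.

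State the localization on $S/\eta$ rather than via a constant unit. Multiplying by $J$ turns $D^-$ into the invariant problem for $(E,JF)$. Rotating $JF$ back equivariantly ($e^{i\theta}$ on $\alpha$-arcs forces $e^{-i\theta}$ on $\beta$-arcs) then degenerates at the non-fixed corners as well. On the quotient, by contrast, $D^\pm$ differ only in the boundary condition $E^{\tau}$ versus $E^{-\tau}=JE^{\tau}$ along the image of $\mathrm{Fix}(\eta)$. The homotopy $e^{i\theta}E^{\tau}$, $0\le\theta\le\pi/2$, stays Fredholm except at fixed corners. This also disposes of interior fixed circles (as in boundary-free annuli), a case your list of ends omits.

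Your treatment of Formulas~\eqref{eq:embedded-index} and~\eqref{eq:index-embedded} matches the paper's: use the transversality-free Euler characteristic computation, then substitute with $|P|=2|\vec\rho|$. Your worry about $\iota$ is justified. The substitution yields $|\vec\rho|+\iota(\vec\rho)$ only if the $\iota$-term in $\chi_{\mathrm{emb}}$ is read as the two-sided quantity, namely the contributions of $\vec\rho$ and $\tau(\vec\rho)$ together, which is twice the one-sided $\iota(\vec\rho)$.
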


\begin{proof}
  The computation of the embedded Euler characteristic in our previous paper~\cite[Proposition 5.69]{LOT1} did not make any assumptions on transversality. So, since real holomorphic curves are special cases of unreal holomorphic curves, the embedded Euler characteristic is given by Formula~\eqref{eq:embedded-index}. Thus, to prove Proposition~\ref{prop:real-index}, it suffices to prove Formula~\eqref{eq:index-source}. (Recall that Formula~\eqref{eq:index-source} is a version of Rasmussen's index formula~\cite[Theorem 9.1]{Rasmussen03:Knots}, but in the real bordered setting.)

  Write $X=\Sigma\times[0,1]\times\RR$ and $L=(\alphas\times\{1\}\times\RR)\cup(\betas\times\{0\}\times\RR)$. Fix a real smooth map $u\co S\to \Sigma\times[0,1]\times\RR$ in the homology class $B$. We have a complex vector bundle $u^*TX$ over $S$, and real subbundle $u^*TL$ over $\bdy S$. The involution $\eta\co S\to S$ is covered by an involution $\tau\co (u^*TX,u^*TL)\to (u^*TX,u^*TL)$; on $u^*TX$ this is complex anti-linear.
  The real index of $D_u\dbar$ is the same as the index of the $\dbar$-operator for real sections of $(u^*TX,u^*TL)$. In particular, the only role of $u$ and $B$ is to determine this complex bundle and the real subbundles. So, given a complex 2-dimensional vector bundle $E$ over $S$, totally real subbundles $F$ over $\bdy S$, and involutions $\tau$ of these covering $\eta$, write $\ind^R(S,E,F,\tau)$ for the index of the $\dbar$-operator on real sections.

  We prove by induction on the complexity of $S$ that 
  \begin{equation}\label{eq:real-index-proof}
    \ind^R(S,E,F,\tau) = \OneHalf\ind(S,E,F) + \OneQuarter\bigl(\sigma(\alphas,\y)-\sigma(\alphas,\x)\bigr).
  \end{equation}
  If $S$ consists of a single bigon (and $\tau$ is reflection), then this is a special case of Guth-Manolescu's computation~\cite[Section 2.3]{GM:real-HF}. If $S$ consists of two components exchanged by $\tau$, then it is clear that the real index is half the ordinary one (and the $\sigma$ terms vanish). If $S$ is a 2-sphere, $\tau$ fixes a circle, and the bundle $E$ over $S$ is trivial, then $\ind(S,E,F)=4$ (the 4-dimensional space of constant maps $S^2\to \CC^2$) and $\ind^R(S,E,F)=2$ (the conjugation-invariant ones, i.e., the constant maps to $\RR^2$). The same holds if $\tau$ is the fixed point-free involution of the 2-sphere, and $E$ is trivial. In particular, Formula~\eqref{eq:real-index-proof} holds in both of these cases.

  We can reduce any $S$ to a union of surfaces of these four types by cutting along pairs of circles (respectively arcs) exchanged by $\tau$. More precisely, given a pair of disjoint circles $Z,Z'\subset S$ with $\tau(Z)=Z'$, fix a trivialization of $E$ over $Z\cup Z'$. Surgering $S$ along $Z\cup Z'$ gives a surface $S'$. Using the trivializations, $E$ induces a bundle $E'$ over $S'$. This operation increases $\ind(S,E,F)$ by $4$ and $\ind^R(S,E,F,\tau)$ by $2$: each surgery is equivalent to splicing $(S,E,F)$ with a trivial vector bundle over a 2-sphere (see, for instance, the nice explanation in~\cite[Section 1.8.2]{EGH00:IntroductionSFT}). Similarly, given a pair of disjoint arcs $A,A'$ exchanged by $\tau$, with boundary on $\bdy S$, after fixing a trivialization of $E$ along $A$ extending a trivialization of $F$ on $\bdy A$, surgering along $A\cup A'$ gives a new surface $S'$ and vector bundles $E',F'$ and $\ind^R(S',E',F',\tau')=\ind^R(S,E,F,\tau)+1$ while $\ind(S',E',F')=\ind(S,E,F)+2$. (This is splicing with a pair of disks; see, for instance,~\cite[Section 8.1]{EtnyreNgSabloff02:coherent-ors}.) In particular, both sides of Equation~\eqref{eq:real-index-proof} change in the same way under these operations. Since we can use them to reduce to the base cases considered above, the proposition follows.
\end{proof}

\subsection{Codimension-1 degenerations}

The following result about the boundary of the 2-dimensional real moduli spaces is essentially equivalent to the fact that $\CFDRa(\HD,\tau)$ satisfies the type $D$ structure relation:
\begin{proposition}\label{prop:codim-1}
  Fix a real bordered Heegaard diagram $(\HD,\tau)$ with $\bdy_L\HD=\PMC$, and a generic symmetric almost complex structure $J$.
  Given $\x,\y\in\Gen_R(\HD,\tau)$, $B\in\pi_2(\x,\y)$, and a sequence of Reeb chords $\vec{\rho}$ (compatible with $B$) so that $\ind^R(B,\vec{\rho})=2$, the sum of the following numbers is even: 
  \begin{enumerate}
  \item\label{item:2-story} The number of real 2-story holomorphic curve ends, i.e., elements of 
  \[
  \bigcup_{\substack{B_1*B_2=B\\(\vec{\rho}_1,\vec{\rho}_2)=\vec{\rho}\\\ind_R(B_i,\vec{\rho}_i)=1}}\cM^{B_1}_R(\x,\w;\vec{\rho}_1)\times\cM^{B_2}_R(\w,\y;\vec{\rho}_2).
  \]
  \item\label{item:join} The number of join curve ends, i.e., elements of
  \[
  \bigcup_{i}\bigcup_{\rho_i=\rho'\rho''}\cM^B_R(\x,\y;\rho_1,\dots,\rho_{i-1},\rho'',\rho',\rho_{i+1},\dots,\rho_n),
  \]
  where $\vec{\rho}=(\rho_1,\dots,\rho_n)$.
  \item\label{item:collision} The number of collisions of levels, i.e., elements of
  \[
  \bigcup_{i}\cM^B_R(\x,\y;\rho_1,\dots,\rho_{i-1},\rho_i\rho_{i+1},\rho_{i+2},\dots,\rho_n).
  \]
  Here, $\rho_i\rho_{i+1}$ denotes the concatenation of the chords if they are composable, and the two-element set $\{\rho_i,\rho_{i+1}\}$ otherwise.
  \end{enumerate}
\end{proposition}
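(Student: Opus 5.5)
Following the unreal case~\cite[Theorem 5.61]{LOT1}, I would compactify the $1$-dimensional space $\cM^B_R(\x,\y;\vec{\rho})/\RR$ and identify its ends. The compactified space is a compact $1$-manifold with boundary, so its boundary points come in pairs. It then suffices to show that every end is of type~\ref{item:2-story}, \ref{item:join}, or~\ref{item:collision}, or else is an end that cancels in pairs or cannot occur, and conversely that each element listed there is glued to exactly one end.

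\emph{Step 1: compactness.} A real curve is in particular an ordinary bordered holomorphic curve. So a sequence in $\cM^B_R(\x,\y;\vec{\rho})$ has a subsequence converging to a holomorphic building as in~\cite[Section 5.4]{LOT1}. Since $\ul{\tau}$ is $J$-anti-holomorphic and the involutions $\eta$ converge, the limit building is itself $\ul{\tau}$-invariant. In particular each story is real. Also, each east-infinity level (curve at $e\infty$) is paired with its image under $\ul{\tau}$: the levels at $\bdy_L$ are exchanged with those at $\bdy_R$.

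\emph{Step 2: codimension count.} I would use the real index formula, Proposition~\ref{prop:real-index}, which is half the ordinary index plus the $\sigma$-correction. With it I would redo the dimension count of~\cite[Proposition 5.43]{LOT1} for real buildings, showing that in an $\ind^R=2$ family only the following codimension-$1$ degenerations occur:
\begin{itemize}
\item two-story splittings into real stories of $\ind^R=1$;
\item a symmetric pair of join curves at $e\infty$, one at each boundary. Since $P$ is discrete, these are not height-constrained against each other; they splits $\rho_i$ as $\rho'\rho''$.
\item a symmetric pair of collisions of two consecutive levels. These produce either a split curve (if composable) or the unordered pair $\{\rho_i,\rho_{i+1}\}$. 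This is where partitions with a two-element part arise.
\item boundary degenerations and sphere bubbles at $e\infty$ or in the interior.
\end{itemize}
Each symmetric pair of degenerations costs $2$ in ordinary index and $1$ in real index. Deeper degenerations cost at least $2$ in real dimension and are excluded, using genericity from Proposition~\ref{prop:transversality}. Boundary degenerations are excluded as usual because $\bdy_L\HD$ has one boundary component per side and domains avoid $\bbpt$. Shuffle-type odd ends, e.g.\ a split curve at $e\infty$, cancel in pairs or are ruled out by embeddedness, exactly as in~\cite[Lemma 5.56]{LOT1}.

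\emph{Step 3: gluing.} For each configuration in~\ref{item:2-story}--\ref{item:collision}, I would glue to obtain a unique end. Ordinary gluing is equivariant when the gluing data (cuts, parameters) are chosen symmetrically, so the glued curve is real. Transversality of the real problem, by Proposition~\ref{prop:transversality}, makes the real glued family one-dimensional near the end.

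\emph{Main obstacle.} I expect the hardest part to be the analysis at $e\infty$, namely that join and collision ends appear exactly once each in the real setting. One must check that the symmetric pair of degenerations at the two boundaries counts as a single real codimension-$1$ phenomenon and not two. It should not be a pair of independent degenerations, which would be codimension $2$ or produce double counts. Concretely, I would verify that the real gluing parameter space at such a pair of $e\infty$ levels is a single half-line, because $\ul{\tau}$ identifies the two gluing parameters. This follows from the local model near the boundary, using that the $\RR$-coordinate is preserved by $\ul{\tau}$.
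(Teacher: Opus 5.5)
Your outline follows the paper's proof. Compactness is inherited from the unreal holomorphic combs: the limits are $\ul{\tau}$-invariant, each story is real, and curves at $e\infty$ are exchanged in pairs between $\bdy_L\Sigma$ and $\bdy_R\Sigma$. The real index formula then shows that a one-story limit with curves at $e\infty$ is either a symmetric pair of join curves or a collision of levels, possibly with a pair of split curves; the paper makes this explicit by noting that $|P|-|P'|$ is even and that the $\sigma$-terms are unchanged. Finally, gluing is done symmetrically, with one common gluing parameter on the two boundary components.

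However, one step as written would fail. You dismiss everything not on your list because ``deeper degenerations cost at least $2$ in real dimension.'' Nodal degenerations in which arcs of the source collapse are codimension one. A $\tau$-exchanged pair of collapsing arcs raises $\chi$ by $2$, so it lowers $\ind^R$ by exactly $1$, and the node constraints cost nothing net. Likewise, in the real setting an $\eta$-invariant interior node lowers $\ind^R$ by only $1$, even though it has codimension two in the unreal theory. Genericity therefore does not remove these configurations. The paper explicitly lists ``nodal curves where some arcs degenerate'' among the possible codimension-one configurations. It then rules them out, together with boundary degenerations, by carrying over the arguments of \cite[Lemmas 5.47, 5.48, and 5.56]{LOT1}. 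Your proof needs that step, plus an argument that $\eta$-invariant interior nodes are not ends, rather than a dimension count.

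Separately, your remark that split curves at $e\infty$ ``cancel in pairs or are ruled out by embeddedness'' contradicts both your own collision bullet and the statement. A symmetric pair of split curves is exactly the composable case of the collision ends in item~\ref{item:collision}, so those ends must be counted, not cancelled. Shuffle curves do not arise at all, because there are no height constraints between chords on the same boundary component.
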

\begin{proof}
  This follows from essentially the same arguments as in the unreal case~\cite[Theorem 5.61]{LOT1}, which we recall.

  The first step is to observe that the moduli spaces have compactifications in terms of holomorphic combs~\cite[Proposition 5.24]{LOT1}. Since the real moduli spaces are subspaces of the usual ones, and this statement does not rely on transversality, the compactness result for the real moduli spaces, via \emph{real holomorphic combs}, follows from the usual one.

  We then prove several gluing results~\cite[Section 5.5]{LOT1}, for 2-story buildings and certain $1$-story combs with curves at $e\infty$. The proofs (which are largely omitted) carry over without change to the real case. In particular, if one starts with a two-story building in which both stories are $\tau$-invariant, the pre-glued approximately holomorphic curves are also $\tau$-invariant, and applying the inverse function theorem to the configuration space of real maps gives real holomorphic curves. Similarly, pre-gluing a symmetric pair of $e\infty$ curves on $\bdy_L\Sigma$ and $\bdy_R\Sigma$ to a real curve, with the same gluing parameter on the two sides, again gives an approximately holomorphic real curve, and applying the inverse function theorem in the real configuration space gives a real holomorphic curve.

  The first restrictions on codimension-1 degenerations~\cite[Proposition 5.43]{LOT1} come from the index formula---the unreal analogue of Formula~\eqref{eq:index-source}. In the real case, the analogous proposition says that the boundary of the index-2 moduli spaces consists of two-story buildings; degenerations at $e\infty$ consisting of a join curve at each boundary component; degenerations of some split curves; and nodal curves where some arcs degenerate. (Shuffle curves do not appear because our moduli spaces do not have height constraints between the Reeb chords, or at least not between ones on the same boundary component.) We review the part of that argument that used the index formula; the following discussion assumes the reader is consulting that proof simultaneously.

  Since we are considering an index-2 real moduli space,
  \[
  \OneHalf\bigl(g-\chi(S)+2e(B)+|P|\bigr) + \OneQuarter\bigl(\sigma(\alphas,\y)-\sigma(\alphas,\x)\bigr)=2.
  \]
  As there, since $\chi(S')\geq \chi(S)$ and $|P'|\leq |P|$, $\ind(B,S',P')\leq \ind(B,S,P)$.
  If a holomorphic comb has more than one story, the main component of each story has index $\geq 1$. So, there are at most two stories. If there are two stories with index $1$ each, then the same argument as in the unreal case implies there are no curves at $e\infty$. Similarly, if $\ind^R(B,S',P')=2$, then there is no curve at $e\infty$.

  So, suppose we have a single story with $\ind^R(B,S',P')=1$ and some curves at $e\infty$. Curves at $e\infty$ come in pairs; write $T'=T'_L\cup T'_R$. Similarly, $|P|-|P'|$ is even, and the $\sigma$-terms in the formula are the same for $(B,S,P)$ and $(B,S',P')$. Thus, either $|P|=|P'|$ and $\chi(T')=m-2$ or $|P'|=|P|-2$ and $\chi(T')=m$. The former case is a join curve and the latter is a collision of levels (possibly including a pair of split curves).
  
  The next step in the unreal case is ruling out boundary degenerations and other nodal degenerations~\cite[Lemmas 5.47, 5.48, and 5.56]{LOT1}; those arguments are the same in the real case. Combining these restrictions with the gluing results gives the proposition, just as in the unreal case~\cite[Proof of Theorem 5.61]{LOT1}.
\end{proof}

\section{Real bordered modules}\label{sec:CFDR}

With the moduli spaces in hand, the definition of the real bordered Floer invariants is the same as the original definition of $\CFDa$, except with the real states and moduli spaces in place of the usual ones.
\begin{definition}\label{def:CFDR}
	Let $(\HD,\tau)$ be a provincially admissible real bordered Heegaard diagram, with $\alpha$-boundary $\PMC$ and a generic symmetric almost complex structure $J$. Let $\CFDRa(\HD,\tau)$ be the left type $D$ structure over $\Alg(-\PMC)$ defined as follows. As an $\FF_2$-vector space, $\CFDRa(\HD,\tau)$ has basis $\Gen_R(\HD,\tau)$. The left idempotent of a state $\x\in\Gen_R(\HD,\tau)$ is the set of $\alpha$-arcs not containing points on $\x$, viewed as a subset of the matched pairs of $-\PMC$ (which are the same as the matched pairs in $\PMC$, i.e., the $\alpha$-arcs). The differential is defined by
\[
\delta^1(\x)=\sum_{\substack{\y\in\Gen_R(\HD,\tau)\\B\in\pi_2(\x,\y)}}\sum_{\ind^R(B;\vec{\rho})=1}\bigl(\#\cM_R^B(\x,\y;\vec{\rho})\bigr)a(-\vec{\rho})\otimes\y
\]
where, if $\vec{\rho}=(\rho_1,\dots,\rho_n)$, then $a(-\vec{\rho})=a(-\rho_1)\cdots a(-\rho_n)$, and $-\rho_i$ is the chord in $-Z$ corresponding to $\rho_i\subset Z$ (the same subset of the circle, but viewed as having the opposite orientation).
\end{definition}

\begin{lemma}\label{lem:admis-finite}
  The sum defining $\delta^1$ is finite, and if $\HD$ is admissible, then $(\CFDRa(\HD,\tau),\delta^1)$ is a bounded type $D$ structure (in the sense of~\cite[Section 2.3]{LOT1}).
\end{lemma}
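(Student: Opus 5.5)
The plan is to reduce both statements to the corresponding facts for ordinary bordered Heegaard diagrams~\cite[Lemma 6.5 and Proposition 6.7]{LOT1}. The real objects sit inside the ordinary ones: $\Gen_R(\HD,\tau)\subset\Gen(\HD)$, $\pi_2^R(\x,\y)\subset\pi_2(\x,\y)$, and each $\cM^B_R(\x,\y;\vec{\rho})$ is a subspace of the ordinary moduli space with chord sequence $\vec{\rhos}$. Admissibility is defined as ordinary admissibility of $\HD$. So the positivity arguments carry over, provided we can rule out the nonreal domains appearing in the sum. We can: if $\cM^B_R$ is nonempty, then $B=-\tau_*(B)$, so only real domains contribute.

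\textbf{Finiteness.} Fix $\x$ and a real state $\y$. Only finitely many states exist, so it suffices to show that only finitely many pairs $(B,\vec{\rho})$ have $\cM^B_R(\x,\y;\vec{\rho})\neq\emptyset$.

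First, a nonempty moduli space forces $B\geq 0$. Second, the idempotent constraint pins down the boundary behaviour: $a(-\vec{\rho})\otimes\y$ must be nonzero and have left idempotent matching $\x$. Hence $\bdy^\bdy B$ is determined by $\vec{\rho}$ up to finitely many choices. Third, and more precisely, the multiplicities of $B$ at the regions adjacent to $\bdy_L\Sigma$ are bounded. This is because the nonzero algebra elements in $\Alg(-\PMC)$ of a given idempotent pair form a finite set, and each product $a(-\vec{\rho})$ with nonzero value has bounded total chord length. (A product of chords covering the basepoint region is excluded, and the strands algebra is finite-dimensional.)

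With the boundary multiplicities bounded, any two such domains differ by a provincial periodic domain plus one of finitely many boundary corrections. Provincial admissibility implies that for each bound there are only finitely many positive domains in $\pi_2(\x,\y)$ with prescribed multiplicities near $\bdy\Sigma$; this is the standard argument~\cite[Lemma 6.5]{LOT1}. So finitely many $B$ occur. For each such $B$, only finitely many $\vec{\rho}$ are compatible with its boundary multiplicities, since each chord has length at least $1/4k$ of a full turn and the total length is fixed by $B$.

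\textbf{Type $D$ relation.} Expand $(\mu_2\otimes\Id)\circ(\Id\otimes\delta^1)\circ\delta^1+(\mu_1\otimes\Id)\circ\delta^1$ and match terms with Proposition~\ref{prop:codim-1}, applied to index-$2$ real moduli spaces:
\begin{itemize}
\item Two-story ends give the $\delta^1\circ\delta^1$ terms. Here we use additivity of $\ind^R$, which follows from Formula~\eqref{eq:index-embedded}, and additivity of the $\sigma$-terms.
\item Join ends $\rho_i=\rho'\rho''$ give the $\mu_1$ terms. The differential on $\Alg(-\PMC)$ resolves crossings, which in $-\PMC$ corresponds to splitting a chord.
\item Collision ends give the $\mu_2$ terms. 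When $\rho_i,\rho_{i+1}$ are not composable, their collision yields a product that is either zero or cancels in pairs.
\end{itemize}
The reversal of orientation $\rho\mapsto-\rho$ is what makes multiplication order and splitting match, exactly as in~\cite[Proposition 6.7]{LOT1}. This bookkeeping is where I expect the real care to be needed: checking that the join/collision correspondence with $\mu_1,\mu_2$ survives passing to $\Alg(-\PMC)$ with only the $\alpha$-side chords recorded. The $\beta$-side punctures are determined by $\tau$ and impose no extra algebra.

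\textbf{Boundedness.} Under full admissibility, every periodic domain has both signs, so there is a uniform bound on the multiplicities of positive domains from $\x$ to $\y$ of any length. Iterates $\delta^k$ correspond to concatenated positive domains. Since $\ind^R\geq 1$ per step (by transversality), a long chain would either produce a positive periodic domain or unbounded multiplicities, so $\delta^k=0$ for $k$ large. As in~\cite[Lemma 6.5]{LOT1}, the main obstacle is controlling multiplicities near the boundary, which provincial admissibility alone does not do. I would handle this by citing the ordinary argument verbatim, since all real domains are ordinary domains.
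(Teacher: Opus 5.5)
Your finiteness and boundedness arguments match the paper's proof, which is the one-line observation that the unreal argument of \cite[Lemma 6.5]{LOT1} carries over. That works because real states, domains and moduli spaces are subsets of the ordinary ones and admissibility is ordinary admissibility; nonzero outputs $a(-\vec\rho)$ then have bounded support, so provincial admissibility gives finiteness, and under admissibility a long chain of nonzero positive domains closes up into a positive periodic domain. Two small fixes: state the finiteness reduction for terms with $a(-\vec\rho)\neq 0$ (which is all your argument uses, since under mere provincial admissibility infinitely many $(B,\vec\rho)$ could have nonempty moduli spaces), and drop the ``type $D$ relation'' paragraph, which belongs to Theorem~\ref{thm:CFDR-defined} rather than this lemma and misassigns the ends, since the $\mu_2$ terms come only from two-story ends.
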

\begin{proof}
  The proof is the same as in the unreal case~\cite[Lemma 6.5]{LOT1}.
\end{proof}

\begin{theorem}\label{thm:CFDR-defined}
	Definition~\ref{def:CFDR} defines a type $D$ structure. That is, $\delta^1$ satisfies the type $D$ structure relation.
\end{theorem}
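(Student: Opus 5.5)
We must show that $(\mu_2\otimes\Id)\circ(\Id\otimes\delta^1)\circ\delta^1+(\mu_1\otimes\Id)\circ\delta^1=0$. We follow the unreal argument for $\CFDa$~\cite[Proposition 6.7]{LOT1}, replacing its input with Proposition~\ref{prop:codim-1}.

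First, fix real states $\x,\y\in\Gen_R(\HD,\tau)$ and an algebra element $a$ in the left idempotent of $\x$. We expand the coefficient of $a\otimes\y$ in the left-hand side as a sum over real domains $B\in\pi_2^R(\x,\y)$ and over chord sequences $\vec{\rho}$ with $\ind^R(B,\vec\rho)=2$. Lemma~\ref{lem:admis-finite} ensures that all the sums involved are finite. For such a pair $(B,\vec\rho)$, Proposition~\ref{prop:codim-1} says that the counts of the three kinds of ends sum to zero mod $2$. We then match each kind of end with a term of the structure relation.

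\begin{itemize}
\item Two-story ends, Case~\eqref{item:2-story}, correspond to the terms of $(\mu_2\otimes\Id)\circ(\Id\otimes\delta^1)\circ\delta^1$. Here we need that the intermediate state $\w$ is itself real. This holds because each story of a real building is $\ul{\tau}$-invariant, so its asymptotics $\w$ satisfy $\tau(\w)=\w$. We also need the occupancy condition: $\w$ occupies half of the $\alpha$-arcs and half of the $\beta$-arcs. This holds because a real $\w$ with $k$ occupied $\alpha$-arcs has $k$ occupied $\beta$-arcs. Moreover, the count for the intermediate level is $k$, because there are $g$ points in total, one on each circle, and the number of $\alpha$-circles matches. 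The product $a(-\vec\rho_1)a(-\vec\rho_2)$ equals $a(-\vec\rho)$.
\item Join ends, Case~\eqref{item:join}, correspond to the terms of $\mu_1$ (the differential on $\Alg(-\PMC)$) applied to $a(-\vec\rho)$. Splitting a chord $\rho_i=\rho'\rho''$ into the out-of-order pair $(\rho'',\rho')$ is exactly what resolving a crossing in the strands algebra does. As in the unreal case, the orientation reversal $\rho\mapsto-\rho$ accounts for the ordering.
\item Collision ends, Case~\eqref{item:collision}, come in two kinds. When $\rho_i,\rho_{i+1}$ are composable, the end $\cM^B_R(\x,\y;\dots,\rho_i\rho_{i+1},\dots)$ gives the product $a(-\rho_i)a(-\rho_{i+1})$ in the alternative factorization. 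The remaining collisions cancel in pairs, or else contribute terms whose algebra product vanishes, exactly as in~\cite[Proof of Proposition 6.7]{LOT1}.
\end{itemize}

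The terms of $\mu_2\circ\delta^1$ with nontrivial products of chords are already accounted for by the two-story decomposition, since the multiplication of the $a(-\rho)$'s is associative.

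The main obstacle is this bookkeeping between collisions of levels and joins versus algebra relations. The unreal proof uses Lemmas 5.67 and 5.71 of~\cite{LOT1} (or their analogues) to show that the contributions vanish when the algebra product is zero. To handle this, we would check that those lemmas rely only on the embedded index formula and on topology of the domains, not on transversality. By Proposition~\ref{prop:real-index}, $\chi_{\mathrm{emb}}$ is unchanged in the real setting, and the relevant index differences change uniformly: $\ind^R$ shifts by $1$ exactly when $\ind$ shifts by $2$, because the $\sigma$-terms are unchanged when only Reeb chords change. So those arguments carry over verbatim to the real case. Summing over $(B,\vec\rho)$ then gives the type $D$ structure relation.
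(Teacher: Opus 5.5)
Your strategy is the paper's: feed Proposition~\ref{prop:codim-1} into the argument of \cite[Proposition 6.7]{LOT1}, and check that the index-theoretic lemmas survive. Your treatment of the two-story ends, including the reality of the intermediate state, is fine. However, the explicit matching of the other ends with terms of the relation is wrong.

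Over $\Alg(-\PMC)$, orientation reversal exchanges in-order and out-of-order pairs. If $\rho_i=\rho'\rho''$ in $Z$, then $a(-\rho'')a(-\rho')=a(-\rho_i)$. So a join end carries the same algebra element as the moduli space it bounds, and it cannot be a $(\mu_1\otimes\Id)\circ\delta^1$ term. Such a term must arise from $\mu_1$ applied to the algebra element of an index-$1$ curve counted by $\delta^1$, not from $\mu_1$ applied to $a(-\vec\rho)$.

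It is the composable collisions that supply these terms. The space $\cM^B_R(\x,\y;\dots,\rho_i\rho_{i+1},\dots)$ consists of index-$1$ curves, and $a(-\rho_i)a(-\rho_{i+1})$ is the out-of-order product, i.e.\ the crossing resolution appearing in $\mu_1\bigl(a(-\rho_i\rho_{i+1})\bigr)$. You describe these ends as giving ``the product in the alternative factorization,'' but that is not a term of the type~$D$ relation: there is no separate $\mu_2\circ\delta^1$ term. As written, the composable collisions are left unmatched and the parity count does not close. The join ends must be disposed of as in \cite{LOT1}, using the valid-splitting restriction of \cite[Lemma 5.77]{LOT1}, not matched with $\mu_1$.

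Your reason for why the \cite{LOT1} restrictions transfer is also stated the wrong way around. What is needed, and what the paper's proof stresses, is that $|\vec\rho|+\iota(\vec\rho)$ enters Formula~\eqref{eq:index-embedded} with coefficient $1$, not $\OneHalf$. Consequently a non-composable collision or invalid splitting lowers $\ind^R$ by exactly as much as it lowers the unreal type~$D$ index, namely by more than $1$, and so still cannot occur.

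Your claim that ``$\ind^R$ shifts by $1$ exactly when $\ind$ shifts by $2$'' holds only for a two-sided unreal index. Against the one-sided index used in the type~$D$ arguments of \cite{LOT1}, it asserts a halving. Under that halving a drop of $2$ would become a drop of $1$, and the configurations those lemmas exclude would no longer be excluded.

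Finally, the paper also checks that the boundary monotonicity lemmas \cite[Lemmas 6.8 and 6.9]{LOT1} carry over. Your sketch leaves this implicit and should state it.
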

\begin{proof}
  Except that the moduli spaces have a different meaning, Proposition~\ref{prop:codim-1} is the same as the type $D$ case of the corresponding result in the unreal case~\cite[Theorem 5.61]{LOT1}. (The unreal case also mentions shuffle curve degenerations, but these only occur when the partition has parts consisting of more than one Reeb chord, and the type $D$ invariant does not consider such partitions.) The results on boundary monotonicity~\cite[Lemma 6.8 and 6.9]{LOT1} still hold (with the same proofs). The restrictions that collisions of levels must be composable and join curves correspond to valid splittings~\cite[Lemma 5.77]{LOT1} follow from the embedded index formula~\eqref{eq:index-embedded} exactly as in the unreal case: non-composable collisions and invalid splittings drop $|\vec\rho|+\iota(\vec\rho)$ by more than $1$. (These terms are the same as in the unreal index formula, not multiplied by $1/2$.) So, this follows from exactly the same argument as in the unreal case~\cite[Proposition 6.7]{LOT1}.
\end{proof}

While we do not prove invariance of $\CFDRa$, we do record that it is independent of the choice of almost complex structure, and hence is an invariant of the real bordered Heegaard diagram:
\begin{proposition}
  Fix a provincially admissible real bordered Heegaard diagram $\HD$ and generic symmetric almost complex structures $J,J'$. Then the type $D$ structures $\CFDRa(\HD;J)$ and $\CFDRa(\HD;J')$ computed with respect to $J$ and $J'$ are homotopy equivalent.
\end{proposition}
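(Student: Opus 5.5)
We will follow the standard continuation-map argument from the unreal case~\cite[Section 6.3]{LOT1}, adapted to the real setting. Choose a generic one-parameter family of symmetric almost complex structures $\{J_t\}_{t\in\RR}$ on $\Sigma\times[0,1]\times\RR$, agreeing with $J$ for $t\ll 0$ and with $J'$ for $t\gg0$. More precisely, this is a symmetric almost complex structure on $\Sigma\times[0,1]\times\RR$ that is no longer $\RR$-translation invariant, interpolating between $J$ and $J'$, and still anti-holomorphic with respect to $\ul{\tau}$; the $\ul{\tau}$-action does not affect the $\RR$-coordinate, so such interpolations exist. We then consider the non-$\RR$-invariant real moduli spaces $\cM^B_{R,\{J_t\}}(\x,\y;\vec{\rho})$ with expected dimension $\ind^R(B,\vec\rho)$, given by Formula~\eqref{eq:index-embedded}; no quotient by $\RR$ is taken. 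We define $f^1\co \CFDRa(\HD;J)\to \Alg(-\PMC)\otimes\CFDRa(\HD;J')$ by counting rigid elements $\ind^R=0$ of these spaces, weighted by $a(-\vec\rho)$, exactly as in Definition~\ref{def:CFDR}.

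The key steps, in order, are as follows. First, we prove transversality for generic symmetric families $\{J_t\}$; this is the argument of Proposition~\ref{prop:transversality}, symmetrizing perturbations. Here we note that the perturbations now depend on $t$, but the symmetrization $Y\mapsto d\tau\circ Y\circ d\tau$ still preserves $t$-dependence. Second, finiteness of the sum defining $f^1$ follows from provincial admissibility as in Lemma~\ref{lem:admis-finite}. Third, we analyze the ends of the $1$-dimensional moduli spaces $\ind^R=1$, via real holomorphic combs as in Proposition~\ref{prop:codim-1}. The ends are:
\begin{enumerate}
\item two-story buildings with one $J$-level of index $1$ and one $\{J_t\}$-level of index $0$;
\item two-story buildings with one $\{J_t\}$-level of index $0$ and one $J'$-level of index $1$;
\item join curve ends, paired symmetrically at $\bdy_L$ and $\bdy_R$;
\item collisions of levels.
\end{enumerate}
The index bookkeeping is exactly as before, using that the $\sigma$-terms and $\chi(T')$ contributions behave additively. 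These ends show that $f^1$ is a type $D$ morphism, i.e.\ $d f=0$, once we use the type $D$ structure relations from Theorem~\ref{thm:CFDR-defined} and the algebra relations corresponding to join and collision ends.

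Finally, we construct a reverse map $g$ from a family from $J'$ to $J$, and show that $g\circ f$ is homotopic to the identity. For this we use a two-parameter family of families, interpolating between the concatenated family and the constant family $J$. The homotopy $h$ counts index $-1$ curves in the parametrized real moduli space, and the ends of the index-$0$ parametrized real moduli space give $g\circ f+\Id = dh$. The concatenation step uses the real gluing for two-story buildings, which goes through since pre-glued symmetric stories remain symmetric.

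The main obstacle we expect is transversality for the parametrized (one- and two-parameter) real moduli spaces. In particular, one must rule out symmetric curves that are forced to be nonregular: for instance, if some $\ind^R<0$ real curves appear generically in the family, or if the real index formula interacts badly with constant-$t$ bubbles. We would handle this by applying Proposition~\ref{prop:transversality}'s argument at each parameter value, using that the injective point chosen there can be taken $\eta$-generic. We would also check that boundary degenerations are excluded exactly as in~\cite[Lemmas 5.47, 5.48]{LOT1}.
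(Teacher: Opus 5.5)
Your proposal is correct and is essentially the paper's approach: the paper's proof says only that this is a straightforward adaptation of the continuation-map argument of \cite[Section 6.3.1]{LOT1}, and you carry out that adaptation. Concretely, you use an $\RR$-dependent family of symmetric almost complex structures, count rigid $\ind^R=0$ real curves, analyze ends as in Proposition~\ref{prop:codim-1}, and use a one-parameter family of such paths to build the homotopy; the transversality you flag as the main obstacle follows from the same symmetrization argument as Proposition~\ref{prop:transversality}, so no new difficulty arises.
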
 
\begin{proof}
  Like the proof of Theorem~\ref{thm:CFDR-defined}, this is a straightforward adaptation of the closed case~\cite[Section 6.3.1]{LOT1}, and is left to the reader.
\end{proof}

\subsection{An example}\label{sec:CFDR-example}
Consider the bordered Heegaard diagram from Figures~\ref{fig:T2-diag-two-fixed-circles} and~\ref{fig:genus-1-domains}, representing $[0,1]\times T^2$ with involution induced by the reflection on $T^2$ (with two fixed circles). We have $\pi_2(\x,\x)=\pi_2(\y,\y)=\ZZ$, with generator $P$ as shown in Figure~\ref{fig:genus-1-domains}.
Since $\epsilon(\x,\y)$ is the nontrivial element of $\ZZ/2\ZZ$ (see Example~\ref{eg:Sigma-times-I-zeta}), the only possible contributions to the differential on $\CFDRa(\HD)$ are real periodic domains. Only domains all of whose coefficients at the boundary are $0$ or $1$ contribute to the differential on $\CFDRa(\HD)$, because of the form of $\Alg(T^2)$. Thus, the only domain to consider is the domain $P$. By Proposition~\ref{prop:bdy-injectivity}, we may compute the contribution of $P$ using a split almost complex structure (and generic perturbation of $\alphas$). Viewed as an element of $\pi_2(\x,\x)$, $P$ clearly has no holomorphic representative. Viewed as an element of $\pi_2(\y,\y)$, a real holomorphic representative of $P$ corresponds to a pair of slits starting at $\y$, in some direction along the $\alpha$-arc and the corresponding direction along the $\beta$-arc, and then a degree-1 holomorphic map to $[0,1]\times\RR^2$ from the result. Thus, the result must be a disk. There is exactly one way to achieve that: for the slits to cut out the boundary. The result is an operation $\delta^1(\y)=\rho_1\rho_2\otimes \y$ (in the usual notation for the torus algebra). To summarize:
\[
\CFDRa(\HD)=\langle \x,\y\rangle \qquad\qquad \delta^1(\x)=0\qquad\qquad \delta^1(\y)=\rho_1\rho_2\otimes\y.
\]

\section{Real-nice diagrams and the real Auroux-Zarev piece}\label{sec:nice-and-AZ}
In this section, we discuss an extension of Sarkar-Wang's notion of nice diagrams~\cite{SarkarWang07:ComputingHFhat} to real bordered Heegaard diagrams. In Section~\ref{sec:real-nice-basics} we formulate the definition of real nice diagrams and show that holomorphic curve counts in such diagrams are combinatorial. (Substantially more domains contribute than in the unreal case; see Figure~\ref{fig:real-nice-curves}.) In Section~\ref{sec:AZ-diagrams}, we recall two nice diagrams that will be used in our computation of real Floer homology, the Auroux-Zarev diagrams. Their bordered Floer invariants are recalled in Section~\ref{sec:AZ-modules-review}, before we compute their real analogues in Section~\ref{sec:real-AZ}.

\subsection{Real-nice diagrams and their real bordered invariants}\label{sec:real-nice-basics}
\begin{definition}
  \label{def:real-nice}
  A real bordered Heegaard diagram $\HD$ is \emph{real-nice} if all the
  regions in $\HD$ not containing the basepoint are bigons or rectangles
  (an edge of which may be on $\bdy\Sigma$) and for all points $x$ in
  $\alphas\cap C$, $\sigma(x)$ has the same value (i.e., either
  $\sigma(x)$ is always $1$ or is always $-1$). If $\sigma(x)$ is
  identically $1$, call the diagram \emph{positive real-nice}, and if
  $\sigma(x)$ is identically $-1$, call it \emph{negative real-nice}.
\end{definition}

\begin{figure}
  \centering
  \includegraphics[alt={Rigid curves in real-nice diagrams}, scale=.97]{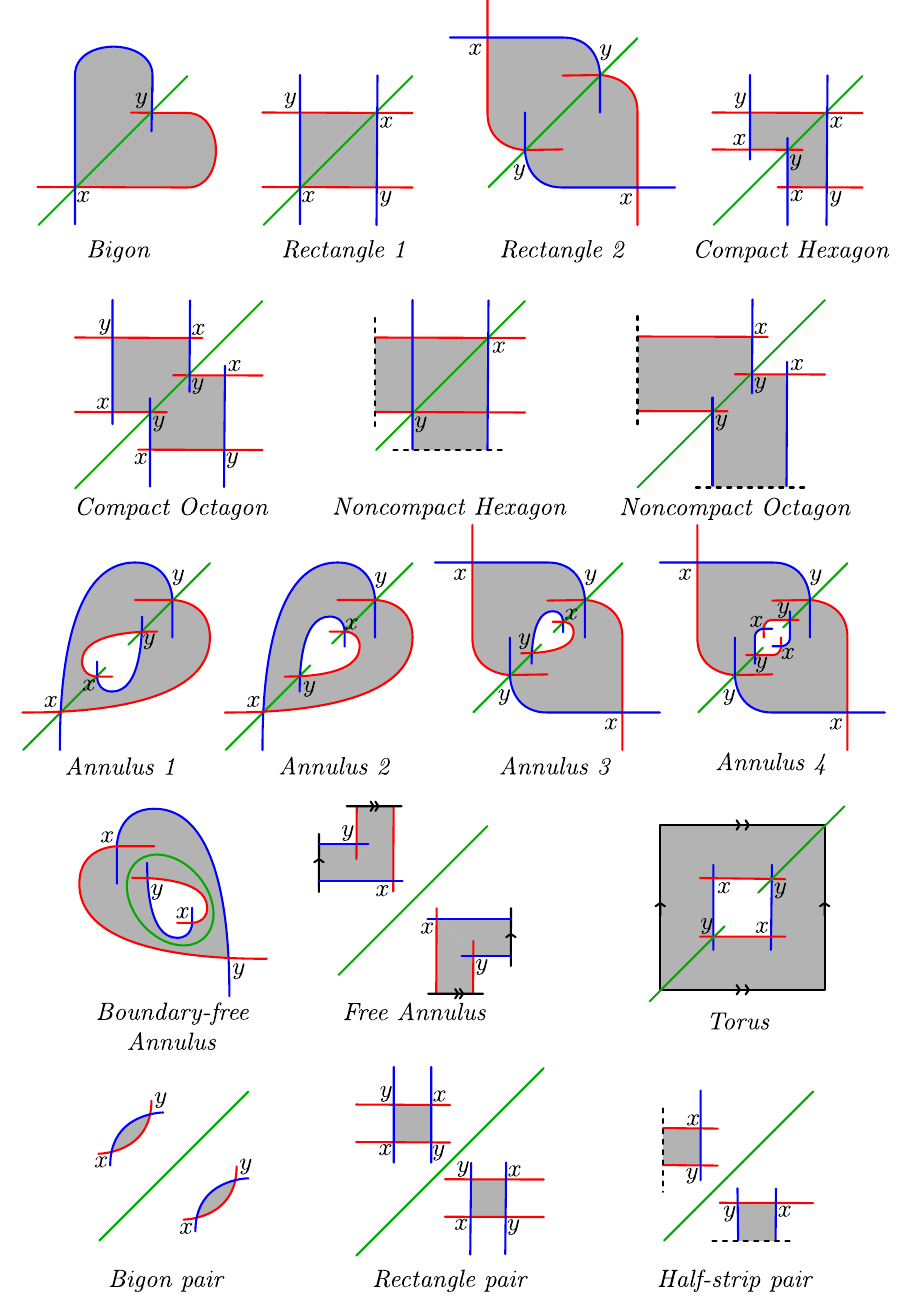}
  \caption{\textbf{Rigid curves in real-nice diagrams}. All the examples are in the $\sigma=-1$ case. For the $\sigma=1$-case, exchange the $\alpha$- and $\beta$-curves.}
  \label{fig:real-nice-curves}
\end{figure}

\begin{proposition}\label{prop:classify-domains}
  If $\HD$ is negative real-nice, then the differential on $\CFDRa(\HD)$ counts:
  \begin{enumerate}
    \item\label{item:bigons} $\tau$-invariant empty bigons;
    \item $\tau$-invariant empty rectangles, which come in two types: rectangles with two $90^\circ$ $\x$-corners on the fixed set and two $90^\circ$ $\y$-corners off the fixed set, and rectangles with two $270^\circ$ $\y$-corners on the fixed set and two $90^\circ$ $\x$-corners off the fixed set;
    \item $\tau$-invariant compact empty hexagons, which are disks with four corners (two in $\x$ and two in $\y$) off the fixed set and two corners (one $90^\circ$ $\x$-corner and one $270^\circ$ $\y$-corner) on the fixed set;
    \item $\tau$-invariant compact empty octagons, which are disks with two $270^\circ$ $\y$-corners on the fixed set and four ($\,90^\circ\!$) $\x$-corners and two ($\,90^\circ\!$) $\y$-corners off the fixed set;
    \item $\tau$-invariant noncompact empty hexagons, which are disks with one fixed $\x$-corner ($\,90^\circ\!$), one fixed $\y$-corner ($\,270^\circ\!$), and two $e\infty$-punctures; 
    \item $\tau$-invariant noncompact empty octagons, with two $\y$-corners ($\,270^\circ\!$) on the fixed set, two $\x$-corners ($\,90^\circ\!$) off the fixed set, and two $e\infty$ Reeb chords;
    \item\label{item:annuli-fixed} $\tau$-invariant compact annuli so that the fixed set intersects the boundary, which come in four types: 
    \begin{enumerate}[label=(A\arabic*)]
      \item\label{item:A1} annuli with one (fixed) $\x$-corner and one (fixed) $\y$-corner on each boundary component ($\,90^\circ$ and $270^\circ$, respectively) and so that the $\alpha$-boundary of the inner component is on the same side of the fixed set as the $\beta$-boundary of the outer component;
      \item\label{item:A2} annuli with one (fixed) $\x$-corner and one (fixed) $\y$-corner on each boundary component ($\,90^\circ$ and $270^\circ$, respectively) and so that the $\alpha$-boundary of the inner component is on the same side of the fixed set as the $\alpha$-boundary of the outer component;
      \item\label{item:A3} annuli so that one boundary component has one (fixed) $\x$- and one (fixed) $\y$-corner ($\,90^\circ$ and $270^\circ$, respectively) and the other boundary component has two fixed $\y$-corners (both $270^\circ\!$) and two non-fixed $\x$ corners (both $90^\circ\!$); and
      \item\label{item:A4} annuli so that each boundary component has two fixed $\y$-corners (both $270^\circ\!$) and two non-fixed $\x$ corners (both $90^\circ\!$);
    \end{enumerate}
    \item\label{item:A0} $\tau$-invariant boundary-free annuli, with no fixed corners, one $90^\circ$ corner and one $270^\circ$ corner (either an $\x$- or $\y$-corner) on each boundary component, and on which $\tau$ acts by reflection across a circle in the interior of the annulus;
    \item\label{item:A-free} $\tau$-invariant free annuli, so that the action of $\tau$ on the annulus is free (and exchanges the boundary components), and the annulus has one $90^\circ$ corner and one $270^\circ$ corner (either an $\x$- or $\y$-corner) on each boundary component;
    \item\label{item:tori} $\tau$-invariant punctured tori, with one boundary component with two fixed $\y$-corners and two non-fixed $\x$-corners (all $270^\circ\!$), and so that the involution on the torus has fixed set an arc and quotient a M\"obius band;
    \item\label{item:bigon-pairs} pairs of empty bigons which are exchanged by $\tau$;
    \item pairs of empty rectangles which are exchanged by $\tau$; and
    \item\label{item:halfstrip-pairs} pairs of empty half-strips which are exchanged by $\tau$.
  \end{enumerate}
  If $\HD$ is positive real-nice, the differential admits an exactly analogous description, except in each case the $\x$ and $\y$ corners are exchanged.
  In all cases, all corners of the domain (or curve) not fixed by the involution are $90^\circ$ angles The map from the holomorphic curve to $\Sigma$ is an immersion, except for boundary-free annuli, free annuli, and tori (Cases~(\ref{item:A0}),~(\ref{item:A-free}), and~(\ref{item:tori})), for which there are two boundary branch points.
\end{proposition}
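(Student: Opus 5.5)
We will classify the rigid real curves by combining the source-dependent index formula~\eqref{eq:index-source} with the embedded Euler characteristic formula~\eqref{eq:embedded-index}, exactly in the spirit of Sarkar-Wang's argument, but keeping track of the symmetry.

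\textbf{Step 1: two easy cases.} Let $u\co S\to\Sigma\times[0,1]\times\RR$ be a rigid real curve, i.e.\ $\ind^R(B,\vec\rho)=1$, with involution $\eta$ on $S$. The first case is that $\eta$ exchanges two components of $S$. Then the $\sigma$-terms vanish, the real index is half the ordinary index of the pair, and each component is an ordinary rigid curve (index $1$) in a nice diagram. By the unreal classification these are empty bigons, rectangles, or half-strips; this gives Cases~(\ref{item:bigon-pairs})--(\ref{item:halfstrip-pairs}).

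\textbf{Step 2: $S$ connected and $\eta$-invariant.} Using that the diagram is negative real-nice, we compute the $\sigma$-term. Each fixed corner of $\x$ contributes $+\OneQuarter$ and each fixed corner of $\y$ contributes $-\OneQuarter$, after the sign convention in $-\sigma(\alphas,\x)$. The number of fixed corners equals the number of intersections of $\bdy S$ with $\mathrm{Fix}(\eta)$. Write $e(B)$ as a sum of local corner contributions. In a nice diagram each region that is not the basepoint region has Euler measure $\ge 0$, since bigons contribute $1/2$ and rectangles contribute $0$. Equation~\eqref{eq:index-source} with index $1$ then gives
\[
2=g-\chi(S)+2e(B)+|P|+\OneHalf\bigl(\sigma(\alphas,\y)-\sigma(\alphas,\x)\bigr),
\]
where $|P|=2|\vec\rho|$. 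Here $g$ is the number of $-\infty$ corners, and by symmetry this equals the number of $\x$-corners. Because $e(B)\ge0$ and $\chi(S)\le 2$ (with $\chi(S)\le 1$ when $\bdy S\ne\emptyset$), this bounds $\chi(S)$ from below by roughly $-1$. So $S$ is a disk, an annulus, or a once-punctured torus (Euler characteristic $-1$ with one boundary component). We then enumerate according to:
\begin{itemize}
\item the number of $\x$- and $\y$-corners on and off the fixed set;
\item whether $e(B)=0$ or $1/2$;
\item the number of $e\infty$ punctures, which come in $\eta$-symmetric pairs.
\end{itemize}
Combined with the Riemann-Hurwitz count of branch points of $\pi_\Sigma\circ u$ (from~\eqref{eq:embedded-index} and the fact that $\pi_\Sigma\circ u$ has degree given by $B$), this pins down the corner angles. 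Non-fixed corners must be $90^\circ$. Fixed $\x$-corners are $90^\circ$ and fixed $\y$-corners $270^\circ$, which is forced by $\sigma=-1$ and the local picture at a fixed intersection point.

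\textbf{Step 3: enumerate by topology.} The disk cases give the bigon, the two rectangle types, the hexagons and octagons (compact or with a symmetric pair of Reeb chords), with $\eta$ acting by reflection. For annuli, $\eta$ is one of three involutions: reflection with the fixed set meeting both boundary circles, reflection across a core circle, or the free involution swapping the boundaries. These give Cases~(\ref{item:annuli-fixed}),~(\ref{item:A0}),~(\ref{item:A-free}), with the subcases \ref{item:A1}--\ref{item:A4} distinguished by how many fixed corners lie on each boundary circle and on which side the $\alpha$/$\beta$ arcs lie. For the punctured torus, the only involution compatible with a single boundary component carrying fixed corners has a fixed arc and Möbius-band quotient, giving Case~(\ref{item:tori}).

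\textbf{Step 4: existence and counts.} For each listed shape we check that the count is $1\pmod 2$. For disks this follows from the Riemann mapping theorem. For the annuli and the torus there is a one-parameter family of real branched maps, namely the position of two $\eta$-symmetric boundary branch points (cuts along $\alpha$ and the corresponding $\beta$ arc, as in the example in Section~\ref{sec:CFDR-example}). Exactly one cut length gives a holomorphic map, by the usual conformal-modulus monotonicity argument. We then use Proposition~\ref{prop:bdy-injectivity} to ensure transversality. Emptiness of all domains follows as in Sarkar-Wang: if a domain had an $\x$- or $\y$-point in its interior, this would lower the index.

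\textbf{Main obstacle.} The hardest step is the exhaustive enumeration in Steps 2--3. The $\sigma$-term allows nonnegative-Euler-characteristic savings to be traded against fixed corners, so we must organize carefully by $(\chi(S),\#\text{fixed }\x,\#\text{fixed }\y)$ and rule out, for instance, annuli with more fixed corners or disks with $270^\circ$ non-fixed corners. We would tabulate these using the formula above case by case.
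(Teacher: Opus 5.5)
Your Steps 1--3 follow the paper's route: split into an exchanged pair versus a single $\eta$-invariant component, then enumerate using Formula~\eqref{eq:index-source}. Step~4, however, which is half of what the proposition asserts, has a genuine gap.

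\textbf{The annuli (A1)--(A4).} For these annuli every $270^\circ$ corner lies on the fixed set. An $\eta$-equivariant cut starting at a fixed corner would have to run along $\alpha$ and along $\beta$ from that same corner, which is impossible. So these domains admit no cuts at all: the source is the domain itself and $\pi_\Sigma\circ u$ is an immersion, exactly as the proposition states. There is therefore no one-parameter family of cut lengths for a monotonicity argument to act on. The real question is whether a single fixed conformal annulus carries a real degree-$g$ map to $[0,1]\times\RR$ with the prescribed boundary behavior, and that needs its own argument. The paper settles (A1) and (A2) by uniformizing the annulus and noting that the real involution forces each $\alpha$-segment to subtend angle $\pi$, so the sheet-exchanging involution exists. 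It settles (A3) and (A4) by enlarging the diagram and following a one-dimensional real moduli space from a known broken end made of simpler pieces. The desired annulus then appears at the other end, concatenated with a bigon (resp.\ a rectangle).

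\textbf{The punctured tori.} Here a symmetric cut family does exist, but the source has genus one. There is no ``usual'' modulus criterion for when a cut holed torus double covers the strip, and you give no control of such a criterion at the two ends of the family. The odd count is asserted, not proved. The paper instead shows that every such domain is the complement of a $3\times 3$ block in a twisted toroidal grid diagram $G$. It then extracts the odd count from $\partial^2=0$ on $\HFR^-(G)/(U_1^2,U_2^2)$, using a rotation symmetry of $G$ to discard non-equivariant decompositions. The only competing decomposition is into annuli whose counts are already known. Even for the boundary-free and free annuli, the intermediate-value argument gives an odd number of cut lengths, not exactly one.

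\textbf{Corrections to Steps~2--3.}
The bound $\chi(S)\geq -1$ is not ``rough''. It comes from the identity
$g+\OneHalf\bigl(\sigma(\alphas,\y)-\sigma(\alphas,\x)\bigr)=\OneHalf\mathit{nfc}+\#(\x\cap C)$, where $\mathit{nfc}$ is the number of non-fixed corners. The right side is at least $1$ because non-fixed $\x$-corners come in $\eta$-pairs. This identity is what drives the whole enumeration, so you should state it.

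At $\chi(S)=-1$ you must also exclude a pair of pants, which you can do by counting corners.

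The relevant Euler measures are $e(B)\in\{0,1\}$, not $\{0,\OneHalf\}$. The invariant bigons (one $90^\circ$ and one $270^\circ$ fixed corner) and the second type of rectangle have $e(B)=1$. By contrast, $e(B)=\OneHalf$ cannot occur: no elementary bigon is $\tau$-invariant when $\sigma\equiv -1$, so bigon regions come in pairs.

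Finally, points of $\x$ or $\y$ in the interior of a domain raise the index, not lower it.
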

\noindent
See Figure~\ref{fig:real-nice-curves} for examples of these domains. \emph{Empty} means that no points of $\x$ or $\y$ are in the interior of the domain.
Note also that the domains need not be embedded; for example, pairs of rectangles can overlap (cf.\ Figure~\ref{fig:AZ-diff-domains}). 
\begin{proof}
  We will consider the case that all points $x$ on $\alphas\cap C$ have $\sigma(x)=-1$. The case that $\sigma(x)$ is identically $1$ follows from the observation that reversing the orientation of $\Sigma$ changes the sign of $\sigma$ and the directions of holomorphic curves (from $\y$ to $\x$ instead of $\x$ to $\y$), but not the counts of holomorphic curves.

  From Formula~\eqref{eq:index-source},
  \begin{equation}\label{eq:nice-ind-1}
    \ind^R(B,S,\vec{\rho})=\frac{1}{2}(g-\chi(S)+2e(B)+|P|)-\frac{1}{4}\#(\y\cap C)+\frac{1}{4}\#(\x\cap C)=1.
  \end{equation}
  Note that $|P|=2|\vec{\rho}|$: for $|P|$, both $\alpha$- and $\beta$-Reeb chords count.

  The index is additive over components of $S$ (where $2g$ is replaced by the number of $\pm\infty$ corners on the component, and so on). For trivial strips, $\ind^R(B,S,P)=0$. So, we consider two cases: a pair of components which are exchanged by $\tau$, and a component which is preserved by $\tau$. We first prove that the domain is as in the statement of the proposition without the condition of being empty or the map to $\Sigma$ being an immersion, and then verify the emptiness and immersion conditions at the end.

  \emph{Case 1.} A pair of non-fixed components. In this case, $\x\cap C=\y\cap C=\emptyset$, and $\chi(S)\leq 2$ with equality if and only if $S$ consists of two disks. We have 
  \[
  \frac{1}{2}(g-\chi(S)+2e(B)+|P|)=1,
  \]
  so for each of the two components $S_1$ and $S_2$, $g_i-\chi(S_i)+2e(B_i)+|P_i|=1$. This is exactly the index formula in the nonequivariant case, so from the corresponding result for nonequivariant nice diagrams~\cite[Proposition 8.4]{LOT1}, $S_1$ and $S_2$ are each a bigon, rectangle, or half-strip.

  Note also that this argument precludes an index-0 pair of non-fixed components.

  \emph{Case 2.} A fixed component. Observe that the component has $|P|$ even, because chords come in pairs, exchanged by the involution; and $\chi(S)\leq 1$, with equality if and only if $S$ is a disk. Since the curve has $2g$ corners at $\pm\infty$, we can rewrite Formula~\eqref{eq:nice-ind-1} as 
  \begin{equation}\label{eq:index-formula-again}
  \mathit{nfc}/4+\#(\x\cap C)/2 + |P|/2-\chi(S)/2+e(B)=1,
  \end{equation}
  where $\mathit{nfc}$ denotes the number of non-fixed corners---the
  number of points in $\x$ not on $C$ plus the number of points in $\y$
  not on $C$. Moreover, $S$ has at least one fixed $x$-corner or at least two non-fixed $x$-corners, so the left side is at least $|P|/2+e(B)$. So, $e(B)\leq 1$, and if $e(B)\neq 0$ then $|P|=0$. We take three subcases.

  \emph{Subcase 2a.} $e(B)=1$. In this case, $\mathit{nfc}/4+\#(\x\cap C)/2-\chi(S)/2=0$. Since there is at least one $x$-corner, we must have $\chi(S)=1$ (so $S$ is a disk) and $\mathit{nfc}/2 + \#(x\cap C)=1$. Thus, the domain either has two non-fixed corners or one fixed $x$-corner. The number of $x$-corners equals the number of $y$-corners, so we either have a bigon (with one fixed $x$-corner and one fixed $y$-corner) or a rectangle of the second type (with two fixed $y$-corners and two non-fixed $x$-corners).

  \emph{Subcase 2b.} $e(B)=1/2$. This does not occur. The condition that $\sigma(x)$ is identically $-1$ implies that no elementary region in the diagram is a bigon that is fixed by $\tau$. So, an even number of the elementary regions in $B$ are bigons, and hence the Euler measure of $B$ is an integer.

  \emph{Subcase 2c.} $e(B)=0$. In this case, $\mathit{nfc}/4+\#(\x\cap C)/2 + |P|/2-\chi(S)/2=1$. By counting $x$-corners, the first two terms contribute at least $1/2$. So, if $|P|\neq 0$, then the left side is at least $3/2-\chi(S)/2$, so $\chi(S)=1$ ($S$ is a disk), $|P|=2$, and $S$ either has one fixed $x$-corner (and no non-fixed corners) or two non-fixed $x$-corners (and no non-fixed $y$-corners). The first case is a noncompact hexagon, and the second case is a noncompact octagon.

  Finally, if $|P|=0$, we could have $\chi(S)=-1$ (a pair of pants or punctured torus), $\chi(S)=0$ (an annulus), or $\chi(S)=1$. (The case $\chi(S)=-2$ does not occur because it would imply there are no $x$ corners.)

  If $\chi(S)=-1$, then Formula~\eqref{eq:index-formula-again} gives $\mathit{nfc}/4+\#(\x\cap C)/2+|P|/2=1/2$. Thus, $|P|=0$ and $\mathit{nfc}/4+\#(\x\cap C)/2=1/2$. So, the domain either has $1$ fixed $x$-corner and $1$ fixed $y$-corner, or $2$ non-fixed corners. In the latter case, since there is at least one $x$-corner per boundary component, there are $2$ non-fixed $x$-corners and $2$ fixed $y$-corners. Moreover, in both cases, there are not enough corners for three boundary components, so $S$ must be a punctured torus. Further, since the Euler measure of the domain is $0$ and the Euler measure of a torus with one $90^\circ$ and one $270^\circ$ corner is $-1$, in fact the domain has $2$ non-fixed $x$-corners and $2$ fixed $y$-corners, as in Figure~\ref{fig:real-nice-curves} (Type~(\ref{item:tori})).
  
  If $\chi(S)=0$, then $\mathit{nfc}/4+\#(\x\cap C)/2+|P|/2=1$. Thus, $|P|=0$ and $\mathit{nfc}/2+\#(\x\cap C)=2$. So, the domain either has $2$ fixed $x$-corners, one fixed $x$-corner and 2 non-fixed corners, or $4$ non-fixed corners. If the involution exchanges the two components of the boundary, then there must be two corners on each. We take some cases:
  \begin{itemize}
  \item If the fixed set of the action on $S$ is empty, then $S$ has two corners on each boundary component, $90^\circ$ and one $270^\circ$, and a free, orientation-reversing involution exchanging the boundary components. This is the free annulus (Type~(\ref{item:A-free})).
  \item If the fixed set is a circle (in the interior of $S$), then $S$ has two corners on each boundary component, one $90^\circ$ and one $270^\circ$; this is the boundary-free annulus, Type~(\ref{item:A0}).
  \item If the involution preserves each boundary component, each of the two components of $\bdy S$ must have two fixed corners, since every orientation-reversing involution of the circle has two fixed points, and an even number of non-fixed corners. So, these are the type \ref{item:A1}--\ref{item:A4} annuli.
  \end{itemize}
 
  If $e(B)=0$, $|P|=0$ and $\chi(S)=1$, then we are considering a disk with $\mathit{nfc}/4+\#(\x\cap C)/2=3/2$. As in the previous paragraph, the boundary component has two fixed corners. Also, the disk is built entirely of rectangles. The possibilities are:
  \begin{itemize}
    \item Two fixed $x$-corners and two non-fixed $y$-corners; this is a rectangle of the first type.
    \item One fixed $x$-corner, one fixed $y$-corner, and four non-fixed corners (two of each type). This is a compact hexagon.
    \item Two fixed $y$-corners and six non-fixed corners. This is a compact octagon.
  \end{itemize}

  Note also that the argument in each case precludes an index-0 fixed component.

  Combining the cases so far, we have proved:
  \begin{itemize}
    \item The curve consists of either a single, fixed component or a single pair of non-fixed components, and
    \item The component(s) have one of the forms from the proposition statement, except perhaps they could be nonempty and the map to $\Sigma$ might not be an immersion.
  \end{itemize}
  To see that the map to $\Sigma$ is an immersion except in cases~(\ref{item:A0}),~(\ref{item:A-free}), and~(\ref{item:tori}), note that if the (real) map to $\Sigma$ has branch points, then there is in fact a family of (real) maps by varying the branch points. We will see in Lemma~\ref{lem:nice-unbranched-domains-contribute} that any real disk, or real annulus of types \ref{item:A1}--\ref{item:A4}, admits a real holomorphic branched cover to the strip, so if the map to $\Sigma$ has branch points, then the moduli space is not rigid (by varying the branch point in $\Sigma$).

  To show that the domain is empty and all non-fixed corners have $90^\circ$ angles, we appeal to the embedded index formula. Specifically, by inspection, an empty domain of each of the types we have reduced to has $\ind^R(B,\vec{\rho})=1$, and a nonempty domain has index strictly larger than this.

  It remains to show that all of these domains admit unique holomorphic representatives. We state and prove this as three separate lemmas, Lemmas~\ref{lem:nice-unbranched-domains-contribute},~\ref{lem:A0-Afree-contribute}, and~\ref{lem:torus-contribute}.
\end{proof}

\begin{lemma}\label{lem:nice-unbranched-domains-contribute}
  In a real-nice diagram, for any choice of symmetric almost complex structure,
  each of the domains in Proposition~\ref{prop:classify-domains} of types~(\ref{item:bigons})--(\ref{item:annuli-fixed}) and~(\ref{item:bigon-pairs})--(\ref{item:halfstrip-pairs}) has a
  unique real holomorphic representative.
\end{lemma}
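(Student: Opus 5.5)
We need to show that each domain of types (1)--(7) and (11)--(13) has exactly one real holomorphic representative, for any symmetric almost complex structure. The plan is to reduce to the split case, write the curve as a branched double-type construction over $[0,1]\times\RR$, and count symmetric solutions to a real Riemann mapping problem.

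\emph{Reduction to the split case.} For each domain in Proposition~\ref{prop:classify-domains} of these types, some $\alpha$-arc has coefficient $0$ on one side and $1$ on the other. This is true because every corner not on the fixed set is a $90^\circ$ corner of an empty polygon or annulus. So Proposition~\ref{prop:bdy-injectivity} applies, and the moduli space is transversely cut out for a split structure after a small generic perturbation of $\alphas$ (and the corresponding perturbation of $\betas$). Since the count of a rigid, transversely cut out moduli space is independent of the symmetric almost complex structure, it suffices to work with a split structure $j_\Sigma\times j_D$, with $j_\Sigma$ anti-invariant under $\tau$. A real holomorphic representative is then a pair $(\pi_\Sigma\circ u,\pi_D\circ u)$. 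Here $\pi_\Sigma\circ u$ is a holomorphic map $S\to\Sigma$ with image the domain $B$; by the immersion statement of Proposition~\ref{prop:classify-domains}, it is an unbranched map from the polygon or annulus $S$ with the given corners. Its conformal structure and the involution $\eta$ are then forced by pulling back $j_\Sigma$ and $\tau$. The remaining data is a $g$-fold branched cover $\pi_D\circ u\co S\to[0,1]\times\RR$ (degree $1$ for the nontrivial component) sending $\alpha$-boundary to $\{1\}\times\RR$, $\beta$-boundary to $\{0\}\times\RR$, corners to $\pm\infty$, and Reeb-chord punctures to $e\infty$. It must also satisfy $\pi_D\circ u\circ\eta=r\circ\pi_D\circ u$, where $r(s,t)=(1-s,t)$.

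\emph{Pairs of components, types (11)--(13).} A symmetric pair is determined by one component, and the other is its image under $\ul{\tau}$. So the count is the count for a single empty bigon, rectangle, or half-strip in the ordinary bordered setting. By~\cite[Proposition 8.4]{LOT1} this count is exactly $1$, and $\RR$-translation is absorbed by quotienting the pair together.

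\emph{Fixed disks, types (1)--(6).} Here $S$ is a disk with $\eta$ a reflection, and $\pi_D\circ u$ must be a biholomorphism to the strip (with boundary punctures). Its boundary marked points are the corners and $e\infty$ punctures, and their cyclic data is determined by $B$. Up to $\RR$-translation, a real isomorphism of a disk with marked points to the strip is unique if it exists. Existence reduces to a real Riemann mapping problem: we need the two $\x$-corner punctures to go to $-\infty$ and the two $\y$-corner punctures to go to $+\infty$. Since $\pi_D\circ u$ need only be a degree-$1$ branched map, the real way to see this is to pass to the quotient $S/\eta$. This is a half-disk, with the fixed arc mapping to $\{1/2\}\times\RR$. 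On $S/\eta$, $\pi_D\circ u$ becomes a map to $[0,1/2]\times\RR$ with prescribed boundary behaviour, and the count becomes the count of rigid maps from a polygon to a half-strip. I would count these by computing the dimension of the space of conformal structures on the quotient polygon. This should equal the number of free parameters in the branched map, and the index-$1$ condition forces exactly one solution modulo $\RR$. For the octagons and hexagons, the non-fixed corners come in $\eta$-pairs; each pair gives one real conformal modulus, and this is matched by one degree of freedom in where $\pi_D$ places the pair along $\pm\infty$.

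\emph{Fixed annuli, type (7), and the main obstacle.} For annuli of types (A1)--(A4), $\eta$ preserves each boundary component, and the quotient is a rectangle-like region with two fixed arcs. Here we need a branched double cover $S\to[0,1]\times\RR$ that is real, and the conformal modulus of the annulus must match the branched cover. I expect this to be the hardest step. The plan is to use a $1$-parameter family argument, as in~\cite[Lemma 9.3]{OS04:HolomorphicDisks}: the position of the branch point is constrained by $\eta$-invariance to lie on the fixed arc $\{1/2\}\times\RR$. As it moves from one end to the other, the modulus of the resulting annulus sweeps monotonically through $(0,\infty)$. So exactly one branch point position realizes the modulus of the annulus pulled back from $\Sigma$. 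Monotonicity and the limiting behaviour at the ends are checked by degenerating to the boundary cases: two bigons or rectangles on one side, and a pinched annulus on the other. The four cases (A1)--(A4) differ only in which boundary arcs are slit, and this determines which end of the family corresponds to modulus $0$ versus $\infty$.
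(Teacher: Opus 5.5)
Your reduction to a split structure and your treatment of the symmetric pairs (11)--(13) are fine. (The reduction also needs the absence of index-$0$ real curves in a real-nice diagram, which you should state.) The fixed disks, types (1)--(6), have a genuine gap.

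\emph{Fixed disks.} You say $\pi_D\circ u$ has degree $1$ and is a biholomorphism to the strip. In fact its degree equals the number of $\x$-corners: $2$ for the rectangles and noncompact octagons, $3$ for compact hexagons, $4$ for compact octagons. Your replacement, comparing the conformal moduli of $S/\eta$ with the parameters of the branched map, is only an index count. It confirms the expected dimension but proves neither existence nor uniqueness. The missing idea is this: for \emph{every} conformal structure on a disk with alternating $\x$- and $\y$-boundary punctures, there is exactly one holomorphic map to $[0,1]\times\RR$ up to translation with the required boundary behavior. In the upper half-plane model it is a constant times $\prod_i(z-x_i)/\prod_i(z-y_i)$. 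Since $r\circ(\pi_D\circ u)\circ\eta$, with $r(s,t)=(1-s,t)$, is another such map, uniqueness forces the map to be real, and no quotient is needed.

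You also cannot cite the immersion clause of Proposition~\ref{prop:classify-domains} to get an uncut source. The paper proves that clause \emph{using} this lemma: it needs that every real disk and every (A1)--(A4) annulus carries a real branched cover of the strip. Argue directly instead, as the paper does: a cut from a $270^\circ$ corner on the fixed set cannot be made equivariantly, because the $\tau$-image of an $\alpha$-cut from that corner is a $\beta$-cut from the same corner.

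\emph{Annuli.} A real cover by an (A1) or (A2) annulus has degree $2$ and two branch points, not one. For (A1) they lie on the fixed arcs, but for (A2) they are exchanged by $\eta$, so ``the'' branch point is not confined to $\{1/2\}\times\RR$. Monotonicity of your sweep is asserted rather than proved; the degenerations only fix the limits. So at best you get an odd count, not uniqueness.

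The paper gets (A1) and (A2) from the symmetry alone. Uniformize the domain as $\{r<|z|<1\}$ with $\eta(z)=\bar z$. The fixed corners then sit at $\pm 1$ and $\pm r$, so each $\alpha$-arc subtends angle $\pi$. Then $z\mapsto r/z$ for (A1), or $z\mapsto -r/z$ for (A2), is a holomorphic involution carrying $\alpha$-arcs to $\alpha$-arcs. This exhibits the annulus as a double branched cover of the strip for every $r$.

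More seriously, (A3) and (A4) are not slit variants of (A1)/(A2). They have three and four $\x$-corners, so the maps have degree $3$ and $4$. The positions of the non-fixed corners are extra moduli, so no one-parameter sweep applies. The paper handles them by extending the diagram so that a larger real domain has two equivariant decompositions, one into curves already known to exist (for (A4), a compact hexagon and an (A3) annulus). It then follows the resulting one-parameter family of equivariant cuts to its other end, which is the desired annulus glued to a bigon for (A3), or to a rectangle for (A4).
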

\begin{proof}
  This is familiar except for the annuli; see~\cite[Examples 4.4--4.6]{GM:real-HF}. For example, consider an octagon. We may work with a split almost complex structure when computing the moduli space; we discuss this more in the annulus case below. The source $S$ of a holomorphic representative is obtained from the domain by perhaps making a cut from each of the $270^\circ$ corners. Such cuts cannot be made $\tau$-equivariantly, so if the cuts are nontrivial, then $\pi_\Sigma\circ u\circ\ul{\tau} \neq \tau\circ\pi_\Sigma\circ u$ (because the images of the branch points are different, say). So, there are no cuts: the source $S$ is identified with the domain and $\pi_\Sigma\circ u$ is the identity map. (If the map to $\Sigma$ is not an embedding, we mean that if the domain $B=\sum n_iR_i$, then we take $n_i$ copies of $R_i$ and glue these together along shared edges.) We must show there is a unique holomorphic map to $[0,1]\times\RR$ respecting the punctures and involution. It is well-known that there is a unique holomorphic map $u_D$ to $[0,1]\times\RR$ respecting the corners. (Explicitly, identify $[0,1]\times\RR$ with the upper half plane, so $-\infty$ is identified with $0$ and $+\infty$ with $\infty$. Identify $S$ with the upper half-plane, so that the $-\infty$ punctures are at points $x_1,x_2,x_3$ and the $+\infty$ punctures are at points $y_1,y_2,y_3$. Then the map is $z\mapsto [(z-x_1)(z-x_2)(z-x_3)]/[(z-y_1)(z-y_2)(z-y_3)]$.) If this map did not respect the involution, then $\tau\circ u_D\circ \ul{\tau}$ would be another holomorphic map respecting the corners, contradicting uniqueness. (The existence of the map $u_D$ for any complex structure on $S$ is why the maps to $\Sigma$ in Proposition~\ref{prop:classify-domains} are immersions, in the case of disks; a similar comment applies to annuli, via the argument below.)
  
  So, in the rest of the proof, we will focus on the four types of annuli in item~(\ref{item:annuli-fixed}). Note first, because there are no (real) index-0 holomorphic curves in a real-nice diagram, the count is independent of the choice of almost complex structure. Also, by Proposition~\ref{prop:bdy-injectivity}, we can achieve transversality by working with a split complex structure and perturbing the $\alpha$- and $\beta$-curves ($\tau$-equivariantly).

  For all the kinds of annuli under consideration, 
  the holomorphic curve in $\Sigma\times[0,1]\times\RR$ can be described as holomorphic maps from the domain in $\Sigma$ (glued together maximally along shared edges), possibly cut along some $\alpha$- or $\beta$-curves, to $[0,1]\times\RR$. In the real case, the cuts must come in pairs, respecting the $\ZZ/2$-action. By considering the local structure at the corners, it follows that for these domains, no cuts are possible. 
  So, we want to show that for some (or all) complex structure on $\Sigma$, there is a holomorphic map from the domain shown to $[0,1]\times\RR$, sending the $\alpha$ boundary to $\{1\}\times\RR$ and the $\beta$-boundary to $\{0\}\times\RR$. The existence of such a map is independent of how the $\alpha$- and $\beta$-curves continue beyond the portions indicated in Figure~\ref{fig:real-nice-curves}.

  For type~\ref{item:A1} and~\ref{item:A2} annuli, this is immediate: double branched covers of $[0,1]\times\RR$ correspond to involutions on the annulus taking the $\alpha$-boundary to the $\alpha$-boundary. Uniformizing the domain by a standard annulus, the real involution shows that, on each component of the boundary, the $\alpha$-segment subtends an angle of $\pi$, and hence such an involution exists (and is unique). (A closely related case, using the same argument, was also described in~\cite[Example 4.7]{GM:real-HF}. See also~\cite[Lemma 9.3]{OS04:HolomorphicDisks}.)

  \begin{figure}
    \centering
    \includegraphics[alt={Diagrams to prove the existence of annuli}]{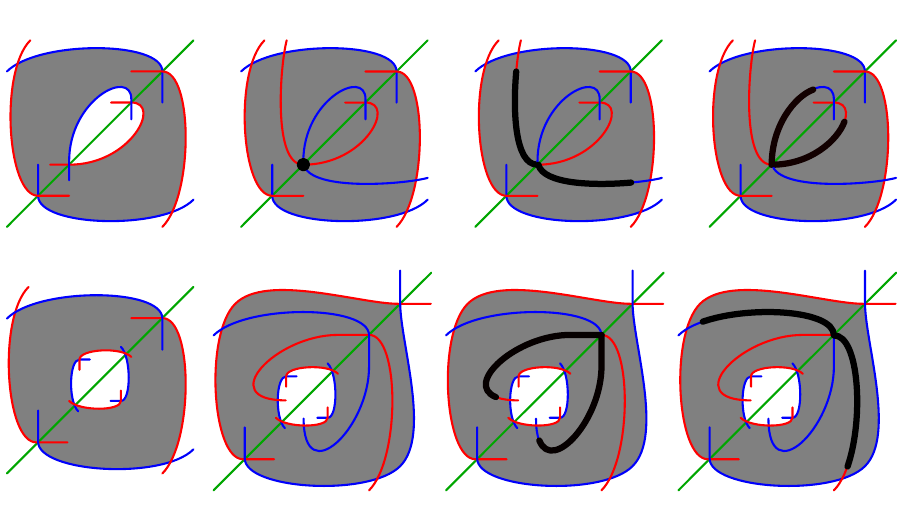}
    \caption{\textbf{Diagrams to prove the existence of annuli.} Top row: the type \ref{item:A3} annuli from Proposition~\ref{prop:classify-domains}; a particular choice of extension of two of the curves in its boundary and a larger domain in the resulting diagram; a decomposition of the result into a pair of type 2 rectangles; the other end of the moduli space, proving the existence of the desired curve. Bottom row: the type \ref{item:A4} annulus; an extension of four of the curves in its boundary and a domain in the larger diagram; a decomposition of that domain into a type \ref{item:A3} annulus and a compact hexagon; the other end of the moduli space, where the domain decomposes into a rectangle and the type \ref{item:A4} annulus, proving the existence of the type \ref{item:A4} annulus.}
    \label{fig:annuli-exist}
  \end{figure}

  We reduce the type~\ref{item:A3} and~\ref{item:A4} annuli to this case. As noted above, the count is independent of how the $\alpha$- and $\beta$-curves continue, so consider the (incomplete, but slightly less incomplete) diagrams shown in Figure~\ref{fig:annuli-exist}. Starting with the type \ref{item:A3} annuli (top row in Figure~\ref{fig:annuli-exist}), extend the diagram for the annulus by filling the inner boundary component (the one with a $90^\circ$ and a $270^\circ$ corner) with a bigon, and extending the arcs from the $270^\circ$ corner of the annulus on the inner boundary out of the annulus's outer boundary without crossing the fixed set. This domain decomposes equivariantly in two ways: as an annulus and a bigon, and as a hexagon and a rectangle. Since the pair of bigons both clearly have holomorphic representatives, gluing them gives a 1-dimensional family of annuli, corresponding to a pair of cuts, exchanged by $\tau$, starting from a corner on the inner boundary of the annulus. (The domain does not admit any other real pair of cuts, guaranteeing the moduli space has this form.) The other end of the moduli space is the annulus whose existence we are trying to prove, concatenated with a bigon. This proves annuli of this type exist.

  We handle type \ref{item:A4} similarly. Extend the domain and some of the curves as shown in Figure~\ref{fig:annuli-exist} (bottom row). The larger domain admits a decomposition as a hexagon, which clearly exists, and a type~\ref{item:A3} annulus, which we just proved exists. Again, there is a unique 1-parameter family of real pairs of cuts leading to this broken curve. At its other edge we see the desired fourth-type annulus glued to a rectangle. This proves the existence of the type \ref{item:A4} annuli.
\end{proof}

  \begin{figure}
    \centering
    \includegraphics[alt={Boundary-free annulus}]{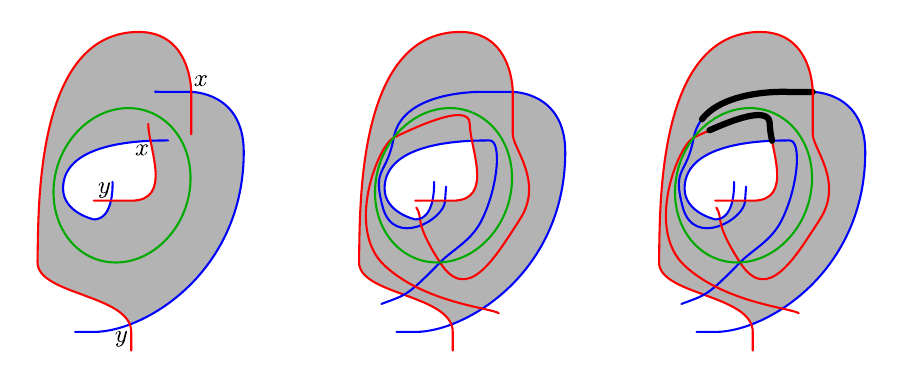}
    \caption{\textbf{Boundary-free annulus}. Left: A boundary-free annulus; we have drawn this less symmetrically than in Figure~\ref{fig:real-nice-curves}, to make it clearer that the symmetry is reflection across the green circle, not some other symmetry. Center: completing the curves locally in a way compatible with a nice diagram. Other ways of completing the diagram come from applying Dehn twists to $\alpha$- and $\beta$-curves, along circles parallel to the fixed set. Right: an equivariant pair of cuts which nearly intersect. For this pair, the inner boundary is mostly $\alpha$-boundary, while the outer boundary is mostly $\beta$-boundary.}
    \label{fig:A0}
  \end{figure}

\begin{lemma}\label{lem:A0-Afree-contribute}
  In a real-nice diagram, for any choice of generic symmetric almost complex structure or generic perturbation of the $\alpha$- and $\beta$-curves,
  each of the domains in Proposition~\ref{prop:classify-domains} of types~(\ref{item:A0}) and~(\ref{item:A-free}) has an
  odd number of real holomorphic representatives.
\end{lemma}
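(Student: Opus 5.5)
We would compute the moduli spaces directly, working (by Proposition~\ref{prop:bdy-injectivity}, and since real-nice diagrams have no index-$0$ curves) with a split complex structure and a generic equivariant perturbation of the curves. By the argument in Proposition~\ref{prop:classify-domains}, the source $S$ of a representative is an annulus, and $\pi_\Sigma\circ u$ has two boundary branch points. These branch points amount to a pair of cuts starting at the two $270^\circ$ corners, along the $\alpha$- or $\beta$-curve through that corner. Equivariance forces the two cuts to be exchanged by $\tau$. In the boundary-free case the reflection swaps the $\alpha$-boundary on one side of the fixed circle with the $\beta$-boundary on the other. In the free case the free involution identifies the cut on one boundary component with the cut on the other. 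So the data of a real representative are a single cut length $t$ (with its mirror determined), together with a choice of direction (along $\alpha$ or along $\beta$) at the $270^\circ$ corners. This is a $1$-parameter family of real annuli $S_t$ with boundary marked into $\alpha$- and $\beta$-arcs and corners at the four punctures.

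For each $t$, a real holomorphic map $S_t\to[0,1]\times\RR$ of degree $1$ sending $\alpha$-arcs to $\{1\}\times\RR$ and $\beta$-arcs to $\{0\}\times\RR$ exists exactly when a certain conformal condition holds. As in the type~\ref{item:A1}/\ref{item:A2} argument, we uniformize $S_t$ by a round annulus $\{r<|z|<1\}$, with $\tau$ acting by $z\mapsto r/\bar z$ in the boundary-free case and by $z\mapsto -r/\bar z$ in the free case. A map to the strip is then the real part of a holomorphic function with prescribed boundary values, or equivalently a double branched cover of the disk. The existence condition reduces to a single real equation $f(t)=0$: for instance, that the period (flux) of the harmonic conjugate around the core circle vanishes, or that the $\alpha$-arcs subtend matching harmonic-measure angles on the two boundary circles. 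Real symmetry halves the usual two conditions for an annulus down to one, which is consistent with the index count.

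The key step is to show that $f$ changes sign across the parameter interval, and that with a generic perturbation its zeros are transverse, so their number is odd. We would examine the two ends of the cut parameter. As $t\to 0$, the annulus degenerates one way; as the cuts approach each other (Figure~\ref{fig:A0}, right), or cut all the way through to the opposite corners, the modulus goes to the other extreme. In one limit the inner boundary is mostly $\alpha$ and the outer mostly $\beta$, and in the other the roles are reversed. Hence the flux has opposite signs at the two ends, and by the intermediate value theorem, together with genericity, there is an odd number of solutions. We need to combine the two cut directions appropriately: the two directional families concatenate into one interval through the uncut annulus. Alternatively, one could emulate the proof of Lemma~\ref{lem:nice-unbranched-domains-contribute}. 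There we would complete the diagram locally as in Figure~\ref{fig:A0} (center), find a larger domain decomposing as the annulus glued to a bigon or rectangle, and decompose it another way into pieces already known to count once, then read off the parity from the ends of a $1$-dimensional real moduli space.

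The main obstacle is controlling the degenerate ends of the cut family rigorously. We must show that the sign of $f$ is genuinely opposite at the two ends and that no other solutions escape to the boundary. This matters because only the parity is claimed, and that parity follows from the sign change together with transversality rather than from an explicit formula. For the free annulus, the same argument runs with the antipodal-type involution, and we expect the flux computation to be essentially identical.
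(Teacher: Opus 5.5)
Your proposal is correct and follows essentially the paper's route. You parametrize real representatives by the length of a $\tau$-invariant pair of cuts, in either direction, and reduce existence to the $\alpha$-arcs on the two boundary components subtending the same angle. Then the sign change between the two extreme cuts gives an odd count, which is the familiar argument of \cite[Lemma 9.3]{OS04:HolomorphicDisks} that the paper invokes.

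The endpoint behavior you flag as the main obstacle is exactly what the paper gets from niceness. For boundary-free annuli, the $\alpha$-cut from one boundary meets its mirror $\beta$-cut on the fixed set before exiting the domain. For free annuli, the pair of cuts either crosses or exits through opposite boundaries. In every case the limit has one boundary entirely $\alpha$ and the other entirely $\beta$, with the roles swapped for the other pair of cuts.
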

\begin{proof}
  First, consider boundary-free annuli (case~(\ref{item:A0})). Unlike the annuli considered in Lemma~\ref{lem:nice-unbranched-domains-contribute}, there are two possible cuts: along the $\alpha$-curve from one boundary component and the $\beta$-curve (to which it is reflected) from the other. It is easy to see that, because the diagram is nice, an $\alpha$-cut from one boundary intersects a $\beta$-cut from the other (at a point on the fixed set) before exiting the domain; see Figure~\ref{fig:A0}. As the cuts approach this point, the angle subtended by the $\alpha$-part of one boundary component approaches $2\pi$, while the $\beta$-part of the other boundary component approaches $2\pi$. Making the other pair of cuts instead, the situation is reversed. Thus, by a familiar argument (see, e.g.,~\cite[Lemma 9.3]{OS04:HolomorphicDisks}), the domain has an odd number of holomorphic representatives.

  \begin{figure}
    \centering
    \includegraphics[alt={Free annuli}]{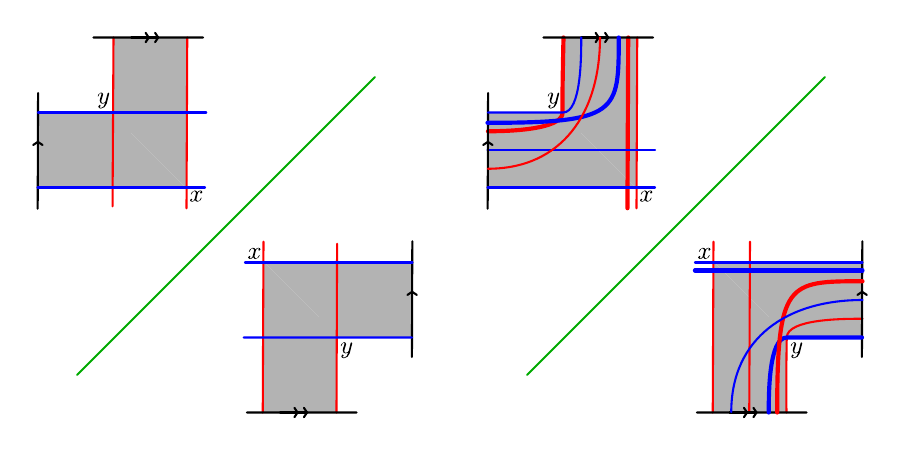}
    \caption{\textbf{Free annuli.} Left: a domain for a free annulus where the cuts exit without crossing. Right: a domain where the cuts cross before exiting. On the right, we have drawn one pair of maximal cuts thicker, to make it clear that they cross before exiting the domain.}
    \label{fig:free-annuli}
  \end{figure}

  The case of free annuli (case~(\ref{item:A-free})) is similar. Now, for each of the two $\tau$-invariant pairs of cuts, either they cross before they exit the domain, or they exit through the opposite boundaries without crossing. (See Figure~\ref{fig:free-annuli}.) Order the two boundary components of the annulus arbitrarily. In either case, one $\tau$-invariant pair limits to a curve where the first boundary component is entirely $\alpha$-boundary and the second is entirely $\beta$-boundary. The second $\tau$-invariant pair limits to a curve where the first boundary component is entirely $\beta$-boundary and the second is entirely $\alpha$-boundary. In either case, there are an odd number of lengths in between where the $\alpha$ portion of the first boundary subtends the same arc as the $\alpha$ portion of the second boundary.
\end{proof}

\begin{lemma}\label{lem:torus-contribute}
  In a real-nice diagram, for any choice of generic symmetric almost complex structure or generic perturbation of the $\alpha$- and $\beta$-curves,
  each domain $D$ in Proposition~\ref{prop:classify-domains} of type~(\ref{item:tori}) has an
  odd number of real holomorphic representatives.
\end{lemma}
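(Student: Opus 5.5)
We follow the pattern of Lemma~\ref{lem:A0-Afree-contribute}: parametrize the possible real branched covers by the $\tau$-equivariant cuts, and show that the count of holomorphic representatives is the parity of the solutions to a one-parameter matching condition. By Proposition~\ref{prop:bdy-injectivity} and the absence of index-$0$ real curves in a real-nice diagram (established in the proof of Proposition~\ref{prop:classify-domains}), the count is independent of the symmetric almost complex structure and the perturbation. So we may work with a split complex structure. We may also complete the diagram locally around $D$ in any convenient nice way, since the count only depends on the domain.

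The source $S$ is a punctured torus whose involution $\eta$ has fixed set an arc and quotient a M\"obius band. The map $\pi_\Sigma\circ u\co S\to\Sigma$ has two boundary branch points, exchanged by $\eta$. Equivalently, $S$ is obtained from the domain by a pair of cuts exchanged by $\tau$: an $\alpha$-cut and the $\beta$-cut to which $\tau$ sends it. These cuts start from the two non-fixed $x$-corners, or from the $270^\circ$ fixed $y$-corners, as the index count allows. The first step is to enumerate the maximal equivariant cut pairs, which we expect to form a closed $1$-parameter family with two ends $t\to 0$ and $t\to 1$.

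For each cut length $t$, the domain with its cuts is a genus-one surface with one boundary component. We quotient by $\eta$ and use the real structure to reduce existence of a holomorphic real map to $[0,1]\times\RR$, respecting corners, to a single real condition $f(t)=0$. In the annulus cases this condition was that the $\alpha$-arcs subtend equal angles. Here the natural choice is to double along the fixed arc, or pass to the double cover of the M\"obius band, which is an annulus. The condition then becomes one such angle-matching (conformal modulus) condition on that annulus.

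Next we analyze the two ends. As $t\to 0$ the cuts vanish and the curve degenerates toward the immersed domain with the $\alpha$-portion of the relevant boundary arcs collapsed. As the cuts reach their maximal length, either they hit each other on the fixed set, as in Figure~\ref{fig:A0}, or they exit through the boundary. In that limit the relevant $\alpha$-portion subtends the full angle. So $f$ changes sign between the two ends, and the zero set of a generic $f$ has odd cardinality. This gives an odd number of representatives, since genericity of the perturbation makes the zeros transverse.

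The main obstacle is the topology. One must verify that the only equivariant cut configurations producing a torus with the stated involution form exactly one interval, with no other branches or closed components. One must also identify correctly which quantity changes sign at the two ends. If the direct analysis is messy, the fallback is the degeneration trick from the proof of Lemma~\ref{lem:nice-unbranched-domains-contribute}. We would enlarge $D$ by adjoining a rectangle or bigon at the fixed $y$-corners so that the larger domain has a second equivariant decomposition into pieces already known to count oddly, such as a type~\ref{item:A4} annulus plus a rectangle. We would then read off the torus count from the ends of the resulting $1$-dimensional moduli space.
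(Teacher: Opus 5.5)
There is a genuine gap. The step that carries all the content is that your existence condition $f$ takes opposite signs at the two ends of the family of cuts. You never establish it; you call it the main obstacle yourself. The set-up around it is also off.

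Equivariant slit pairs cannot start at the fixed $y$-corners. A $270^\circ$ corner accepts at most one slit, and $\tau$ sends an $\alpha$-slit at a fixed corner to a $\beta$-slit at the \emph{same} corner. So the candidates are two families: $\alpha$ from $x_1$ with $\beta$ from $x_2=\tau(x_1)$, and the reverse.

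The value $t=0$ is only the point where these two families meet. The uncut domain is an ordinary, undegenerate source, not a limit in which ``the $\alpha$-portion collapses,'' so it carries no sign information. The real ends are the two maximal-slit configurations, where the slits run into each other at a fixed point of $\alpha\cap\beta$ or run into $\partial D$. These are broken or nodal limits. Unlike the annulus cases of Lemma~\ref{lem:A0-Afree-contribute}, nothing evident forces your condition to have definite, opposite signs there.

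Your reduction to an angle-matching condition on an annulus is also unsupported. The orientation double cover of the M\"obius band $S/\eta$ does not carry the map to $[0,1]\times\RR$, and doubling along the fixed arc just returns $S$. Existence can be reduced to one real condition, for instance via the hyperelliptic involution of the genus-$2$ Schottky double of $S$, but the analysis at the ends would still be undone.

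Your fallback is the right kind of idea and is essentially the paper's strategy, but it lacks the ingredients that make it work.
\begin{itemize}
\item \textbf{Structure result.} The paper first proves that every type~(\ref{item:tori}) domain is the complement of a $3\times 3$ block in a twisted toroidal grid diagram $G$. Here $G$ has two $\alpha$-circles of slope $p/q$, two $\beta$-circles of slope $q/p$, and $\tau$ is reflection across $y=x$. The complement $D'=G\setminus D$ is a type-1 rectangle.
\item \textbf{Closed-diagram argument.} It places one basepoint on the fixed set inside $D$ and one inside $D'$. It then uses $\partial^2=0$ on $\HFR^-(G)/(U_1^2,U_2^2)$ for $D''=D*D'$ when $p/q>0$, or $D'*D$ when $p/q<0$.
\item \textbf{Controlling decompositions.} Enlarging $D$ locally by ``a rectangle or bigon'' gives you no control over all index-$1$ decompositions of the bigger domain. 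The paper gets that control from the extra rotation symmetry $\sigma$ of $G$. It pairs off the non-$\sigma$-equivariant decompositions, whose counts agree by independence of the almost complex structure.
\item \textbf{The surviving decomposition.} Exactly one other $\sigma$- and $\tau$-equivariant decomposition remains. For $p/q>0$ it is a free annulus plus a boundary-free annulus; for $p/q<0$ it is two type~\ref{item:A2} annuli. All of these already count oddly by Lemmas~\ref{lem:nice-unbranched-domains-contribute} and~\ref{lem:A0-Afree-contribute}.
\end{itemize}
Your proposed ``type~\ref{item:A4} annulus plus a rectangle'' is not what occurs. Without the grid-diagram classification you have no way to enumerate the competing decompositions.
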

\begin{proof}
  We will adopt a similar strategy to the types~\ref{item:A3} and~\ref{item:A4} annuli in the proof of Lemma~\ref{lem:nice-unbranched-domains-contribute}, after some preparatory work.

  As in the other cases, since there are no index $0$ positive domains in the diagram, the count of holomorphic representatives of $D$ is independent of the choice of almost complex structure or perturbation of the $\alpha$- and $\beta$-circles.

  Following an idea of Binns-Guth-Xiao, here is a source of such domains. Start from a closed torus, viewed as $\RR^2/\ZZ^2$; this has an orientation-reversing involution given by reflection across the line $y=x$. Let $\alphas$ be the image of two lines of slope $p/q$, and $\betas$ the image of two lines of slope $q/p$. (This is a special case of the twisted toroidal grid diagrams for lens spaces in~\cite{BGH08:lens-grid}.) These lines divide the torus into parallelograms. If there is an embedded $3\times 3$ block of parallelograms with two opposite corners on the line $y=x$, then deleting that block from the torus gives a domain of type~(\ref{item:tori}). See Figure~\ref{fig:tori-domain-types}.
  
  We claim that, in fact, all domains of type~(\ref{item:tori}) are of this form. First, since the boundary is connected, the fixed set of the involution is also connected. Next, because the domain under consideration has Euler measure $0$, it is tiled by rectangles. Label the four arcs in the boundary $a_1,b_1,a_2,b_2$, so that the $a_i$ lie on $\alpha$-circles.
  As the $\alpha$-curves $\alpha_1$, $\alpha_2$ containing $a_1$, $a_2$ continue past $b_1$ into the interior of the domain, they are parallel: there is a rectangle (made of many small rectangles) with one edge on $\alpha_1$, the opposite edge on $\alpha_2$, and an edge in between that is the segment $b_1$. Since the domain is tiled by rectangles, $\alpha_1$ and $\alpha_2$ remain parallel---cobounding a rectangle $R$---until one of them exits the domain---necessarily through the interior of $b_2$, since the rectangle already has $b_1$ on its boundary. Eventually, the other $\alpha_i$ also exits through the interior of $b_2$. Moreover, by considering how the rectangle $R$ intersects $b_2$, it is clear that the four points in $(\alpha_1\cup\alpha_2)\cap b_2$ alternate between points on $\alpha_1$ and on $\alpha_2$. See Figure~\ref{fig:extend-to-grid}.
  
  \begin{figure}
    \centering
    \begin{tabular}{cccc}
      \def\p{1}
      \def\q{3}
      \begin{tikzpicture}[scale=3, alt={Toroidal domain with slope 1/3}]
  \draw[white] (-.05,-.05) rectangle (1.05,1.05);
  \draw[thick] (0,0) rectangle (1,1);
  \begin{scope}
  \clip (0,0) rectangle (1,1);
  %
  \pgfmathsetmacro{\lastline}{\p+\q}
  \draw[darkgreen] (0,0) -- (1,1);
  \foreach \i in {0,...,\lastline} {
    \pgfmathsetmacro{\x}{-\q/\p + \i/\p}
    \pgfmathsetmacro{\z}{-\p/\q + \i/\q}
    \draw[red] (\x, 0) -- (\x + \q, \p);
    \draw[blue] (\z, 0) -- (\z + \p, \q);
    \draw[orange] (\x+1/2/\p, 0) -- (\x + \q+1/2/\p, \p);
    \draw[cyan] (\z+1/2/\q, 0) -- (\z + \p+1/2/\q, \q);
  }
  \fill[white] (0.306,0.306) -- (0.466,0.784) -- (0.944,0.944) -- (0.784,0.466) -- cycle;
  \end{scope}
  \draw[fill=white] (1/4,1/4) circle (0.025) node[below left, font=\scriptsize, xshift=2pt, yshift=2pt] {$y$};
  \draw[fill=white] (1,1) circle (0.025) node[above right, font=\scriptsize, xshift=-2pt, yshift=-2pt] {$y$};
  \fill (7/16,13/16) circle (0.025) node[above left, font=\scriptsize, xshift=2pt, yshift=-2pt] {$x$};
  \fill (13/16,7/16) circle (0.025) node[below right, font=\scriptsize, xshift=-2pt, yshift=2pt] {$x$};
  \draw[->] (0,0.46) -- (0,0.54);
  \draw[->] (1,0.46) -- (1,0.54);
  \draw[->] (0.40,0) -- (0.47,0);
  \draw[->] (0.53,0) -- (0.60,0);
  \draw[->] (0.40,1) -- (0.47,1);
  \draw[->] (0.53,1) -- (0.60,1);
      \end{tikzpicture}
      &
      \def\p{2}
      \def\q{5}
  \begin{tikzpicture}[scale=3, alt={Toroidal domain with slope 2/5}]
      \draw[white] (-.05,-.05) rectangle (1.05,1.05);
  \draw[thick] (0,0) rectangle (1,1);
  \begin{scope}
  \clip (0,0) rectangle (1,1);
  %
  \pgfmathsetmacro{\lastline}{\p+\q}
  \draw[darkgreen] (0,0) -- (1,1);
  \foreach \i in {0,...,\lastline} {
    \pgfmathsetmacro{\x}{-\q/\p + \i/\p}
    \pgfmathsetmacro{\z}{-\p/\q + \i/\q}
    \draw[red] (\x, 0) -- (\x + \q, \p);
    \draw[blue] (\z, 0) -- (\z + \p, \q);
    \draw[orange] (\x+1/2/\p, 0) -- (\x + \q+1/2/\p, \p);
    \draw[cyan] (\z+1/2/\q, 0) -- (\z + \p+1/2/\q, \q);
  }
  \fill[white] (0.371,0.371) -- (0.492,0.674) -- (0.796,0.796) -- (0.674,0.492) -- cycle;
  \end{scope}
  \draw[fill=white] (1/3,1/3) circle (0.025) node[below left, font=\scriptsize, xshift=2pt, yshift=2pt] {$y$};
  \draw[fill=white] (5/6,5/6) circle (0.025) node[above right, font=\scriptsize, xshift=-2pt, yshift=-2pt] {$y$};
  \fill (10/21,29/42) circle (0.025) node[above left, font=\scriptsize, xshift=2pt, yshift=-2pt] {$x$};
  \fill (29/42,10/21) circle (0.025) node[below right, font=\scriptsize, xshift=-2pt, yshift=2pt] {$x$};
  \draw[->] (0,0.46) -- (0,0.54);
  \draw[->] (1,0.46) -- (1,0.54);
  \draw[->] (0.40,0) -- (0.47,0);
  \draw[->] (0.53,0) -- (0.60,0);
  \draw[->] (0.40,1) -- (0.47,1);
  \draw[->] (0.53,1) -- (0.60,1);
      \end{tikzpicture}
      &
      \def\p{1}
      \def\q{3}
  \begin{tikzpicture}[scale=3, alt={Toroidal domain with slope -1/3 (reflected)}]
      \draw[white] (-.05,-.05) rectangle (1.05,1.05);
  \draw[thick] (0,0) rectangle (1,1);
  \begin{scope}
  \clip (0,0) rectangle (1,1);
  %
  \pgfmathsetmacro{\lastline}{\p+\q}
  \draw[darkgreen] (0,0) -- (1,1);
  \foreach \i in {0,...,\lastline} {
    \pgfmathsetmacro{\x}{-\q/\p + \i/\p}
    \pgfmathsetmacro{\z}{-\p/\q + \i/\q}
    \draw[red] ({1-\x}, 0) -- ({1-\x-\q}, \p);
    \draw[blue] ({1-\z}, 0) -- ({1-\z-\p}, \q);
    \draw[orange] ({1-\x-1/2/\p}, 0) -- ({1-\x-\q-1/2/\p}, \p);
    \draw[cyan] ({1-\z-1/2/\q}, 0) -- ({1-\z-\p-1/2/\q}, \q);
  }
  \fill[white] (0.2875,0.2875) -- (0.925-3/16,0.075+1/16) -- (0.7125-1/8,0.7125-1/8) -- (0.075+1/16,0.925-3/16) -- cycle;
  \end{scope}
  \draw[fill=white] (1/4,1/4) circle (0.025) node[below left, font=\scriptsize, xshift=2pt, yshift=2pt] {$y$};
  \draw[fill=white] (3/4-1/8,3/4-1/8) circle (0.025) node[above right, font=\scriptsize, xshift=-2pt, yshift=-2pt] {$y$};
  \fill (13/16,1/16) circle (0.025) node[below right, font=\scriptsize, xshift=-2pt, yshift=2pt] {$x$};
  \fill (1/16,13/16) circle (0.025) node[above left, font=\scriptsize, xshift=2pt, yshift=-2pt] {$x$};
  \draw[->] (0,0.46) -- (0,0.54);
  \draw[->] (1,0.46) -- (1,0.54);
  \draw[->] (0.40,0) -- (0.47,0);
  \draw[->] (0.53,0) -- (0.60,0);
  \draw[->] (0.40,1) -- (0.47,1);
  \draw[->] (0.53,1) -- (0.60,1);
      \end{tikzpicture}
      &
      \def\p{2}
      \def\q{5}
  \begin{tikzpicture}[scale=3, alt={Toroidal domain with slope -2/5 (reflected)}]
  \draw[white] (-.05,-.05) rectangle (1.05,1.05);
  \draw[thick] (0,0) rectangle (1,1);
  \begin{scope}
  \clip (0,0) rectangle (1,1);
  %
  \pgfmathsetmacro{\lastline}{\p+\q}
  \draw[darkgreen] (0,0) -- (1,1);
  \foreach \i in {0,...,\lastline} {
    \pgfmathsetmacro{\x}{-\q/\p + \i/\p}
    \pgfmathsetmacro{\z}{-\p/\q + \i/\q}
    \draw[red] ({1-\x}, 0) -- ({1-\x-\q}, \p);
    \draw[blue] ({1-\z}, 0) -- ({1-\z-\p}, \q);
    \draw[orange] ({1-\x-1/2/\p}, 0) -- ({1-\x-\q-1/2/\p}, \p);
    \draw[cyan] ({1-\z-1/2/\q}, 0) -- ({1-\z-\p-1/2/\q}, \q);
  }
  \fill[white] (0.373,0.373) -- (0.677,0.252) -- (0.555,0.555) -- (0.252,0.677) -- cycle;
  \end{scope}
  \draw[fill=white] (5/14,5/14) circle (0.025) node[below left, font=\scriptsize, xshift=2pt, yshift=2pt] {$y$};
  \draw[fill=white] (4/7,4/7) circle (0.025) node[above right, font=\scriptsize, xshift=-2pt, yshift=-2pt] {$y$};
  \fill (5/7,3/14) circle (0.025) node[below right, font=\scriptsize, xshift=-2pt, yshift=2pt] {$x$};
  \fill (3/14,5/7) circle (0.025) node[above left, font=\scriptsize, xshift=2pt, yshift=-2pt] {$x$};
  \draw[->] (0,0.46) -- (0,0.54);
  \draw[->] (1,0.46) -- (1,0.54);
  \draw[->] (0.40,0) -- (0.47,0);
  \draw[->] (0.53,0) -- (0.60,0);
  \draw[->] (0.40,1) -- (0.47,1);
  \draw[->] (0.53,1) -- (0.60,1);
      \end{tikzpicture}
      \\[1em]
      \def\p{1}
      \def\q{3}
  \begin{tikzpicture}[scale=3, alt={Slope 1/3 domain extended to twisted grid diagram}]
      \draw[white] (-.05,-.05) rectangle (1.05,1.05);
  \draw[thick] (0,0) rectangle (1,1);
  \begin{scope}
  \clip (0,0) rectangle (1,1);
  %
  \pgfmathsetmacro{\lastline}{\p+\q}
  \draw[darkgreen] (0,0) -- (1,1);
  \foreach \i in {0,...,\lastline} {
    \pgfmathsetmacro{\x}{-\q/\p + \i/\p}
    \pgfmathsetmacro{\z}{-\p/\q + \i/\q}
    \draw[red] (\x, 0) -- (\x + \q, \p);
    \draw[blue] (\z, 0) -- (\z + \p, \q);
    \draw[orange] (\x+1/2/\p, 0) -- (\x + \q+1/2/\p, \p);
    \draw[cyan] (\z+1/2/\q, 0) -- (\z + \p+1/2/\q, \q);
  }
  \fill[pattern=dots, pattern color=black!50] (1/4,1/4) -- (7/16,13/16) -- (1,1) -- (13/16,7/16) -- cycle;
  \end{scope}
  \draw[fill=white] (1/4,1/4) circle (0.025);
  \draw[fill=white] (1,1) circle (0.025);
  \fill (7/16,13/16) circle (0.025);
  \fill (13/16,7/16) circle (0.025);
  \draw[->] (0,0.46) -- (0,0.54);
  \draw[->] (1,0.46) -- (1,0.54);
  \draw[->] (0.40,0) -- (0.47,0);
  \draw[->] (0.53,0) -- (0.60,0);
  \draw[->] (0.40,1) -- (0.47,1);
  \draw[->] (0.53,1) -- (0.60,1);
      \end{tikzpicture}
      &
      \def\p{2}
      \def\q{5}
  \begin{tikzpicture}[scale=3, alt={Slope 2/5 domain extended to twisted grid diagram}]
      \draw[white] (-.05,-.05) rectangle (1.05,1.05);
  \draw[thick] (0,0) rectangle (1,1);
  \begin{scope}
  \clip (0,0) rectangle (1,1);
  %
  \pgfmathsetmacro{\lastline}{\p+\q}
  \draw[darkgreen] (0,0) -- (1,1);
  \foreach \i in {0,...,\lastline} {
    \pgfmathsetmacro{\x}{-\q/\p + \i/\p}
    \pgfmathsetmacro{\z}{-\p/\q + \i/\q}
    \draw[red] (\x, 0) -- (\x + \q, \p);
    \draw[blue] (\z, 0) -- (\z + \p, \q);
    \draw[orange] (\x+1/2/\p, 0) -- (\x + \q+1/2/\p, \p);
    \draw[cyan] (\z+1/2/\q, 0) -- (\z + \p+1/2/\q, \q);
  }
  \fill[pattern=dots, pattern color=black!50] (1/3,1/3) -- (10/21,29/42) -- (5/6,5/6) -- (29/42,10/21) -- cycle;
  \end{scope}
  \draw[fill=white] (1/3,1/3) circle (0.025);
  \draw[fill=white] (5/6,5/6) circle (0.025);
  \fill (10/21,29/42) circle (0.025);
  \fill (29/42,10/21) circle (0.025);
  \draw[->] (0,0.46) -- (0,0.54);
  \draw[->] (1,0.46) -- (1,0.54);
  \draw[->] (0.40,0) -- (0.47,0);
  \draw[->] (0.53,0) -- (0.60,0);
  \draw[->] (0.40,1) -- (0.47,1);
  \draw[->] (0.53,1) -- (0.60,1);
      \end{tikzpicture}
      &
      \def\p{1}
      \def\q{3}
  \begin{tikzpicture}[scale=3, alt={Slope -1/3 domain extended to twisted grid diagram}]
      \draw[white] (-.05,-.05) rectangle (1.05,1.05);
  \draw[thick] (0,0) rectangle (1,1);
  \begin{scope}
  \clip (0,0) rectangle (1,1);
  %
  \pgfmathsetmacro{\lastline}{\p+\q}
  \draw[darkgreen] (0,0) -- (1,1);
  \foreach \i in {0,...,\lastline} {
    \pgfmathsetmacro{\x}{-\q/\p + \i/\p}
    \pgfmathsetmacro{\z}{-\p/\q + \i/\q}
    \draw[red] ({1-\x}, 0) -- ({1-\x-\q}, \p);
    \draw[blue] ({1-\z}, 0) -- ({1-\z-\p}, \q);
    \draw[orange] ({1-\x-1/2/\p}, 0) -- ({1-\x-\q-1/2/\p}, \p);
    \draw[cyan] ({1-\z-1/2/\q}, 0) -- ({1-\z-\p-1/2/\q}, \q);
  }
  \fill[pattern=dots, pattern color=black!50] (1/4,1/4) -- (13/16,1/16) -- (5/8,5/8) -- (1/16,13/16) -- cycle;
  \end{scope}
  \draw[fill=white] (1/4,1/4) circle (0.025);
  \draw[fill=white] (5/8,5/8) circle (0.025);
  \fill (13/16,1/16) circle (0.025);
  \fill (1/16,13/16) circle (0.025);
  \draw[->] (0,0.46) -- (0,0.54);
  \draw[->] (1,0.46) -- (1,0.54);
  \draw[->] (0.40,0) -- (0.47,0);
  \draw[->] (0.53,0) -- (0.60,0);
  \draw[->] (0.40,1) -- (0.47,1);
  \draw[->] (0.53,1) -- (0.60,1);
      \end{tikzpicture}
      &
      \def\p{2}
      \def\q{5}
  \begin{tikzpicture}[scale=3, alt={Slope -2/5 domain extended to twisted grid diagram}]
      \draw[white] (-.05,-.05) rectangle (1.05,1.05);
  \draw[thick] (0,0) rectangle (1,1);
  \begin{scope}
  \clip (0,0) rectangle (1,1);
  %
  \pgfmathsetmacro{\lastline}{\p+\q}
  \draw[darkgreen] (0,0) -- (1,1);
  \foreach \i in {0,...,\lastline} {
    \pgfmathsetmacro{\x}{-\q/\p + \i/\p}
    \pgfmathsetmacro{\z}{-\p/\q + \i/\q}
    \draw[red] ({1-\x}, 0) -- ({1-\x-\q}, \p);
    \draw[blue] ({1-\z}, 0) -- ({1-\z-\p}, \q);
    \draw[orange] ({1-\x-1/2/\p}, 0) -- ({1-\x-\q-1/2/\p}, \p);
    \draw[cyan] ({1-\z-1/2/\q}, 0) -- ({1-\z-\p-1/2/\q}, \q);
  }
  \fill[pattern=dots, pattern color=black!50] (5/14,5/14) -- (5/7,3/14) -- (4/7,4/7) -- (3/14,5/7) -- cycle;
  \end{scope}
  \draw[fill=white] (5/14,5/14) circle (0.025);
  \draw[fill=white] (4/7,4/7) circle (0.025);
  \fill (5/7,3/14) circle (0.025);
  \fill (3/14,5/7) circle (0.025);
  \draw[->] (0,0.46) -- (0,0.54);
  \draw[->] (1,0.46) -- (1,0.54);
  \draw[->] (0.40,0) -- (0.47,0);
  \draw[->] (0.53,0) -- (0.60,0);
  \draw[->] (0.40,1) -- (0.47,1);
  \draw[->] (0.53,1) -- (0.60,1);
      \end{tikzpicture}
      \\[1em]
      \def\p{1}
      \def\q{3}
  \begin{tikzpicture}[scale=3, alt={Equivariant decomposition of slope 1/3 domain}]
      \draw[white] (-.05,-.05) rectangle (1.05,1.05);
  \draw[thick] (0,0) rectangle (1,1);
  \begin{scope}
  \clip (0,0) rectangle (1,1);
  \fill[pattern=dots, pattern color=black!60]
    (0,0) -- (0,1/2) -- (9/16,11/16) -- (2/3,1) -- (1,1) -- (1,2/3) -- (11/16,9/16) -- (1/2,0) -- cycle;
  \fill[pattern=dots, pattern color=black!60]
    (1/2,1) -- (7/16,13/16) -- (0,2/3) -- (0,1) -- cycle;
  \fill[pattern=dots, pattern color=black!60]
    (2/3,0) -- (13/16,7/16) -- (1,1/2) -- (1,0) -- cycle;
  \fill[pattern=north west lines, pattern color=black!60]
    (0,2/3) -- (7/16,13/16) -- (1/2,1) -- (2/3,1) -- (9/16,11/16) -- (0,1/2) -- cycle;
  \fill[pattern=north west lines, pattern color=black!60]
    (13/16,7/16) -- (2/3,0) -- (1/2,0) -- (11/16,9/16) -- (1,2/3) -- (1,1/2) -- cycle;
  \pgfmathsetmacro{\lastline}{\p+\q}
  \draw[darkgreen] (0,0) -- (1,1);
  \foreach \i in {0,...,\lastline} {
    \pgfmathsetmacro{\x}{-\q/\p + \i/\p}
    \pgfmathsetmacro{\z}{-\p/\q + \i/\q}
    \draw[red] (\x, 0) -- (\x + \q, \p);
    \draw[blue] (\z, 0) -- (\z + \p, \q);
    \draw[orange] (\x+1/2/\p, 0) -- (\x + \q+1/2/\p, \p);
    \draw[cyan] (\z+1/2/\q, 0) -- (\z + \p+1/2/\q, \q);
  }
  \end{scope}
  \draw[red, line width=2pt] (7/16,13/16) -- (0,2/3);
  \draw[red, line width=2pt] (1,2/3) -- (11/16,9/16);
  \draw[cyan, line width=2pt] (7/16,13/16) -- (1/2,1);
  \draw[cyan, line width=2pt] (1/2,0) -- (11/16,9/16);
  \draw[blue, line width=2pt] (13/16,7/16) -- (2/3,0);
  \draw[blue, line width=2pt] (2/3,1) -- (9/16,11/16);
  \draw[orange, line width=2pt] (13/16,7/16) -- (1,1/2);
  \draw[orange, line width=2pt] (0,1/2) -- (9/16,11/16);
  \filldraw[fill=white, draw=black] (9/16-0.025,11/16-0.025) rectangle (9/16+0.025,11/16+0.025);
  \filldraw[fill=white, draw=black] (11/16-0.025,9/16-0.025) rectangle (11/16+0.025,9/16+0.025);
  \fill (7/16,13/16) circle (0.025);
  \fill (13/16,7/16) circle (0.025);
  \draw[->] (0,0.46) -- (0,0.54);
  \draw[->] (1,0.46) -- (1,0.54);
  \draw[->] (0.40,0) -- (0.47,0);
  \draw[->] (0.53,0) -- (0.60,0);
  \draw[->] (0.40,1) -- (0.47,1);
  \draw[->] (0.53,1) -- (0.60,1);
      \end{tikzpicture}
      &
      \def\p{2}
      \def\q{5}
  \begin{tikzpicture}[scale=3, alt={Equivariant decomposition of slope 2/5 domain}]
      \draw[white] (-.05,-.05) rectangle (1.05,1.05);
  \draw[thick] (0,0) rectangle (1,1);
  \begin{scope}
  \clip (0,0) rectangle (1,1);
  \fill[pattern=dots, pattern color=black!50]
    (0,0) -- (0,1/2) -- (10/21,29/42) -- (3/5,1) -- (1,1) -- (1,3/5) -- (29/42,10/21) -- (1/2,0) -- cycle;
  \fill[pattern=dots, pattern color=black!50]
    (1/2,1) -- (17/42,16/21) -- (0,3/5) -- (0,1) -- cycle;
  \fill[pattern=dots, pattern color=black!50]
    (1,0) -- (3/5,0) -- (16/21,17/42) -- (1,1/2) -- cycle;
  \fill[pattern=north west lines, pattern color=black!60]
    (0,3/5) -- (17/42,16/21) -- (1/2,1) -- (3/5,1) -- (10/21,29/42) -- (0,1/2) -- cycle;
  \fill[pattern=north west lines, pattern color=black!60]
    (1,1/2) -- (16/21,17/42) -- (3/5,0) -- (1/2,0) -- (29/42,10/21) -- (1,3/5) -- cycle;
  \pgfmathsetmacro{\lastline}{\p+\q}
  \draw[darkgreen] (0,0) -- (1,1);
  \foreach \i in {0,...,\lastline} {
    \pgfmathsetmacro{\x}{-\q/\p + \i/\p}
    \pgfmathsetmacro{\z}{-\p/\q + \i/\q}
    \draw[red] (\x, 0) -- (\x + \q, \p);
    \draw[blue] (\z, 0) -- (\z + \p, \q);
    \draw[orange] (\x+1/2/\p, 0) -- (\x + \q+1/2/\p, \p);
    \draw[cyan] (\z+1/2/\q, 0) -- (\z + \p+1/2/\q, \q);
  }
  \end{scope}
  \draw[orange, line width=2pt] (10/21,29/42) -- (0,1/2);
  \draw[orange, line width=2pt] (1,1/2) -- (16/21,17/42);
  \draw[blue, line width=2pt] (10/21,29/42) -- (3/5,1);
  \draw[blue, line width=2pt] (3/5,0) -- (16/21,17/42);
  \draw[red, line width=2pt] (29/42,10/21) -- (1,3/5);
  \draw[red, line width=2pt] (0,3/5) -- (17/42,16/21);
  \draw[cyan, line width=2pt] (29/42,10/21) -- (1/2,0);
  \draw[cyan, line width=2pt] (1/2,1) -- (17/42,16/21);
  \filldraw[fill=white, draw=black] (16/21-0.025,17/42-0.025) rectangle (16/21+0.025,17/42+0.025);
  \filldraw[fill=white, draw=black] (17/42-0.025,16/21-0.025) rectangle (17/42+0.025,16/21+0.025);
  \fill (10/21,29/42) circle (0.025);
  \fill (29/42,10/21) circle (0.025);
  \draw[->] (0,0.46) -- (0,0.54);
  \draw[->] (1,0.46) -- (1,0.54);
  \draw[->] (0.40,0) -- (0.47,0);
  \draw[->] (0.53,0) -- (0.60,0);
  \draw[->] (0.40,1) -- (0.47,1);
  \draw[->] (0.53,1) -- (0.60,1);
      \end{tikzpicture}
      &
      \def\p{1}
      \def\q{3}
  \begin{tikzpicture}[scale=3, alt={Equivariant decomposition of slope -1/3 domain}]
      \draw[white] (-.05,-.05) rectangle (1.05,1.05);
  \draw[thick] (0,0) rectangle (1,1);
  \begin{scope}
  \clip (0,0) rectangle (1,1);
  \fill[pattern=dots, pattern color=black!50]
    (0,0)--(1/3,0)--(1/4,1/4)--(0,1/3)--cycle;
  \fill[pattern=dots, pattern color=black!50]
    (1,0)--(1,1/3)--(1/2,1/2)--(1/3,1)--(0,1)--(0,1/2)--(3/8,3/8)--(1/2,0)--cycle;
  \fill[pattern=dots, pattern color=black!50]
    (1,1/2)--(1,1)--(1/2,1)--(5/8,5/8)--cycle;
  \fill[pattern=north east lines, pattern color=black!60]
    (1/3,0)--(1/2,0)--(3/8,3/8)--(0,1/2)--(0,1/3)--(1/4,1/4)--cycle;
  \fill[pattern=north east lines, pattern color=black!60]
    (1,1/3)--(1,1/2)--(5/8,5/8)--(1/2,1)--(1/3,1)--(1/2,1/2)--cycle;
  \pgfmathsetmacro{\lastline}{\p+\q}
  \draw[darkgreen] (0,0) -- (1,1);
  \foreach \i in {0,...,\lastline} {
    \pgfmathsetmacro{\x}{-\q/\p + \i/\p}
    \pgfmathsetmacro{\z}{-\p/\q + \i/\q}
    \draw[red] ({1-\x}, 0) -- ({1-\x-\q}, \p);
    \draw[blue] ({1-\z}, 0) -- ({1-\z-\p}, \q);
    \draw[orange] ({1-\x-1/2/\p}, 0) -- ({1-\x-\q-1/2/\p}, \p);
    \draw[cyan] ({1-\z-1/2/\q}, 0) -- ({1-\z-\p-1/2/\q}, \q);
  }
  \end{scope}
  \draw[red, line width=2pt] (1/4,1/4) -- (0,1/3);
  \draw[red, line width=2pt] (1,1/3) -- (1/2,1/2);
  \draw[blue, line width=2pt] (1/4,1/4) -- (1/3,0);
  \draw[blue, line width=2pt] (1/3,1) -- (1/2,1/2);
  \draw[cyan, line width=2pt] (5/8,5/8) -- (1/2,1);
  \draw[cyan, line width=2pt] (1/2,0) -- (3/8,3/8);
  \draw[orange, line width=2pt] (5/8,5/8) -- (1,1/2);
  \draw[orange, line width=2pt] (0,1/2) -- (3/8,3/8);
  \fill (1/2-0.025,1/2-0.025) rectangle (1/2+0.025,1/2+0.025);
  \fill (3/8-0.025,3/8-0.025) rectangle (3/8+0.025,3/8+0.025);
  \draw[fill=white] (1/4,1/4) circle (0.025);
  \draw[fill=white] (5/8,5/8) circle (0.025);
  \draw[->] (0,0.46) -- (0,0.54);
  \draw[->] (1,0.46) -- (1,0.54);
  \draw[->] (0.40,0) -- (0.47,0);
  \draw[->] (0.53,0) -- (0.60,0);
  \draw[->] (0.40,1) -- (0.47,1);
  \draw[->] (0.53,1) -- (0.60,1);
      \end{tikzpicture}
      &
      \def\p{2}
      \def\q{5}
  \begin{tikzpicture}[scale=3, alt={Equivariant decomposition of slope -2/5 domain}]
      \draw[white] (-.05,-.05) rectangle (1.05,1.05);
  \draw[thick] (0,0) rectangle (1,1);
  \begin{scope}
  \clip (0,0) rectangle (1,1);
  \fill[pattern=dots, pattern color=black!50]
    (0,0)--(2/5,0)--(2/7,2/7)--(0,2/5)--cycle;
  \fill[pattern=dots, pattern color=black!50]
    (1,0)--(1,2/5)--(4/7,4/7)--(2/5,1)--(0,1)--(0,1/2)--(5/14,5/14)--(1/2,0)--cycle;
  \fill[pattern=dots, pattern color=black!50]
    (1,1/2)--(1,1)--(1/2,1)--(9/14,9/14)--cycle;
  \fill[pattern=north east lines, pattern color=black!60]
    (2/5,0)--(1/2,0)--(5/14,5/14)--(0,1/2)--(0,2/5)--(2/7,2/7)--cycle;
  \fill[pattern=north east lines, pattern color=black!60]
    (1,2/5)--(1,1/2)--(9/14,9/14)--(1/2,1)--(2/5,1)--(4/7,4/7)--cycle;
  \pgfmathsetmacro{\lastline}{\p+\q}
  \draw[darkgreen] (0,0) -- (1,1);
  \foreach \i in {0,...,\lastline} {
    \pgfmathsetmacro{\x}{-\q/\p + \i/\p}
    \pgfmathsetmacro{\z}{-\p/\q + \i/\q}
    \draw[red] ({1-\x}, 0) -- ({1-\x-\q}, \p);
    \draw[blue] ({1-\z}, 0) -- ({1-\z-\p}, \q);
    \draw[orange] ({1-\x-1/2/\p}, 0) -- ({1-\x-\q-1/2/\p}, \p);
    \draw[cyan] ({1-\z-1/2/\q}, 0) -- ({1-\z-\p-1/2/\q}, \q);
  }
  \end{scope}
  \draw[orange, line width=2pt] (5/14,5/14) -- (0,1/2);
  \draw[orange, line width=2pt] (1,1/2) -- (9/14,9/14);
  \draw[cyan, line width=2pt] (5/14,5/14) -- (1/2,0);
  \draw[cyan, line width=2pt] (1/2,1) -- (9/14,9/14);
  \draw[blue, line width=2pt] (4/7,4/7) -- (2/5,1);
  \draw[blue, line width=2pt] (2/5,0) -- (2/7,2/7);
  \draw[red, line width=2pt] (4/7,4/7) -- (1,2/5);
  \draw[red, line width=2pt] (0,2/5) -- (2/7,2/7);
  \fill (9/14-0.025,9/14-0.025) rectangle (9/14+0.025,9/14+0.025);
  \fill (2/7-0.025,2/7-0.025) rectangle (2/7+0.025,2/7+0.025);
  \draw[fill=white] (5/14,5/14) circle (0.025);
  \draw[fill=white] (4/7,4/7) circle (0.025);
  \draw[->] (0,0.46) -- (0,0.54);
  \draw[->] (1,0.46) -- (1,0.54);
  \draw[->] (0.40,0) -- (0.47,0);
  \draw[->] (0.53,0) -- (0.60,0);
  \draw[->] (0.40,1) -- (0.47,1);
  \draw[->] (0.53,1) -- (0.60,1);
      \end{tikzpicture}
    \end{tabular}
    \caption{\textbf{Cutting up toroidal domains.} Top: toroidal domains corresponding to four different slopes $p/q$. Center: extending these domains to twisted grid diagrams. Bottom: the unique alternate equivariant decompositions of these domains.}
    \label{fig:tori-domain-types}
  \end{figure}
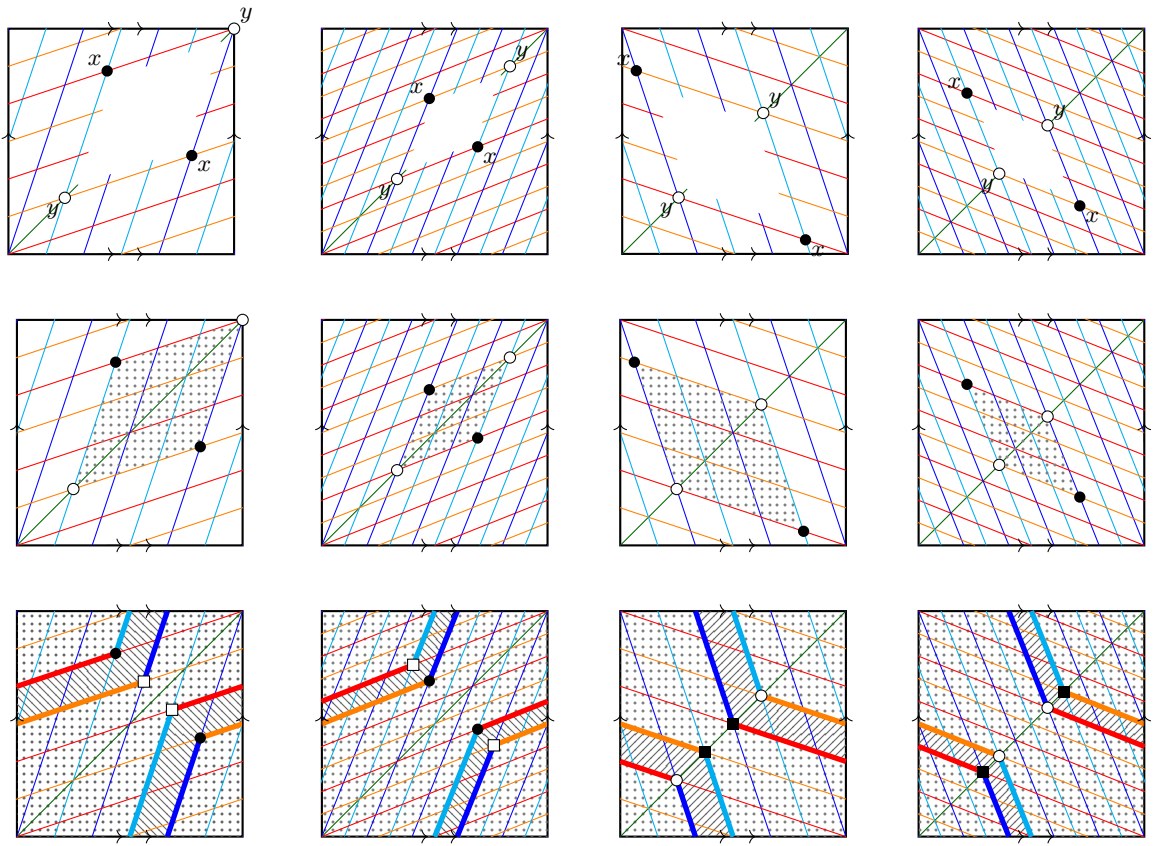

  By the same argument, there are four points on $(\alpha_1\cup\alpha_2)\cap b_2$, alternating between points on $\alpha_1$ and on $\alpha_2$; four points on $(\beta_1\cup\beta_2)\cap a_1$, alternating between points on $\beta_1$ and on $\beta_2$; and four points on $(\beta_1\cup\beta_2)\cap a_1$, alternating between points on $\beta_1$ and on $\beta_2$. It follows that we can glue a rectangle to the boundary of the domain and extend the $\alpha$- and $\beta$-circles to a twisted toroidal grid diagram $G$ for a lens space (as in~\cite{BGH08:lens-grid}).

  \begin{figure}
    \centering
    \includegraphics[alt={Extending curves in toroidal domains}]{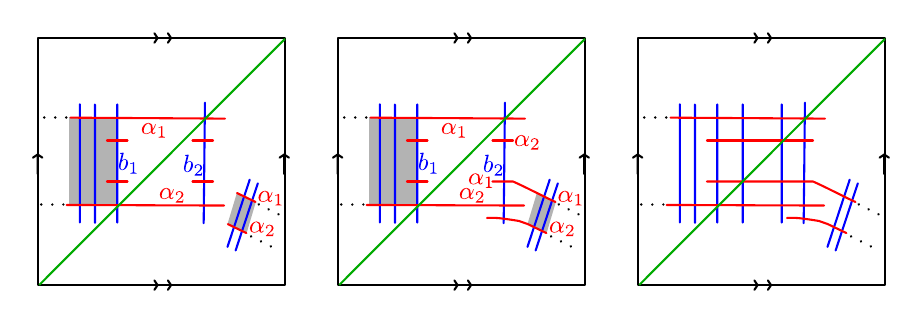}
    \caption{\textbf{Extending curves in toroidal domains.} Left: the curves $\alpha_1$ and $\alpha_2$ are parallel until one exits the domain, through the interior of $b_2$. Center: the existence of a rectangle $R$ (shaded) determines the order of the points where $\alpha_1$ and $\alpha_2$ exit through $b_2$: the points on $b_2$ alternate between $\alpha_1$ and $\alpha_2$. Right: extending the curves beyond the domain to obtain a grid diagram for a lens space.}
    \label{fig:extend-to-grid}
  \end{figure}

  The fixed set of the involution intersects the boundary of the domain at two opposite corners, and passes diagonally across small rectangles. So, we can extend the fixed set and the involution to the completed grid diagram $G$, so that the fixed set still passes diagonally across rectangles. Now, choose an identification of the torus with $\RR^2/\ZZ^2$ so that the fixed set maps to the line $y=x$. Then the $\alpha$-circles map to circles at some slope $p/q$, the $\beta$-circles map to circles at slope $q/p$, and the boundary maps to a $3\times 3$ sub-rectangle, as claimed. (See Figure~\ref{fig:tori-domain-types}.)

  View $G$ as a 2-pointed real Heegaard diagram (in the sense of~\cite[Section 3.3]{GM:real-HF}), by placing two basepoints on the fixed set, one in the middle of $D$ and the other in the middle of $G\setminus D$. We will use the fact that $\bdy^2=0$ on $\HFR^-(G)/(U_1^2,U_2^2)$ to prove that $D\subset G$ has a holomorphic representative. Let $D'=G\setminus D$ be the complement to $D$. The domain $D'\in \pi_2(\y,\x)$ is a rectangle of the first type. Observe that there is an extra symmetry $\sigma$ of the diagram, by rotation by $\pi$ around the basepoints; unlike $\tau$, the symmetry $\sigma$ is orientation-preserving and takes $\alpha$-circles to $\alpha$-circles (but exchanges the two $\alpha$-circles). The domains $D$ and $D'$ are preserved by $\sigma$. 
  
  We take two cases, depending on whether $p/q>0$ or $p/q<0$. For $p/q>0$, we consider the concatenation $D''=D*D'\in\pi_2(\x,\x)$; for $p/q<0$, we consider instead $D''=D'*D\in\pi_2(\y,\y)$. We will show that there is a unique second decomposition of $D''$ as a concatenation of index $1$ positive domains that are preserved by both $\sigma$ and $\tau$, and so that those domains are of types we have already shown have holomorphic representatives (i.e., are not tori). Perhaps there are also other decompositions of $D''$ into real index $1$ domains that are not $\sigma$-equivariant, but those decompositions come in pairs. Any two such decompositions $D_1*D_2,\sigma(D_1)*\sigma(D_2)$ contribute the same way to $\bdy^2$: if $D_1$ and $D_2$ are both not tori, then they necessarily contribute, while if they are tori we have not (yet) counted their contribution, but by independence of the count from the almost complex structure, the counts of representatives for $D_i$ and $\sigma(D_i)$ still agree. In particular, decompositions that are not $\sigma$-equivariant contribute an even number of terms to $\bdy^2$, and hence it suffices to consider only $\sigma$-equivariant decompositions. (Note that the holomorphic curves may not be $\sigma$-invariant, just the domains.)

  Consider the case that $p/q>0$. Any other $\sigma$- and $\tau$-invariant decomposition of $D''$ corresponds to making cuts of equal length in both directions from both $x$-corners (that is, four cuts along the same number of edges of the grid). These cuts are in the opposite direction from the ones decomposing $D''$ as $D*D'$. The cut starting along $\beta$ from one $x$-corner ends when it intersects the cut along $\alpha$ from the other $x$-corner, and vice versa. These cuts decompose the domain as a concatenation as a free annulus and a boundary-free annulus. See Figure~\ref{fig:tori-domain-types}.

  Finally, consider the case that $p/q<0$. In this case, the four cuts start at the $y$-corners, and again are of equal length and go in the opposite direction from the decomposition of $D''$ as $D'*D$. In this case, the cuts end when the $\beta$-cut from one $y$-corner intersects the $\alpha$-cut from the same corner. In this case, these cuts decompose the domain into a pair of type~\ref{item:A2} annuli.
  Again, see Figure~\ref{fig:tori-domain-types}.

  We have now shown that, in both cases, decompositions of $D''$ not involving $D$ contribute an odd number of terms to $\bdy^2=0$. Thus, $D$ has an odd number of holomorphic representatives, as desired.
\end{proof}

\begin{figure}
  \centering
  \includegraphics[alt={Another nice toroidal domain}]{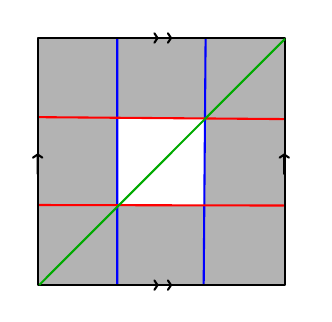}
  \caption{\textbf{Another nice toroidal domain.} Domains of this kind, which require two parallel $\alpha$-circles, appear in multi-basepointed real Heegaard Floer homology, but not in the single basepoint case.}
  \label{fig:another-torus}
\end{figure}

\begin{remark}
  For multi-basepointed real Heegaard Floer homology, there is an additional type of toroidal domain in nice diagrams, corresponding to removing a $1\times 1$ rather than $3\times 3$ sub-grid. See Figure~\ref{fig:another-torus}.
\end{remark}

\begin{remark}
  In the sutured, rather than bordered case, the results in this section have also been obtained by Binns-Guth-Xiao~\cite{BGX:real-sutured}. In particular, they pointed out that an earlier version of Proposition~\ref{prop:classify-domains} missed cases~(\ref{item:A-free}) and~(\ref{item:tori}).
\end{remark}

\subsection{Real Auroux-Zarev diagrams}\label{sec:AZ-diagrams}
Given a pointed matched circle $\PMC$, the \emph{Auroux-Zarev diagram} $\AZ(\PMC)$ is obtained from the triangle $T=\{(x,y)\in\RR^2\mid 0\leq x\leq x+y\leq 4k+1\}$ as follows. Identify the left boundary $T\cap\{x=0\}$ with $\PMC$, so the point $1\in\CircPts$ is identified with $(0,4k)$. This also identifies $T\cap\{x+y=4k+1\}$ with $\CircPts$; identify a neighborhood of $(i,4k+1-i)$ and $(M(i),4k+1-M(i))$ in $T\cap \{x+y=4k+1\}$, to obtain a surface of genus $k$ with one boundary component. Then identify neighborhoods of $(0,1/2)$ and $(0,4k+1/2)$, and of $(1/2,0)$ and $(4k+1/2,0)$ in $\bdy T$. The result is a surface with three boundary components; fill the boundary component intersecting $\{x+y=4k+1\}$ with a disk. The $\alpha$-arcs are the images of the segments $[0,4k+1-i]\times\{i\}$; each $\alpha$-arc consists of two line segments. Similarly, the $\beta$-arcs are the images of the segments $\{i\}\times[0,4k+1-i]$. The basepoint (or rather, base-arc) $\bbpt$ is an arc near $(0,0)$ connecting the two boundary components. See Figure~\ref{fig:AZ}. The diagram $\AZbar(\PMC)=-\AZ(-\PMC)$ is the result of reversing the orientation on $\AZ(-\PMC)$. See Figure~\ref{fig:AZ-bar}. We recall the bordered invariants of $\AZ(\PMC)$ and $\AZbar(\PMC)$ in Section~\ref{sec:AZ-modules-review}. (See~\cite{LOTHomPair} for more on these diagrams.) The diagrams $\AZ(\PMC)$ and $\AZbar(\PMC)$ represent $[0,1]\times F(\PMC)$, but with one side below the Heegaard surface ($\alpha$-bordered) and the other above ($\beta$-bordered). (The identifications of the boundaries on the two sides differ by a half boundary Dehn twist~\cite[Proposition 4.4]{LOTHomPair}, but because we are interested in computing invariants for closed 3-manifolds, the complications from considering strongly based mapping classes will not play a role in the present paper.)

\begin{definition}
Fix a pointed matched circle $\PMC=(Z,\CircPts,M,z)$. Let $w$ be a point
between the $2k\th$ and $(2k+1)\st$ points in $\CircPts$, and let
$\tau_\PMC\co Z\to Z$ denote reflection across $\{z,w\}$. We say that
$(\PMC,\tau_\PMC)$ is a \emph{real pointed matched circle} if $M$
respects $\tau_\PMC$, i.e., $M\circ\tau_{\PMC}=\tau_{\PMC}\circ M$. 
\end{definition}
See Figure~\ref{fig:real-PMC}. Given a real pointed matched circle $\PMC$, the involution $\tau_\PMC$ induces an isomorphism $\tau_{\PMC,*}\co \Alg(\PMC)\stackrel{\cong}{\lra} \Alg(\PMC)^\op$. 

For a real pointed matched circle $(\PMC,\tau_\PMC)$,
the diagrams $\AZ(\PMC)$ and $\AZbar(\PMC)$ are real bordered Heegaard diagrams, with the involution $\tau\co T\to T$, $\tau(x,y)=(4k+1-y,4k+1-x)$; see Figures~\ref{fig:AZ} and~\ref{fig:AZ-bar}. We have:
\begin{lemma}\label{lem:AZ-represents}
  Given a real pointed matched circle $\PMC$,
  the real Heegaard diagrams $(\AZ(\PMC),\tau)$ and $(\AZbar(\PMC),\tau)$ specify real thick surfaces. The central real surface has genus $k$ and one fixed circle, and its quotient is orientable if and only if the matching $M$ in $\PMC$ preserves $\{1,\dots,2k\}$ (and hence $\{2k+1,\dots,4k\}$).
\end{lemma}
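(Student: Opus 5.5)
Since $\AZ(\PMC)$ and $\AZbar(\PMC)$ already represent $[0,1]\times F(\PMC)$ as ordinary $\alpha$-$\beta$-bordered diagrams, the plan is to check that the involution $\tau(x,y)=(4k+1-y,4k+1-x)$ turns them into real bordered diagrams, and then read off the topology of the central surface. I would proceed in three steps.

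First, check the conditions for a real bordered Heegaard diagram. The map $\tau$ is reflection of $T$ across the line $x+y=4k+1$ composed with the swap $x\leftrightarrow y$, i.e.\ reflection across the anti-diagonal $y=x$ shifted; concretely it is an orientation-reversing isometry of $T$ preserving the hypotenuse pointwise and exchanging the legs $\{x=0\}$ and $\{y=0\}$. It sends the horizontal segments $[0,4k+1-i]\times\{i\}$ (the $\alpha$-arcs) to the vertical segments $\{4k+1-i\}\times[0,i]$, which are $\beta$-arcs after reindexing $i\mapsto 4k+1-i$. Compatibility with the gluings along the hypotenuse needs $M$ to commute with $i\mapsto 4k+1-i$, which is exactly the condition that $M$ respects $\tau_\PMC$. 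The identifications near the corners $(0,1/2)\sim(0,4k+1/2)$ and $(1/2,0)\sim(4k+1/2,0)$ are exchanged by $\tau$, and $\bbpt$ near $(0,0)$ is preserved. So $\tau$ descends, exchanges the two boundary components, and sends $\alphas$ to $\betas$. The same check applies to $\AZbar$.

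Second, identify the $3$-manifold. By the construction of $Y(\HD)$, the involution on $Y$ is induced by $(p,t)\mapsto(\tau(p),3-t)$. Since $Y\cong[0,1]\times F$ with the two boundary parametrizations swapped by $\tau$, I would produce a $\tau$-equivariant product structure, for instance by using the surface $\ol\Sigma\times\{3/2\}$ together with the compressing disks. The result is a real thick surface in the sense of Example~\ref{eg:real-surf-times-I}, whose central surface is the closed genus-$k$ surface $\ol{\Sigma}$ cut along $\alphas\cup\betas$ and reglued, with involution induced by $\tau$. This step, matching the abstract product structure equivariantly, is the main obstacle. I would argue via the classification of real surfaces: the fixed-set count and orientability can be computed on the middle level $\Sigma\times\{3/2\}$ modulo compressions, and these invariants are preserved.

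Third, compute the invariants. The fixed set of $\tau$ on $\Sigma$ is the segment of the hypotenuse together with the $\bbpt$ arc, closed up through the disk capping the hypotenuse boundary. This forms a single circle, so $|C|=1$ by the identification of fixed sets noted earlier. For orientability of the quotient, use the homology basis description in Example~\ref{eg:real-surf-times-I}. Each matched pair $\{i,M(i)\}$ gives a $1$-handle along the hypotenuse. If $M$ preserves $\{1,\dots,2k\}$, then $\tau$ maps each handle to a different handle, and the quotient is a disk with untwisted bands, hence orientable. If some pair straddles $w$, that handle is carried to itself with a twist, giving a Möbius band in the quotient. Finally, Example~\ref{eg:Sigma-times-I-zeta}'s computation $|C|-1=0$ serves as a consistency check.
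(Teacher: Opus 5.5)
Your outline and your final answers are right, but the argument rests on an impossible picture of $\tau$, and that breaks Steps~1 and~3. No involution of $T$ can fix the hypotenuse pointwise while exchanging the legs. The endpoints $(4k+1,0)$ and $(0,4k+1)$ of the hypotenuse lie on different legs, so $\tau$ must swap them. (Read literally, $(x,y)\mapsto(4k+1-y,4k+1-x)$ is the reflection in the line $x+y=4k+1$, which does not preserve $T$---it sends $(0,0)$ to $(4k+1,4k+1)$. The involution drawn in Figures~\ref{fig:AZ} and~\ref{fig:number-AZ} is $(x,y)\mapsto(y,x)$. It is also the one compatible with the labeling $[i,j]\leftrightarrow(i,4k+1-j)$ and $\tau_\PMC([i,j])=[\tau(j),\tau(i)]$.) This map reverses the hypotenuse, fixing only its midpoint $w$, and sends the $\alpha$-segment at height $i$ to the $\beta$-segment at $x=i$. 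Your compatibility condition on $M$ is correct, because hypotenuse point $j$ goes to $4k+1-j$.

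The fixed set, however, is not the hypotenuse together with $\bbpt$. The arc $\bbpt$ joins the two boundary circles, which $\tau$ exchanges, so it is reversed rather than fixed. The fixed set in $T$ is the diagonal from the right-angle corner $(0,0)$ to $w$, passing through the fixed intersection points $(t,t)$, $1\le t\le 2k$. Both endpoints lie on the boundary component that gets capped. That component is made of the hypotenuse pieces between handle feet, the short leg segments near the three vertices of $T$, and the outer sides of the two leg bands. $\tau$ acts on this circle as a reflection whose only fixed points are $(0,0)$ and $w$. Extending by reflection in a diameter of the capping disk closes the diagonal up into a single circle $C$. The same count shows that no hypotenuse handle is $\tau$-invariant: an invariant handle would put two more fixed points (the ends of its cocore) on that circle.

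That last fact is exactly what your orientability argument gets wrong. A handle $\{i,M(i)\}$ with $i\le 2k<M(i)$ is sent to the \emph{different} handle $\{4k+1-i,4k+1-M(i)\}$, never to itself; for the genus-$1$ antipodal circle, $\tau$ swaps $\{1,3\}$ and $\{2,4\}$. The clean criterion is that a real surface with a single fixed circle has orientable quotient if and only if $C$ separates $\ol{\Sigma}$. Indeed, if $\ol{\Sigma}\setminus C$ is connected, a path from $p$ to $\tau(p)$ avoiding $C$ descends to an orientation-reversing loop. Here $C$ cuts $T$ together with the capping disk into two halves:
\begin{itemize}
\item one containing hypotenuse points $1,\dots,2k$ and the $\alpha$-leg with its band;
\item the other containing $2k+1,\dots,4k$ and the $\beta$-leg with its band.
\end{itemize}
So $\ol{\Sigma}\setminus C$ is disconnected iff no handle joins the halves, i.e., iff $M$ preserves $\{1,\dots,2k\}$. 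Your M\"obius band does exist, but it is the image of a straddling handle whose far foot has been reflected across $C$ in the quotient, not a self-mapped handle.

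Finally, Step~2 is not an obstacle. $\AZ(\PMC)$ and $\AZbar(\PMC)$ have no $\alpha$- or $\beta$-circles, so $Y(\HD)$ is literally $\ol{\Sigma}\times[1,2]$ with involution $(p,t)\mapsto(\tau(p),3-t)$. The boundary compatibility comes from $\tau(\alphas)=\betas$. This is on the nose a real thick surface with central real surface $(\ol{\Sigma},\tau)$ of genus $k$, with no compressions, regluing, or appeal to the classification. That observation, together with the fixed-set and separation computation above, is all the paper's one-line ``immediate from the construction'' proof contains.
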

\begin{proof}
  This is immediate from the construction of a real bordered manifold from a real bordered Heegaard diagram.
\end{proof}

In particular, if $\PMC$ is the antipodal pointed matched circle, then $F(\PMC)/\tau$ is always nonorientable. If $\PMC$ is an even-genus split pointed matched circle, then $F(\PMC)/\tau$ is orientable; for an odd-genus split pointed matched circle, $F(\PMC)/\tau$ is nonorientable (as it must be, by the classification of real involutions of surfaces).

\subsection{Classical bordered invariants of the Auroux-Zarev diagrams}\label{sec:AZ-modules-review}
It seems helpful to pause at this point and recall the bordered \AAm\ and \DD\ invariants of the Auroux-Zarev piece and its mirror. The computations in Section~\ref{sec:real-AZ} of the real invariants of these diagrams do not, in fact, depend on the answers in the unreal case, so the reader may skip this section; but it provides context for the computations below.

\begin{figure}
  \centering
  \begin{tikzpicture}[alt={Auroux-Zarev pieces}]
  \begin{scope}[yshift = 5cm, xshift=-2cm]
    \draw (0,5) arc (180:0:2.5);
    \draw (.25,5) arc (180:0:2.25);
    \draw (5,5) arc (90:-90:2.5);
    \draw (5,4.75) arc (90:-90:2.25);
    \draw (.8,4.2) arc (-225:-45:1.414*1.2);
    \draw (1.2,3.8) arc (-225:-45:1.414*.8);
    \draw (2.2,2.8) arc (-225:-45:1.414*.8);
    \draw (1.8,3.2) arc (-225:-45:1.414*1.2);
    \begin{scope}[even odd rule]
    \draw[fill=white] (1.8,3.2) arc (-225:-45:1.414*1.2) (2.2,2.8) arc (-225:-45:1.414*.8);
    \end{scope}
    \draw (0,5) to (5,5) to (5,0) to (0,5);
    \foreach \y in {1,2,3,4}
      \draw[red, thick] (5,\y) to (5-\y,\y); 
    \foreach \x in {1,2,3,4}
      \draw[blue,thick] (\x,5) to (\x,5-\x);
    \foreach \x / \y / \z in {4/1/$\iota_0$, 3/2/$\iota_1$, 2/3/$\iota_0$, 1/4/$\iota_1$}
      \node at (\x-.2,\y-.2) () {\z};
    \foreach \x / \y / \z in {4/2/$\rho_1$, 4/3/$\rho_{12}$, 4/4/$\rho_{123}$, 3/3/$\rho_2$, 3/4/$\rho_{23}$, 2/4/$\rho_3$}
      \node at (\x+.3,\y+.2) () {\z};
    \draw[darkgreen] (5,5) to (2.5,2.5);
    \end{scope}
  \begin{scope}
    \begin{scope}[rotate=180, shift = {(-5,-5)}]
    \draw (0,5) arc (180:0:2.5);
    \draw (.25,5) arc (180:0:2.25);
    \draw (5,5) arc (90:-90:2.5);
    \draw (5,4.75) arc (90:-90:2.25);
    \draw (.8,4.2) arc (-225:-45:1.414*1.2);
    \draw (1.2,3.8) arc (-225:-45:1.414*.8);
    \draw (2.2,2.8) arc (-225:-45:1.414*.8);
    \draw (1.8,3.2) arc (-225:-45:1.414*1.2);
    \begin{scope}[even odd rule]
    \draw[fill=white] (1.8,3.2) arc (-225:-45:1.414*1.2) (2.2,2.8) arc (-225:-45:1.414*.8);
    \end{scope}
    \end{scope}
    \draw[-] (0,5) to (0,0) to (5,0) to (0,5);
    \foreach \y in {1,2,3,4}
      \draw[red, thick] (0,\y) to (5-\y,\y); 
    \foreach \x in {1,2,3,4}
      \draw[blue,thick] (\x,0) to (\x,5-\x);
    \foreach \x / \y / \z in {4/1/$\iota_1$, 3/2/$\iota_0$, 2/3/$\iota_1$, 1/4/$\iota_0$}
      \node at (\x+.2,\y+.2) () {\z};
    \foreach \x / \y / \z in {1/3/$\rho_1$, 1/2/$\rho_{12}$, 1/1/$\rho_{123}$, 2/2/$\rho_2$, 2/1/$\rho_{23}$, 3/1/$\rho_3$}
      \node at (\x+.3,\y+.2) () {\z};
    \draw[darkgreen] (0,0) to (2.5,2.5);
  \end{scope}
  \begin{scope}[xshift=5.35cm, yshift=-2.5cm, scale=.9]
    \draw (.8,8.2) arc (-225:-45:1.414*1.2);
    \draw (1.2,7.8) arc (-225:-45:1.414*.8);
    \draw (2.2,6.8) arc (-225:-45:1.414*.8);
    \draw (1.8,7.2) arc (-225:-45:1.414*1.2);
    \begin{scope}[even odd rule]
    \draw[fill=white] (1.8,7.2) arc (-225:-45:1.414*1.2) (2.2,6.8) arc (-225:-45:1.414*.8);
    \end{scope}
    \draw (4.8,4.2) arc (-225:-45:1.414*1.2);
    \draw (5.2,3.8) arc (-225:-45:1.414*.8);
    \draw (6.2,2.8) arc (-225:-45:1.414*.8);
    \draw (5.8,3.2) arc (-225:-45:1.414*1.2);
    \begin{scope}[even odd rule]
    \draw[fill=white] (5.8,3.2) arc (-225:-45:1.414*1.2) (6.2,2.8) arc (-225:-45:1.414*.8);
    \end{scope}
    \draw (0,9) to (9,9) to (9,0) to (0,9);
    \foreach \y in {1,2,3,4,5,6,7,8}
      \draw[red, thick] (9,\y) to (9-\y,\y); 
    \foreach \x in {1,2,3,4,5,6,7,8}
      \draw[blue,thick] (\x,9) to (\x,9-\x);
    \draw[darkgreen] (9,9) to (4.5,4.5);
  \end{scope}
  \end{tikzpicture}
  \caption{\textbf{Auroux-Zarev pieces.} \textcolor{red}{Horizontal} line segments are \textcolor{red}{$\alpha$-curves}, and \textcolor{blue}{vertical} line segments are \textcolor{blue}{$\beta$-curves}. The left column shows the diagram for the genus $1$ pointed matched circle, drawn first so that the action of $\Alg(\PMC)$ on $\CFAAa(\AZ)$ is apparent, and then as one would draw the diagram to understand $\CFDDa(\AZ)$. The right side shows the case of the genus $2$ split pointed matched circle, drawn in the style of $\CFAAa(\AZ)$, with two half-handles omitted to save space. Also, a disk should be attached to the outer boundary of each diagram. The \textcolor{darkgreen}{diagonal line} (which becomes a circle after attaching the disk) is the fixed set $C$ of the real involution.}
  \label{fig:AZ}
\end{figure}
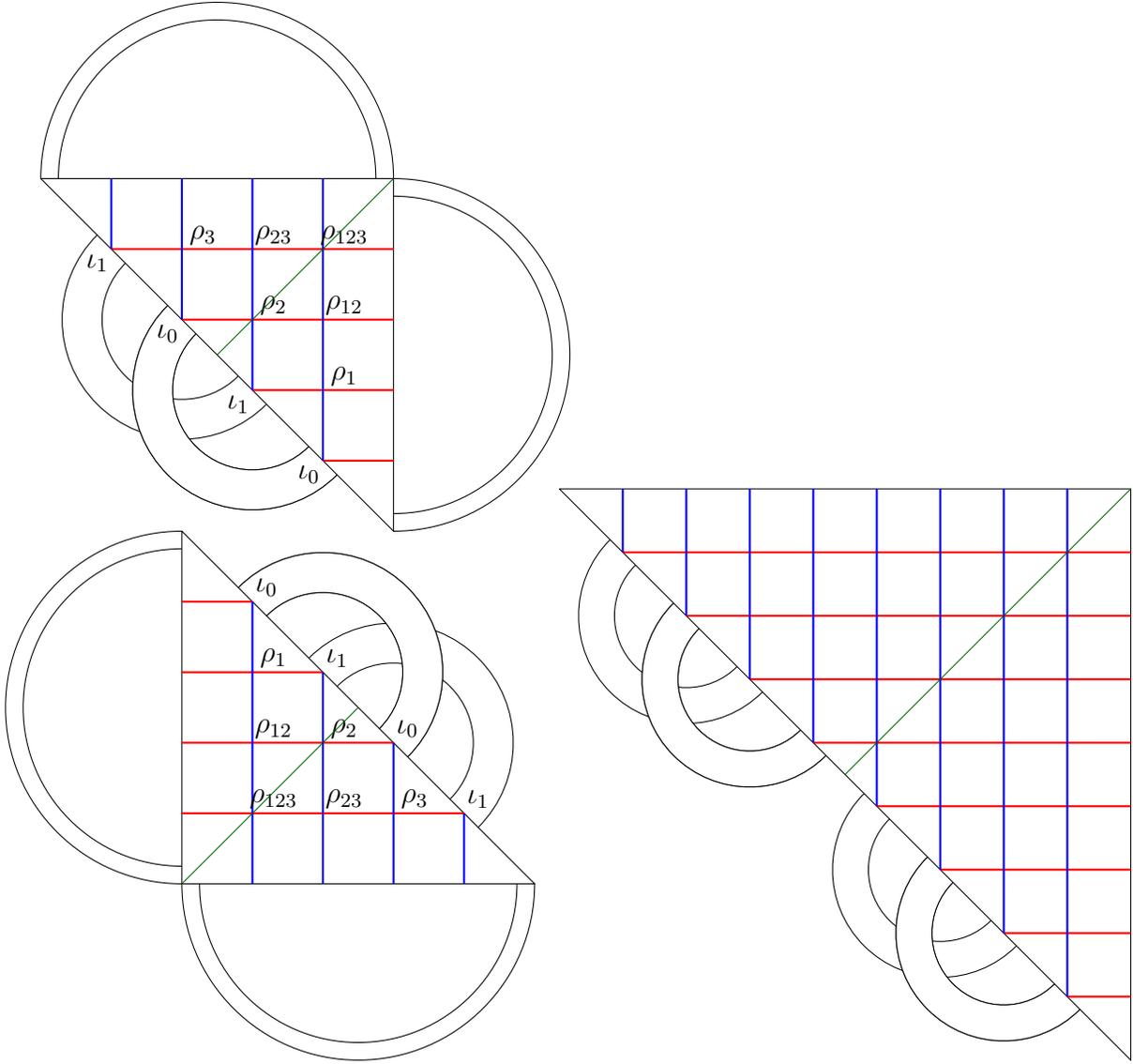

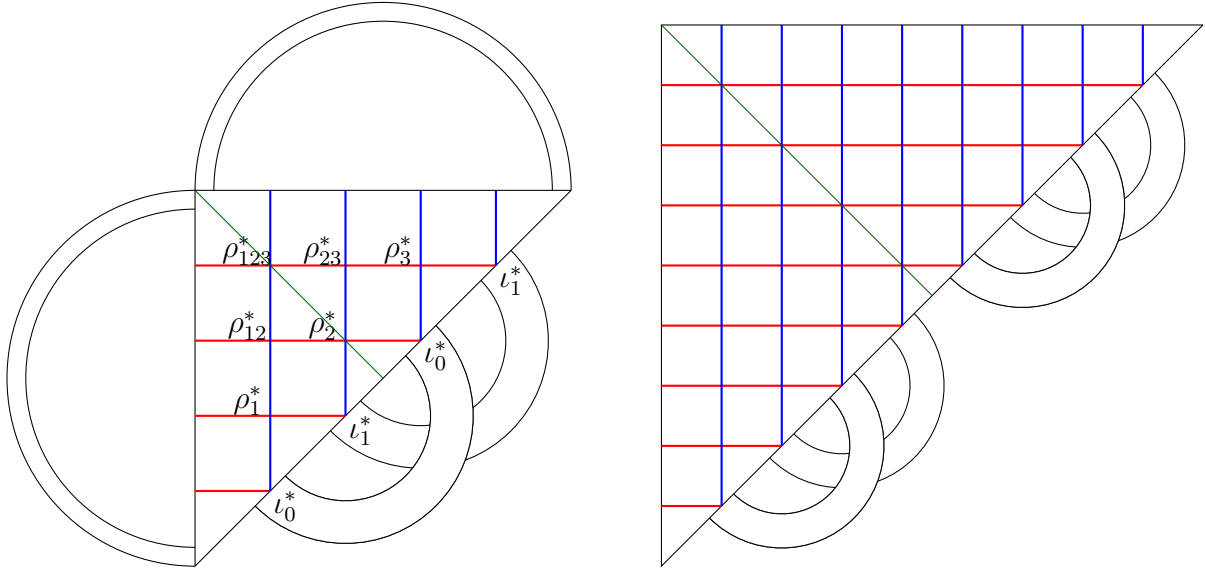
\begin{figure}
  \centering
    \begin{tikzpicture}[alt={Mirror Auroux-Zarev pieces}]
  \begin{scope}[xscale=-1, yshift=-2cm, xshift=-2cm]
    \draw (0,5) arc (180:0:2.5);
    \draw (.25,5) arc (180:0:2.25);
    \draw (5,5) arc (90:-90:2.5);
    \draw (5,4.75) arc (90:-90:2.25);
    \draw (.8,4.2) arc (-225:-45:1.414*1.2);
    \draw (1.2,3.8) arc (-225:-45:1.414*.8);
    \draw (2.2,2.8) arc (-225:-45:1.414*.8);
    \draw (1.8,3.2) arc (-225:-45:1.414*1.2);
    \begin{scope}[even odd rule]
    \draw[fill=white] (1.8,3.2) arc (-225:-45:1.414*1.2) (2.2,2.8) arc (-225:-45:1.414*.8);
    \end{scope}
    \draw (0,5) to (5,5) to (5,0) to (0,5);
    \foreach \y in {1,2,3,4}
      \draw[red, thick] (5,\y) to (5-\y,\y); 
    \foreach \x in {1,2,3,4}
      \draw[blue,thick] (\x,5) to (\x,5-\x);
    \foreach \x / \y / \z in {4/1/$\iota_0^*$, 3/2/$\iota_1^*$, 2/3/$\iota_0^*$, 1/4/$\iota_1^*$}
      \node at (\x-.2,\y-.2) () {\z};
    \foreach \x / \y / \z in {4/2/$\rho_1^*$, 4/3/$\rho_{12}^*$, 4/4/$\rho_{123}^*$, 3/3/$\rho_2^*$, 3/4/$\rho_{23}^*$, 2/4/$\rho_3^*$}
      \node at (\x+.3,\y+.2) () {\z};
    \draw[darkgreen] (5,5) to (2.5,2.5);
    \end{scope}
  \begin{scope}[xscale=-.8, yscale=.8, xshift=-13cm, yshift=-2.5cm]
    \draw (.8,8.2) arc (-225:-45:1.414*1.2);
    \draw (1.2,7.8) arc (-225:-45:1.414*.8);
    \draw (2.2,6.8) arc (-225:-45:1.414*.8);
    \draw (1.8,7.2) arc (-225:-45:1.414*1.2);
    \begin{scope}[even odd rule]
    \draw[fill=white] (1.8,7.2) arc (-225:-45:1.414*1.2) (2.2,6.8) arc (-225:-45:1.414*.8);
    \end{scope}
    \draw (4.8,4.2) arc (-225:-45:1.414*1.2);
    \draw (5.2,3.8) arc (-225:-45:1.414*.8);
    \draw (6.2,2.8) arc (-225:-45:1.414*.8);
    \draw (5.8,3.2) arc (-225:-45:1.414*1.2);
    \begin{scope}[even odd rule]
    \draw[fill=white] (5.8,3.2) arc (-225:-45:1.414*1.2) (6.2,2.8) arc (-225:-45:1.414*.8);
    \end{scope}
    \draw (0,9) to (9,9) to (9,0) to (0,9);
    \foreach \y in {1,2,3,4,5,6,7,8}
      \draw[red, thick] (9,\y) to (9-\y,\y); 
    \foreach \x in {1,2,3,4,5,6,7,8}
      \draw[blue,thick] (\x,9) to (\x,9-\x);
    \draw[darkgreen] (9,9) to (4.5,4.5);
  \end{scope}
  \end{tikzpicture}
  \caption{\textbf{Mirror Auroux-Zarev pieces.} In this case, we have drawn both pictures with the $\alpha$-arcs on the left.
  Otherwise, conventions are as in Figure~\ref{fig:AZ}.}
  \label{fig:AZ-bar}
\end{figure}

There are canonical isomorphisms 
\begin{align*}
\CFAAa(\AZ(\PMC))&=\Alg(\PMC) & &\text{\cite[Proposition 4.1]{LOTHomPair},}\\
\CFAAa(\AZbar(\PMC))&=\overline{\Alg(\PMC)} & &\text{\cite[Proposition 4.3]{LOTHomPair}, and}\\
\CFDDa(\AZbar(-\PMC))&=\lsup{\Alg(\PMC)}\mathit{bar}^{\Alg(\PMC)} & &\text{\cite[Proposition 5.11]{LOTHomPair}.}
\end{align*}
Here, $\overline{\Alg(\PMC)}$ is the dual type \AAm\ bimodule to $\Alg(\PMC)$, whose underlying $\FF_2$-vector space is $\Hom_{\FF_2}(\Alg(\PMC),\FF_2)$. The type \DD\ bimodule $\lsup{\Alg(\PMC)}\mathit{bar}^{\Alg(\PMC)}$ has underlying vector space $\overline{\Alg}=\Hom_{\FF_2}(\Alg(\PMC),\FF_2)$ and $\delta^1\co \overline{\Alg}\to\Alg(\PMC)\otimes\overline{\Alg}\otimes\Alg(\PMC)$ given by 
\[
\delta^1(\phi) = 1\otimes \overline{d}(\phi)\otimes 1 + \sum_{\substack{\xi\in\mathrm{Chord}(\PMC)\\(I,J)\text{ complementary}}}\Bigl( Ia(\xi)J\otimes (\phi\cdot Ia(\xi)J)\otimes 1 + 1\otimes (Ia(\xi)J\cdot \phi)\otimes Ia(\xi)J\Bigr).
\]
Here, $\overline{d}$ is the differential on $\overline{\Alg}$ induced by (i.e., the transpose of) the differential on $\Alg(\PMC)$, and $\mathrm{Chord}(\PMC)$ is the set of chords in $\PMC$. Note that, if we use the dual basis for $\overline{\Alg}$ and $\phi=a(\rhos)^*$, then $\overline{d}(\phi)$ is the sum of all $a(\rhos')^*$ so that $a(\rhos)$ is obtained from $a(\rhos')$ by resolving a crossing, and $\phi\cdot a(\xi)$ is the result of factoring $\xi$ from $\rhos$ on the right. Our convention for idempotents is that the left idempotent of $a^*$ is the complement to the right idempotent for $a$. As a concrete example, in the genus $1$ case, $\rho_{12}^*=\iota_1\rho_{12}^*\iota_1$ (because $\rho_{12}=\iota_0\rho_{12}\iota_0$) and
\begin{align*}
\delta^1(\rho_{12}^*) &= \iota_1\rho_2\iota_0\otimes (\rho_{12}^*\cdot \iota_1\rho_{2}\iota_0)\otimes 1+1\otimes(\iota_0\rho_1\iota_1\cdot \rho_{12}^*)\otimes \iota_0\rho_1\iota_1\\
&=\rho_2\otimes \rho_1^*\otimes 1+1\otimes\rho_2^*\otimes\rho_1.
\end{align*}
As another example, $\rho_1^*=\iota_1\rho_1^*\iota_0$ (because $\rho_1=\iota_0\rho_1\iota_1$) and
\begin{align*}
\delta^1(\rho_1^*)&=\iota_0\rho_1\iota_1\otimes(\rho_1^*\cdot\iota_0\rho_1\iota_1)\otimes 1+1\otimes(\iota_0\rho_1\iota_1\cdot \rho_1^*)\otimes\iota_0\rho_1\iota_1\\
&=\rho_1\otimes\iota_0^*\otimes 1+1\otimes\iota_0^*\otimes\rho_1.
\end{align*}

These results also give a description of $\CFDDa(\AZ(\PMC))$, by tensoring $\CFAAa(\AZ(\PMC))$ with the type \DD\ identity bimodule on both sides. The underlying vector space of $\CFDDa(\AZ(\PMC))$ is $\Alg(\PMC)$, and the differential is given by
\[
\delta^1(\wt{b}) = 1\otimes d(\wt{b})\otimes 1 + \sum_{\substack{\xi\in\mathrm{\Chord}(\PMC)\\(I,J)\text{ complementary}}}\Bigl( r(Ia(\xi)J)\otimes \wt{b\cdot Ia(\xi)J} \otimes 1 + 1\otimes \wt{Ia(\xi)J \cdot b}\otimes Ia(\xi)J\Bigr),
\]
where $r\co \PMC\to-\PMC$ is the orientation-reversing identity map. Here, we are writing the element of $\CFDDa(\AZ(\PMC))$ corresponding to $a\in\Alg(\PMC)$ as $\wt{a}$; the left idempotent of $\wt{a}$ is the image under $r$ of the complement of the right idempotent of $a$, and the right idempotent of $\wt{a}$ is the image under $r$ of the complement of the left idempotent of $a$. For example, $\wt{\rho_{12}}=\iota_0\wt{\rho_{12}}\iota_0$ and $\wt{\rho_2}=\iota_0\wt{\rho_2}\iota_1$, and
\begin{align*}
\delta^1(\wt{\rho_{12}})&=\delta^1(\iota_0\wt{\rho_{12}}\iota_0) = \iota_0\rho_1\iota_1\otimes\iota_1\wt{\rho_{12}\cdot\rho_3}\iota_0\otimes 1=\rho_1\otimes\wt{\rho_{123}}\otimes 1\\
\delta^1(\wt{\rho_{2}}) &= \delta^1(\iota_0\wt{\rho_{2}}\iota_1)=\iota_0\rho_1\iota_1\otimes\iota_1\wt{\rho_2\cdot\rho_3}\iota_1\otimes 1 + 1\otimes\iota_0\wt{\rho_1\cdot\rho_2}\iota_0\otimes \iota_0\rho_1\iota_1\\
&= \rho_1\otimes\wt{\rho_{23}}\otimes 1 + 1\otimes\wt{\rho_{12}}\otimes \rho_1
\end{align*}

\subsection{The real Auroux-Zarev modules}\label{sec:real-AZ}

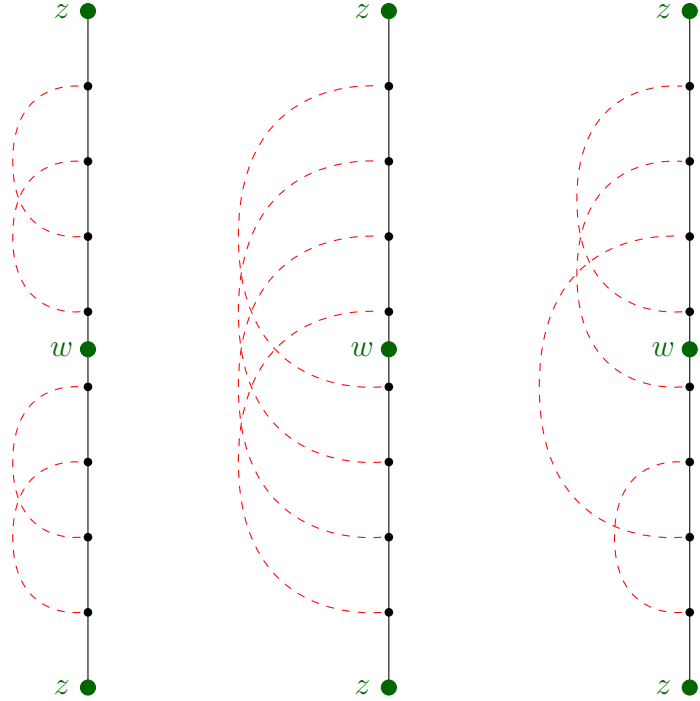
\begin{figure}
  \centering
  \begin{tikzpicture}[alt={Real pointed matched circles}]
    \begin{scope}
      \draw[-] (0,0) -- (0,9); 
      \foreach \y in {1,2,...,8}
        \filldraw (0,\y) circle (.05); 
      \foreach \y/\name in {0/z,4.5/w,9/z} {
        \filldraw[darkgreen] (0,\y) circle (.1); 
        \node at (-.35,\y) () {$\textcolor{darkgreen}{\name}$};
      }
      \foreach \y/\z in {1/3, 2/4, 5/7, 6/8}
        \draw[red, dashed] plot [smooth, tension=2] coordinates {(-.1,\y) (-\z/2+\y/2,\y/2+\z/2) (-.1,\z)};
    \end{scope}
    \begin{scope}[xshift=4cm]
      \draw[-] (0,0) -- (0,9); 
      \foreach \y in {1,2,...,8}
        \filldraw (0,\y) circle (.05); 
      \foreach \y/\name in {0/z,4.5/w,9/z} {
        \filldraw[darkgreen] (0,\y) circle (.1); 
        \node at (-.35,\y) () {$\textcolor{darkgreen}{\name}$};
      }
      \foreach \y/\z in {1/5, 2/6, 3/7, 4/8}
        \draw[red, dashed] plot [smooth, tension=2] coordinates {(-.1,\y) (-\z/2+\y/2,\y/2+\z/2) (-.1,\z)};
    \end{scope}
    \begin{scope}[xshift=8cm]
      \draw[-] (0,0) -- (0,9); 
      \foreach \y in {1,2,...,8}
        \filldraw (0,\y) circle (.05); 
      \foreach \y/\name in {0/z,4.5/w,9/z} {
        \filldraw[darkgreen] (0,\y) circle (.1); 
        \node at (-.35,\y) () {$\textcolor{darkgreen}{\name}$};
      }
      \foreach \y/\z in {1/3, 2/6, 4/7, 5/8}
        \draw[red, dashed] plot [smooth, tension=2] coordinates {(-.1,\y) (-\z/2+\y/2,\y/2+\z/2) (-.1,\z)};
    \end{scope}
  \end{tikzpicture}
  \caption{\textbf{Real pointed matched circles.} Left and center: genus $2$ real pointed matched circles (the split and antipodal pointed matched circles). Right: a genus $2$ pointed matched circle which is not real. The \textcolor{red}{dashed} semicircles indicate the matchings. As usual, to make drawing easier, we have cut the circles $Z$ open at the basepoints $z$.}
  \label{fig:real-PMC}
\end{figure}

Fix a real pointed matched circle $\PMC$. In the diagram $\AZ(\PMC)$, all points $x$ on $\alphas\cap C$ have $\sigma(x)=-1$; for $\AZbar(\PMC)$, all points $x$ on $\alphas\cap C$ have $\sigma(x)=+1$.
In particular, the diagram $\AZ(\PMC)$ (respectively $\AZbar(\PMC)$) is negative (respectively positive) real-nice. So, Proposition~\ref{prop:classify-domains} computes $\CFDRa(\AZ(\PMC))$ and $\CFDRa(\AZbar(\PMC))$. We devote the rest of this section to giving a more explicit description.

Fix a pointed matched circle $\PMC=(Z,\CircPts,M,z)$, where $|\CircPts|=4k$. Cutting $Z$ open at $z$ identifies $\CircPts$ with $\{1,\dots,4k\}$. Recall that a \emph{strands diagram} (generator of $\Alg(\PMC)$) consists of some number $n$ of intervals $[i,j]$ with $1\leq i<j\leq 4k$, the \emph{moving strands}, and $2k-2n$ additional points $\mathbf{h}\subset \{1,\dots,4k\}$, the \emph{horizontal strands}, so that $M(\mathbf{h})=\mathbf{h}$, $\mathbf{h}$ is disjoint from the initial and terminal points of the moving strands, and the initial (respectively terminal) points of the moving strands are themselves disjoint, and disjoint from their images under $M$. Equivalently, we can think of $\textbf{h}$ as a collection of trivial intervals $[i,i]$. We have restricted to the central $\SpinC$-structure on the surface, as usual.

Now, fix a real pointed matched circle $(\PMC,\tau_\PMC)$. The involution $\tau_\PMC$ acts on the set of strands diagrams. (A moving strand $[i,j]$ is sent to $[\tau_\PMC(j),\tau_\PMC(i)]$.) Call a strands diagram $a$ \emph{symmetric} if $\tau_\PMC(a)=a$, and asymmetric otherwise. Similarly, chords $[i,j]$ in $\PMC$ come in two types: \emph{fixed chords}, with $j=\tau(i)$, and \emph{non-fixed chords}.

\begin{figure}
  \centering
  \begin{tikzpicture}[scale=.666, alt={Numbering the segments in AZ and AZbar}]
    \begin{scope}[rotate=180]
    \draw (0,9) to (9,9) to (9,0) to (0,9);
    \foreach \y in {1,2,...,8}
      \draw[red, thick] (9,\y) to (9-\y,\y); 
    \foreach \x in {1,2,...,8}
      \draw[blue,thick] (\x,9) to (\x,9-\x);
    \draw[darkgreen] (9,9) to (4.5,4.5);
    \node at (5,10) (AZlab) {$\AZ$};
    \foreach \y in {1,2,...,8}
      {
      \node at (9.25,\y) (ylab\y) {$\y$};
      \node at (9-\y,9.3) (xlab\y) {$\y$};
      }
    \filldraw (7,5) circle (.1);
  \end{scope}
  \begin{scope}[xscale=-1, xshift=-12cm, yshift = -9cm]
    \draw (0,9) to (9,9) to (9,0) to (0,9);
    \foreach \y in {1,2,...,8}
      \draw[red, thick] (9,\y) to (9-\y,\y); 
    \foreach \x in {1,2,...,8}
      \draw[blue,thick] (\x,9) to (\x,9-\x);
    \draw[darkgreen] (9,9) to (4.5,4.5);
    \node at (5,-.95) (AZlab) {$\AZbar$};
    \foreach \y in {1,2,...,8}
      {
      \node at (9.25,\y) (ylab\y) {$\y$};
      \node at (9-\y,9.3) (xlab\y) {$\y$};
      }
    \filldraw (7,5) circle (.1);
  \end{scope}
  \end{tikzpicture}
  \caption{\textbf{Numbering the segments in $\AZ$ and $\AZbar$.} The diagram $\AZ$ is shown on the left and the diagram $\AZbar$ on the right, both drawn with the $\alpha$-boundary on the left. In both cases, the intersection point corresponding to the chord $[2,5]$ is marked. We have not drawn the matching or the corresponding handles, as they are irrelevant to the numbering scheme. Note that the top-to-bottom numbering of the points on the $\alpha$-boundary of $\AZ$ is consistent with type $A$, not type $D$ conventions: for the type $D$ structure, the chord $[1,2]$ goes from the point labeled $8$ to the point labeled $7$.}
  \label{fig:number-AZ}
\end{figure}
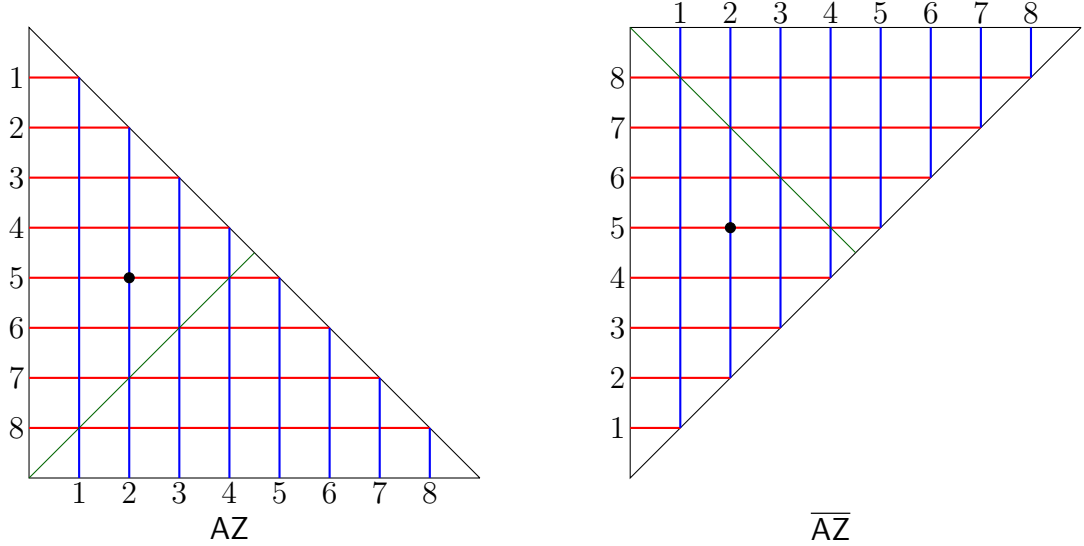

Starting with $\CFDRa(\AZbar(\PMC))$, we recall how points in $\AZbar(\PMC)$ correspond to chords. Each $\alpha$-arc in $\AZbar(\PMC)$ consists of two segments in the triangle (horizontal line segments in Figure~\ref{fig:AZ-bar}). By going along the boundary in the opposite of the boundary orientation, number these $\alpha$-segments $1,\dots,4k$. Similarly, each $\beta$-arc consists of two $\beta$-segments; number the $\beta$-segments, too, in the opposite of the boundary orientation. See Figure~\ref{fig:number-AZ}. Identify the intersection point between the $i\th$ beta-segment and the $j\th$-alpha segment with the strand $[i,j]$. (In the special case $i=j$, this is a horizontal strand rather than a moving strand.)

The module $\CFDRa(\AZbar(\PMC))$ is generated by the set of symmetric algebra elements. We will write a state corresponding to a symmetric algebra element $a$ as $a^*$, because the differential is reminiscent of the differential on $\CFDDa(\AZbar(\PMC))$. The left idempotent of $a^*$ is the complement of the right idempotent of $a$. (Compare Section~\ref{sec:AZ-modules-review}.)

Proposition~\ref{prop:classify-domains} specifies which domains contribute to the differential. From the form of the diagram $\AZbar(\PMC)$, there are no bigons, type 2 rectangles, annuli, tori, or bigon pairs. We describe the contributions of each of the remaining classes of domains explicitly:
\begin{enumerate}[label=(\roman*)]
\item\label{item:AZbar-1} Type 1 Rectangles: These contribute exactly as in the differential on $\CFDDa(\AZbar(\PMC))$, except that the ending chords must both be fixed. Explicitly, if $a$ contains a pair of non-fixed chords $[i,j]$ and $[\tau(j),\tau(i)]$ with $i<\tau(j)\leq 2k$ (and hence $2k+1\leq j<\tau(i)$), and there is no chord $[p,q]$ in $a$ with $i<p<\tau(j)<j<q<\tau(i)$, then $\delta^1(a^*)$ has a term $1\otimes b^*$, where $b$ is obtained from $a$ by replacing $\{[i,j],[\tau(j),\tau(i)]\}$ with $\{[i,\tau(i)],[j,\tau(j)]\}$. (The condition about $[p,q]$ corresponds to the rectangle being empty.) 
\item\label{item:AZbar-2} Rectangle Pairs: These correspond to a pair of terms in $\bdy^2$ on $\CFDDa(\AZbar(\PMC))$ corresponding to (anti-)resolving a pair of crossings exchanged by $\tau$, between four non-fixed strands. Explicitly, if $a$ contains chords $[i,j],[i',j'],[\tau(j),\tau(i)],[\tau(j'),\tau(i')]$ with $i<i'\leq j<j'$, and there is no chord $[p,q]$ in $a$ with $i<p<i'\leq j<q<j'$, then $\delta^1(a^*)$ has a term $1\otimes b^*$, where $b$ is obtained by replacing $\{[i,j],[i',j'],[\tau(j),\tau(i)],[\tau(j'),\tau(i')]\}$ in $a$ with $\{[i,j'],[j,i'],[\tau(j'),\tau(i)],[\tau(i'),\tau(j)]\}$. (Again, the condition about $[p,q]$ corresponds to the rectangles being empty.) 

In the case $j=i'$, $[j,i']$ and $[\tau(i'),\tau(j)]$ are horizontal, so we also add their matched points $[M(j),M(i')]$ and $[M(\tau(i')),M(\tau(j))]$ to the strand diagram $b$. This adding of matched points when some strands were horizontal also occurs in cases~\ref{item:AZbar-3}, \ref{item:AZbar-5}, \ref{item:AZbar-6}, \ref{item:AZbar-7}, and~\ref{item:AZbar-9}, but we will not mention it each time.
\item\label{item:AZbar-3} Compact Downward-Biting Hexagon: These are similar to the previous case, except that the two crossing are between a fixed strand and a pair of non-fixed strands. Explicitly, if $a$ contains non-fixed chords $[i,j]$ and $[\tau(j),\tau(i)]$ and a fixed chord $[\ell,\tau(\ell)]$ with $i<\ell\leq j<\tau(\ell)$, and no chord $[p,q]$ with $i<p<\ell\leq j<q<\tau(\ell)$, then $\delta^1(a^*)$ has a term $1\otimes b^*$, where $b$ is obtained from $a$ by replacing $\{[i,j],[\tau(j),\tau(i)],[\ell,\tau(\ell)]\}$ with $\{[i,\tau(i)],[\ell,j],[\tau(\ell),\tau(j)]\}$.
\item\label{item:AZbar-4} Compact Upward-Biting Hexagon: These are similar to the previous case. If $a$ contains non-fixed chords $[i,j]$ and $[\tau(j),\tau(i)]$ and a fixed chord $[\ell,\tau(\ell)]$ with $i<\ell<\tau(j)<j$, and no chord $[p,q]$ with $i<p<\tau(j)<j<q<\tau(\ell)$, then $\delta^1(a^*)$ has a term $1\otimes b^*$, where $b$ is obtained from $a$ by replacing $\{[i,j],[\tau(j),\tau(i)],[\ell,\tau(\ell)]\}$ with $\{[\tau(j),j],[\ell,\tau(i)],[i,\tau(\ell)]\}$.
\item\label{item:AZbar-5} Octagons: If $a$ contains non-fixed chords $[i,j]$ and $[\tau(j),\tau(i)]$ and fixed chords $[\ell,\tau(\ell)]$ and $[m,\tau(m)]$ with $i<\ell<m\leq j<\tau(m)$, and no chord $[p,q]$ with $i<p<m\leq j<q<\tau(\ell)$, then $\delta^1(a^*)$ has a term $1\otimes b^*$, where $b$ is obtained from $a$ by replacing $\{[i,j],[\tau(j),\tau(i)],[\ell,\tau(\ell)],[m,\tau(m)]\}$ with $\{[i,\tau(\ell)],[\ell,\tau(i)],[m,j],[\tau(j),\tau(m)]\}$.
\item\label{item:AZbar-6} Disjoint Half-Strip Pairs: Suppose $a$ contains non-fixed chords $[i,j]$ and $[\tau(j),\tau(i)]$; $\ell$ satisfies $i\leq \ell<j<\tau(i)$ and is not equal or matched to a terminal endpoint of any chord in $a$; and there is no chord $[p,q]$ in $a$ with $p<i\leq\ell<q<j$. Then $\delta^1(a^*)$ has a term $\rho\otimes b^*$ where $b^*$ is obtained from $a$ by replacing $\{[i,j],[\tau(j),\tau(i)]\}$ by $\{[i,\ell],[\tau(\ell),\tau(i)]\}$ and $\rho$ has a single moving strand $[\ell,j]$ and the same left idempotent as $a^*$.
\item\label{item:AZbar-7} Overlapping Half-Strip Pairs: Suppose $a$ contains two non-fixed chords $[i,j]$ and $[\tau(j),\tau(i)]$ exchanged by $\tau$; $\ell$ satisfies $i<\ell\leq j<\tau(i)$ and is not equal or matched to a terminal endpoint of any chord in $a$; and there is no chord $[p,q]$ in $a$ with $i<p<\ell\leq j<q$. Then $\delta^1(a^*)$ has a term $\rho\otimes b^*$ where $b^*$ is obtained from $a$ by replacing $\{[i,j],[\tau(j),\tau(i)]\}$ by $\{[\ell,j],[\tau(j),\tau(\ell)]\}$ and $\rho$ has a single moving strand $[\tau(\ell),\tau(j)]$ and the same left idempotent as $a^*$.
\item\label{item:AZbar-8} Noncompact Hexagon: Suppose $a$ contains a fixed chord $[i,\tau(i)]$; $j$ satisfies $i<j\leq 2k$ and is not equal or matched to the initial endpoint of any chord in $a$; and $a$ does not contain a chord $[p,q]$ where $i<p<j<\tau(j)<q$. Then $\delta^1(a^*)$ has a term $\rho\otimes b^*$ where $b^*$ is obtained from $a$ by replacing $[i,\tau(i)]$ by $[j,\tau(j)]$ and $\rho$ has a single moving strand $[\tau(j),\tau(i)]$ and the same left idempotent as $a^*$.
\item\label{item:AZbar-9} Noncompact Octagons: Suppose $a$ contains fixed chords $[i,\tau(i)]$ and $[j,\tau(j)]$; $\ell$ satisfies $j<\ell<\tau(j)<\tau(i)$ and is not equal or matched to the terminal endpoint of any chord in $a$; and no $[p,q]$ in $a$ satisfies $p<j$ and $\ell<q<\tau(i)$. Then $\delta^1(a^*)$ has a term $\rho\otimes b^*$ where $b^*$ is obtained from $a^*$ by replacing $\{[i,\tau(i)],[j,\tau(j)]\}$ by $\{[j,\ell], [\tau(\ell),\tau(j)]\}$ and $\rho$ has a single moving strand $[\ell,\tau(i)]$ and the same left idempotent as $a^*$.
\end{enumerate}
Examples (or schematics) of all of these kinds of differentials are shown in Figure~\ref{fig:AZbar-diff}; see also Figure~\ref{fig:AZ-diff-domains}.

\begin{figure}
  \centering
  \tikzset{strand/.style={out=0, in=180, looseness=.25}}
  \begin{tikzpicture}[scale=.5, alt={Differential on CFDR(AZbar(PMC))}]
  \begin{scope}[xshift=-1cm]
    \draw[-] (0,0) -- (0,9); 
    \foreach \y in {1,2,...,8} {
      \filldraw (0,\y) circle (.05); 
      \draw[ultra thin, dotted] (0,\y) -- (27,\y); 
      }
    \foreach \y/\name in {4.5/w} {
      \filldraw[darkgreen] (0,\y) circle (.1); 
      \node at (-.35,\y) () {$\textcolor{darkgreen}{\name}$};
      \draw[ultra thin, dashed, darkgreen] (0,\y) -- (27,\y);
    }
  \end{scope}
  \begin{scope}[xshift=0cm]
    \draw[-] (0,2) to[strand] (1,6);
    \draw[-] (0,3) to[strand] (1,7);
    \draw[->] (1.25,4.5) -- (1.75,4.5);
    \draw[-] (2,2) to[strand] (3,7);
    \draw[-] (2,3) to[strand] (3,6);
    \node at (1.5,-1) (ilab) {\ref{item:AZbar-1}};
  \end{scope}
  \begin{scope}[xshift=5cm]
    \draw[-] (0,1) to[strand] (1,3);
    \draw[-] (0,2) to[strand] (1,4);
    \draw[-] (0,9-4) to[strand] (1,9-2);
    \draw[-] (0,9-3) to[strand] (1,9-1);
    \draw[->] (1.25,4.5) -- (1.75,4.5);
    \draw[-] (2,1) to[strand] (3,4);
    \draw[-] (2,2) to[strand] (3,3);
    \draw[-] (2,9-4) to[strand] (3,9-1);
    \draw[-] (2,9-3) to[strand] (3,9-2);
    \node at (1.5,-1) (iilab) {\ref{item:AZbar-2}};
  \end{scope}
  \begin{scope}[xshift=10cm]
    \draw[-] (0,1) to[strand] (1,3);
    \draw[-] (0,2) to[strand] (1,7);
    \draw[-] (0,9-3) to[strand] (1,9-1);
    \draw[->] (1.25,4.5) -- (1.75,4.5);
    \draw[-] (2,1) to[strand] (3,8);
    \draw[-] (2,2) to[strand] (3,3);
    \draw[-] (2,9-3) to[strand] (3,9-2);
    \node at (1.5,-1) (iiilab) {\ref{item:AZbar-3}};
  \end{scope}
  \begin{scope}[xshift=15cm]
    \draw[-] (0,1) to[strand] (1,5);
    \draw[-] (0,3) to[strand] (1,6);
    \draw[-] (0,4) to[strand] (1,8);
    \draw[->] (1.25,4.5) -- (1.75,4.5);
    \draw[-] (2,1) to[strand] (3,6);
    \draw[-] (2,3) to[strand] (3,8);
    \draw[-] (2,4) to[strand] (3,5);
    \node at (1.5,-1) (iiilab) {\ref{item:AZbar-4}};
  \end{scope}
  \begin{scope}[xshift=20cm]
    \draw[-] (0,1) to[strand] (2,5);
    \draw[-] (0,2) to[strand] (2,7);
    \draw[-] (0,3) to[strand] (2,6);
    \draw[-] (0,4) to[strand] (2,8);
    \draw[->] (2.25,4.5) -- (2.75,4.5);
    \node at (2.5,-1) (vlab) {\ref{item:AZbar-5}};
    \draw[-] (3,1) to[strand] (5,7);
    \draw[-] (3,2) to[strand] (5,8);
    \draw[-] (3,3) to[strand] (5,5);
    \draw[-] (3,4) to[strand] (5,6);
  \end{scope}
  \begin{scope}[yshift = -11cm]
  \begin{scope}[xshift=-1cm]
    \draw[-] (0,0) -- (0,9); 
    \foreach \y in {1,2,...,8} {
      \filldraw (0,\y) circle (.05); 
      \draw[ultra thin, dotted] (0,\y) -- (31,\y); 
      }
    \foreach \y/\name in {4.5/w} {
      \filldraw[darkgreen] (0,\y) circle (.1); 
      \node at (-.35,\y) () {$\textcolor{darkgreen}{\name}$};
      \draw[ultra thin, dashed, darkgreen] (0,\y) -- (31,\y);
    }
  \end{scope}
  \begin{scope}[xshift=0cm]
    \draw[-] (0,1) to[strand] (1,4);
    \draw[-] (0,9-4) to[strand] (1,9-1);
    \draw[->] (1.25,4.5) -- (1.75,4.5);
    \node at (3.5,4.5) (tens) {$\otimes$};
    \draw[-, thick] (2,2) to[strand] (3,4);
    \draw[-] (4,1) to[strand] (5,2);
    \draw[-] (4,9-2) to[strand] (5,9-1);
    \node at (2.5,-1) (vilab) {\ref{item:AZbar-6}};
  \end{scope}
  \begin{scope}[xshift=8cm]
    \draw[-] (0,1) to[strand] (1,4);
    \draw[-] (0,9-4) to[strand] (1,9-1);
    \draw[->] (1.25,4.5) -- (1.75,4.5);
    \node at (3.5,4.5) (tens) {$\otimes$};
    \draw[-, thick] (2,6) to[strand] (3,8);
    \draw[-] (4,3) to[strand] (5,4);
    \draw[-] (4,9-4) to[strand] (5,9-3);
    \node at (2.5,-1) (vilab) {\ref{item:AZbar-7}};
  \end{scope}
  \begin{scope}[xshift=16cm]
    \draw[-] (0,2) to[strand] (1,9-2);
    \draw[->] (1.25,4.5) -- (1.75,4.5);
    \node at (3.5,4.5) (tens) {$\otimes$};
    \draw[-, thick] (2,9-3) to[strand] (3,9-2);
    \draw[-] (4,3) to[strand] (5,9-3);
    \node at (2.5,-1) (viilab) {\ref{item:AZbar-8}};
  \end{scope}
  \begin{scope}[xshift=24cm]
    \draw[-] (0,2) to[strand] (1,9-2);
    \draw[-] (0,3) to[strand] (1,9-3);
    \draw[->] (1.25,4.5) -- (1.75,4.5);
    \node at (3.5,4.5) (tens) {$\otimes$};
    \draw[-, thick] (2,9-4) to[strand] (3,9-2);
    \draw[-] (4,3) to[strand] (5,9-4);
    \draw[-] (4,4) to[strand] (5,9-3);
    \node at (2.5,-1) (viiilab) {\ref{item:AZbar-9}};
  \end{scope}
  \end{scope}
  \end{tikzpicture}  \caption{\textbf{Differential on $\CFDRa(\AZbar(\PMC))$.} In this schematic, we imagine that part of a real pointed matched circle $\PMC$ near the midpoint $w$ is shown, and all the points in $\CircPts$ indicated are matched to points outside this region. The first row shows terms $\delta^1(a^*)=1\otimes b^*$ coming from provincial domains, with only $a^*$ and $b^*$ drawn. In cases \ref{item:AZbar-2}, \ref{item:AZbar-3}, and~\ref{item:AZbar-5}, some non-invariant strands could also be horizontal. The second row shows examples of $c\otimes b^*$ where $c$ is nontrivial (and indicated to the left of the $\otimes$ symbol, and with a thicker strand). In cases~\ref{item:AZbar-6},~\ref{item:AZbar-7} and~\ref{item:AZbar-9}, some (thin) strands could be horizontal. In~\ref{item:AZbar-6}, there are cases where the algebra (thick) strand is above or crosses the $w$-line, in~\ref{item:AZbar-7}, there are cases where the algebra (thick) strand is below or crosses the $w$-line, and in~\ref{item:AZbar-9} there are cases where the algebra (thick) strand crosses the $w$-line.}
  \label{fig:AZbar-diff}
\end{figure}

Next, we describe $\CFDRa(\AZ(\PMC))$, which we view as a left type $D$ structure over $\Alg(-\PMC)$ (though we could use $\tau_Z$ to view it as a left type $D$ structure over $\Alg(\PMC)$). The underlying $\FF_2$-vector space of $\CFDRa(\AZ(\PMC))$ is $\Alg(\PMC)$. We will denote the basis element of $\CFDRa(\AZ(\PMC))$ corresponding to a strands diagram $a\in\Alg(\PMC)$ by $\wt{a}$. Explicitly, if we draw $\AZ(\PMC)$ as in Section~\ref{sec:AZ-diagrams},
 then the chord $[i,j]$ corresponds to the point $(i,4k+1-j)$. (See Figure~\ref{fig:number-AZ}.)

The left idempotent of $\wt{a}$ is the complementary idempotent to the left idempotent of $a$. (Equivalently, because $a=r(a)$, this is the image under $r$ of the complementary idempotent to the right idempotent of $a$; compare Section~\ref{sec:AZ-modules-review}.)

The diagram is again nice, and the differential has the following terms. In cases~\ref{item:AZ-6}--\ref{item:AZ-9}, the term only appears if the idempotents permit it, i.e., if it does not result in two strands starting on the same matched pair, or ending on the same matched pair. (In the $\AZbar$ case above, we included this idempotent condition in the individual cases.)
\begin{enumerate}[label=(\roman*)]
\item\label{item:AZ-1} Type (1) Rectangles: These contribute exactly as the differential on $\Alg(\PMC)$: given strands $[i,\tau(i)]$ and $[j,\tau(j)]$ in $a$ with $i<j<\tau(j)<\tau(i)$, and no strand $[p,q]$ with $i<p<j<\tau(j)<q<\tau(i)$, $\delta^1(\wt{a})$ has a term $1\otimes \wt{b}$ where ${b}$ is obtained from ${a}$ by replacing $\{[i,\tau(i)],[j,\tau(j)]\}$ with $\{[i,\tau(j)],[j,\tau(i)]\}$.
\item\label{item:AZ-2} Rectangle Pairs: These contribute as pairs of differentials on $\Alg(\PMC)$ between quadruples of non-fixed strands exchanged by $\tau$. That is, given four strands $[i,j]$, $[\ell,m]$, $[\tau(j),\tau(i)]$, and $[\tau(m),\tau(\ell)]$ in $a$ with $i<\ell\leq m<j<\tau(\ell)$, and no strand $[p,q]$ with $i<p<\ell\leq m<q<j$, $\delta^1(\wt{a})$ has a term $1\otimes\wt{b}$ where $b$ is obtained from $a$ by replacing 
$\{[i,j], [\ell,m], [\tau(j),\tau(i)], [\tau(m),\tau(\ell)]\}$ with $\{[i,m], [\ell,j], [\tau(j),\tau(\ell)], [\tau(m),\tau(i)]\}$.

If $\ell=m$, we drop the horizontal strands $[M(\ell),M(m)]$ and $[M(\tau(m)),M(\tau(\ell))]$ matched to $[\ell,m]$ and $[\tau(m),\tau(\ell)]$ from $b$. This also occurs in Cases~\ref{item:AZ-4}, \ref{item:AZ-5}, \ref{item:AZ-6}, \ref{item:AZ-7}, and \ref{item:AZ-9} if there are horizontal strands, but we do not mention it again.
\item\label{item:AZ-3} Compact Downward-Biting Hexagon: These are similar to the previous case, but where we have three strands, with one fixed by $\tau$ and the other two exchanged by $\tau$. That is, given strands $[i,j]$, $[\tau(j),\tau(i)]$, and $[m,\tau(m)]$ in $a$ with $i<\tau(j)<m<\tau(m)<j<\tau(i)$, if there is no strand $[p,q]$ with $i<p<m<\tau(m)<q<j$, then $\delta^1(\wt{a})$ has a term $1\otimes\wt{b}$ where $b$ is obtained by replacing $\{[i,j],[\tau(j),\tau(i)],[m,\tau(m)]\}$ in $a$ with $\{[i,\tau(m)],[\tau(j),j],[m,\tau(i)]\}$. 
\item\label{item:AZ-4} Compact Upward-Biting Hexagon: These are similar to the previous case. Given strands $[i,j]$, $[\tau(j),\tau(i)]$, and $[m,\tau(m)]$ in $a$ with $m<i\leq j<\tau(m)$, if there is no strand $[p,q]$ with $m<p<i\leq j<q<\tau(m)$, then $\delta^1(\wt{a})$ has a term $1\otimes\wt{b}$ where $b$ is obtained from $a$ by replacing $\{[i,j],[\tau(j),\tau(i)],[m,\tau(m)]\}$ with $\{[m,j], [\tau(j),\tau(m)],[i,\tau(i)]\}$. 
\item\label{item:AZ-5} Octagons: Given non-fixed strands $[i,j]$, $[\tau(j),\tau(i)]$, $[\ell,m]$, $[\tau(m),\tau(\ell)]$ in $a$ with $i<\tau(j)<\ell<\tau(m)$, and no strand $[p,q]$ with $i<p<\ell\leq m<q<j$, $\delta^1(\wt{a})$ has a term $1\otimes\wt{b}$ where $b$ is obtained  by replacing $\{[i,j],[\tau(j),\tau(i)],[\ell,m],[\tau(m),\tau(\ell)]\}$ in $a$ with $\{[i,m],[\tau(j),j],[\ell,\tau(\ell)],[\tau(m),\tau(i)]\}$.
\item\label{item:AZ-6} Disjoint Half-Strip Pairs: Given non-fixed strands $[i,j]$ and $[\tau(j),\tau(i)]$ in $a$ and an integer $\ell$ with $i\leq j<\ell<\tau(i)$, and no strand $[p,q]$ with $p<i<j<q<\ell$, $\delta^1(\wt{a})$ has a term $\rho\otimes \wt{b}$ where $b$ is obtained from $a$ by replacing $\{[i,j],[\tau(j),\tau(i)]\}$ with $\{[i,\ell],[\tau(\ell),\tau(i)]\}$ and $\rho$ has a single moving strand $[\tau(\ell),\tau(j)]$ and the same left idempotent as $\wt{a}$.
\item\label{item:AZ-7} Overlapping Half-Strip Pairs. Given non-fixed strands $[i,j]$ and $[\tau(j),\tau(i)]$ in $a$ and an integer $\ell$ with $\ell<i\leq j<\tau(i)$, and no strand $[p,q]$ with $\ell<p<i\leq j<q$, $\delta^1(\wt{a})$ has a term $\rho\otimes \wt{b}$ where $b$ is obtained from $a$ by replacing $\{[i,j],[\tau(j),\tau(i)]\}$ with $\{[\ell,j],[\tau(j),\tau(\ell)]\}$, and $\rho$ has a single moving strand $[\ell,i]$.
\item\label{item:AZ-8} Noncompact Hexagon: Given a fixed strand $[i,\tau(i)]$ in $a$, an integer $\ell>\tau(i)$, and no strand $[p,q]$ in $a$ with $p<i<\tau(i)<q<\ell$, $\delta^1(\wt{a})$ has a term $\rho\otimes\wt{b}$ where $b$ is obtained from $a$ by replacing $[i,\tau(i)]$ with $[\tau(\ell),\ell]$ and $\rho$ has a single moving strand $[\tau(\ell),i]$ and the same left idempotent as $\wt{a}$.
\item\label{item:AZ-9} Noncompact Octagons: Given non-fixed strands $[i,j]$ and $[\tau(j),\tau(i)]$ in $a$ and an integer $\ell$ with $i\leq j<\tau(i)<\ell$, and no strand $[p,q]$ in $a$ with $p<i\leq j<q<\ell$, $\delta^1(\wt{a})$ has a term $\rho\otimes \wt{b}$ where $b$ is obtained by replacing $\{[i,j],[\tau(j),\tau(i)]\}$ in $a$ with $\{[i,\tau(i)],[\tau(\ell),\ell]\}$ and $\rho$ has a single moving strand $[\tau(\ell),\tau(j)]$ and the same left idempotent as $\wt{a}$.
\end{enumerate}
See Figures~\ref{fig:AZ-diff} and~\ref{fig:AZ-diff-domains}.

\begin{figure}
  \centering
  \tikzset{strand/.style={out=0, in=180, looseness=.25}}
  \begin{tikzpicture}[scale=.5, alt={Differential on CFDR(AZ(PMC))}]
  \begin{scope}[xshift=-1cm]
    \draw[-] (0,0) -- (0,9); 
    \foreach \y in {1,2,...,8} {
      \filldraw (0,\y) circle (.05); 
      \draw[ultra thin, dotted] (0,\y) -- (27,\y); 
      }
    \foreach \y/\name in {4.5/w} {
      \filldraw[darkgreen] (0,\y) circle (.1); 
      \node at (-.35,\y) () {$\textcolor{darkgreen}{\name}$};
      \draw[ultra thin, dashed, darkgreen] (0,\y) -- (27,\y);
    }
  \end{scope}
  \begin{scope}[xshift=0cm]
    \draw[-] (0,2) to[strand] (1,7);
    \draw[-] (0,3) to[strand] (1,6);
    \draw[->] (1.25,4.5) -- (1.75,4.5);
    \draw[-] (2,2) to[strand] (3,6);
    \draw[-] (2,3) to[strand] (3,7);
    \node at (1.5,-1) (ilab) {\ref{item:AZ-1}};
  \end{scope}
  \begin{scope}[xshift=5cm]
    \draw[-] (0,1) to[strand] (1,4);
    \draw[-] (0,2) to[strand] (1,3);
    \draw[-] (0,9-4) to[strand] (1,9-1);
    \draw[-] (0,9-3) to[strand] (1,9-2);
    \draw[->] (1.25,4.5) -- (1.75,4.5);
    \draw[-] (2,1) to[strand] (3,3);
    \draw[-] (2,2) to[strand] (3,4);
    \draw[-] (2,9-4) to[strand] (3,9-2);
    \draw[-] (2,9-3) to[strand] (3,9-1);
    \node at (1.5,-1) (iilab) {\ref{item:AZ-2}};
  \end{scope}
  \begin{scope}[xshift=10cm]
    \draw[-] (0,1) to[strand] (1,6);
    \draw[-] (0,3) to[strand] (1,8);
    \draw[-] (0,4) to[strand] (1,5);
    \draw[->] (1.25,4.5) -- (1.75,4.5);
    \draw[-] (2,1) to[strand] (3,5);
    \draw[-] (2,3) to[strand] (3,6);
    \draw[-] (2,4) to[strand] (3,8);
    \node at (1.5,-1) (iiilab) {\ref{item:AZ-3}};
  \end{scope}
  \begin{scope}[xshift=15cm]
    \draw[-] (0,1) to[strand] (1,8);
    \draw[-] (0,2) to[strand] (1,3);
    \draw[-] (0,9-3) to[strand] (1,9-2);
    \draw[->] (1.25,4.5) -- (1.75,4.5);
    \draw[-] (2,1) to[strand] (3,3);
    \draw[-] (2,2) to[strand] (3,7);
    \draw[-] (2,9-3) to[strand] (3,9-1);
    \node at (1.5,-1) (ivlab) {\ref{item:AZ-4}};
  \end{scope}
  \begin{scope}[xshift=20cm]
    \draw[-] (0,1) to[strand] (2,7);
    \draw[-] (0,2) to[strand] (2,8);
    \draw[-] (0,3) to[strand] (2,5);
    \draw[-] (0,4) to[strand] (2,6);
    \draw[->] (2.25,4.5) -- (2.75,4.5);
    \node at (2.5,-1) (vlab) {\ref{item:AZ-5}};
    \draw[-] (3,1) to[strand] (5,5);
    \draw[-] (3,2) to[strand] (5,7);
    \draw[-] (3,3) to[strand] (5,6);
    \draw[-] (3,4) to[strand] (5,8);
  \end{scope}
  \begin{scope}[yshift = -11cm]
  \begin{scope}[xshift=-1cm]
    \draw[-] (0,0) -- (0,9); 
    \foreach \y in {1,2,...,8} {
      \filldraw (0,\y) circle (.05); 
      \draw[ultra thin, dotted] (0,\y) -- (31,\y); 
      }
    \foreach \y/\name in {4.5/w} {
      \filldraw[darkgreen] (0,\y) circle (.1); 
      \node at (-.35,\y) () {$\textcolor{darkgreen}{\name}$};
      \draw[ultra thin, dashed, darkgreen] (0,\y) -- (31,\y);
    }
  \end{scope}
  \begin{scope}[xshift=0cm]
    \draw[-] (0,1) to[strand] (1,2);
    \draw[-] (0,9-2) to[strand] (1,9-1);
    \draw[->] (1.25,4.5) -- (1.75,4.5);
    \node at (3.5,4.5) (tens) {$\otimes$};
    \draw[-, thick] (2,5) to[strand] (3,7);
    \node at (2.5,-1) (vilab) {\ref{item:AZ-6}};
    \draw[-] (4,1) to[strand] (5,4);
    \draw[-] (4,9-4) to[strand] (5,9-1);
  \end{scope}
  \begin{scope}[xshift=8cm]
    \draw[-] (0,3) to[strand] (1,4);
    \draw[-] (0,9-4) to[strand] (1,9-3);
    \draw[->] (1.25,4.5) -- (1.75,4.5);
    \node at (3.5,4.5) (tens) {$\otimes$};
    \draw[-, thick] (2,1) to[strand] (3,3);
    \node at (2.5,-1) (vilab) {\ref{item:AZ-7}};
    \draw[-] (4,1) to[strand] (5,4);
    \draw[-] (4,9-4) to[strand] (5,9-1);
  \end{scope}
  \begin{scope}[xshift=16cm]
    \draw[-] (0,3) to[strand] (1,9-3);
    \draw[->] (1.25,4.5) -- (1.75,4.5);
    \node at (3.5,4.5) (tens) {$\otimes$};
    \draw[-, thick] (2,2) to[strand] (3,3);
    \draw[-] (4,2) to[strand] (5,9-2);
    \node at (2.5,-1) (viilab) {\ref{item:AZ-8}};
  \end{scope}
  \begin{scope}[xshift=24cm]
    \draw[-] (0,3) to[strand] (1,9-4);
    \draw[-] (0,4) to[strand] (1,9-3);
    \draw[->] (1.25,4.5) -- (1.75,4.5);
    \node at (3.5,4.5) (tens) {$\otimes$};
    \draw[-, thick] (2,2) to[strand] (3,4);
    \draw[-] (4,2) to[strand] (5,9-2);
    \draw[-] (4,3) to[strand] (5,9-3);
    \node at (2.5,-1) (viiilab) {\ref{item:AZ-9}};
  \end{scope}
  \end{scope}
  \end{tikzpicture}  \caption{\textbf{Differential on $\CFDRa(\AZ(\PMC))$.} Conventions are as in Figure~\ref{fig:AZbar-diff}.}
  \label{fig:AZ-diff}
\end{figure}

\begin{figure}
  \centering
  \begin{tikzpicture}[scale=.5, alt={Domains for terms in the differential on CFDR(AZ(PMC))}]
  \begin{scope}
    \draw (9,0) to (0,0) to (0,9) to (9,0);
    \foreach \y in {1,2,...,8}
      \draw[red, thick] (0,9-\y) to (\y,9-\y); 
    \foreach \x in {1,2,...,8}
      \draw[blue,thick] (9-\x,0) to (9-\x,\x);
    \draw[darkgreen] (0,0) to (4.5,4.5);
    \foreach \y in {1,2,...,8}
      {
      \node at (-.35,9-\y) (ylab\y) {$\y$};
      \node at (\y,-.4) (xlab\y) {$\y$};
      }
    \fill[semitransparent] (3,3) rectangle (4,4);
    \fill[pattern=crosshatch] (1,4) rectangle (2,6);
    \fill[pattern=crosshatch] (4,1) rectangle (6,2);
    \fill[pattern=dots] (1,2) rectangle (3,3);
    \fill[pattern=dots] (2,1) rectangle (3,2);
  \end{scope}
  \begin{scope}[xshift=9.5cm]
    \draw (9,0) to (0,0) to (0,9) to (9,0);
    \foreach \y in {1,2,...,8}
      \draw[red, thick] (0,9-\y) to (\y,9-\y); 
    \foreach \x in {1,2,...,8}
      \draw[blue,thick] (9-\x,0) to (9-\x,\x);
    \draw[darkgreen] (0,0) to (4.5,4.5);
    \foreach \y in {1,2,...,8}
      {
      \node at (-.35,9-\y) (ylab\y) {$\y$};
      \node at (\y,-.4) (xlab\y) {$\y$};
      }
    \fill[semitransparent] (3,3) rectangle (4,5);
    \fill[semitransparent] (4,3) rectangle (5,4);
    \fill[pattern=crosshatch] (1,2) rectangle (2,4);
    \fill[pattern=crosshatch] (2,1) rectangle (3,4);
    \fill[pattern=crosshatch] (3,1) rectangle (4,3);
    \fill[pattern=dots] (0,5) rectangle (1,7);
    \fill[pattern=dots] (5,0) rectangle (7,1);
  \end{scope}
  \begin{scope}[xshift=19cm]
    \draw (9,0) to (0,0) to (0,9) to (9,0);
    \foreach \y in {1,2,...,8}
      \draw[red, thick] (0,9-\y) to (\y,9-\y); 
    \foreach \x in {1,2,...,8}
      \draw[blue,thick] (9-\x,0) to (9-\x,\x);
    \draw[darkgreen] (0,0) to (4.5,4.5);
    \foreach \y in {1,2,...,8}
      {
      \node at (-.35,9-\y) (ylab\y) {$\y$};
      \node at (\y,-.4) (xlab\y) {$\y$};
      }
    \fill[semitransparent] (3,0) rectangle (4,5);
    \fill[semitransparent] (0,3) rectangle (5,4);
    \fill[pattern=crosshatch] (0,1) rectangle (2,2);
    \fill[pattern=crosshatch] (1,0) rectangle (2,1);
    \fill[pattern=dots] (3,0) rectangle (5,4);
    \fill[pattern=dots] (0,3) rectangle (4,5);
  \end{scope}
  \end{tikzpicture}
  \caption{\textbf{Domains for terms in the differential on $\CFDRa(\AZ(\PMC))$.} Left: domains of Type~\ref{item:AZ-1} (solid gray), \ref{item:AZ-2} (crosshatched), and \ref{item:AZ-3} (dots). Center: Type~\ref{item:AZ-4} (solid gray), \ref{item:AZ-5} (crosshatched), and \ref{item:AZ-6} (dots). Right: Type~\ref{item:AZ-7} (solid gray), \ref{item:AZ-8} (crosshatched), and \ref{item:AZ-9} (dots). To conserve space, these are mostly not the same examples as in Figure~\ref{fig:AZ-diff}.}
  \label{fig:AZ-diff-domains}
\end{figure}
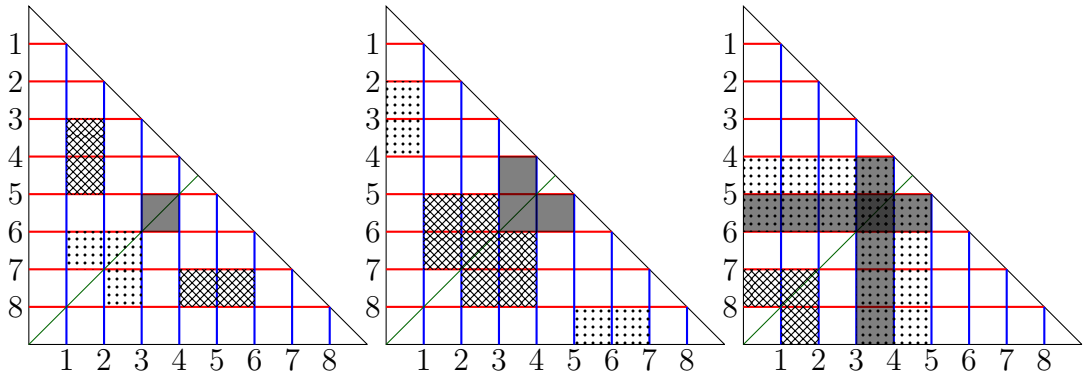

\begin{example}\label{eg:genus-1-AZ-CFDR}
  We spell out $\CFDRa(\AZbar(\PMC))$ and $\CFDRa(\AZ(\PMC))$ if $\PMC$ is the genus $1$ pointed matched circle. The module $\CFDRa(\AZbar(\PMC))$ is generated by two elements, $[2,3]^*$ and $[1,4]^*$, and the differential $\delta^1([1,4]^*)=[3,4]\otimes [2,3]^*$.
  That is, in our usual notation for the torus algebra, $\CFDRa(\AZbar)$ is generated by $\rho_2^*$ and $\rho_{123}^*$. The right idempotent of $\rho_2$ is $\iota_0$ and the right idempotent of $\rho_{123}$ is $\iota_1$ (i.e., $\rho_2\iota_0=\rho_2$ and $\rho_{123}\iota_1=\rho_{123}$), so the left idempotent of $\rho_2^*$ is $\iota_1$ and the left idempotent of $\rho_{123}^*$ is $\iota_0$.
  The differential is $\delta^1(\rho_{123}^*)=\rho_3\otimes\rho_{2}^*$ (which is consistent with the idempotents). The differential comes from a noncompact hexagon.

  Similarly, the module $\CFDRa(\AZ(\PMC))$ is generated by elements $\wt{[2,3]}$ and $\wt{[1,4]}$. The differential is given by $\delta^1(\wt{[2,3]})=[1,2]\otimes\wt{[1,4]}$. In the usual notation for the torus algebra, this is $\delta^1(\wt{\rho_{2}})=\rho_1\otimes \wt{\rho_{123}}$. (The differential again comes from a noncompact hexagon.) The left idempotent of $\rho_2$ is $\iota_1$, so the left idempotent of $\wt{\rho_2}$ is $\iota_0$. Similarly, the left idempotent of $\wt{\rho_{123}}$ is $\iota_1$ (which is consistent with $\delta^1$).
\end{example}

\begin{example}\label{eg:split-genus-2}
  The complex $\CFDRa(\AZ(\PMC))$ for $\PMC$ the split, genus $2$ pointed matched circle is shown in Figure~\ref{fig:CFDR-AZ-2-split}.
  Because of the low genus, only terms of types~\ref{item:AZ-1},~\ref{item:AZ-6},~\ref{item:AZ-7}, and~\ref{item:AZ-9} contribute to the differential. (It is immediate that there are no terms of types~\ref{item:AZ-2}--\ref{item:AZ-5}; to see that there are no terms of type~\ref{item:AZ-8} involves considering the idempotents.) As an example of the idempotents, the idempotent of the state $\wt{\{[2,2],[4,4],[5,5],[7,7]\}}$ (which is denoted $\begin{bsmallmatrix}2 & 5\\\cdot & \cdot\end{bsmallmatrix}$ in Figure~\ref{fig:CFDR-AZ-2-split}) has the matched pairs $\{1,3\}$ and $\{6,8\}$ occupied (i.e., is $\{[1,1],[3,3],[6,6],[8,8]\}$). The idempotent of the state $\wt{\{[3,6],[4,5]\}}$ (which is denoted $\begin{bsmallmatrix} 3 & 4\\ 6 & 5
  \end{bsmallmatrix}$ in Figure~\ref{fig:CFDR-AZ-2-split}) has the matched pairs $\{5,7\}$ and $\{6,8\}$ occupied.  
  
  As an example of the differential,
  \begin{align*}
  \delta^1&(\wt{\{[2,2],[4,4],[5,5],[7,7]\}})\\ 
  &= 
  \{[6,7],[1,1],[3,3]\}\otimes \wt{\{[2,3],[6,7]\}}
  + \{[3,7],[6,6],[8,8]\}\otimes \wt{\{[2,6],[3,7]\}}\\
  &\qquad+ \{[1,2],[6,6],[8,8]\}\otimes \wt{\{[1,2],[7,8]\}}
  + \{[1,4],[6,6],[8,8]\}\otimes \wt{\{[1,4],[5,8]\}}\\
  &\qquad+ \{[3,4],[6,6],[8,8]\}\otimes \wt{\{[3,4],[5,6]\}}
  + \{[1,7],[6,6],[8,8]\}\otimes \wt{\{[1,8],[2,7]\}}\\
  &\qquad+ \{[3,5],[6,6],[8,8]\}\otimes \wt{\{[3,6],[4,5]\}}
  + \{[1,5],[6,6],[8,8]\}\otimes \wt{\{[1,8],[4,5]\}}.
  \end{align*}
  The first two terms are of type~\ref{item:AZ-6}, the next three of type~\ref{item:AZ-7}, and the last three are of type~\ref{item:AZ-9}. Note that there is, for example, no type~\ref{item:AZ-6} term $\{[6,7],[1,1],[3,3]\}\otimes \wt{\{[2,4],[5,7]\}}$ here because of the idempotents. As another example,
  \[
  \delta^1(\wt{\{[3,6],[4,5]\}})=1\otimes \wt{\{[3,5],[4,6]\}},
  \]
  coming from a term of type~\ref{item:AZ-1}. We could equivalently write $1$ as the idempotent corresponding to this state, which is $\{[5,5],[6,6],[7,7],[8,8]\}$.

  \begin{figure}
    \centering
    \includegraphics[alt={The complex CFDR(AZ(PMC)) for the split genus 2 pointed matched circle}]{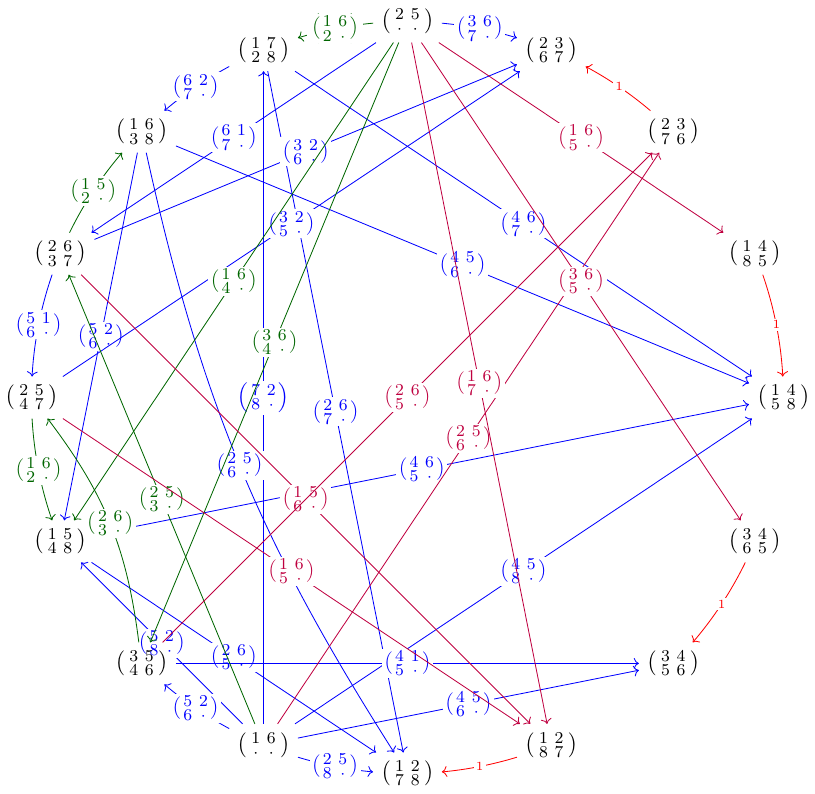}
    \caption{\textbf{The complex $\CFDRa(\AZ(\PMC))$ for the split, genus $2$ pointed matched circle.} For readability, we are using slightly different notation for generators of the algebra: $\begin{psmallmatrix}1 & 2\\ 8 & 7\end{psmallmatrix}$, for instance, means the pair of strands $[1,8],[2,7]$, and $\begin{psmallmatrix}1 & 6\\ 2 & \cdot\end{psmallmatrix}$ means the single strand $[1,2]$ with left idempotent $\{1,3,6,8\}$. All the algebra coefficients in the differential are either $1$ or have a single moving strand (hence have one dot inside the matrix). Type~\textcolor{red}{(i)},~\textcolor{blue}{(vi)},~\textcolor{darkgreen}{(vii)}, and~\textcolor{purple}{(ix)} terms contribute to the differential.}
    \label{fig:CFDR-AZ-2-split}
  \end{figure}
\end{example}

Recall that the strands diagrams in $\Alg(\PMC)$ where two moving strands overlap---the elements whose supports have multiplicity $>1$ somewhere---span an acyclic ideal in $\Alg(\PMC)$~\cite[Proposition 4.2]{LOT2}. So, the quotient $\Alg'(\PMC)$ by this ideal is quasi-isomorphic to $\Alg(\PMC)$, and when performing computations in bordered Floer homology it suffices to work over the smaller algebra $\Alg'(\PMC)$. We show that a similar phenomenon occurs for $\CFDRa(\AZ(\PMC))$. We then observe that, in the case of real pointed matched circles giving surfaces with orientable quotients, this leads to a particularly simple description of $\CFDRa(\AZ(\PMC))$.

\begin{lemma}\label{lem:mult-2-contractible}
  For any real pointed matched circle $\PMC$, the strands diagrams which have multiplicity greater than $1$ somewhere span a contractible sub-type-$D$-structure of $\CFDRa(\AZ(\PMC))$. Thus, $\CFDRa(\AZ(\PMC))$ is homotopy equivalent to its quotient by this sub-type-$D$-structure.
\end{lemma}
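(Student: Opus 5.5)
The plan is to show first that these diagrams span a sub-type-$D$-structure, and then to show it is contractible using a filtration by support, as in the unreal argument~\cite[Proposition 4.2]{LOT2}.

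\emph{Step 1: closure under $\delta^1$.} For a strands diagram $a$, let its \emph{support} be the function on $[1,4k]$ counting how many moving strands of $a$ cover each point. I would go through the nine types~\ref{item:AZ-1}--\ref{item:AZ-9} in the explicit description of $\CFDRa(\AZ(\PMC))$ and check the following in each case. If $\delta^1(\wt a)$ contains $\rho\otimes\wt b$, then $\operatorname{supp}(b)\geq\operatorname{supp}(a)$ pointwise, with equality exactly when $\rho$ is an idempotent.
\begin{itemize}
\item Types~\ref{item:AZ-1}--\ref{item:AZ-5} only resolve crossings, possibly several $\tau$-symmetric crossings at once. They permute endpoints of strands without changing the multiset of covered intervals, so the support is preserved.
\item Types~\ref{item:AZ-6}--\ref{item:AZ-9} lengthen strands: $[i,j]$ becomes $[i,\ell]$ or $[\ell,j]$, and $[i,\tau(i)]$ becomes $[\tau(\ell),\ell]$. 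The added support is exactly the support of the pair $\rho,\tau(\rho)$.
\end{itemize}
Hence if $\wt a$ has multiplicity $>1$ somewhere, so does every $\wt b$ appearing in $\delta^1(\wt a)$, and these diagrams span a sub-type-$D$-structure $N$.

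\emph{Step 2: reduce to fixed support.} Filter $N$ by support, ordered by total length, which is bounded above. By Step~1 the associated graded object has only the terms of types~\ref{item:AZ-1}--\ref{item:AZ-5}, all with coefficient $1$. These terms preserve both the support and the idempotent. So the associated graded object splits as a direct sum, over symmetric supports $s$ with some multiplicity $\geq 2$, of finite-dimensional chain complexes $(C_s,d_s)$ over the idempotent ring. If each $C_s$ is acyclic, I would pick a contraction of each one and apply the homological perturbation lemma for type $D$ structures, using boundedness of the filtration. This would produce a homotopy equivalence from $N$ to $0$, so $N$ is contractible. The quotient statement then follows because $\CFDRa(\AZ(\PMC))$ is an extension of its quotient by $N$.

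\emph{Step 3: acyclicity of $C_s$.} This is the main obstacle. I would imitate the unreal contraction. Let $p$ be the smallest point at which $s\geq 2$. Among the strands covering the arc just after $p$, two are distinguished: those ending first and last, or starting first and last. The homotopy $h$ resolves or unresolves the crossing between these two strands. Because $s$ is $\tau$-symmetric, $\tau(p)$ is the largest such point. There are two cases.
\begin{itemize}
\item If $p<\tau(p)$ and the overlap region near $p$ lies entirely below $w$, define $h$ to act on the pair of crossings at $p$ and $\tau(p)$ simultaneously. This mirrors the rectangle pairs of type~\ref{item:AZ-2}, and both the $h d+d h=\mathrm{id}$ verification and the emptiness conditions reduce to the unreal computation, applied on one side of $w$.
\item If the overlap involves fixed strands $[m,\tau(m)]$ or strands crossing $w$, the relevant terms are those of types~\ref{item:AZ-1} and~\ref{item:AZ-3}--\ref{item:AZ-5}.
\end{itemize}
In the second case I would have to check carefully, case by case, that the same canonical choice of distinguished strands still gives $hd+dh=\mathrm{id}$. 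The difficulty is that one operation now changes three or four strands at once, and the emptiness conditions (``no strand $[p,q]$ with \dots'') must match on both sides of the identity. If this direct check becomes unwieldy, the fallback is to choose instead a $\tau$-equivariant perfect matching on generators of $C_s$, pairing each $\wt a$ with the diagram obtained by the canonical operation at $p$ and $\tau(p)$. Each matched pair should be joined by a coefficient-$1$ arrow, and I would verify that the matching is acyclic, so that discrete Morse theory (iterated cancellation) gives acyclicity of $C_s$.
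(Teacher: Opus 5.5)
Your Steps 1 and 2 are fine and do the same job as the paper's opening moves. Closure under $\delta^1$ is immediate. Your support filtration with perturbation plays the role of the paper's remark that the substructure $P$ is contractible iff $\Idem(\PMC)\DT P$ is, so only the provincial terms of types (i)--(v) matter.

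The gap is Step 3, which is the whole content of the lemma, and the part you do sketch would not go through as stated. You define the homotopy only when the two distinguished strands are non-fixed, and claim that case ``reduces to the unreal computation on one side of $w$.'' It does not. Differentials of types (i), (iii), (iv), (v) can change which configuration sits at the first double point, so $\partial H+H\partial=\Id$ only holds after cancelling terms \emph{across} cases.

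In the paper, $H$ is chosen by the two strands $\xi,\xi'$ over the first point with multiplicity $2$ above and $1$ below, and has four cases:
\begin{itemize}
\item $H=0$ if $\xi$ and $\xi'$ cross.
\item If $\xi'=\tau(\xi)$, replace $\{[i,j],[\tau(j),\tau(i)]\}$ by $\{[i,\tau(i)],[\tau(j),j]\}$, inverting a type (i) rectangle.
\item If $\xi$ is fixed, introduce a symmetric pair of crossings among $\xi,\xi',\tau(\xi')$, inverting a type (iv) hexagon.
\item Otherwise, invert a type (ii) rectangle pair on four strands.
\end{itemize}
The verification needs cross-case cancellations. For example, the three-strand $H$ followed by a type (i) differential cancels a type (i) differential followed by the four-strand $H$. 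Likewise, the $\xi'=\tau(\xi)$ homotopy followed by a type (iv) differential cancels a type (v) differential followed by the three-strand $H$. So your ``easy'' case cannot be checked in isolation, and the cases you leave open are exactly where the genuinely real moves enter.

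You also omit the preliminaries that make ``the first double point'' and ``the two distinguished strands'' well defined and compatible with $d$. In your complexes $C_s$, horizontal strands still participate in resolutions (the $\ell=m$ terms that drop a matched horizontal pair), and initial and terminal endpoints can coincide. The paper handles this in two moves:
\begin{itemize}
\item It filters further by the number of horizontal strands, so the associated graded never touches horizontal strands and is independent of the matching.
\item It then embeds this into a matching-free complex on $8k$ points, via $i\mapsto 2i$ on initial and $j\mapsto 2j-1$ on terminal endpoints.
\end{itemize}
Only then is there a canonical pair $(\xi,\xi')$ with no strand starting or ending between their initial points. Your discrete-Morse fallback does not avoid any of this: the matching would be this $H$, and proving it acyclic is the same case analysis. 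Also, ``$\tau$-equivariant'' is vacuous there, since every generator is already $\tau$-symmetric.
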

The quotient is, of course, generated by the strands diagrams with multiplicity $0$ or $1$ everywhere.
\begin{proof}
  It is immediate from the definition of the differential that these strands diagrams span a sub-type-$D$-structure.
  The proof that this substructure is contractible is an adaptation of the corresponding proof for the bordered Floer algebras~\cite[Theorem 9(2)]{LOT2}. Let $P$ denote this sub-structure.

  Let $\Alg_+(\PMC)$ be the augmentation ideal, generated by all strands diagrams with at least one moving strand, so $\Alg(\PMC)/\Alg_+(\PMC)=\Idem(\PMC)$ is a direct sum of copies of $\FF_2$, one for each idempotent strands diagram. Then $P$ is contractible over $\Alg(\PMC)$ if and only if the induced type $D$ structure $\Idem(\PMC)\DT P$ is contractible. That is, when proving that $P$ is contractible, it suffices to consider only the provincial differential---the terms $1\otimes \wt{b}$ in the differential---and we will do so for the rest of the argument. We can (and do) regard $P$ with this provincial differential as an ordinary chain complex, by identifying $1\otimes\wt{b}$ with $\wt{b}$.

  There is a filtration on $P$ by the number of horizontal strands; the differential preserves or decreases this filtration. We will show that the homology of the associated graded complex is trivial. Additionally, $P$ decomposes as direct sums over the left and right idempotents, and over the supports of the algebra elements (in $H_1(Z,\CircPts)$). (This uses the fact that we are only considering the provincial differential.) So, we fix a choice of left and right idempotents and a support, and denote by $\wt{P}$ the summand of the associated graded complex with this pair of idempotents and support, and $j$ moving strands. (From the definition of $P$, $j\geq 2$.)
 
  In $\wt{P}$, the differential does not resolve crossings on horizontal strands. It follows that the chain complex $\wt{P}$ is independent of the choice of matching.

  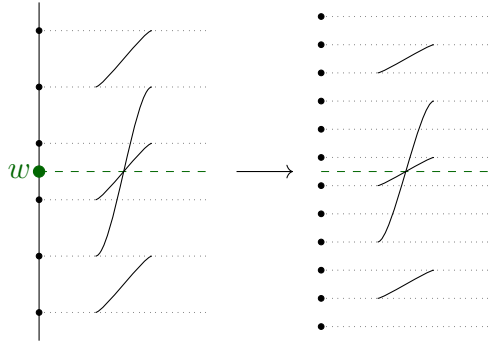
\begin{figure}
  \centering
    \tikzset{strand/.style={out=0, in=180, looseness=.25}}
  \begin{tikzpicture}[scale=.75, alt={Perturbing the endpoints}]
  \begin{scope}[xshift=-1cm]
    \draw[-] (0,.5) -- (0,6.5); 
    \foreach \y in {1,2,...,6} {
      \filldraw (0,\y) circle (.05); 
      \draw[ultra thin, dotted] (0,\y) -- (3,\y); 
      \draw[ultra thin, dotted] (0,\y) -- (3,\y); 
      }
    \foreach \y/\name in {3.5/w} {
      \filldraw[darkgreen] (0,\y) circle (.1); 
      \node at (-.35,\y) () {$\textcolor{darkgreen}{\name}$};
      \draw[ultra thin, dashed, darkgreen] (0,\y) -- (3,\y);
      \draw[ultra thin, dashed, darkgreen] (5,\y) -- (8,\y);
    }
  \end{scope}
  \begin{scope}[xshift=0cm]
    \draw[-] (0,1) to[strand] (1,2);
    \draw[-] (0,2) to[strand] (1,5);
    \draw[-] (0,3) to[strand] (1,4);
    \draw[-] (0,5) to[strand] (1,6);
  \end{scope}
    \draw[->] (2.5,3.5) -- (3.5,3.5);
  \begin{scope}[xshift=4cm,yshift=.25cm,scale=.5]
    \foreach \y in {1,2,...,12} {
      \filldraw (0,\y) circle (.1); 
      \draw[ultra thin, dotted] (0,\y) -- (6,\y); 
      \draw[ultra thin, dotted] (0,\y) -- (6,\y); 
      }
    \draw[-] (2,2) to[strand] (4,3);
    \draw[-] (2,4) to[strand] (4,9);
    \draw[-] (2,6) to[strand] (4,7);
    \draw[-] (2,10) to[strand] (4,11);
  \end{scope}
  \end{tikzpicture}
    \caption{\textbf{Perturbing the endpoints.} The element of $\Alg(\PMC)$ on the left maps to the element of $\Alg(8k)$ on the right, by sending initial endpoints $i$ to $2i$ and terminal endpoints $j$ to $2j-1$.}
    \label{fig:perturb-endpoints}
  \end{figure}

  Consider a chain complex $Q$ defined like $\CFDRa(\AZ(\PMC))$, except:
  \begin{itemize}
  \item If $\CircPts$ for $\PMC$ consists of $4k$ points $\{1,\dots,4k\}$, then $Q$ is based on a pointed matched circle with $8k$ points $\{1,\dots,8k\}$, and 
  \item there is no matching in the definition of $Q$.
  \end{itemize}
  As in the algebra case~\cite[Proof of Theorem 9(2)]{LOT2}, we can embed $\wt{P}$ in $Q$. Fix a state $\wt{a}\in\wt{P}$. For each moving (non-horizontal) strand $\xi=[i,j]$ of $a$, consider a new strand $[2i,2j-1]$.
  Let $a'\in C_*$ consist of all the strands $\xi'$, for $\xi$ a strand of $a$.
  See Figure~\ref{fig:perturb-endpoints}. Note that the symmetry of $a$ exchanging $[i,j]$ and $[4k+1-i,4k+1-j]$ induces a symmetry of $a'$ exchanging $[\ell,m]$ and $[8k+1-m,8k+1-\ell]$. In particular, $a'\in C_*$. Moreover, because the differential on $\wt{P}$ does not affect the horizontal strands, this induces a chain map $\wt{P}\to Q$. Of course, all elements in the image have the same support; let $\wt{Q}$ denote the summand with this support. Then $\wt{P}\cong\wt{Q}$, and we will show that $\wt{Q}$ is acyclic.

  \begin{figure}
  \centering
  \tikzset{strand/.style={out=0, in=180, looseness=.25}}
  \begin{tikzpicture}[scale=.5, alt={Perturbing the endpoints}]
  \begin{scope}[xshift=-1cm]
    \draw[-] (0,0) -- (0,9); 
    \foreach \y in {1,2,...,8} {
      \filldraw (0,\y) circle (.05); 
      \draw[ultra thin, dotted] (0,\y) -- (25,\y); 
      }
    \foreach \y/\name in {4.5/w} {
      \filldraw[darkgreen] (0,\y) circle (.1); 
      \node at (-.35,\y) () {$\textcolor{darkgreen}{\name}$};
      \draw[ultra thin, dashed, darkgreen] (0,\y) -- (25,\y);
    }
  \end{scope}
  \begin{scope}[xshift=0cm]
    \draw[-] (0,2) to[strand] (1,7);
    \draw[-] (0,3) to[strand] (1,6);
    \draw[->] (1.25,4.5) -- node[above]{\lab{H}} (1.75,4.5);
    \node at (2,4.5) (zero1) {$0$}; 
    \node at (1.25,-1) (ilab) {\ref{item:H-cross}};
    \node at (-.5,3) (ipos) {\lab{i}};
  \end{scope}
  \begin{scope}[xshift=3cm]
    \draw[-] (0,1) to[strand] (1,8);
    \draw[-] (0,2) to[strand] (1,3);
    \draw[-] (0,6) to[strand] (1,7);
    \draw[->] (1.25,4.5) -- node[above]{\lab{H}} (1.75,4.5);
    \node at (2,4.5) (zero1) {$0$}; 
    \node at (1.25,-1) (ilab) {\ref{item:H-cross}};
    \node at (-.5,2) (ipos) {\lab{i}};
  \end{scope}
  \begin{scope}[xshift=6cm]
    \draw[-] (0,1) to[strand] (1,4);
    \draw[-] (0,2) to[strand] (1,3);
    \draw[-] (0,5) to[strand] (1,8);
    \draw[-] (0,6) to[strand] (1,7);
    \draw[->] (1.25,4.5) -- node[above]{\lab{H}} (1.75,4.5);
    \node at (2,4.5) (zero1) {$0$}; 
    \node at (1.25,-1) (ilab) {\ref{item:H-cross}};
    \node at (-.5,2) (ipos) {\lab{i}};
  \end{scope}
  \begin{scope}[xshift=10cm]
    \draw[-] (2,2) to[strand] (3,7);
    \draw[-] (2,3) to[strand] (3,6);
    \draw[->] (1.25,4.5) -- node[above]{\lab{H}} (1.75,4.5);
    \draw[-] (0,2) to[strand] (1,6);
    \draw[-] (0,3) to[strand] (1,7);
    \node at (1.5,-1) (ilab) {\ref{item:H-exchanged}};
    \node at (-.5,2) (jpos) {\lab{\tau(j)}};
    \node at (-.5,3) (ipos) {\lab{i}};
  \end{scope}
  \begin{scope}[xshift=15cm]
    \draw[-] (2,1) to[strand] (3,8);
    \draw[-] (2,2) to[strand] (3,3);
    \draw[-] (2,9-3) to[strand] (3,9-2);
    \draw[->] (1.25,4.5) -- node[above]{\lab{H}} (1.75,4.5);
    \draw[-] (0,1) to[strand] (1,3);
    \draw[-] (0,2) to[strand] (1,7);
    \draw[-] (0,9-3) to[strand] (1,9-1);
    \node at (1.5,-1) (ivlab) {\ref{item:H-one-fixed}};
    \node at (-.5,1) (jpos) {\lab{j}};
    \node at (-.5,2) (ipos) {\lab{i}};
    \node at (-.5,3) (lpos) {\lab{\ell}};
  \end{scope}
  \begin{scope}[xshift=20cm]
    \draw[-] (2,1) to[strand] (3,4);
    \draw[-] (2,2) to[strand] (3,3);
    \draw[-] (2,9-4) to[strand] (3,9-1);
    \draw[-] (2,9-3) to[strand] (3,9-2);
    \draw[->] (1.25,4.5) -- node[above]{\lab{H}} (1.75,4.5);
    \draw[-] (0,1) to[strand] (1,3);
    \draw[-] (0,2) to[strand] (1,4);
    \draw[-] (0,9-4) to[strand] (1,9-2);
    \draw[-] (0,9-3) to[strand] (1,9-1);
    \node at (1.5,-1) (iilab) {\ref{item:H-none-fixed}};
    \node at (-.5,2) (ipos) {\lab{i}};
    \node at (-.5,4) (jpos) {\lab{j}};
    \node at (-.5,1) (lpos) {\lab{\ell}};
    \node at (-.5,3) (mpos) {\lab{m}};
  \end{scope}
  \end{tikzpicture}  \caption{\textbf{The homotopy $H$.} Conventions are as in Figures~\ref{fig:AZbar-diff} and~\ref{fig:AZ-diff}.}
  \label{fig:H}
\end{figure}
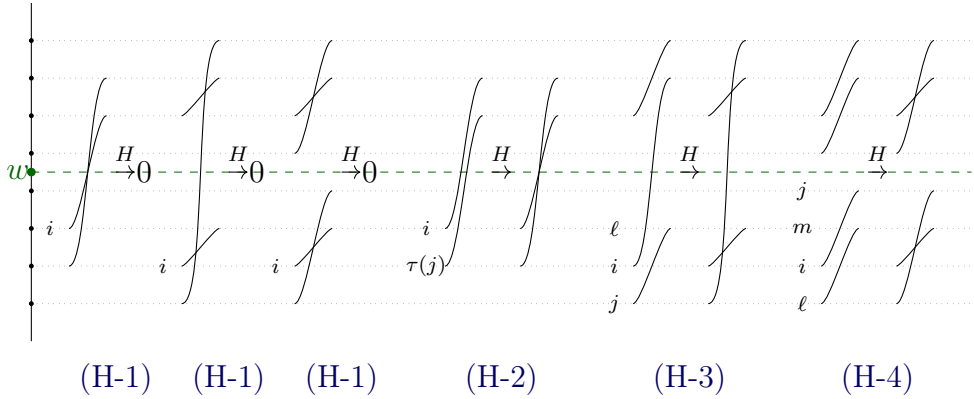

  We define a map $H\co \wt{Q}\to\wt{Q}$ so that 
  \begin{equation}\label{eq:H-is-htpy}
    \bdy\circ H+H\circ\bdy = \Id.
  \end{equation}
  Consider the first point $i\in\{1,\dots,8k\}$ so that elements of $\wt{Q}$ have multiplicity $2$ above $i$ and $1$ below $i$. Given a state $\wt{a}\in\wt{Q}$, let $\xi$ and $\xi'$ be the two strands that cross the point $i+1/2$, so that $\xi$ starts at $i$ and $\xi'$ starts at a point below $i$ and ends at a point above $i$. Observe that no strand starts or ends between the initial points of $\xi$ and $\xi'$. Define $H(\wt{a})$ in cases as follows:
  \begin{enumerate}[label=(H-\arabic*)]
    \item\label{item:H-cross} If $\xi$ and $\xi'$ cross, define $H(\wt{a})=0$.
    \item\label{item:H-exchanged} If $\xi=[i,j]$ and $\xi'=[8k+1-j,8k+1-i]=[\tau(j),\tau(i)]$ (so $\xi$ and $\xi'$ do not cross, but are exchanged by $\tau$), let $H(\wt{a})$ be the result of replacing $\{\xi,\xi'\}$ with $\{[i,8k+1-i],[8k+1-j,j]\}$ (i.e., making them cross). (Note that $8k+1-j<i$, from how we chose $\xi$ and $\xi'$.)
    \item\label{item:H-one-fixed} If $\xi=[i,8k+1-i]=[i,\tau(i)]$ and $\xi'=[j,\ell]$, since $\xi$ and $\xi'$ do not cross we have $\ell<8k+1-i<8k+1-j$. Thus, $a$ also contains $\xi''=[\tau(\ell),\tau(j)]$. Let $H(\wt{a})$ be the result of replacing $\{\xi,\xi',\xi''\}$ by $\{[j,\tau(j)],[i,\ell],[\tau(\ell),\tau(i)]\}$. That is, we introduce an equivariant pair of crossings.
    \item\label{item:H-none-fixed} Otherwise, $\xi=[i,j]$, $\xi'=[\ell,m]$, and $a$ also contains the chords $[\tau(j),\tau(i)]$ and $[\tau(m),\tau(\ell)]$. Let $H(\wt{a})$ be the result of replacing these four chords by the chords $\{[i,m],[\ell,j],[\tau(j),\tau(\ell)],[\tau(m),\tau(i)]\}$.
  \end{enumerate}
  See Figure~\ref{fig:H}.

  We verify Equation~\eqref{eq:H-is-htpy} by a case analysis. First, suppose that $H(\wt{a})=0$, as in Case~\ref{item:H-cross} of the definition of $H$. There is exactly one term in $d(a)$ of type~\ref{item:AZ-1}--\ref{item:AZ-4} that results in $\xi$ and $\xi'$ not crossing. Indeed, none of these terms creates a double crossing, because no strands start between the initial points of $\xi$ and $\xi'$; and type~\ref{item:AZ-5} does not contribute, because in that case the bottom two strands do not cross. So, there is exactly one term that contributes to $H\circ\bdy$, and $(H\circ\bdy)(\wt{a})=\wt{a}$.

  Next, suppose that $\xi$ and $\xi'$ are as in Case~\ref{item:H-exchanged} of the definition of $H$. Terms in $\bdy\circ H$ and $H\circ\bdy$ not involving these strands at all cancel in pairs. (This uses the fact that no strands start between the initial points of $\xi$ and $\xi'$, to deduce that none of these terms involve double crossings with $\xi$ or $\xi'$ or their images under $H$.) Applying $H$ and then resolving the new crossing gives the identity map. The remaining terms where we apply this type of $H$ and then a differential are of the following sorts:
  \begin{itemize}
  \item Performing $H$ and then a type~\ref{item:AZ-1} differential (not at the new crossing) cancels with performing a type~\ref{item:AZ-3} differential and then a case~\ref{item:H-one-fixed} $H$. (Note that the type~\ref{item:AZ-1} differential leaves the outermost strand unchanged, by the double crossing rule.)
  \item Performing $H$ and then a type~\ref{item:AZ-4} differential (not involving the outermost strand) cancel with performing a type~\ref{item:AZ-5} differential and then a case~\ref{item:H-one-fixed} $H$. (Type~\ref{item:AZ-4} differentials involving the outermost strand do not occur.)
  \end{itemize}
  Other than the identity, there are no nonzero terms coming from applying a differential and then a type~\ref{item:H-exchanged} $H$.

  Next, suppose that $\xi$ and $\xi'$ are as in Case~\ref{item:H-one-fixed} of the definition of $H$. Applying $H$ and then a type~\ref{item:AZ-4} differential gives the identity map. Other terms in $\bdy\circ H$ (for this case of $H$) are:
  \begin{itemize}
  \item A case~\ref{item:H-one-fixed} $H$ followed by a type~\ref{item:AZ-1} differential. These cancel with applying a type~\ref{item:AZ-1} differential and then a case~\ref{item:H-none-fixed} $H$.
  \item A case~\ref{item:H-one-fixed} $H$ followed by a type~\ref{item:AZ-2} differential. These cancel with applying a type~\ref{item:AZ-2} differential and then a case~\ref{item:H-one-fixed} $H$.
  \item A case~\ref{item:H-one-fixed} $H$ followed by a type~\ref{item:AZ-4} differential. These cancel with applying a type~\ref{item:AZ-4} differential and then a case~\ref{item:H-one-fixed} $H$.
  \item A case~\ref{item:H-one-fixed} $H$ followed by a type~\ref{item:AZ-5} differential. These cancel with applying a type~\ref{item:AZ-5} differential and then a case~\ref{item:H-one-fixed} $H$, where the~\ref{item:H-one-fixed} does not use either of the symmetric strands produced by the type~\ref{item:AZ-5} differential.
  \end{itemize}
  This also accounts for all the terms in $H\circ\bdy$ for this case of $H$, except for type~\ref{item:AZ-5} differentials and then case~\ref{item:H-one-fixed} using the longer of the two symmetric strands. Those terms canceled terms from case~\ref{item:H-exchanged}, above. (There is also the type~\ref{item:AZ-4} differential followed by $H$ on the resulting three strands giving the identity map, from case~\ref{item:H-cross}.)
  
  Finally, suppose that $\xi$ and $\xi'$ are as in Case~\ref{item:H-none-fixed} of the definition of $H$. Applying $H$ and then a type~\ref{item:AZ-2} differential to the resulting four strands gives the identity map term. Other terms in $\bdy\circ H$ (for this case of $H$) are:
  \begin{itemize}
  \item A case~\ref{item:H-none-fixed} $H$ followed by a type~\ref{item:AZ-2} differential using two of these four strands. These cancel with type~\ref{item:AZ-2} differentials followed by case~\ref{item:H-none-fixed} $H$.
  \item A case~\ref{item:H-none-fixed} $H$ followed by a type~\ref{item:AZ-3} differential. These cancel with type~\ref{item:AZ-3} differentials followed by case~\ref{item:H-none-fixed} $H$.
  \item A case~\ref{item:H-none-fixed} $H$ followed by a type~\ref{item:AZ-5} differential. These cancel with type~\ref{item:AZ-5} differentials followed by case~\ref{item:H-none-fixed} $H$.
  \end{itemize}
  The other terms in this case of $H\circ\bdy$ are the identity from a type~\ref{item:AZ-2} differential followed by $H$; a type~\ref{item:AZ-1} differential followed by a type~\ref{item:H-none-fixed} $H$, which canceled with terms in a type~\ref{item:H-one-fixed} $H$ followed by a type~\ref{item:AZ-1} differential; and a type~\ref{item:AZ-4} differential followed by a type~\ref{item:H-none-fixed} $H$, which canceled with terms in a type~\ref{item:H-one-fixed} $H$ followed by a type~\ref{item:AZ-4} differential.

  This completes the verification of Equation~\eqref{eq:H-is-htpy}, and hence of the proof of the lemma.
\end{proof}

\begin{corollary}\label{cor:AZ-simplest-model}
  Let $\PMC$ be a real pointed matched circle so that the real surface
  $(F(\PMC),\tau)$ has $F(\PMC)/\tau$ orientable. In particular, $\PMC$ is the connected sum $\PMC'\#(-\PMC')$ of a pointed matched circle with its mirror, and there is an embedding $\Alg(\PMC')\otimes\Alg(-\PMC')\hookrightarrow \Alg(\PMC)$. Let $r\co \Alg(\PMC')\to\Alg(-\PMC')$ be the anti-isomorphism induced by reflection.
  Then
  $\CFDRa(\AZ(\PMC))$ is homotopy equivalent to a type $D$ structure
  with generators in bijection with multiplicity-1 strands diagrams in $\Alg(\PMC')$. Moreover, writing $[a\otimes ra]$ for the generator corresponding to $a$, the left idempotent of $[a\otimes ra]$ is $\iota'\otimes r\iota'$, where $\iota'$ is the complementary idempotent to the left idempotent of $a$; and the differential is given by 
  \[
  \delta^1([a\otimes ra])=\sum_{b\in\bdy(a)} 1\otimes[b\otimes rb] + \!\!\!\!\sum_{\text{chords }\rho\in\Alg(\PMC')}\!\!\!\! (1\otimes r\rho)\otimes [a\rho\otimes (r\rho)(ra)]+(\rho\otimes 1)\otimes [\rho a\otimes (ra)(r\rho)].
  \]
\end{corollary}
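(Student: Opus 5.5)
The plan is to start from Lemma~\ref{lem:mult-2-contractible}: $\CFDRa(\AZ(\PMC))$ is homotopy equivalent to its quotient by the strands diagrams of multiplicity $>1$. Then identify the surviving generators and differential using the explicit list~\ref{item:AZ-1}--\ref{item:AZ-9}.

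First I would use orientability of the quotient. By Lemma~\ref{lem:AZ-represents}, $M$ preserves $\{1,\dots,2k\}$, so $\PMC=\PMC'\#(-\PMC')$, with $\PMC'$ on the points below $w$ and $\tau_\PMC$ identifying the upper half with the mirror $-\PMC'$. Consider a generator $\wt{a}$ of multiplicity at most $1$. A fixed chord $[i,\tau(i)]$ covers $w$, and a non-fixed chord crossing $w$ is paired with its image, which also covers $w$. Since multiplicity $\le 1$ allows at most one strand over $w$, the only possibility is a single fixed strand. That strand's endpoints $i$ and $\tau(i)$ lie in different halves. Because $M$ preserves the halves, this strand forces the idempotent to occupy unequal numbers of matched pairs in the two halves. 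I would check that this is incompatible with a real state, i.e.\ with the requirement that exactly half the arcs be occupied symmetrically. Once that is settled, every surviving $a$ has the form $a'\cup\tau(a')=a'\otimes ra'$ with $a'\in\Alg(\PMC')$ of multiplicity~$1$, which gives the claimed bijection. The idempotent statement then follows directly from the rule that the left idempotent of $\wt{a}$ is the complement of the left idempotent of $a$, split into the two halves.

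Next I would go through the differential in the quotient, case by case. Types~\ref{item:AZ-1}, \ref{item:AZ-3}, \ref{item:AZ-4}, \ref{item:AZ-5}, \ref{item:AZ-8} and~\ref{item:AZ-9} all create or involve a strand crossing $w$. Such terms either vanish in the quotient or land in multiplicity~$\ge 2$, so they drop out. The remaining terms match the formula as follows:
\begin{itemize}
\item Type~\ref{item:AZ-2} rectangle pairs, with both crossings in $\PMC'$ and mirrored, give exactly $\sum_{b\in\bdy a}[b\otimes rb]$, since resolving crossings in multiplicity one is the differential of $\Alg(\PMC')$.
\item Type~\ref{item:AZ-6} disjoint half-strip pairs extend a strand upward by a chord. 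Depending on which half the moving strand $\rho$ lies in, this is either right multiplication $a\rho$, with coefficient $r\rho$ in the $-\PMC'$ factor, or the symmetric counterpart.
\item Type~\ref{item:AZ-7} overlapping half-strip pairs extend a strand downward, giving $\rho a$ with coefficient $\rho$ in the $\PMC'$ factor.
\end{itemize}
The emptiness and idempotent conditions in~\ref{item:AZ-6} and~\ref{item:AZ-7} should translate precisely into the product being nonzero in the multiplicity-one quotient, i.e.\ no double crossings and compatible idempotents.

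The main obstacle is the bookkeeping in this last step. I need to show that half-strip terms whose algebra coefficient crosses $w$, or that use a chord of $\PMC$ not contained in one half, vanish. I also need to match orientation conventions so that the coefficient really is $1\otimes r\rho$ versus $\rho\otimes 1$ in $\Alg(\PMC')\otimes\Alg(-\PMC')\subset\Alg(-\PMC)$, with no cross terms. I would first check both points on the genus $2$ split example (Example~\ref{eg:split-genus-2}, Figure~\ref{fig:CFDR-AZ-2-split}) to pin down the conventions, and then argue in general via the support in $H_1(Z,\CircPts)$ and the idempotent constraint at $w$.
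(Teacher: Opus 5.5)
Your proposal is correct and follows essentially the paper's route: pass to the quotient by the contractible multiplicity-$>1$ sub-structure of Lemma~\ref{lem:mult-2-contractible}, note that symmetric generators with a strand crossing $w$ lie in that sub-structure, and read off the remaining differential from rectangle pairs and the two kinds of half-strip pairs. The check you defer is done in the paper by counting occupied matched pairs of the left and right idempotents on each side of $w$; this shows that $\tau$-symmetry forces an even number of strands across $w$, so your lone-fixed-strand case cannot occur. The count needs $k$ even, which is automatic since $\PMC'$ is itself a pointed matched circle with $k$ matched pairs.
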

\begin{proof}
  Consider the set of generators $\wt{a}$ of $\CFDRa(\AZ(\PMC))$ so that some strand in $a$ crosses $w$ (or, equivalently, there is a strand $[i,j]$ in $a$ with $i\leq 2k<j$). We claim that there are an even number of strands in $a$ crossing $w$. Indeed, suppose the left idempotent of $a$ has $2m$ horizontal strands below $w$ (i.e., $m$ pairs of such strands), and $a$ has $n$ moving strands which cross $w$. Then the left idempotent of $a$ has $4k-2m$ horizontal strands above $w$, so the right idempotent of $a$ has $4k-2m+2n$ horizontal strands above $w$. Since $a$ is fixed by $\tau$, $4k-2m+2n=2m$, so $n$ is even, as claimed. Thus, if we quotient by the sub-type-$D$-structure from Lemma~\ref{lem:mult-2-contractible}, we are left with a type $D$ structure generated by symmetric strands diagrams which do not cross $w$, that is, the set of generators from the statement of the corollary. The differential on the quotient only includes terms which start and end at such generators, which again are exactly the ones in the lemma statement (from rectangle pairs and the two types of half-strip pairs).
\end{proof}

\begin{example}
  Figure~\ref{fig:CFDR-AZ-2-split-small} shows the model from Corollary~\ref{cor:AZ-simplest-model} for $\CFDRa(\AZ(\PMC))$ for $\PMC$ the split genus $2$ pointed matched circle (cf.\ Example~\ref{eg:split-genus-2}). Figure~\ref{fig:CFDR-AZ-2-antipodal-small} shows the model from Lemma~\ref{lem:mult-2-contractible} for $\CFDRa(\AZ(\PMC))$ where $\PMC$ is the antipodal genus $2$ pointed matched circle; note that Corollary~\ref{cor:AZ-simplest-model} does not apply in this case, but the answer is still attractively simple.
  \begin{figure}
  \centering
  \includegraphics[alt={Small model for CFDR(AZ(PMC)) for the split genus 2 pointed matched circle}]{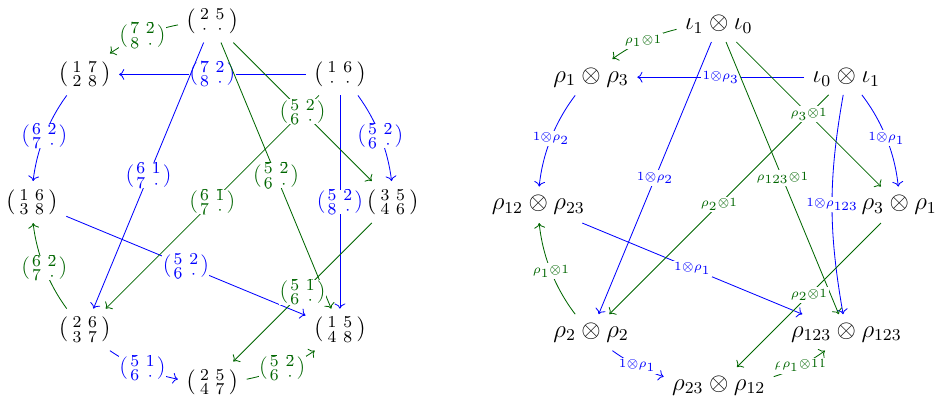}
  \caption{\textbf{The small model for the complex $\CFDRa(\AZ(\PMC))$ for the split, genus $2$ pointed matched circle.} Left: Conventions are as in Figure~\ref{fig:CFDR-AZ-2-split}. Right: the same model, but in terms of the elements $a\otimes ra$ and $\rho\otimes 1$ or $1\otimes r\rho$ of the statement of Corollary~\ref{cor:AZ-simplest-model}.}
  \label{fig:CFDR-AZ-2-split-small}
  \end{figure}

  \begin{figure}
  \centering
  \includegraphics[alt={Small model for CFDR(AZ(PMC)) for the antipodal genus 2 pointed matched circle}]{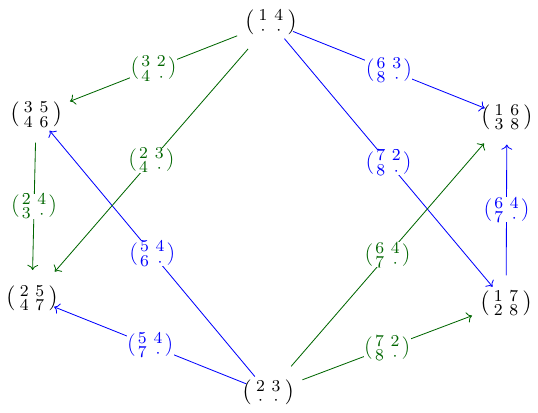}
  \caption{\textbf{The small model for the complex $\CFDRa(\AZ(\PMC))$ for the antipodal, genus $2$ pointed matched circle.} Conventions are as in Figure~\ref{fig:CFDR-AZ-2-split}.}
  \label{fig:CFDR-AZ-2-antipodal-small}
  \end{figure}
\end{example}

\section{Pairing theorem}\label{sec:pairing}
In this section, we give two proofs of the pairing theorem for the tensor product $\CFAa(\HD')\DT\CFDRa(\HD,\tau)$. First, we give a self-contained proof via nice diagrams, assuming $\HD'$ and $\HD$ are nice. Then we explain how the time dilation proof of the pairing theorem extends to the real case; to avoid repeating a large amount of verbiage, that argument is meant to be read alongside the proof in the unreal case.

\subsection{The pairing theorem via nice diagrams}\label{sec:nice-pairing}
\begin{theorem}\label{thm:nice-pairing} 
  Let $(\HD,\tau)$ be a real nice bordered Heegaard diagram with
  boundary $(-\PMC)\amalg \PMC^\beta$, and $\HD'$ a nice arced bordered
  Heegaard diagram with boundary $(-\PMC')\amalg \PMC$. Then there is a
  homotopy equivalence of type $D$ structures
  \[
  \lsup{\Alg(\PMC')}\CFDRa(\HD'\cup\HD\cup(-\HD')^\beta,\wt{\tau})\simeq 
  \lsup{\Alg(\PMC')}\CFDAa(\HD')_{\Alg(\PMC)}\DT\lsup{\Alg(\PMC)}\CFDRa(\HD,\tau).
  \]
\end{theorem}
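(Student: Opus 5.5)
The plan is to prove the equivalence by a direct identification of generators and differentials, as in the nice-diagram proof of the unreal pairing theorem. In fact the plan aims for an \emph{isomorphism} of type $D$ structures, not just a homotopy equivalence. Write $\wt{\HD}=\HD'\cup\HD\cup(-\HD')^\beta$.

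\emph{Step 1: the glued diagram is real-nice.} Show that $(\wt{\HD},\wt{\tau})$ is again real-nice.
\begin{itemize}
\item In a nice bordered diagram, every non-basepoint region meeting the boundary is a rectangle with one edge on the boundary.
\item Gluing two such half-rectangles along $\PMC$ gives a rectangle.
\item The regions adjacent to $\bbpt$ merge into the basepoint region.
\item Interior regions of $\HD$, $\HD'$ and $-\HD'$ are unchanged.
\item The fixed set of $\wt{\tau}$ lies in $\Sigma$, since $\wt{\tau}$ exchanges the two copies of $\Sigma'$. Hence the signs $\sigma(x)$ of points of $\alphas\cap C$ are those of $\HD$.
\end{itemize}
Consequently, Proposition~\ref{prop:classify-domains} together with Lemmas~\ref{lem:nice-unbranched-domains-contribute}, \ref{lem:A0-Afree-contribute} and~\ref{lem:torus-contribute} computes $\CFDRa(\wt{\HD})$ combinatorially. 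Likewise, \cite[Proposition 8.4]{LOT1} computes $\CFDAa(\HD')$. In particular $\delta^1_k=0$ for $k>2$ on $\CFDAa(\HD')$, and each $\delta^1_2$ term involves a single chord. Also, every nonidempotent algebra coefficient in $\delta^1$ of $\CFDRa(\HD)$ is a single chord. So the box tensor product is a finite sum of terms that are compositions of at most one $\CFDRa$ step and one $\CFDAa$ step.

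\emph{Step 2: generators.} A real state of $\wt{\HD}$ is fixed by $\wt{\tau}$, so it has the form $\x'\cup\x\cup\wt{\tau}(\x')$ with $\x\in\Gen_R(\HD,\tau)$ and $\x'\in\Gen(\HD')$. The arc-occupancy conditions (one point on each full $\alpha$- and $\beta$-circle of $\wt{\HD}$) say exactly that the idempotent of $\x'$ on the $\PMC$ side is complementary to the occupied arcs of $\x$. This matches the generators $\x'\otimes\x$ of $\CFDAa(\HD')\DT\CFDRa(\HD,\tau)$.

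\emph{Step 3: differentials.} Sort the domains from Proposition~\ref{prop:classify-domains} in $\wt{\HD}$ by whether they meet the gluing circles.
\begin{itemize}
\item Domains supported in $\HD$ are the provincial terms of $\CFDRa(\HD)$.
\item $\wt{\tau}$-exchanged pairs supported in $\HD'\cup(-\HD')$ are a domain $D$ and its mirror $\wt{\tau}(D)$. They correspond to the $\delta^1_1$ terms of $\CFDAa(\HD')$, including the $\Alg(\PMC')$-valued half-strips on $\bdy_L\HD'$.
\item Domains crossing the gluing circles must be cut along them. For each type in the list, I would check that cutting produces exactly one domain in $\HD$ with a single pair of $e\infty$ chords $\{\rho,\tau(\rho)\}$, and a $\CFDAa$ domain in $\HD'$ with the single chord $\rho$ (its mirror in $-\HD'$ is then forced). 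Examples: a noncompact hexagon in $\HD$ plus a pair of rectangles or half-strips in $\HD'$ glues to a compact hexagon, or to a noncompact one if the $\HD'$ piece reaches $\PMC'$. A half-strip pair glues to a rectangle pair, and a noncompact octagon glues to a compact octagon.
\item Conversely, each composite term of the tensor product glues to such a domain, with matching multiplicities; emptiness of the glued domain follows from emptiness of the pieces.
\end{itemize}
The counts agree mod $2$: every contributing domain on either side has an odd number of representatives.

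\emph{Main obstacle.} The hardest step is the converse direction of Step 3. I must rule out that a glued index-one real domain crosses the gluing circles in a pattern not arising from a single chord pair. The dangerous cases are:
\begin{itemize}
\item annuli of types~\ref{item:A1}--\ref{item:A4}, or the free annuli, boundary-free annuli and tori, straddling the boundary;
\item domains meeting the boundary in several chord pairs.
\end{itemize}
My first attempt is an index/Euler-measure count. Each noncompact item in the list has exactly one pair of $e\infty$ punctures, and the embedded index formula~\eqref{eq:index-embedded} shows that splitting along the boundary adds the chord contributions $|\vec{\rho}|+\iota(\vec{\rho})$ additively. So a second chord pair would force the $\HD$-piece to have real index at least $2$, or force a $\CFDAa$ piece with two chords. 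The latter is impossible for nice $\HD'$.

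For annuli and tori, I would use that all their corners are on $\x$ and $\y$ near the fixed set, which lies in $\HD$. If such a domain met the boundary, its $\HD$-part would be one of the noncompact types in the list. The only noncompact types are disks (noncompact hexagons and octagons) and half-strip pairs. So a disk-type cut would leave a disk on the $\HD$ side, contradicting the topology ($\chi$) of the glued domain.
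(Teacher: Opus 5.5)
Your overall architecture agrees with the paper's. You match generators, then sort rigid real curves in $\HD'\cup\HD\cup(-\HD')^\beta$ into three kinds, and use niceness of $\HD'$ for the converse. Curves supported in $\HD'\amalg(-\HD')^\beta$ give the $\delta^1_1$ terms of $\CFDAa(\HD')$. Curves supported in $\HD$ give the provincial terms of $\CFDRa(\HD,\tau)$, paired with the unit $\delta^1_2$. Curves crossing the gluing circles give composites of $\delta^1$ on $\CFDRa(\HD,\tau)$ with $\delta^1_2$ on $\CFDAa(\HD')$.

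However, the step you flag as the main obstacle is where the proof lives, and your proposed resolution does not work.
\begin{enumerate}
\item The premise that all corners of the annuli and tori lie near the fixed set, hence in $\HD$, is false. Free annuli and boundary-free annuli have no fixed corners at all. Nothing you say confines the non-fixed $x$-corners of the type (A3)/(A4) annuli or of the tori to $\HD$.
\item The claim that the $\HD$-part of a crossing domain ``would be one of the noncompact types in the list'' is circular. Proposition~\ref{prop:classify-domains} classifies index-one real curves in $\HD$. You have not shown that cutting a rigid curve in the glued diagram along the gluing circles yields a rigid curve in $\HD$.
\item Your index count for extra chord pairs has the same defect. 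The embedded index is not simply additive under cutting, and lower bounds on the index of the pieces require knowing they have holomorphic representatives.
\item Your example of a noncompact hexagon in the glued diagram, built from an $\HD'$-piece reaching $\PMC'$, cannot occur. It would need a $\delta^1_2$ term with nontrivial algebra output, which you correctly note a nice $\HD'$ lacks. Also, an $\HD'$-piece glued along a chord must be a half-strip on $\bdy_R\HD'$, not a rectangle.
\end{enumerate}

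The missing idea is a cut on the source, not an index computation. Take a rigid real curve $u\co S\to(\Sigma'\cup\Sigma\cup(-\Sigma'))\times[0,1]\times\RR$, write $\bdy\Sigma=Z\amalg Z^\beta$, and set $A=(\pi_\Sigma\circ u)^{-1}(Z)$ and $B=(\pi_\Sigma\circ u)^{-1}(Z^\beta)$.
\begin{itemize}
\item Generically $A$ and $B$ are disjoint $1$-manifolds, with $\bdy A$ on the $\alpha$-boundary and $\bdy B$ on the $\beta$-boundary of $S$, and both miss the preimage of the fixed set.
\item They cut $S$ into $S_A\cup S'\cup S_B$ with $\eta(S_A)=S_B$, and every fixed corner lies in $S'$.
\item Homological linear independence of the $\alpha$-curves forces each piece to contain both an $x$-corner and a $y$-corner.
\end{itemize}
Now run through the list in Proposition~\ref{prop:classify-domains}.
\begin{itemize}
\item Bigons, bigon pairs, half-strip pairs, boundary-free annuli and free annuli have too few corners.
\item Both types of rectangles, noncompact hexagons and octagons, all (A1)--(A4) annuli, and the tori have all their $x$-corners or all their $y$-corners on the fixed set, so they cannot cross.
\item Only compact hexagons, compact octagons and rectangle pairs survive. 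Each splits in essentially one way: a half-strip in $\HD'$, its mirror, and a noncompact hexagon, noncompact octagon, or half-strip pair in $\HD$, with a single chord pair.
\end{itemize}
This one corner count rules out annuli and tori straddling the boundary, multiple chord pairs, and $\HD'$-pieces reaching $\PMC'$. With it in place, the rest of your plan goes through.
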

\begin{proof}
  For definiteness, assume that $\HD$ is negative real nice. The identification of generators is clear. We first analyze rigid holomorphic curves in $\HD'\cup\HD\cup(-\HD')^\beta$ and show that they appear in the differential on the tensor product $\CFDAa(\HD')\DT\CFDRa(\HD,\tau)$; then we show that no other terms appear in that differential.

  There are three kinds of rigid holomorphic curves in $\HD'\cup\HD\cup(-\HD')^\beta$:
  \begin{enumerate}[label=(\arabic*)]
    \item\label{item:pairing-proof-case-1} Holomorphic curves as in Proposition~\ref{prop:classify-domains} where the domain lies entirely in $\HD'\amalg (-\HD')^\beta$ (and the components of the curve over $\HD$ are trivial strips).  
    \item\label{item:pairing-proof-case-2} Holomorphic curves as in Proposition~\ref{prop:classify-domains} (or Figure~\ref{fig:real-nice-curves}) where the domain lies entirely in $\HD$ (and the components of the curve over $\HD'\amalg (-\HD')^\beta$ are trivial strips).  
    \item\label{item:pairing-proof-case-3} Holomorphic curves as in Proposition~\ref{prop:classify-domains} where the domain intersects all three of $\HD$, $\HD'$, and $(-\HD')^\beta$ nontrivially.
  \end{enumerate}

  In Case~\ref{item:pairing-proof-case-1}, the curve is a bigon pair, rectangle pair, or half-strip pair (since those are the only rigid domains with two connected components). So, the component in $\HD'$ contributes to the operation $\delta^1_1$ on $\CFDAa(\HD')$, and thus contributes to the differential on the tensor product via the operation
  \begin{equation}
  \label{eq:pair-proof-tens-diag-1}
  \mathcenter{\begin{tikzpicture}[alt={Diagram showing a delta-one-one operation}]
    \node at (0,-1/2) (deltanode) {$\delta^1_1$};
    \draw[->] (0,0) to (deltanode);
    \draw[->] (deltanode) to (0,-1); 
    \draw[->] (.5,0) to (.5,-1); 
    \draw[->] (deltanode) to (-1/2,-1);
  \end{tikzpicture}.}
  \end{equation}

  In Case~\ref{item:pairing-proof-case-2}, by definition, the domain is one that contributes to the differential on $\CFDRa(\HD)$ with coefficient $1$. So, this contributes to the differential on the tensor product via the operation
  \begin{equation}
  \label{eq:pair-proof-tens-diag-2}
  \mathcenter{\begin{tikzpicture}[alt={Diagram showing delta-one-two composed with delta-one}]
    \node at (0,-1.3) (deltanode) {$\delta^1_2$};
    \node at (1,-.6) (dn2) {$\delta^1$};
    \draw[->] (0,0) to (deltanode);
    \draw[->] (deltanode) to (0,-2); 
    \draw[->] (1,0) to (dn2);
    \draw[->] (deltanode) to node[above]{\lab{1}} (-1,-2);
    \draw[->] (dn2) to node[above]{\lab{1}} (deltanode);
    \draw[->] (dn2) to (1,-2);
  \end{tikzpicture}.}
  \end{equation}

  In Case~\ref{item:pairing-proof-case-3}, we have a holomorphic curve $u\co S\to (\Sigma'\cup\Sigma\cup(-\Sigma'))\times[0,1]\times\RR$ of one of the types in Proposition~\ref{prop:classify-domains} (Figure~\ref{fig:real-nice-curves}). Write $\bdy\Sigma=Z\amalg Z^\beta$. Then (perhaps after perturbing the almost complex structure to be generic) the preimages $A = (\pi_\Sigma\circ u)^{-1}(Z)$ and $B = (\pi_\Sigma\circ u)^{-1}(Z^\beta)$ satisfy:
  \begin{itemize}
  \item $A$ and $B$ are 1-dimensional submanifolds of $S$ (with boundary),
  \item $A\cap B=\emptyset$,
  \item $\bdy A$ is contained in the part of $\bdy S$ mapped to the $\alpha$-curves, and $\bdy B$ in the part of $\bdy S$ mapped to the $\beta$-curves, 
  \item $A$ and $B$ are disjoint from the preimage of the fixed set in $\Sigma$, and
  \item $A\cup B$ separates $S$ into $S_A\cup S'\cup S_B$, each of which contains at least one $x$-corner and one $y$-corner. (This follows from the fact that the $\alpha$-curves in a bordered Heegaard diagram are homologically linearly independent.)
  \end{itemize}
  In most cases, no such 1-manifolds $A$ and $B$ exist. (For example,
  for a bigon, any arc $A$ with boundary on the $\alpha$-boundary of $S$
  splits off a component with no $x$ or $y$-punctures.) Indeed, the only
  cases in which such $A$ and $B$ exist are compact hexagons, octagons, and
  rectangle pairs. In each case, there is essentially one way to
  decompose the curve: as a half-strip in each of $\HD'$ and
  $(-\HD')^\beta$, and a noncompact hexagon, noncompact octagon, or
  half-strip pair in $\HD$. In particular, the part of the curve in
  $\HD'$ contributes an operation of the form $\delta^1_2(\x',\rho)=\y'$
  to $\CFDAa(\HD')$, and the part in $\HD$ contributes an operation
  $\delta^1(\x)=\rho\otimes \y$. Taken together, the domain contributes
  a term $1\otimes(\y'\otimes \y)$ in $\delta^1(\x'\otimes \x)$ on the
  glued diagram, as
  \begin{equation}
  \label{eq:pair-proof-tens-diag-3}
  \mathcenter{\begin{tikzpicture}[alt={Diagram showing paired operations in the pairing theorem proof}]
    \node at (0,-1.3) (deltanode) {$\delta^1_2$};
    \node at (1,-.6) (dn2) {$\delta^1$};
    \draw[->] (0,0) to node[left]{\lab{\x'}} (deltanode);
    \draw[->] (deltanode) to node[left]{\lab{\y'}} (0,-2); 
    \draw[->] (1,0) to node[left]{\lab{\x}} (dn2);
    \draw[->] (deltanode) to node[above]{\lab{1}} (-1,-2);
    \draw[->] (dn2) to node[above]{\lab{\rho}} (deltanode);
    \draw[->] (dn2) to node[left]{\lab{\y}} (1,-2);
  \end{tikzpicture}.}
  \end{equation}

  So, we have proved that every term in the differential on $\CFDRa(\HD'\cup\HD\cup(-\HD')^\beta,\wt{\tau})$ is also a term in the differential on the tensor product. Conversely, since $\HD'$ is nice, the only nontrivial operations on $\CFDAa(\HD')$ are $\delta^1_1$ and $\delta^1_2$, and these come from half-strips with boundary on $\bdy_L\HD'$ and provincial bigons and rectangles, for $\delta^1_1$, and the map $\delta^1_2(\x,1)=1\otimes \x$ from the definition of $\CFDAa(\HD')$ as strictly unital and half-strips with boundary on $\bdy_R\HD'$ for $\delta^1_2$. In the latter cases, the algebra input must come from a rigid curve in $\CFDRa(\HD)$. So, the only terms in the differential on the tensor product are the ones accounted for by Formulas~\eqref{eq:pair-proof-tens-diag-1}--\eqref{eq:pair-proof-tens-diag-3}, above.
\end{proof}

\subsection{General pairing theorem via time dilation}
In this section, we observe that the time dilation proof of the pairing theorem also carries over to the real case. In particular, this proves a version of Theorem~\ref{thm:nice-pairing} without the requirement that the diagrams be nice:

\begin{theorem}\label{thm:gen-pairing}
  Let $(\HD,\tau)$ be a real bordered Heegaard diagram with
  boundary $(-\PMC)\amalg \PMC^\beta$, and $\HD'$ an arced bordered
  Heegaard diagram with boundary $(-\PMC')\amalg \PMC$. Then there is a
  homotopy equivalence of type $D$ structures
  \[
  \lsup{\Alg(\PMC')}\CFDRa(\HD'\cup\HD\cup(-\HD')^\beta,\wt{\tau})\simeq 
  \lsup{\Alg(\PMC')}\CFDAa(\HD')_{\Alg(\PMC)}\DT\lsup{\Alg(\PMC)}\CFDRa(\HD,\tau).
  \]
\end{theorem}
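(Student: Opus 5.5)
We follow the time dilation proof of the pairing theorem in the unreal case~\cite[Chapter 9]{LOT1}, keeping track of the involution at every stage. Since the two copies of $\HD'$ are exchanged by $\wt{\tau}$, a real curve in the glued diagram is determined by its restriction to $\HD'\cup\HD$. Its part on $\HD$ is automatically real.

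\textbf{Step 1: neck stretching.} We stretch the two necks along $Z=\bdy_L\HD$ and $Z^\beta=\bdy_R\HD$ simultaneously, using symmetric almost complex structures with equal neck lengths on the two sides. This produces real matched pairs: a real curve $v$ in $\HD$ with asymptotics $\vec{\rho}$, together with a curve $u'$ in $\HD'$ and its mirror $\tau(u')$ in $(-\HD')^\beta$. Their asymptotics are matched, and the $\RR$-heights of the Reeb chords agree. By the real gluing results sketched in the proof of Proposition~\ref{prop:codim-1}, for long necks the rigid real curves in the glued diagram are identified with rigid real matched pairs. The gluing is done symmetrically, with the same gluing parameter on both sides, and the inverse function theorem is applied in the real configuration space.

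The index bookkeeping comes from Proposition~\ref{prop:real-index}. The real index of the glued curve equals $\ind(u')$ plus $\ind^R(v)$, minus the matching condition. Only the points $\y\cap C$ contribute to the $\sigma$-terms, and they all lie in $\HD$. The matching condition counts one height constraint per pair of chords, since each chord on $Z$ is paired with its image on $Z^\beta$.

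\textbf{Step 2: time dilation.} We rescale the $\RR$-direction on $\HD$ by a factor $T\to\infty$, exactly as in the unreal case. In the limit, the heights of the chords of $v$ spread apart relative to those of $u'$. The matched count then converges to the count of pairs made of one rigid real curve $v$ in $\HD$, giving $\delta^1$ on $\CFDRa(\HD)$, and one curve $u'$ in $\HD'$ whose east chords are constrained to be at fixed heights, giving $\delta^1_{1+n}$ on $\CFDAa(\HD')$. Together these yield $\delta^1_{n+1}\circ(\Id\otimes\delta^{n})$ on the box tensor product.

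We also need a homotopy equivalence between the complexes at different dilation parameters. For this we run the standard one-parameter family argument over the real moduli spaces. The codimension-one degenerations are the real analogues of those in Proposition~\ref{prop:codim-1}: two-story buildings, collisions, joins, and the time-dilation-specific splittings in which heights tend to $\infty$ relative to each other. Transversality for the parametrized real moduli spaces follows as in Proposition~\ref{prop:transversality}.

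\textbf{Main obstacle.} The hardest part is the analysis of the ends of the one-dimensional real moduli spaces as $T$ varies, in particular checking that no new real-specific degenerations appear. Two issues need checking.

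First, curves at $e\infty$ in $\HD$ now come in $\tau$-symmetric pairs on $Z$ and $Z^\beta$. Their Euler characteristic drops must be counted as in the proof of Proposition~\ref{prop:codim-1}, with $|P|$ decreasing in steps of two. I would handle this by repeating that index argument with the dilated heights.

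Second, compactness and boundary monotonicity must be established for the real matched moduli spaces. My plan is to observe that these spaces are subspaces of the usual matched spaces, so the unreal compactness results apply verbatim. Every remaining boundary stratum is either symmetric or comes in a $\tau$-exchanged pair. In the latter case the two strata are identified by $\tau$ and do not occur separately, because $\HD'$ and $(-\HD')^\beta$ are glued mirror-symmetrically.

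Admissibility, and finiteness of the sums as in Lemma~\ref{lem:admis-finite}, are then used to ensure that the box tensor product is defined and that the homotopy equivalences are bounded.
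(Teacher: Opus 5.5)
Your proposal follows the same route as the paper's proof. The shared steps are: $\tau$-symmetric neck stretching to real matched triples with index $\ind(B_1,S_1,P_1)+\ind^R(B_2,S_2,P_2)-m$; time dilation and interpolation in $T$, where the $e\infty$ components come in $\tau$-exchanged pairs (so $m_2$ jumps by $2$); gluing that respects the involution; and compactness and monotonicity inherited from the unreal case, since real moduli spaces are subspaces of unreal ones.

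You are vaguer than the paper only at the $T\to\infty$ limit. There $v$ becomes a multi-story real comb rather than the single rigid curve you describe, and you should redo the index count showing that the index-one ideal-matched combs are exactly the simple ones. You should also note that the trimming/spine argument survives because $|\vec{\rho}|+\iota(\vec{\rho})$ is not halved in the real embedded index formula.
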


\begin{corollary}\label{cor:pairing-to-closed}
  Suppose $(Y'\cup_{F(\PMC)}Y\cup_{F(\PMC)}Y',\wt{\tau})$ is a closed real 3-manifold decomposed into a real bordered $3$-manifold $(Y,\tau)$ and a pair of bordered $3$-manifold $Y'$ exchanged by $\wt{\tau}$. 
  Then
  \[
  \CFRa(Y'\cup_{F(\PMC)}Y\cup_{F(\PMC)}Y',\wt{\tau})\simeq \CFAa(Y')_{\Alg(\PMC)}\DT\lsup{\Alg(\PMC)}\CFDRa(Y,\tau).
  \]
\end{corollary}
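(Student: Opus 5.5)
The plan is to deduce Corollary~\ref{cor:pairing-to-closed} from Theorem~\ref{thm:gen-pairing} by capping off. Choose a bordered Heegaard diagram $\HD'$ for $Y'$, regarded as an arced bordered diagram with boundary $(-\PMC')\amalg\PMC$ with $\PMC'$ the genus-$0$ pointed matched circle. Equivalently, $Y'$ becomes a bordered manifold with one extra $S^2$ boundary component, obtained by deleting a ball around a point of the arc. Similarly, choose a real bordered Heegaard diagram $(\HD,\tau)$ for $(Y,\tau)$; this is possible by the definitions of Section~\ref{sec:real-bord-mflds}. By the gluing lemma following Example~\ref{eg:T2-two-fixed-circles-HD}, the glued diagram $(\HD'\cup\HD\cup(-\HD')^\beta,\wt{\tau})$ represents $Y'\cup Y\cup Y'$ with a symmetric pair of balls removed, i.e.\ a real bordered manifold with boundary $S^2\amalg S^2$. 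Both diagrams should be taken admissible, with $\HD'$ and $\HD$ admissible in a compatible way, so that the glued diagram is admissible; this can be achieved by isotopies, performed $\tau$-equivariantly on $\HD$.

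Next, I apply Theorem~\ref{thm:gen-pairing}. It gives
\[
\CFDRa(\HD'\cup\HD\cup(-\HD')^\beta,\wt{\tau})\simeq\CFDAa(\HD')\DT\CFDRa(\HD,\tau).
\]
When $\PMC'$ has no points, $\Alg(\PMC')=\FF_2$. A type $D$ structure over $\FF_2$ is then just a chain complex, and $\CFDAa(\HD')$ coincides with $\CFAa(Y')$. This last identification is standard for a bordered diagram with a genus-$0$ boundary, where the moduli spaces have no Reeb chords on that side.

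It remains to identify the left side with Guth--Manolescu's $\CFRa$ of the closed diagram obtained by filling the two boundary circles with disks carrying the basepoint. I will compare the pieces directly. Real states with no $\alpha$- or $\beta$-arcs are exactly the $\tau$-invariant generators in the closed diagram. Symmetric almost complex structures on the cylindrical model correspond to those used by Guth--Manolescu. The real index formula of Proposition~\ref{prop:real-index} reduces, when there are no chords, to their formula. Finally, domains that avoid the base-arc are exactly the domains with $n_z=0$.

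The main obstacle I expect is this last identification of the moduli spaces. In the bordered setting, curves live in $\Sigma\times[0,1]\times\RR$ with punctured $\Sigma$, while Guth--Manolescu use a closed surface with a basepoint. I would argue, as in the unreal case of \cite{LOT1}, that curves avoiding the basepoint region cannot approach the deleted disks. I would also invoke the independence of $\CFDRa$ from the choice of almost complex structure, and invariance of $\CFRa$, to conclude that the two complexes agree up to homotopy equivalence.
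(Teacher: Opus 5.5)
Your argument is correct, but it takes a different route from the paper. You treat Theorem~\ref{thm:gen-pairing} as a black box and specialize to the empty pointed matched circle $\PMC'$, so that $\Alg(\PMC')=\FF_2$ and $\CFDAa(\HD')=\CFAa(Y')$. You then do the bookkeeping that identifies $\CFDRa$ of an $S^2\amalg S^2$-bordered real diagram with $\CFRa$ of the filled closed diagram.

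The paper goes the other way. It gives no independent proof of Theorem~\ref{thm:gen-pairing}; it says explicitly that it proves this corollary ``rather than the more general'' theorem. It does so by running the time-dilation argument of \cite{LOT1} directly on the closed diagram $\HD'\cup\HD\cup(-\HD')^\beta$:
\begin{itemize}
\item it stretches the neck equivariantly along both copies of $\bdy\HD$ and obtains matched triples $(u_1,u_2,\tau(u_1))$ with a real matched index formula;
\item it checks that ends of the $T$-matched moduli spaces are two-story curves or $\tau$-symmetric pairs of join/collision degenerations at $e\infty$;
\item it sends $T\to\infty$ and identifies trimmed simple ideal-matched curves with the terms of $\CFAa(Y')\DT\CFDRa(Y,\tau)$.
\end{itemize}

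So your reduction is logically sound relative to the stated theorem, but it offloads all of the analytic content onto a result whose only justification in the paper is this same argument, generalized to carry an extra bordered boundary on $\HD'$. What your route buys is a clean explanation of why the closed statement is just the $\PMC'=\emptyset$ case of the bimodule statement. What the paper's buys is an actual proof, without a second boundary component on $\HD'$ to track.

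The identification you flag as the main obstacle, between cylindrical real curve counts on the glued diagram and Guth--Manolescu's complex, is one the paper also takes for granted. Your treatment there is no weaker than the paper's.

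One small correction: the existence of a real bordered Heegaard diagram for an arbitrary real bordered 3-manifold does not follow from the definitions; the paper lists it as unproved in Section~\ref{sec:not-here}. Since $\CFDRa(Y,\tau)$ is only defined relative to a chosen diagram, the statement implicitly presupposes one, so this does not affect your argument.
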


\begin{proof}
  We recall the pairing proof in the unreal case, and note which arguments change in the real case. To minimize confusion, we will prove Corollary~\ref{cor:pairing-to-closed} (where $\bdy Y'$ is connected), rather than the more general Theorem~\ref{thm:gen-pairing}.

  The proof starts by stretching the neck along $\bdy \HD$~\cite[Section 9.1]{LOT1}; now we do so while respecting the involution $\tau$ (i.e., using the same conformal parameters on the two components of $\bdy\HD$). Holomorphic curves degenerate to matched holomorphic curves (cf.~\cite[Definition 9.2]{LOT1}). Since we are now splitting the surface into three components, not two, we obtain matched triples, instead of pairs: holomorphic curves $(u_1,u_2,u_3)$ with $u_1$ in $\Sigma'\times[0,1]\times\RR$, $u_2$ a real curve in $\Sigma\times[0,1]\times\RR$, and $u_3=\tau(u_1)$ in $(-\Sigma')^\beta\times[0,1]\times\RR$. The matched expected dimension formula~\cite[Equation~(9.3)]{LOT1} is replaced by
  \begin{align*}
    \ind^R&(B_1,S_1;B_2,S_2)= \ind(B_1,S_1,P_1)+\ind^R(B_2,S_2,P_2)-m\\
    &= g_1+\OneHalf g_2+2e(B_1)+e(B_2)-\chi(S_1)-\OneHalf\chi(S_2)+m+\OneQuarter\bigl(\sigma(\alphas,\y)-\sigma(\alphas,\x)\bigr). 
  \end{align*}
  (Really, there are three surfaces $S_1,S_2,S_3=\eta(S_1)$ and three domains $B_1,B_2,B_3=-\tau_*(B_1)$, and the formula looks a little more symmetric in these terms. The partition $P_2$ is essentially two copies of $P$, as in Proposition~\ref{prop:real-index}, and $m=|P_1|=\OneHalf|P_2|$.)
  The moduli space of real matched triples is transversely cut out by a simple extension of the unreal case~\cite[Proposition 9.4]{LOT1}. The curves $u_i$ are strongly boundary monotone~\cite[Lemma 9.5]{LOT1}. The count of the matched moduli space agrees with the counts of holomorphic curves in $\HD'\cup\HD\cup(-\HD')$ for appropriate almost complex structures by an analogous argument to the unreal case~\cite[Proposition 9.6]{LOT1}, but gluing respecting the real involution. As in the unreal case, when stretching the neck, embedded curves converge to embedded curves, and every pair of Reeb chords at the same height are nested or disjoint~\cite[Lemma 9.8]{LOT1}. (That proof uses the fact that the homology class determines the Euler characteristic for embedded curves which, as noted in the proof of Proposition~\ref{prop:real-index}, holds without change in the real case.)

  Next, we deform the matching condition $\ev(u_1)=\ev(u_2)=\ev(u_3)$ to $\ev(u_1)=T\ev(u_2)=\ev(u_3)$, to obtain the moduli space of $T$-matched pairs. Transversality (for generic $T$, cf.~\cite[Lemma 4.14]{LOT1}) follows as in the previous paragraph. The next result is a classification of ends of 1-dimensional moduli spaces~\cite[Proposition 9.17]{LOT1}, via the source-dependent index formula. The claim, that these correspond to 2-story curves or degenerations of a single split or join component, still holds (except that components at $e\infty$ come in pairs, exchanged by $\tau$). Running briefly through that part of the argument, by Formula~\eqref{eq:index-source} we have
  \[
  2 = g_1+\OneHalf g_2-\chi(S_1)-\OneHalf\chi(S_2)+e(B)+m_0,
  \]
  where $m_0$ is the number of east punctures of $S_1$ (which is half the number of east punctures of $S_2$).
  Moreover, if $T$ denotes the components at $e\infty$ between $u_1$ and $u_2$, then
  \[
  \chi(S_1)+\OneHalf\chi(S_2)-m_0=\chi(S_1')+\OneHalf\chi(S_2')+\chi(T)-m_1-\OneHalf m_2.
  \]
  (Here $T$ refers only to the components between $S_1$ and $S_2$, not those between $S_2$ and $S_3=\eta(S_1)$, but we are using $m_2$ to denote the count of $e$ punctures on both sides of $S_2'$.)
  Thus, with $k$ the number of components of $T$, an upper bound for the dimension of the space of (real) limit curves for generic $J$ is
  \begin{align*}
  \ind(B_1,S_1',P_1)+&\ind^R(B_2,S_2',P_2)-k-1\\
  &=g_1+\OneHalf g_2-\chi(S_1')-\OneHalf\chi(S_2')+e(B)+\OneHalf k-1+\OneQuarter\bigl(\sigma(\alphas,\y)-\sigma(\alphas,\x)\bigr)\\
  &=\chi(S_1)-\chi(S_1')+\OneHalf\bigl(\chi(S_2)-\chi(S'_2)\bigr)-m_0+\OneHalf k+1\\
  &=(\chi(T)-k)+(k-m_1)+\OneHalf(k-m_2)+1.
  \end{align*}
  This restricts the cases as in the unreal proof. (Note that, because of the symmetry, $m_2$ is even and the case $m_2=k+1$ is replaced by $m_2=k+2$, corresponding to one new puncture on each side of $u_2$.)

  The proof that combs in the ends of these moduli spaces with curves at $e\infty$ cancel in pairs is the same as in the unreal case~\cite[Proposition 9.18]{LOT1}, but gluing respecting the real involution (as usual). This implies that the $T$-matched moduli spaces induce a chain complex, as in the unreal case~\cite[Proposition 9.19]{LOT1}.

  The next step is to consider moduli spaces interpolating between different values of $T$, the so-called $\psi$-matched moduli spaces. There is again a classification of ends of the 1-dimensional moduli spaces~\cite[Lemma 9.21]{LOT1} (this time, with proof omitted); the (omitted) proof adapts to the real case the same way as above (with the same index computations). In particular, different values of $T$ give homotopy equivalent chain complexes, as in the unreal case~\cite[Proposition 9.22]{LOT1}.

  The next step is to send $T\to\infty$. In this limit, $T$-matched curves converge to (the evident real analogue of) ideal matched holomorphic combs; since the real moduli spaces are subspaces of the unreal ones, and the statement did not use transversality or the index, the unreal case~\cite[Proposition 9.26]{LOT1} implies the real case. We then introduce simple ideal-matched curves~\cite[Definition 9.28]{LOT1}; in the real case, these are triples $(U_1,U_2,U_3)$ where $U_1$ has at most one story, $U_3=\tau(U_1)$, $U_2$ is a real comb with perhaps many stories; and either:
  \begin{itemize}
  \item[(1)] $U_1$ and $U_3$ are trivial while $U_2$ is a provincial holomorphic curve (no $e\infty$ punctures) with one story and real index $1$, or
  \item[(1$'$)] $U_2$ is trivial, and $U_1$ is a provincial holomorphic curve with one story and index $1$ (and $U_3=\tau(U_1)$), or
  \item[(2)] each story of $U_2$ has at least one $e$ puncture and real index $1$ (with the discrete partition), and $U_1$ (and hence $U_3$) has index $1$.
  \end{itemize}
  Simple ideal-matched curves have index $1$ as real matched curves, by a simple modification of the unreal case~\cite[Lemma 9.29]{LOT1}. Conversely, we adapt the index computation in the unreal case~\cite[Lemma 9.30]{LOT1} to show that they are the only ideal matched combs with index $1$. With the obvious extension of the notation in that lemma, an upper bound for the dimension of the ideal-matched moduli space is now
  \begin{align*}
  \ind&(B_1,S_1',P_1)+\ind^R(B_2,S_2',P_2')-2\\
    &=g_1+\OneHalf g_2+2e(B_1)+e(B_2)-\chi(S_1')-\OneHalf\chi(S_2')+\OneQuarter\bigl(\sigma(\alphas,\y)-\sigma(\alphas,\x)\bigr)+k_2-1\\
    &=g_1+\OneHalf g_2+2e(B_1)+e(B_2)-\bigl(\chi(S_1)-\chi(T_1)+m_1\bigr)-\OneHalf\bigl(\chi(S_2)-2\chi(T_2)+2m_2\bigr)\\
    &\qquad\qquad\qquad\qquad+\OneQuarter\bigl(\sigma(\alphas,\y)-\sigma(\alphas,\x)\bigr)+k_2-1\\
    &=\bigl(\chi(T_1)-m_1\bigr)+\bigl(\chi(T_2)-m_0\bigr)+(k_2-m_2)+\bigl(\ind^R(B_1,S_1;B_2,S_2)-1\bigr).
  \end{align*}
  If $\ind^R(B_1,S_1;B_2,S_2)<1$, the sum is negative, so there are no ideal matched combs, and when $\ind^R(B_1,S_1;B_2,S_2)=1$ the three other terms must vanish, so $(U_1,U_2)$ is a simple ideal-matched curve.

  Finally, we introduce trimmed simple ideal-matched curves~\cite[Definition 9.31]{LOT1}, which again has an evident extension to the real case (with three combs, $w_1,w_2,w_3=\tau(w_1)$ instead of two). The technical lemma relating boundary monotonicity and the algebra in this setting~\cite[Lemma 9.32]{LOT1} does not use transversality or the expected dimensions, so carries over without change. The spine of a simple ideal-matched curve is a trimmed simple ideal-matched curve: the proof in the unreal case uses the relation between the expected dimensions and the grading in the algebra, in a similar way to ruling out non-composable collisions in the proof of Theorem~\ref{thm:CFDR-defined}. Again, this carries over to the real case because the terms $|\vec{\rho}|$ and $\iota(\vec{\rho})$ appear the same way in the unreal and real embedded index formulas (because in the real case, each chord appears twice, once on each boundary component of $\Sigma$).

  Trimmed curves can again be untrimmed in a combinatorially unique way~\cite[Lemmas 9.37 and 9.38]{LOT1} (no changes here). As in the unreal case, it is easy to see that there are finitely many trimmed simple ideal-matched curves in a given homology class~\cite[Lemma 9.39]{LOT1} (again, no changes). Finally, the proof that the counts of the $T$-matched moduli spaces for $T$ sufficiently large and of the trimmed simple ideal-matched moduli spaces agree is the same as in the unreal case, but where we perform the gluings respecting the involution (as usual). Indeed, the argument is exactly the same as the argument there, focusing on the fiber product of moduli spaces containing $(w_1,v)$ and $w_2$, with the components $(w_3=\tau(w_1),\tau(v))$ coming along for the ride.

  With these ingredients in place, the proof of the pairing theorem is formal, and identical to the unreal case~\cite[Section 9.1]{LOT1}.
\end{proof}

\section{Gradings}\label{sec:gradings}
\subsection{Review of the unreal case}
Recall that the bordered algebra $\Alg(\PMC)$ is graded by a noncommutative group. In fact, there are two versions: the \emph{big grading group} $\bigGroup(k)$ and the \emph{small grading group} $G(\PMC)$, which are central extensions
\begin{align*}
  \ZZ\to G(\PMC)\to H_1(F(\PMC))\\
  \ZZ\to \bigGroup(k)\to \ZZ^{4k-1}.
\end{align*}
Even though $\bigGroup$ depends only on $k$, it will be clearer to write it as $\bigGroup(\PMC)$, thinking of $\ZZ^{4k-1}$ as $H_1(Z\setminus\{z\},\CircPts)$. We can realize elements of $\bigGroup(\PMC)$ as certain pairs $(m;s)$ where $s\in H_1(Z\setminus\{z\},\CircPts)$ and $m\in\OneHalf\ZZ$~\cite[Section 3.3]{LOT1}. The grading of an algebra element $a$ with support $[a]$ is given by
\begin{equation}\label{eq:grb-on-alg}
  \grb(a)=(\iota(a);[a]).
\end{equation}
The image of the grading in $H_1(F(\PMC))$ or $H_1(Z\setminus\{z\},\CircPts)$ is the \emph{$\SpinC$-component} of the grading; the element of $\OneHalf\ZZ$ is the \emph{Maslov component}.

The bordered bimodule $\CFDAa(\HD)$, where $\bdy \HD=\PMC_L\amalg\PMC_R$, is graded by a set $S'(\HD)$ with a commuting left action by $\bigGroup(-\PMC_L)$ and right action by $\bigGroup(\PMC_R)$, restricting to the same action by $\ZZ$. Equivalently, $S'(\HD)$ has a right action by $\bigGroup(-\PMC_L)^\op\times_\ZZ\bigGroup(\PMC_R)$. The orbits of $S'(\HD)$ correspond to $\SpinC$-structures on $Y$. Fix a $\SpinC$-structure on $Y$ represented by a state $\x_0\in\Gen(\HD)$. Given a domain $B\in\pi_2(\x,\y)$, let 
\[
  g'(B)=(-e(B)-n_\x(B)-n_\y(B);\bdy_LB;\bdy_RB)\in \bigGroup(-\PMC_L)^\op\times_\ZZ\bigGroup(\PMC_R).
\]
The orbit of $S'(\HD)$ corresponding to $\x_0$ is given by 
\[
  \langle g'(P)\mid P\in\pi_2(\x_0,\x_0)\rangle\backslash\bigl(\bigGroup(-\PMC_L)^\op\times_\ZZ\bigGroup(\PMC_R)\bigr),
\]
a right $\bigGroup(-\PMC_L)^\op\times_\ZZ\bigGroup(\PMC_R)$-set,
and the grading of $\x$ is $\grb(\x)=g'(B)$ for any $B\in\pi_2(\x_0,\x)$~\cite[Section 6.5]{LOT2}.

To obtain a grading by $G(\PMC)$, we choose \emph{grading refinement data}, consisting of a basic idempotent $\iota_0$ and for every other basic idempotent $\iota$ and element $\psi(\iota)\in G'(\PMC)$ whose boundary is, in a suitable sense, $\iota-\iota_0\in H_0(\CircPts/M)$~\cite[Section 3.3.2]{LOT1}. Then, the refined grading of an algebra element $a=\iota_1 a\iota_2$ is given by 
\begin{equation}\label{eq:gr-on-alg}
  \gr(a)=\psi(\iota_1)\grb(a)\psi(\iota_2)^{-1}.
\end{equation}

With respect to the smaller group, the orbit of the grading set $S(\HD)$ for $\CFDAa(\HD)$ corresponding to the $\SpinC$ structure of $\x_0$ is given by
\begin{equation}\label{eq:gr-set-refined-unreal}
  \langle g'(P)\mid P\in\pi_2(\x_0,\x_0)\rangle\backslash \bigl(G(-\PMC_L)^\op\times_\ZZ G(\PMC_R)\bigr),
\end{equation}
a right $G(-\PMC_L)^\op\times_\ZZ G(\PMC_R)$-set.
Here, we choose the base idempotents $\iota_0$ for $\PMC_L$ and $\PMC_R$ to be the idempotents for the base state $\x_0$, and observe that each $g'(P)$ in fact lies in $G(\PMC)\subset\bigGroup(\PMC)$~\cite[Lemma 6.29]{LOT2}. The grading of a state $\x$ with left and right idempotents $\iota_L$ and $\iota_R$ is
\begin{equation}\label{eq:refined-gr}
  \gr(\x)=\psi_L(\iota_L)\grb(\x)\psi_R(\iota_R)^{-1}.
\end{equation}

\subsection{Grading real bordered modules}
Now, let $(\HD,\tau)$ be a real bordered Heegaard diagram, with $\bdy_L\HD=-\PMC$, so $\CFDRa(\HD,\tau)$ is a left module over $\Alg(\PMC)$. Given a real domain $B\in\pi_2^R(\x,\y)$, define
\[
g'_R(B)=\Bigl(-\OneHalf\bigl(e(B)+n_\x(B)+n_\y(B)\bigr)-\OneQuarter\bigl(\sigma(\alphas,\y)-\sigma(\alphas,\x)\bigr);\bdy_L B\Bigr).
\]
Fix a real state $\x_0\in\Gen_R(\HD)$; we define a grading set for the real states that are real $\SpinC$-equivalent to $\x_0$. Specifically, with respect to the big and small grading groups, this grading set is
\begin{align}
S'_R(\HD,\x_0)&= \bigGroup(\PMC)/\langle g'_R(P)\mid P\in\pi_2^R(\x_0,\x_0)\rangle \\
S_R(\HD,\x_0)&= G(\PMC)/\langle g'_R(P)\mid P\in\pi_2^R(\x_0,\x_0)\rangle,\label{eq:gr-set-refined-real}
\end{align}
which have left actions by $G'(\PMC)$ and $G(\PMC)$, respectively. Given a state $\x\in\Gen_R(\HD,\tau)$ which is real $\SpinC$ equivalent to $\x_0$, with left idempotent $\iota$, choose a real domain $B\in\pi_2^R(\x_0,\x)$ and define
\begin{align}
  \grb(\x)&=g'_R(B)\\
  \gr(\x)&=\psi(\iota)\grb(\x)=\psi(\iota)g'_R(B)\label{eq:refined-gr-real}
\end{align}
where $\psi$ is some chosen grading refinement data.

There are several properties to check:
\begin{proposition}\label{prop:real-gradings}
  These definitions have the following properties:
  \begin{enumerate}[label=(\arabic*)]
    \item\label{item:gpR-in-group} For any real domain $B\in\pi_2^R(\x,\y)$, the tuple $g'_R(B)$ lies in $\bigGroup(\PMC)$. Thus, $S'_R(\HD,\x_0)$ is well-defined and $\grb(\x)$ is an element of $S'_R(\HD,\x_0)$.
    \item\label{item:gpR-in-subgroup} The elements $g'_R(P)$ in fact lie in $G(\PMC)\subset \bigGroup(\PMC)$, so $S_R(\HD,\x_0)$ is well-defined. 
    \item\label{item:gp-additive} If $B_1\in\pi_2^R(\w,\x)$ and $B_2\in\pi_2^R(\x,\y)$, and $B_1*B_2\in\pi_2(\w,\y)$ is their concatenation, then 
    \[
    g'_R(B_1*B_2)=g'_R(B_1)g'_R(B_2).
    \]
    \item\label{item:grb-indept} The element $\grb(\x)$ is independent of the choice of real domain $B$.
    \item\label{item:gr-in-SR} The element $\gr(\x)$ lies in $S_R(\HD,\x_0)$.
    \item\label{item:gr-is-grading} The maps $\grb$ and $\gr$ define gradings on $\CFDRa(\HD,\tau)$.
  \end{enumerate}
\end{proposition}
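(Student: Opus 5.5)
The plan is to reduce each item to the corresponding unreal statement for the bimodule grading from \cite[Section 6.5]{LOT2}, using three facts about a real domain $B$. First, since $B=-\tau_*(B)$ and $\tau$ exchanges $\bdy_L\Sigma$ with $\bdy_R\Sigma$ (reversing orientation), $\bdy_R B$ is determined by $\bdy_L B$. Second, the unreal grading $g'(B)=(-e(B)-n_\x(B)-n_\y(B);\bdy_LB;\bdy_RB)$ is therefore a ``doubled'' version of $g'_R(B)$. Third, the correction term $\OneQuarter(\sigma(\alphas,\y)-\sigma(\alphas,\x))$ is exactly what makes Proposition~\ref{prop:real-index} consistent. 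The key observation is that, under the map $\bigGroup(\PMC)\to\bigGroup(-\PMC_L)^\op\times_\ZZ\bigGroup(\PMC_R)$ sending $(m;s)\mapsto(2m;s;\tau_*s)$ (which I would check is a homomorphism after the appropriate identifications, since the product in $\bigGroup$ multiplies the $\ZZ$-part additively and adds a linking term $L(s_1,s_2)$ that doubles under this map), $g'_R(B)$ maps to $g'(B)$ corrected by the $\sigma$-term.

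For \ref{item:gpR-in-group}, I would use the criterion for membership in $\bigGroup(\PMC)$ from \cite[Section 3.3]{LOT1}: the Maslov component must lie in $\OneHalf\ZZ$ and satisfy a parity compatible with the multiplicities of $\bdy_L B$ at points of $\CircPts$. For $g'(B)$ this is known, and $e+n_\x+n_\y\in\OneHalf\ZZ$ with the right parity. Halving it and adding the $\sigma$-term gives a quarter-integer a priori. I would show it is a half-integer with the correct parity by exploiting that $\ind^R$ in Formula~\eqref{eq:index-embedded} is an integer. More directly, I would compute locally: at each fixed corner the $\sigma$-contribution compensates the quarter-integer part of $\OneHalf(n_\x+n_\y)$ coming from corner multiplicities on $C$. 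This local computation is where I expect the main obstacle. I would try an induction over concatenations with elementary real domains (pairs of regions, or single $\tau$-invariant regions meeting $C$), checking the statement directly for each.

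\ref{item:gp-additive} follows from additivity of $e$, of the $\sigma$-differences (these telescope), and the usual identity $n_\x(B_2)+n_\y(B_1)=n_\w(B_1)+n_\y(B_2)$ up to the linking term $L(\bdy_LB_1,\bdy_LB_2)$ built into the group law, exactly as in \cite[Lemma 6.?]{LOT2}. For the real case I would halve the unreal identity; the linking term there is $L(\bdy_LB_1,\bdy_LB_2)+L(\bdy_RB_1,\bdy_RB_2)=2L(\bdy_LB_1,\bdy_LB_2)$ by $\tau$-symmetry. \ref{item:grb-indept} follows from \ref{item:gp-additive}, since two choices of $B$ differ by $P\in\pi_2^R(\x_0,\x_0)$ on the left, which is quotiented out.

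For \ref{item:gpR-in-subgroup}, a periodic $P$ has $\bdy_LP$ with zero boundary in $H_0(\CircPts)$ after matching, so it lies in $G(\PMC)$, just as in \cite[Lemma 6.29]{LOT2}. \ref{item:gr-in-SR} follows because $\psi(\iota)g'_R(B)$ has $\SpinC$-component whose boundary is $\iota_0-\iota_0=0$ once $\iota_0$ is taken to be the idempotent of $\x_0$. For \ref{item:gr-is-grading}, for a term $a(-\vec\rho)\otimes\y$ in $\delta^1(\x)$ from $B$ with $\ind^R(B,\vec\rho)=1$, I would use \ref{item:gp-additive} to get $\grb(\y)=\grb(\x)g'_R(B)$ in the appropriate order. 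I would then compute $g'_R(B)=\lambda^{-1}\grb(a(-\vec\rho))^{\pm1}$, using Formula~\eqref{eq:index-embedded} together with the algebra grading $\grb(a)=(\iota(a);[a])$ and $\iota(a(\vec\rho))=\iota(\vec\rho)$ summed appropriately. This is exactly the unreal computation \cite[Lemma 6.?]{LOT1}, since $|\vec\rho|+\iota(\vec\rho)$ enters the real index unhalved. Refining by $\psi$ is then formal.
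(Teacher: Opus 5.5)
\textbf{Items (2)--(6).} These follow the paper's argument: halving the unreal additivity identity using $L(\bdy_LB_1,\bdy_LB_2)=L(\bdy_RB_1,\bdy_RB_2)$, the boundary condition on periodic domains, and the real embedded index for the grading property.

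\textbf{The gap is item (1).} This is the substantive part, and neither route you sketch closes it. Here is exactly what is missing.
Since $\bdy_RB$ is the mirror of $\bdy_LB$, both have the same number $p$ of parity changes. So the unreal fact $g'(B)\in\bigGroup(-\PMC_L)^\op\times_\ZZ\bigGroup(\PMC_R)$ only gives $e(B)+n_\x(B)+n_\y(B)\equiv\OneHalf p\pmod 1$. Also, $\sigma(\alphas,\y)-\sigma(\alphas,\x)$ is even, because $\#(\x\cap C)\equiv g\pmod 2$ for any real state. Your doubling map therefore pins down the Maslov component of $g'_R(B)$ only modulo $\OneHalf$. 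You need one more bit: $e+n_\x+n_\y$ modulo $2$.

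Integrality of $\ind^R(B,\vec\rho)$ does not supply this bit. When $\bdy_LB=[\vec\rho]$, integrality is \emph{equivalent} to item (1), because the grading of $a(\vec\rho)$ already lies in $\bigGroup(\PMC)$. Nothing guarantees a real representative of $B$ with $\chi=\chi_{\mathrm{emb}}$ that would make it an integer for independent Fredholm reasons. Nor does this route address $B$ with negative boundary multiplicities.

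The proposed induction over ``elementary real domains'' is not well posed. Single regions, or $\tau$-pairs of regions, are not domains between real states, and $n_\x$, $n_\y$, $\sigma$ depend on the corners, so there is nothing to concatenate. The assertion that fixed corners locally compensate is exactly what has to be proved.

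\textbf{The missing idea.} You need to compute on the quotient by the involution, after first normalizing the diagram near $C$. The paper first makes local modifications whose effect on $g'_R$ is explicit:
\begin{itemize}
\item an isotopy (creating two bigons) so that every fixed intersection point has $\sigma=+1$;
\item fixed point stabilizations so that each fixed circle meets $\x$ and $\y$ in an even number of points;
\item isotopies after which a real rectangle moves pairs of fixed points of $\x$ (and of $\y$) off $C$, changing the grading by exactly $1$.
\end{itemize}
After these, $\tau$ acts freely on $\x$ and $\y$, and the $\sigma$-terms disappear.

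Next it replaces $B$ by $\wt B$, adding copies of $[\Sigma]$. This changes $\OneHalf(e+n_\x+n_\y)$ by an integer, since with two boundary components $e+n_\x+n_\y$ of $[\Sigma]$ is even. One then builds the surface $F$ of \cite[Lemma 10.3]{LOT1}. It inherits an involution $\tau_F$ whose fixed set is a union of circles, so $e(\wt B)=e(F)\equiv 2e(F/\tau_F)\pmod 2$.

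Computing $e(F/\tau_F)$ modulo $1$, from the halved corner counts $n_\pm^\tau$ and the $\alpha$-boundary data $\delta_i$, determines $e(\wt B)$ modulo $2$. That is the missing bit. The resulting expression for $\OneHalf(e+n_\x+n_\y)$ is literally the unreal one, whose fractional part was already identified with $\OneQuarter$ times the number of parity changes of $\bdy_LB$.
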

\begin{proof}
  Recall that $\bigGroup(\PMC)$ consists of pairs $(m;h)$ where 
  \[
  m\equiv\OneQuarter\#\bigl(\text{parity changes in }h\bigr) \pmod{1}
  \]
  \cite[Definition 3.33]{LOT1}. To verify that $g'_R(B)$ lies in $\bigGroup(\PMC)$, we must check that it satisfies this congruence. We imitate the proof that $g'(B)$ has this property~\cite[Lemma 10.3]{LOT1}, and for brevity will assume the reader is following that proof.
  
  \begin{figure}
    \centering
    \includegraphics[alt={A local modification to make sigma positive}]{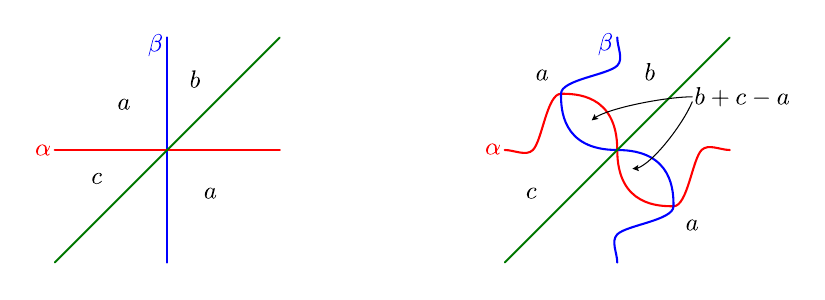}
    \caption{\textbf{A local modification to make $\sigma$ positive.} The diagram near a fixed intersection point before and after the modification are shown. The labels $a$, $b$, and $c$ in the left figure indicate the coefficients of a domain before the modification, and the labels in the right figure indicate the corresponding coefficients after the modification.}
    \label{fig:make-sigma-positive}
  \end{figure}

  We start with some reductions. First, we may assume that all the points in $\alphas\cap\betas$ in the fixed set have $\sigma=1$, as follows. For each fixed point with $\sigma=-1$, let $\HD'$ be the result of performing a small isotopy as in Figure~\ref{fig:make-sigma-positive}, introducing a new pair of bigons and two new non-fixed intersection points, and changing the fixed intersection point to have $\sigma=1$. Any domain in $\HD$ gives a domain in $\HD'$ with the same corners, as indicated in the figure. The boundary of the domain (and hence $\epsilon$) is unchanged. If the point under consideration is in both $\x$ and $\y$ or in neither $\x$ nor $\y$, then the $\sigma$-contribution to the grading is unchanged, and $n_\x(B)+n_\y(B)$ changes by an even integer. If the point is in one of $\x$ or $\y$, then the $\sigma$ contribution changes by $1/2$ and, in the notation from the figure, the new Euler measure and point measures are 
  \begin{align*}
    e(B')&=e(B)+(b+c)/2-a\\
    n_\x(B')+n_\y(B')&=n_\x(B)+n_\y(B)-a+(b+c)/2.
  \end{align*}
  So, the change to the grading is an integer plus $1/2+(b+c)/2$. Since this corner is in exactly one of $\x$ or $\y$, $b+c$ is odd, so the overall grading change is an integer.

  Second, we reduce to the case that an even number of the points in $\x$ (and $\y$) are fixed by $\tau$. For each component of the fixed set which intersects $\x$ (and hence $\y$) in an odd number of points, perform a fixed point stabilization as shown in Figure~\ref{fig:stabilize-near-fixed-set} (see also~\cite[Figure 3.5]{GM:real-HF}), with feet adjacent to this component. If the local multiplicity of $B$ at the feet is $a$, then this decreases the Euler measure by $2a$, increases $n_\x+n_\y$ by $2a$, and does not change any of the other terms in the grading formula. Of course, it also does not change $\bdy_L B$. So, it suffices to prove the result for the stabilized diagram. Thus, we may assume that $\x$ and $\y$ have an even number of points on each component of the fixed set.
  
  \begin{figure}
    \centering
    \includegraphics[alt={Stabilizing near the fixed set}]{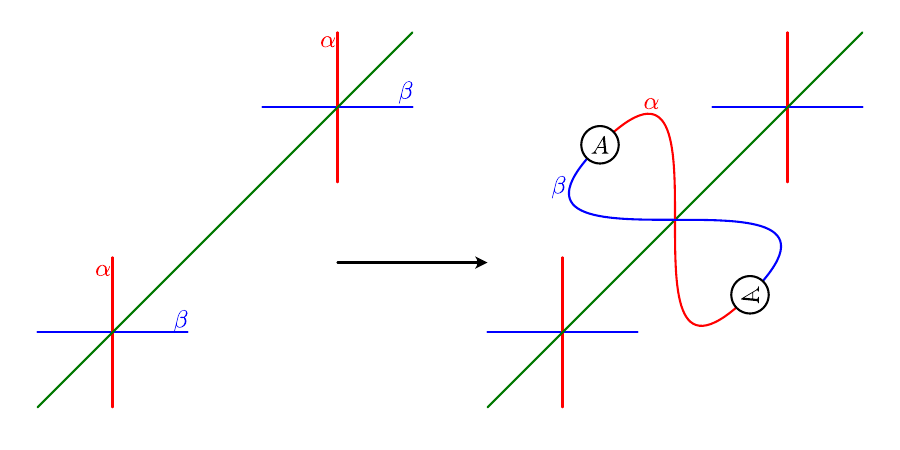}
    \caption{\textbf{Stabilizing near the fixed set.} A small neighborhood of part of a component of the fixed set is shown, before and after the stabilization. In Guth-Manolescu's terminology, this is a fixed point stabilization.}
    \label{fig:stabilize-near-fixed-set}
  \end{figure}

  Third, we reduce to the case that none of the points in $\x$ or $\y$ is fixed by $\tau$. If $\x$ contains some points on the fixed set, choose a pair of adjacent points in $\x$ on a component of the fixed set and perform an isotopy as in Figure~\ref{fig:make-gens-free}. In the new diagram, $\x$ is connected to another state $\x'$ by a rectangle, so that $\x'$ has two fewer points on the fixed set. In particular, the real domain in $B\in \pi_2(\x,\y)$ gives a real domain $B'\in\pi_2(\x',\y)$ with $\bdy_LB=\bdy_LB'$ and grading differing by $1$. Thus, it suffices to prove the result for $B'$ instead of $B$. Repeat this procedure until the state $\x$ has no fixed points, and repeat an analogous construction for $\y$ until $\y$ has no fixed points.

  \begin{figure}
    \centering
    \includegraphics[alt={Third modification near the fixed set}]{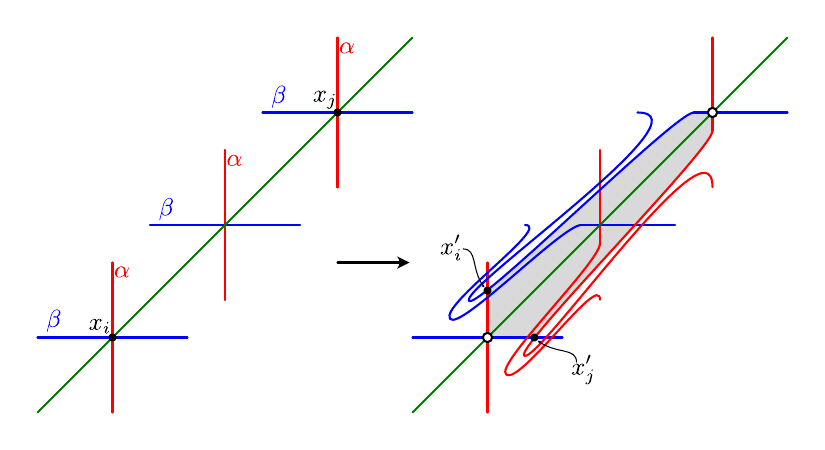}
    \caption{\textbf{Third modification near the fixed set.} A small neighborhood of a component of the fixed set is shown, with two fixed components of a state marked. After the isotopy, there is a real rectangle connecting this state to one with two fewer points on the fixed set.}
    \label{fig:make-gens-free}
  \end{figure}

  So, it suffices to prove the statement when $\tau\co \x\to \x$ and $\tau\co \y\to\y$ are free. In particular, the $\sigma$ terms in the definition of $g'_R$ vanish, so we will drop them from the discussion.

  Now, begin the proof as in the unreal case~\cite[Lemma 10.3]{LOT1}. Because our bordered Heegaard diagrams have two boundary components, $e([\Sigma])+n_\x([\Sigma])+n_\y([\Sigma])$ is even. Thus, 
  \[
    \OneHalf\bigl(e(B)+n_\x(B)+n_\y(B)\bigr)\equiv \OneHalf\bigl(e(\wt{B})+n_\x(\wt{B})+n_\y(\wt{B})\bigr)\pmod{1},  
  \]
  and it suffices to prove the result for $\wt{B}$ instead of $B$.

  Build the surface $F$ as in the unreal case. The surface $F$ inherits an involution $\tau_F$ which sends $R_i^{(j)}$ to $\tau(R_i)^{(m(R_i)-j)}$. That is, $\tau_F$ applies $\tau$ from the Heegaard surface and also reverses the indexing of the $R_i$. It is straightforward to check that $\tau_F$ respects the gluing relations.

  We have
  \[
  e(\wt{B})=e(F)=2e(F/\tau_F)
  \pmod{2}.
  \]
  Here, we view $F/\tau$ as a surface-with-corners as follows. Because $\tau$ does not fix any points in $\x\cup\y$, the fixed set of $\tau_F$ consists entirely of circles. So, the boundary of $F/\tau$ is identified with the boundary of $F$ disjoint union some circles; corners are the images of corners in $F$, with the same local models.
  
  Define $n_\pm$ as in the unreal case. Because of the symmetry $\tau$, $n_+$ and $n_-$ are both even; to remember that, write $n_\pm = 2n_{\pm}^\tau$ (so $n_\pm^\tau$ are counts of corners of $F/\tau_F$). When defining the $\delta_i$, consider only the $\alpha$-boundary of the Heegaard diagram. Then
  \[
  e(F/\tau)\equiv \frac{1}{4}\left(-n_+^\tau+n_-^\tau+\sum_{i=1}^{4k}|\delta_i|\right) \pmod{1}.
  \]
  Also,
  \begin{align*}
    n_\x(\wt{B})+n_\y(\wt{B}) &\equiv \sum_{x\in \x+\y}\bigl(\OneQuarter C(x)+\OneHalf h(x)\bigr) \pmod{2}\\
    &\equiv \OneQuarter(2n_+^\tau-2n_-^\tau)+\OneHalf\hspace{-2em}\sum_{x\in [(\x+\y)\cap(\alphas^a\cup\betas^a)]}\hspace{-2em}h(x)\pmod{2}.
  \end{align*}
  (For the second congruence, note that in the real case, for each $\alpha$-circle there are four points that contribute the same way to $h$: the two points in $\x$ and $\y$ on that $\alpha$-circle and the points in $\x$ and $\y$ on the corresponding $\beta$-circle.)
  Thus,
  \begin{align}
  \OneHalf\bigl(e(\wt{B})&+n_\x(\wt{B})+n_\y(\wt{B})\bigr)\nonumber\\
  &\equiv \frac{1}{4}\left(-n_+^\tau+n_-^\tau+\sum_{i=1}^{4k}|\delta_i|\right)+
  \OneQuarter(n_+^\tau-n_-^\tau)+\OneQuarter\hspace{-2em}\sum_{x\in [(\x+\y)\cap(\alphas^a\cup\betas^a)]}\hspace{-2em}h(x)
  \pmod{1}\nonumber\\
  &=\OneQuarter\sum_{i=1}^{4k}|\delta_i|
  +\OneHalf\hspace{-1em}\sum_{x\in (\x+\y)\cap\alphas^a}\hspace{-1em}h(x).\label{eq:gr-lem-penultimate}
  \end{align}
  In the last line, the $h$ sum only counts half-disks with boundary along the $\alpha$-arcs; the factor of $1/2$ instead of $1/4$ is that there are corresponding half-disks with boundary along the $\beta$-arcs.

  This last formula is now identical to the formula in the unreal case~\cite[p.~192]{LOT1}. So, the same case analysis implies that its fractional part agrees with $\epsilon(\bdy_LB)$, giving Point~\ref{item:gpR-in-group}.

  Point~\ref{item:gpR-in-subgroup} is easier. The subgroup $G(\PMC)\subset G'(\PMC)$ consists of pairs $(m;h)$ so that $\bdy h = -M_*\bdy(h)$, and this condition clearly holds for the boundaries of periodic domains (real or not).

  For Point~\ref{item:gp-additive}, recall that $g'(B)$ is additive in the sense that
  $g'(B_1*B_2)=g'(B_1)g'(B_2)$
  \cite[Lemma 10.4]{LOT1}. Equivalently,
  \begin{multline*}
  e(B_1*B_2)+n_\w(B_1*B_2)+n_\y(B_1*B_2)\\
  =e(B_1)+n_\w(B_1)+n_\x(B_1)+e(B_2)+n_\x(B_2)+n_\y(B_2)-L(\bdy_LB_1,\bdy_LB_2)-L(\bdy_RB_1,\bdy_RB_2),
  \end{multline*}
  where $L$ is the linking number~\cite[Formula 3.32]{LOT1}. Since the linking number satisfies $L(\bdy_LB_1,\bdy_LB_2)=L(\bdy_RB_1,\bdy_RB_2)$, this gives
  \begin{multline*}
  \OneHalf\bigl(e(B_1*B_2)+n_\w(B_1*B_2)+n_\y(B_1*B_2)\bigr)\\
  =\OneHalf\bigl(e(B_1)+n_\w(B_1)+n_\x(B_1)\bigr)+\OneHalf\bigl(e(B_2)+n_\x(B_2)+n_\y(B_2)\bigr)-L(\bdy_LB_1,\bdy_LB_2).
  \end{multline*}
  The $\sigma$ terms in the definition of $g'_R(B)$ are clearly additive, so $g'_R$ is also additive.

  Point~\ref{item:grb-indept} follows from Point~\ref{item:gp-additive}, since any other real domain $B'\in\pi_2^R(\x_0,\x)$ can be written as $B'=P*B$ where $P\in\pi_2^R(\x_0,\x_0)$. 

  Point~\ref{item:gr-in-SR} is straightforward, and is the same as in the unreal case. The projection of $\psi(\iota)g_R'(B)$ has boundary $\iota-\iota_0-\iota+\iota_0=0$ (because $\bdy_L B$ goes from $\iota_0$ to $\iota$ in $-Z_L$, hence from $\iota$ to $\iota_0$ in $Z_L$).

  Point~\ref{item:gr-is-grading} is again essentially the same as in the unreal case, but using the real index, Formula~\eqref{eq:index-embedded}, in place of the ordinary index. Suppose a pair $(B,\vec{\rho})$ contributes $a(\vec{\rho})\otimes \y$ to $\delta^1(\x)$. Then $\ind^R(B,\vec{\rho})=1$, so 
  \[
  g'_R(B)=\bigl(-1+|\vec\rho|+\iota(\vec{\rho});\bdy_LB\bigr)=(-1;0)\grb(\vec{\rho}).
  \]
  (For the second equality, see~\cite[Formula (10.21)]{LOT1}.) 
  It now follows from Point~\ref{item:gp-additive} that $\grb$ defines a grading. To see that $\gr$ also defines a grading, we have
  \begin{align*}
  \gr(a(\vec{\rho})\otimes \y)&=\gr(a(\vec{\rho}))\gr(\y)\\
  &=\psi(\iota_\x)\grb(a)\psi(\iota_\y)^{-1}\psi(\iota_\y)\grb(\y)\\
  &=\psi(\iota_\x)\grb(a)\grb(\y)\\
  &=\psi(\iota_\x)(-1;0)\grb(\x)\\
  &=(-1;0)\gr(\x).
  \end{align*}
  This concludes the proof.
\end{proof}

\subsection{Comparison with the unreal gradings}
Of course, $\Gen_R(\HD,\tau)\subset\Gen(\HD)$, so the set of real states inherits a grading from the (unreal) grading on $\Gen(\HD)$. The Maslov component of this grading is not well-behaved, because the real and unreal indices of holomorphic curves differ (arbitrarily), but the $\SpinC$ components of the grading on $\Gen(\HD)$ do behave well.

Write the boundary of $\HD$ as $(-\PMC)\cup\PMC^\beta$, and let $Y(\HD)$ be the 3-manifold specified by $\HD$. Considering only the $\SpinC$-components of the gradings, $\CFDRa(\HD,\tau)$ is graded by 
\[
  H_1(F(\PMC))/\bdy_L H_2(Y,\bdy Y)^{-\tau_*}
\]
where $\bdy_L$ is the composition $H_2(Y,\bdy Y)^{-\tau_*}\stackrel{\bdy}{\to} H_1(\bdy Y)=H_1(F(\PMC))\amalg H_1(F(\PMC))\to H_1(F(\PMC))$ where the second map is projection to the first $H_1(F(\PMC))$ summand. On the other hand, $\CFDDa(\HD)$ is graded by 
\[
\bigl[H_1(F(\PMC))\oplus H_1(F(\PMC))\bigr]/\bdy H_2(Y,\bdy Y).
\]

We reformulate the grading on $\CFDRa(\HD,\tau)$ to be closer to the grading on $\CFDDa(\HD)$. Inside $H_1(F(\PMC))\oplus H_1(F(\PMC))$ is an anti-diagonal $\nabla=\{(x,-x)\mid x\in H_1(F(\PMC))\}$, and 
\[
 H_1(F(\PMC))/\bdy_L H_2(Y,\bdy Y)^{-\tau_*} \stackrel{\cong}{\lra} \nabla / \bdy H_2(Y,\bdy Y)^{-\tau_*}.
\]
Observe that $\bdy H_2(Y,\bdy Y)^{-\tau_*} \subset [\bdy H_2(Y,\bdy Y)]\cap \nabla$, so we obtain a map
\begin{equation}\label{eq:real-gr-set-to-unreal}
  H_1(F(\PMC))/\bdy_L H_2(Y,\bdy Y)^{-\tau_*}\to \bigl[H_1(F(\PMC))\oplus H_1(F(\PMC))\bigr]/\bdy H_2(Y,\bdy Y)
\end{equation}
of grading sets.

To define a grading on $\Gen_R(\HD,\tau)$, we choose base states for each real $\SpinC$-equivalence class. Fixing a real $\SpinC$-equivalence class and a base state $\x_0$ for it, if $\x\in \Gen_R(\HD,\tau)$ is another state in the equivalence class, and we also use $\x_0$ to define the grading for $\Gen(\HD)$, then Formula~\eqref{eq:real-gr-set-to-unreal} respects the gradings.

Now, suppose $\x_0$ and $\y$ are $\SpinC$-equivalent, but not real $\SpinC$-equivalent. Then using the ordinary grading (and continuing to take only the $\SpinC$-components), $\gr(\y)\in \bigl[H_1(F(\PMC))\oplus H_1(F(\PMC))\bigr]/\bdy H_2(Y,\bdy Y)$. If $\Delta=\{(x,x)\mid x\in H_1(F(\PMC))\}$ is the diagonal, then 
\[
\gr(\y)+\tau(\gr(\y))\in \Delta / \bdy [(1+\tau_*)H_2(Y,\bdy Y)].
\]
By construction, this is exactly $\bdy\zeta(\x,\y)$.
In particular, the $\SpinC$-components of the ordinary (unreal) grading determines the image of the difference cycle $\zeta$.

Finally, a few words about how this grading relates to Guth-Manolescu's in the closed case. Each real $\SpinC$-equivalence class of states corresponds to a real $\SpinC$-structure~\cite[Section 3.7]{GM:real-HF} $\spinc$. The grading we have defined in a given $\SpinC$-equivalence class is by the $\ZZ$-set
\[
\ZZ/\Bigl\langle -\OneHalf\bigl(e(P)+2n_\x(P)\bigr)\mid P\in\pi_2(\x,\x)\Bigr\rangle.
\]
This is half the divisibility of $c_1(\spinc)$ (where we view $\spinc$ as an ordinary $\SpinC$-structure). Equivalently, this is a relative grading by $\ZZ/n\ZZ$ where $n$ is half the divisibility of $c_1(\spinc)$, exactly as Guth-Manolescu describe~\cite[Section 4.6]{GM:real-HF}. Tracing through the definitions, our relative grading of states agrees with theirs.

\subsection{A graded pairing theorem}\label{sec:graded-pairing}
As in Theorem~\ref{thm:nice-pairing}, let $(\HD,\tau)$ be a real nice
bordered Heegaard diagram with boundary $(-\PMC)\amalg \PMC^\beta$, and
$\HD'$ a nice arced bordered Heegaard diagram with boundary
$(-\PMC')\amalg \PMC$. 
Let $(Y,\tau)$ and $Y'$ be the 3-manifolds represented by $(\HD,\tau)$ and $\HD'$, so $\bdy Y=-F(\PMC)\amalg -F(\PMC)$ and $\bdy Y'=-F(\PMC')\amalg F(\PMC)$.
Fix a $\SpinC$-equivalence class of states
for $\HD'\cup\HD\cup(-\HD')^\beta$, a base state
$\x'_0\cup\x_0\cup\x'_0$ representing it, 
and grading refinement data $\psi_{\PMC'}$ for $\Alg(\PMC')$ and
$\psi_{\PMC}$ for $\Alg(\PMC)$. To keep notation shorter, we will write $\x'_0\cup\x_0\cup\x'_0$ as $\x'_0\x_0\x'_0$, and $\HD'\cup\HD\cup(-\HD')^\beta$ as $\HD'\HD\HD'$. These data determine:
\begin{itemize}
\item Gradings on $\Alg(\PMC)$ and $\Alg(\PMC')$ by $G(\PMC)$ and $G(\PMC')$, via Formulas~\eqref{eq:grb-on-alg} and~\eqref{eq:gr-on-alg}. 
\item Grading sets for the summands of $\CFDRa(\HD,\tau)$ and $\CFDAa(\PMC')$ corresponding to the $\SpinC$-equivalence class of $\x_0$ and $\x_0'$, as in Formulas~\eqref{eq:gr-set-refined-real} and~\eqref{eq:gr-set-refined-unreal}.
\item A grading set for the summand of $\CFDRa(\HD'\HD\HD',\wt{\tau})$ in the $\SpinC$-equivalence class of $\x'_0\x_0\x'_0$, using the base state $\x'_0\x_0\x'_0$, again as in Formula~\eqref{eq:gr-set-refined-real}.
\item Gradings of these summands of $\CFDRa(\HD,\tau)$, $\CFDAa(\PMC')$, and $\CFDRa(\HD'\HD\HD',\wt{\tau})$ by these sets, as in Formulas~\eqref{eq:refined-gr-real} and~\eqref{eq:refined-gr}.
\end{itemize}
The last point is a little subtle. To compute the grading of a state $\x\in\Gen_R(\HD,\tau)$, say, we choose a domain $B\in\pi_2^R(\x_0,\x)$, but (by Proposition~\ref{prop:real-gradings} part~\ref{item:grb-indept}) the grading of $\x$ is independent of that choice.

As in the pairing theorem for bimodules~\cite[Section 7.1.1]{LOT2}, given states $\x,\y\in\Gen_R(\HD)$ and $\x',\y'\in\Gen(\HD')$ so that $\epsilon(\x,\y)=\epsilon(\x',\y')=0$ and $\zeta(\x,\y)=0$, we can compute $\epsilon(\x'\x\x',\y'\y\y')$ from the grading data, as follows. It suffices to compute $\epsilon(\x'_0\x_0\x'_0,\x'\x\x')$. The $\SpinC$-components of the gradings of $\x'$ and $\x$ are elements of $H_1(F(\PMC))$; denote them $[\gr(\x')]$ and $[\gr(\x)]$. It follows from the definitions that $\epsilon(\x'_0\x_0\x'_0,\x'\x\x')$ is the image of $[\gr(\x')]+[\gr(\x)]$ under the composition
\begin{align*}
  H_1\bigl(F(\PMC)\bigr)/\bigl(\bdy_L H_2(Y,\bdy Y)+\bdy_R H_2(Y',\bdy Y') \bigr)&\to H_1(Y'\cup Y\cup Y')/H_1\bigl(F(\PMC')\amalg F(\PMC')\bigr)\\
  &\stackrel{1-\tau_*}{\lra}H_1(Y'\cup Y\cup Y')^{-\tau_*}/H_1\bigl(F(\PMC')\amalg F(\PMC')\bigr)^{-\tau_*}\!\!\!\!,
\end{align*}
where the first map is induced by inclusion of the left boundary of $Y$.

We digress briefly to discuss how to extract $\zeta$ from the grading data.
So, suppose that $\epsilon(\x'_0\x_0\x'_0,\x'\x\x')=0$; for convenience, assume also that $\x_0$ and $\x$ occupy the same $\alpha$-arcs. Then, for $B\in\pi_2^R(\x_0,\x)$ and $B'\in\pi_2(\x'_0,\x')$, $(\bdy_RB'+\bdy_LB,\bdy_LB+\bdy_RB')\in H_1(F(\PMC)\amalg F(\PMC))$ is in the kernel of the map 
\[
H_1(F(\PMC)\amalg F(\PMC))\to H_1(F(\PMC)\amalg F(\PMC))/K \hookrightarrow H_1(Y'YY',\bdy Y'YY')
\]
where $K$ is the image of the boundary map
\begin{align*}
  H_2(Y',\bdy Y')\oplus H_2(Y,\bdy Y)\oplus H_2(Y',\bdy Y')&\to H_1(\bdy Y',-F(\PMC'))\oplus H_1(\bdy Y)\oplus H_1(\bdy Y',-F(\PMC'))\\
  &\to H_1(F(\PMC))\oplus H_1(F(\PMC))
\end{align*}
(as we can change $B'$, $B$, and $\tau(B')$ by periodic domains to get a domain in $\pi_2(\x_0'\x_0\x'_0,\x'\x\x')$).
Choose a preimage $(C,D,E)\in H_2(Y',\bdy Y')\oplus H_2(Y,\bdy Y)\oplus H_2(Y',\bdy Y')$ of $(\bdy_RB'+\bdy_LB,\bdy_LB+\bdy_RB')$. Then $\zeta(\x'_0\x_0\x'_0,\x'\x\x')$ is represented by 
\[
  (C+\tau_*(E),D+\tau_*(D),E+\tau_*(C))\in H_2(Y'YY',\bdy Y'YY')^{\tau_*}/\Image(1+\tau_*).
\]
It is straightforward to see that this element depends only on the grading data in the tensor product grading set.

Returning from the digression, if
$\zeta(\x'_0\x_0\x'_0,\x'\x\x')=0$, then we can choose a real domain connecting $\x'_0\x_0\x'_0$ and $\x'\x\x'$. Intersecting this domain with $\HD'$ and $\HD$ gives domains that can be used to define the gradings of $\x'$ and $\x$, and hence the grading of $\x'\otimes\x$ in the tensor product grading set. The result is clearly in the same orbit as the grading of $\x'_0\otimes\x_0$.
On the other hand, if $(\gr(\x'_0),\gr(\x_0))$ and $(\gr(\x'),\gr(\x))$ are in the same orbit of the tensor product grading set, then we can choose $\bdy_RB'=\bdy_LB$, so $\x'_0\x_0\x'_0$ and $\x'\x\x'$ are also connected by a real domain.

In conclusion, $\x'_0\otimes\x_0$ and $\x'\otimes\x$ are in the same orbit of the tensor product grading set if and only if $\x'_0\x_0\x'_0$ and $\x'\x\x'$ are in the same orbit of the grading set on $\HD'\cup\HD\cup(-\HD')^\beta$.

Now, consider only the orbit corresponding to $\x'_0\x_0\x'_0$. The grading set on $\HD'\cup\HD\cup(-\HD'^\beta)$ is 
\[
G(\PMC')/\left\langle g'_R(\wt{P})\mid \wt{P}\in\pi_2^R(\x'_0\x_0\x'_0,\x'_0\x_0\x'_0)\right\rangle 
\]
while the grading set on the tensor product is the $G(\PMC')$-orbit of the identity in
\[
[G(\PMC')\times_\ZZ G(\PMC)]/\left\langle g'(P')\cdot (1,g'_R(P))\mid P\in\pi_2^R(\x_0,\x_0),\ P'\in\pi_2(\x'_0,\x'_0)\right\rangle.
\]
Define a map of grading sets sending $h$ to $(h,1)$. Then $h\cdot g'_R(\wt{P})$ maps to $(h\cdot g'_R(\wt{P}),1)$, but writing $\wt{P}=P'PP'$ with $P'\in\pi_2(\x'_0,\x'_0)$ and $P\in\pi_2^R(\x_0,\x_0)$, we have
\[
(h\cdot g'_R(\wt{P}),1)=(h,1)\cdot g'(P')\cdot(1,g'_R(P))=(h,1),
\]
where the first equality uses Lemma~\ref{lem:grading-additivity-pairing-1} below. In particular, we have a well-defined map of grading sets.

\begin{lemma}\label{lem:grading-additivity-pairing-1}
  With notation as above, $(g'_R(\wt{P}),1)=g'(P')\cdot (1,g'_R(P))$.
\end{lemma}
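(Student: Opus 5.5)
The plan is to compare the two sides component by component, Maslov and $\SpinC$ separately, using the additivity formulas already established. The periodic domain $\wt{P}=P'PP'$ on $\HD'\HD\HD'$ restricts to $P'$ on $\HD'$, to $P$ on $\HD$, and to $-\tau_*(P')$ on $(-\HD')^\beta$. For the $\SpinC$ component the check is direct. The left boundary $\bdy_L\wt{P}$ equals $\bdy_L P'$. On the right side, $g'(P')$ contributes $\bdy_RP'$ and $(1,g'_R(P))$ contributes $\bdy_LP$. Since the pieces glue along $\PMC$, we have $\bdy_RP'+\bdy_LP=0$. Hence the $G(\PMC)$-coordinate of the right side has trivial $\SpinC$ part, and it will be central, i.e.\ lie in the $\ZZ$ that is amalgamated in $G(\PMC')\times_\ZZ G(\PMC)$. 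So the whole identity reduces to an equality of Maslov components. In that equality, the noncommutative product on the right side produces a linking-number correction $L(\bdy_RP',\bdy_LP)$, exactly as in the proof of Point~\ref{item:gp-additive} of Proposition~\ref{prop:real-gradings}.

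For the Maslov component I would write out both sides. The left side is
\[
-\OneHalf\bigl(e(\wt{P})+2n_{\x'_0\x_0\x'_0}(\wt{P})\bigr)-\OneQuarter\cdot 0,
\]
and the $\sigma$-terms vanish because the initial and terminal states agree. Euler measure and point measures are additive over regions. The regions in $\HD'$ and $(-\HD')^\beta$ contribute equally, since $\tau$ preserves $e$ and $n$ up to the sign that makes $-\tau_*(P')$ a copy of $P'$. So the left side becomes
\[
-\bigl(e(P')+2n_{\x'_0}(P')\bigr)-\OneHalf\bigl(e(P)+2n_{\x_0}(P)\bigr),
\]
with a correction for how the corner measures of points lying on the gluing arcs split. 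The right side is the Maslov component of $g'(P')$ plus that of $g'_R(P)$, plus the linking correction coming from the group law in $G(\PMC)$.

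The expected obstacle is matching the boundary correction terms. I would imitate the unreal proof that gradings are compatible with gluing~\cite[Section 7.1.1]{LOT2}, or~\cite[Lemma 10.4/Section 10.2]{LOT1}. There, cutting a domain along $\PMC$ changes $e+n_\x+n_\y$ by a term that is exactly cancelled by the linking number $L(\bdy_RP',\bdy_LP)$ appearing in the product in $G'(\PMC)$. In the real case this cut happens twice, along $\PMC$ and along $\PMC^\beta$, with equal contributions by symmetry. The factor $\OneHalf$ in $g'_R$ then halves the doubled correction to a single $L$ term. That single term is precisely what the product $g'(P')\cdot(1,g'_R(P))$ produces, mirroring the use of $L(\bdy_LB_1,\bdy_LB_2)=L(\bdy_RB_1,\bdy_RB_2)$ in Proposition~\ref{prop:real-gradings}. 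A convenient way to organize this is to first reduce, by the same isotopies used in that proposition, to the case where no state points lie on the gluing circles' neighbourhoods. After that reduction the point measures split cleanly, and only the Euler-measure and linking terms remain to compare.
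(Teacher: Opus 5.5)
Your skeleton matches the paper's. You compare $\SpinC$ components, using $\bdy_L\wt{P}=\bdy_LP'$ and $\bdy_RP'+\bdy_LP=0$. You then compare Maslov components, using additivity over the three pieces, the fact that $(-\HD')^\beta$ contributes exactly what $\HD'$ does, and the vanishing of the $\sigma$-terms. The problem is the step you flag as the crux. You posit a nonzero correction to $e+n_\x+n_\y$ from cutting along $\PMC$ and $\PMC^\beta$, and a nonzero linking term $L(\bdy_RP',\bdy_LP)$ that cancels it. Neither term exists. So the mechanism you propose to imitate is not what makes the identity true, and your argument never actually checks the Maslov equality.

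What is actually true is the following.

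First, gluing introduces no correction. The points of $\x'_0\x_0\x'_0$ are $\alpha\cap\betas$ intersections in the interiors of the pieces, never on the gluing circles, so $n_{\x'_0\x_0\x'_0}(\wt{P})=2n_{\x'_0}(P')+n_{\x_0}(P)$ exactly. Euler measure is also exactly additive. Gluing two boundary-adjacent regions along a segment of $Z$ lowers $\chi$ by $1$, but it removes four $90^\circ$ corners at the $\alpha$-arc endpoints, which raises the corner term by $1$. Hence $e(\wt{P})=2e(P')+e(P)$.

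Second, the linking term from the group law is $L(\bdy_RP',\bdy_LP)=L(a,-a)=-L(a,a)=0$, because $L$ is antisymmetric. So the Maslov components of $g'(P')$ and $(1,g'_R(P))$ simply add.

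The linking correction you are remembering belongs to \emph{vertical} concatenation $B_1*B_2$ (\cite[Lemma 10.4]{LOT1}, used in Point~\ref{item:gp-additive} of Proposition~\ref{prop:real-gradings}), not to gluing along the boundary. Likewise, the isotopies in Proposition~\ref{prop:real-gradings} modify the diagram near the fixed set, not near $\bdy\Sigma$, so they are irrelevant here.

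With these two facts, both sides equal
\[
\Bigl(-\OneHalf\bigl(e(P)+2e(P')+4n_{\x'_0}(P')+2n_{\x_0}(P)\bigr);\bdy_LP';0\Bigr),
\]
which is the whole of the paper's two-line computation.
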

\begin{proof}
  We have
  \begin{align*}
  (g'_R(\wt{P}),1)&=\Bigl(-\OneHalf\bigl(e(\wt{P})+2n_{\x'_0\x_0\x'_0}(\wt{P})\bigr)-\OneQuarter\bigl(\sigma(\alphas,\x_0)-\sigma(\alphas,\x_0)\bigr); \bdy_L\wt{P};0\Bigr)\\
  &=\Bigl(-\OneHalf\bigl(e(P)+2e(P')+4n_{\x'_0}(P')+2n_{\x_0}(P)\bigr); \bdy_LP';0\Bigr),
  \end{align*}
  while
  \begin{align*}
  g'(P')\cdot (1,g'_R(P))&=\Bigl(-e(P')-2n_{\x'_0}(P');\bdy_L P';\bdy_R P'\Bigr)\cdot\Bigl(-\OneHalf\bigl(e(P)+2n_{\x_0}(P)\bigr);0,\bdy_LP\Bigr)\\
  &=\Bigl(-\OneHalf\bigl(e(P)+2e(P')+4n_{\x'_0}(P')+2n_{\x_0}(P)\bigr); \bdy_LP';0\Bigr),
  \end{align*}
  where the second equality uses the fact that if the $\SpinC$-components agree, then the Maslov components simply add, and $\bdy_RP'=-\bdy_LP$.
\end{proof}

\begin{theorem}\label{thm:graded-pairing}
  With respect to this isomorphism of grading sets, the homotopy
  equivalence in Theorem~\ref{thm:nice-pairing} respects the gradings.
\end{theorem}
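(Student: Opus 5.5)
The homotopy equivalence of Theorem~\ref{thm:nice-pairing} is, on the level of generators, the identification $\x'\x\x'\leftrightarrow \x'\otimes\x$, and every differential on either side is matched term-by-term. So it suffices to show that the map of grading sets $h\mapsto(h,1)$ constructed above sends $\gr(\x'\x\x')$ to $\gr(\x')\gr(\x)$, the tensor-product grading of $\x'\otimes\x$, for every real state $\x'\x\x'$ in the orbit of $\x'_0\x_0\x'_0$. Both sides are defined as gradings, so they transform the same way under $\delta^1$; compatibility with the generator identification then gives a graded equivalence.

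To compare the two gradings, I fix a real domain $\wt{B}\in\pi_2^R(\x'_0\x_0\x'_0,\x'\x\x')$ and write it as $\wt{B}=B'\cup B\cup(-\tau_*B')$. Here $B'\in\pi_2(\x'_0,\x')$ is a domain in $\HD'$, $B\in\pi_2^R(\x_0,\x)$ is a real domain in $\HD$, and $\bdy_RB'=-\bdy_LB$. By Proposition~\ref{prop:real-gradings}\ref{item:grb-indept}, $\grb(\x'\x\x')=g'_R(\wt{B})$, $\grb(\x')=g'(B')$ and $\grb(\x)=g'_R(B)$. The Maslov and $\SpinC$ bookkeeping is then exactly the computation in Lemma~\ref{lem:grading-additivity-pairing-1}, carried out for a domain between distinct states instead of a periodic one.

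The Euler measure splits as $e(\wt{B})=e(B)+2e(B')$. The point measures split as $n_{\x'_0\x_0\x'_0}+n_{\x'\x\x'}$ over the three pieces, with the two copies of $B'$ contributing equally. The $\sigma$-terms live only in $\HD$, since $\tau$ has no fixed points on the $\HD'$ pieces. Halving gives
\[
g'_R(\wt{B})=\Bigl(-\OneHalf\bigl(e(B)+n_{\x_0}(B)+n_\x(B)\bigr)-\OneQuarter\bigl(\sigma(\alphas,\x)-\sigma(\alphas,\x_0)\bigr)-\bigl(e(B')+n_{\x'_0}(B')+n_{\x'}(B')\bigr);\bdy_LB'\Bigr).
\]
Multiplying $g'(B')\cdot(1,g'_R(B))$ in $G'(\PMC')\times_\ZZ G'(\PMC)$ gives the same Maslov component, plus a linking correction $L(\bdy_RB',\bdy_LB)$ from the group law. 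The main obstacle is making sure this correction does not spoil the equality. The grading refinement data also has to be handled: I will insert $\psi_{\PMC}(\iota)\psi_{\PMC}(\iota)^{-1}$ at the middle idempotent, exactly as in the proof of Proposition~\ref{prop:real-gradings}\ref{item:gr-is-grading}. This reduces the refined statement to the unrefined one.

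For the linking term, I will first try to show it vanishes because $\bdy_RB'=-\bdy_LB$, as in the closed-periodic case. If it does not vanish on the nose, I will follow the unreal graded pairing theorem \cite[Section 10]{LOT1}. There, $g'$ on the glued diagram equals the product of the pieces precisely because the linking correction accounts for the change in $n_\x+n_\y$ at the boundary when gluing. In the real setting the same correction appears twice, once at each copy of $\PMC$, and then gets halved. It therefore matches the single correction in the product $G(\PMC')\times_\ZZ G(\PMC)$.

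Finally, I check well-definedness: changing $\wt{B}$ by a real periodic domain changes both sides consistently, by Lemma~\ref{lem:grading-additivity-pairing-1}.
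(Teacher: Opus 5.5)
Your proposal is correct and is essentially the paper's proof. Both split a real domain as $B'\cup B\cup(-\tau_*B')$, use additivity of $e$ and $n$ with the $\sigma$-terms supported in $\HD$, cancel $\psi(\iota)\psi(\iota)^{-1}$ at the middle idempotent, and check $g'_R(B'BB')=g'(B')g'_R(B)$. The linking correction you flag vanishes on the nose: $L$ is antisymmetric, so $L(\bdy_RB',\bdy_LB)=-L(\bdy_LB,\bdy_LB)=0$, which is the paper's remark that the Maslov components simply add. Your fallback paragraph is therefore unnecessary, and its description of the unreal argument is inaccurate anyway, since the same vanishing is what happens there too.
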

\begin{proof}
  The proof is essentially the same as the proof of Lemma~\ref{lem:grading-additivity-pairing-1}. We have already explained in what sense the isomorphism respects the decomposition of the grading group into $G(\PMC')$-orbits. So, consider the orbit containing $\gr(\x'_0\x_0\x'_0)$ for the glued diagram, and the corresponding orbit containing $\x'_0\otimes \x_0$ for the tensor product. Given another state $\x'\x\x'$ and corresponding state $\x'\otimes\x$ in this orbit, there are domains $B\in\pi_2^R(\x_0,\x)$ and $B'\in\pi_2(\x'_0,\x')$ so that $\bdy_LB=-\bdy_RB'$. Then
  \begin{align*}
  \gr(\x'\x\x')&=\psi'(\iota')g'_R(B'BB') & \gr(\x'\otimes \x)=\psi'(\iota')g'(B')\psi(\iota)\psi(\iota)^{-1}g'_R(B).
  \end{align*}
  We have
  \begin{align*}
    g'_R(B'BB')&=\Bigl(-\OneHalf\bigl(e(B'BB')+n_{\x'_0\x_0\x'_0}(B'BB')+n_{\x'\x\x'}(B'BB')\bigr)\\
    &\qquad\qquad\qquad\qquad-\OneQuarter\bigl(\sigma(\alphas,\x)-\sigma(\alphas,\x_0)\bigr);\bdy_LB'\Bigr)\\
    g'(B')&=(-e(B')-n_{\x'_0}(B')-n_{\x'}(B');\bdy_LB';\bdy_RB')\\
    g'_R(B)&=\Bigl(-\OneHalf\bigl(e(B)+n_{\x_0}(B)+n_\x(B)\bigr)-\OneQuarter\bigl(\sigma(\alphas,\x)-\sigma(\alphas,\x_0)\bigr);\bdy_L B\Bigr).
  \end{align*}
  So,
  \begin{align*}
  g'_R(B'BB')&=\Bigl(-e(B')-n_{\x'_0}(B')-n_{\x'}(B')-\OneHalf\bigl(e(B)+n_{\x_0}(B)+n_{\x}(B)\bigr)\\
  &\qquad\qquad\qquad-\OneQuarter\bigl(\sigma(\alphas,\x)-\sigma(\alphas,\x_0)\bigr);\bdy_LB'\Bigr) \\
  &=g'(B')g'_R(B),
  \end{align*}
  as desired.
\end{proof}

\section{Computing \texorpdfstring{$\HFRa$}{real HF-hat} by factoring mapping classes}\label{sec:compute-HFRa}
We now have all the tools to compute $\HFRa(Y,\tau)$ as long as $\tau$ has connected fixed set. Nagase showed that $Y$ admits a real Heegaard splitting $Y=H\cup_\Sigma H'$, where $\tau(\Sigma)=\Sigma$ and $\tau(H)=H'$~\cite{Nagase79:real-Heegaard} (see also~\cite[Section 3.1]{GM:real-HF}). Fix a real pointed matched circle $\PMC$ and a diffeomorphism $\phi\co F(\PMC)\to \Sigma$ intertwining the involutions. (If $\Sigma/\tau$ is nonorientable, we can take $\PMC$ to be the antipodal pointed matched circle; if $\Sigma/\tau$ is orientable, and hence the genus of $\Sigma$ is even, we can take $\PMC$ to be the split pointed matched circle.) From the description of $Y(\AZ(\PMC))$ (Section~\ref{sec:AZ-diagrams}), the diffeomorphism $\phi$ extends to an identification of $Y(\AZ(\PMC))$ and a neighborhood of $\Sigma$.

\begin{figure}
  \centering
  \includegraphics[alt={The 0-framed handlebody}]{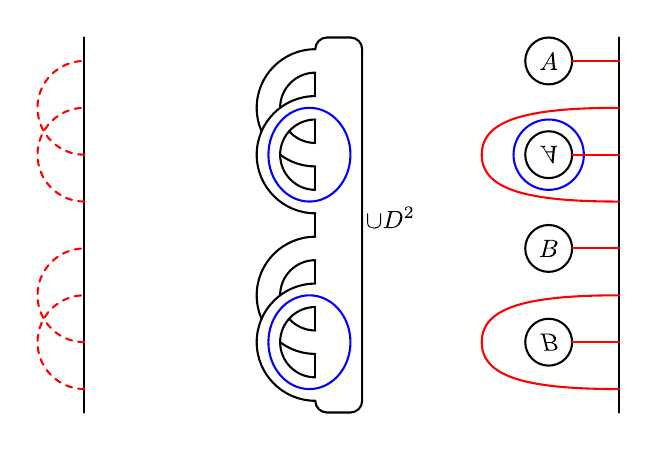}
  \caption{\textbf{The 0-framed handlebody}. Left: the split pointed matched circle $\PMC'$ of genus $2$. Center: the associated surface $F(\PMC)$, and two standard circles on it. Right: a bordered Heegaard diagram representing the $0$-framed handlebody, in which these two circles bound disks.}
  \label{fig:0-fr-handlebody}
\end{figure}

Choose $g$ circles $a_1,\dots,a_g\subset F(\PMC)$ so that the handlebody $H$ is determined by compressing $\phi(a_1),\dots,\phi(a_g)$. So, $H'$ is determined by compressing $\tau(\phi(a_i))=\phi(\tau(a_i))$, $i=1,\dots,g$.
Let $\PMC'$ be the split pointed matched circle with the same genus as $\PMC$. Choose a diffeomorphism $\psi\co F(\PMC)\to F(\PMC')$ so that $\psi(a_1),\dots,\psi(a_g)$ are the $g$ standard circles on $F(\PMC')$ specifying the $0$-framed handlebody $H_0=(H_0,\phi_0\co F(\PMC')\to\bdy H_0)$ of genus $g$ (see Figure~\ref{fig:0-fr-handlebody}).

So,
\begin{equation}\label{eq:decompose-Y}
Y = H_0\cup_{F(\PMC')} Y_\psi \cup_{F(\PMC)}Y(\AZ(\PMC))\cup_{F(\PMC)}Y_\psi\cup_{F(\PMC')}H_0,
\end{equation}
where $Y_\psi$ is the mapping cylinder of $\psi$,
and the involution $\tau$ is the standard involution on $\AZ(\PMC)$ and exchanges the two copies of $Y_\psi$ (respectively $H_0$).

By construction, $\AZ(\PMC)$ is a real nice bordered Heegaard diagram. Thus, by Corollary~\ref{cor:pairing-to-closed} and the usual pairing theorem for bordered Heegaard Floer homology~\cite[Theorem 2]{LOT2}, 
\begin{align*}
\CFRa(Y,\tau)&\simeq \CFAa(H_0)\DT\CFDAa(Y_\psi)\DT\CFDRa(\AZ(\PMC))\\
&\simeq \Mor(\CFDa(-H_0),\CFDAa(Y_\psi)\DT\CFDRa(\AZ(\PMC))).
\end{align*}
In a previous paper, we gave an algorithm for computing $\CFDAa(Y_\psi)$, by factoring $\psi$ into arcslides, as well as an explicit description of $\CFDa(-H_0)$ (the latter is easy). Thus, we obtain an algorithm for computing the dimension of $\HFRa(Y,\tau)$.

We described in Section~\ref{sec:graded-pairing} how to compute the obstruction class $\epsilon$ from the grading data for $\CFAa(H_0\cup Y_\psi)$ and $\CFDRa(\AZ(\PMC))$, as well as the obstruction class $\zeta$ when $\epsilon$ vanishes. Together, $\epsilon$ and $\zeta$ give an affine version of the decomposition of $\HFRa(Y,\tau)$ into real $\SpinC$-structures. Finally, in each $\SpinC$-structure, Theorem~\ref{thm:graded-pairing} gives the relative Maslov grading.

\subsection{Computer implementation}
We have implemented Corollary~\ref{cor:AZ-simplest-model}'s small model for $\CFDRa(\AZ(\PMC))$, for $\PMC$ a real pointed matched circle representing a surface with orientable quotient, in Bohua Zhan's \verb|bfh_python| package~\cite{Zhan:code}. This allows one to compute, for instance, $\HFRa$ of branched double covers of knots in $S^3$. At the time of writing, we have not implemented the gradings, so the code only computes the total dimension of $\HFRa$.

As an example interaction, to compute the real Floer homology of the branched double cover of a trefoil, one could run:
\begin{verbatim}
from real import realAZ, selfGluingHB
from pmc import splitPMC
from arcslide import Arcslide
from arcslideDA import ArcslideDA
hb = selfGluingHB(splitPMC(1))
twists = [ArcslideDA(Arcslide(hb.algebra.pmc,1,0)),
   ArcslideDA(Arcslide(hb.algebra.pmc,2,1))]
for twist in twists:
  hb = twist.tensorD(hb)
  hb.simplify()
cx = realAZ(splitPMC(1)).morToD(hb)
cx.simplify()
print("Dimension of HFR is: ",len(cx))
\end{verbatim}
\noindent
This prints that the dimension is $3$. Indeed, this is a special case of~\cite[Example 6.4]{GM:real-HF}, which in turn is a special case of~\cite[Corollary 1.3]{Hendricks:HFR-loc} (which was conjectured in~\cite[Remark 6.6]{GM:real-HF}).

Of course, to use the program, one needs a description of the bordered handlebodies being attached on the two sides. For fibered knots, like the trefoil, these have a particularly convenient description. Let $K$ be a fibered knot with fiber $F$ and monodromy $\phi$. Then the branched double cover of $K$ is given by
\[
\sgHB\cup_{F\cup(-F)} Y_{\phi\cup\Id}\cup_{F\cup (-F)}[0,1]\times (F\cup (-F))\cup_{F\cup (-F)}Y_{\phi\cup\Id}\cup_{F\cup(-F)}\sgHB,
\]
where $\sgHB$ is the \emph{self-gluing handlebody}~\cite[Section 9.5]{LOT4}, that is, $[0,1]\times F$. For this to be a decomposition into bordered $3$-manifolds, we need parametrizations of the boundaries where the pieces are glued. If we parameterize $F$ by the \emph{linear pointed matched circle}~\cite[Section 5.1]{LOT:DCov1}, then the Dehn twists coming from branched double covers of braid generators have simple descriptions in terms of arcslides, and in fact \verb|bfh_python| already knows this description. So, implementing $\CFDa(\sgHB)$ (which was computed in our earlier paper~\cite[Theorem 9.3]{LOT4}) makes it easy to compute $\HFRa(\Sigma(K))$ from the monodromy for $K$.

From Hendricks's result~\cite[Corollary 1.3]{Hendricks:HFR-loc}, if $\Sigma(K)$ is an $L$-space, then we have $\HFRa(\Sigma(K),\tau)\cong \HFa(\Sigma(K))\cong \FF_2^{\det(K)}$. KnotInfo~\cite{knotinfo} lists monodromies for knots through 12 crossings. Of these, there are three fibered, genus $2$ knots $K$ for which $\Sigma(K)$ is not an $L$-space, and for these we computed $\HFRa(\Sigma(K),\tau)$: 
\begin{center}
\begin{tabular}{cccc}
  \toprule
  $K$ & $\det(K)$ & $\dim \HFa(\Sigma(K))$ & $\dim \HFRa(\Sigma(K),\tau)$\\
  \midrule
  $10_{145}$ & 3 & 5 & 5\\
  $12n121$ & $1$ & 3 & 3\\
  $12n642$ & $27$ & $29$ & $29$\\
  \bottomrule
\end{tabular}
\end{center}

These computations took a few minutes each on a modern laptop (MacBook Air M1, 16GB RAM). The most time-consuming step is computing and simplifying the morphism complex between type $D$ structures at the end. Indeed, for $12n642$, the computer produces a complex with 569,433 generators at this stage. 

If we had defined a type $A$ real invariant, tensoring with it instead of taking the morphism complex at this stage would result in a much smaller complex, and hence faster computations. In fact, for the case of $\AZ(\PMC)$ from Corollary~\ref{cor:AZ-simplest-model}, this takes little extra work:
\begin{proposition}\label{prop:CFAR-AZ}
  Let $\PMC=\PMC'\#(-\PMC')$ be a real pointed matched circle so that $F(\PMC)/\tau$ is orientable, as in Corollary~\ref{cor:AZ-simplest-model}. Define a differential right module $\CFARa(\AZ(\PMC),\tau)$ over the multiplicity $1$ algebra $\Alg'(-\PMC)$ as follows. As a chain complex, $\CFARa(\AZ(\PMC),\tau)$ is $\Alg'(-\PMC')$. Define a right action of $\Alg'(-\PMC)$ on $\Alg'(-\PMC')$ by
  \[
  a\cdot b = \begin{cases}
  0 & \text{if $b$ has a strand that crosses from $\PMC'$ to $-\PMC'$}\\
  r(b'')ab' &\text{if $a=b'b''$ where $b'\in\Alg'(-\PMC')$ and $b''\in \Alg'(\PMC')$}.
  \end{cases}
  \]
  Here, $r\co \Alg'(\PMC')\to\Alg'(-\PMC')$ denotes vertical mirroring (which is an anti-homomorphism).

  Then for any bordered $3$-manifold $Y$ with boundary $F(\PMC)\amalg F(\PMC)$, we have the following pairing theorem:
  \[
  \CFRa(Y\cup Y(\AZ(\PMC))\cup Y,\wt{\tau})\simeq \CFARa(\AZ(\PMC),\tau)\DT\CFDa(Y).
  \]
\end{proposition}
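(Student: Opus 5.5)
The plan is to derive the statement formally from Corollary~\ref{cor:pairing-to-closed} and Corollary~\ref{cor:AZ-simplest-model}, using the duality between type $A$ and type $D$ modules rather than any new holomorphic-curve analysis.

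First, by Corollary~\ref{cor:pairing-to-closed}, $\CFRa(Y\cup Y(\AZ(\PMC))\cup Y,\wt{\tau})\simeq \CFAa(Y)\DT\CFDRa(\AZ(\PMC))$. Using the standard identification $\CFAa(Y)\DT N\simeq \Mor(\CFDa(-Y),N)$, or equivalently $\CFAa(Y)\simeq \Mor(\CFDa(Y),\Alg)$ up to orientation conventions, the right-hand side becomes a morphism complex from $\CFDa(Y)$ into $\CFDRa(\AZ(\PMC))$. Next, replace $\CFDRa(\AZ(\PMC))$ by the small model of Corollary~\ref{cor:AZ-simplest-model}, and replace $\Alg(\PMC)$ by the quasi-isomorphic multiplicity-one quotient $\Alg'$. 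This substitution is legitimate because the small model's coefficients are all $1$ or single chords, so it is a type $D$ structure over $\Alg'$; moreover, tensor and morphism complexes are invariant under restriction along the quasi-isomorphism $\Alg\to\Alg'$ for bounded modules.

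The key step is to recognize that the small model is the type $D$ structure obtained by tensoring the proposed $\CFARa$ with the type $DD$ identity bimodule, so that $\CFARa\DT\CFDa(Y)\simeq \CFARa\DT\CFDDa(\Id)\DT\CFAa(\cdot)$-type manipulations reproduce $\CFAa(Y)\DT\CFDRa$. Concretely, I would check directly that $\CFARa(\AZ(\PMC),\tau)\DT\lsup{\Alg'(\PMC)}\CFDDa(\Id)$ equals the small model. Generators of the left side pair a strands element $c\in\Alg'(-\PMC')$ with a complementary idempotent. The dual basis identifies $c$ with $a\in\Alg'(\PMC')$ via mirroring, matching $[a\otimes ra]$ with left idempotent $\iota'\otimes r\iota'$. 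The differential on the tensor product has two kinds of terms. The internal differential $d$ on $\Alg'(-\PMC')$ produces the terms $1\otimes[b\otimes rb]$ for $b\in\bdy a$. The right action by a chord $\rho\in\Alg'(\PMC)$ from the $DD$ identity (which sums over chords) produces the remaining terms. Chords lying in $-\PMC'$ act by right multiplication, giving $(1\otimes r\rho)\otimes[a\rho\otimes(r\rho)(ra)]$ after mirroring. Chords lying in $\PMC'$ act via $r(\rho)$ on the left, giving $(\rho\otimes 1)\otimes[\rho a\otimes\cdots]$. Chords crossing $w$ act by zero, matching the absence of such terms in Corollary~\ref{cor:AZ-simplest-model}.

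Before this comparison, I would verify that the formula for $a\cdot b$ defines an associative right action compatible with $d$. The zero case is an ideal: products of elements with a crossing strand still cross, or vanish in $\Alg'$. On the subalgebra $\Alg'(-\PMC')\otimes\Alg'(\PMC')$ the action is $x\mapsto r(b'')xb'$, which is associative because $r$ is an anti-homomorphism. The Leibniz rule follows because $d$ commutes with $r$.

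Given all this, the chain of equivalences is $\CFARa\DT\CFDa(Y)\simeq \CFARa\DT\CFDDa(\Id)\DT\CFAa(\cdot)$, which identifies $\CFARa\DT\CFDa(Y)$ with $\CFAa(Y)\DT(\text{small model})\simeq\CFRa$. Equivalently, one can phrase this as $\Mor(\CFDa(-Y),\text{small model})$.

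I expect the main obstacle to be bookkeeping of orientations and sides: deciding whether $\CFDa(Y)$ or $\CFDa(-Y)$ appears, matching $\Alg(-\PMC)$ against $\Alg(\PMC)$ via $r$, and confirming idempotent compatibility, specifically that complementary idempotents line up with the left idempotent $\iota'\otimes r\iota'$. I would first settle these in the genus-one case against Example~\ref{eg:genus-1-AZ-CFDR}.
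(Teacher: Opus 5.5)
Your proposal is correct and is essentially the paper's proof. The paper likewise checks directly that $\CFARa(\AZ(\PMC),\tau)\DT\CFDDa(\Id_{\PMC})$ reproduces the small model of Corollary~\ref{cor:AZ-simplest-model}: the internal differential gives the $\bdy(a)$ terms, chords on the two sides of $w$ give the two families of chord terms, and chords crossing $w$ act by zero. It then concludes from Corollary~\ref{cor:pairing-to-closed} together with $\CFAa(Y)\DT\CFDDa(\Id_{\PMC})\simeq\CFDa(Y)$.

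The morphism-complex detour in your first paragraph is not needed. Your check that the formula defines an associative action satisfying the Leibniz rule is a detail the paper leaves implicit.
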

\begin{proof}
  First, we show that $\CFARa(\AZ(\PMC),\tau)\DT\CFDDa(\Id_{\PMC})\cong\CFDRa(\AZ(\PMC))$. Recall that $\CFDDa(\Id_{\PMC})$ is a (left-left) type \DD\ bimodule over $\Alg'(\PMC)$ and $\Alg'(-\PMC)$, with one generator for each pair of complementary idempotents, and 
  \[
  \delta^1(i\otimes i')=\sum_{j\otimes j'}\sum_{\text{chords }\xi}(i\otimes i')\bigl(\xi\otimes \ol{r}(\xi)\bigr)\otimes(j\otimes j')
  \]
  \cite[Theorem 1]{LOT4}. Here, $\ol{r}\co \Alg'(\PMC)\to\Alg'(-\PMC)$ is vertical reflection.
    
  Write a generator $i\otimes i'$ simply as $i$. Tensoring this with $\CFARa(\AZ(\PMC),\tau)$ over $\Alg'(-\PMC)$, there is one generator $a\otimes i$ for each generator $a\in\Alg(\PMC')$, where $i$ is the right idempotent of $a$, and the differential is
  \begin{align*}
  \delta^1(a\otimes i) &= (\bdy(a)\otimes i)+ \sum_{\text{chords $\xi$ in bottom $-\PMC'$}} (\xi\otimes 1)\otimes \xi a + \sum_{\text{chords $\xi$ in top $\PMC'$}} (1\otimes \xi)\otimes a r(\xi)\\
  &= (\bdy(a)\otimes i)+ \sum_{\text{chords $\xi$ in bottom $-\PMC'$}} (\xi\otimes 1)\otimes \xi a + \sum_{\text{chords $\xi$ in bottom $-\PMC'$}} (1\otimes r(\xi))\otimes a \xi.
  \end{align*}
  This agrees with the differential from Corollary~\ref{cor:AZ-simplest-model}.

  Now, from the pairing theorem, Corollary~\ref{cor:pairing-to-closed},
  \[
  \CFRa(Y\cup Y(\AZ(\PMC))\cup Y,\wt{\tau})\simeq \CFDRa(\AZ(\PMC),\tau)\DT\CFAa(Y).
  \]
  But we have
  \begin{align*}
  \CFDRa(\AZ(\PMC),\tau)\DT\CFAa(Y)&\simeq \CFARa(\AZ(\PMC),\tau)\DT\Bigl(\CFAa(Y)\DT\CFDDa(\Id_\PMC)\Bigr)\\
  &\simeq \CFARa(\AZ(\PMC),\tau)\DT\CFDa(Y),
  \end{align*}
  as desired.
\end{proof}

Tensoring with this module instead of taking the morphisms to $\CFDRa(\AZ(\PMC))$, for $12n642$ the final chain complex has 521 generators (a thousand-fold savings).

\section{Other Applications}\label{sec:applications}

\subsection{The first differential in Hendricks's spectral sequence}
Real Heegaard Floer homology is a version of $\ZZ/2$-equivariant Floer theory, and so it is natural to expect that it would relate to nonequivariant Floer homology via a localization theorem.
Hendricks showed that this expectation is correct, constructing a spectral sequence
\begin{equation}\label{eq:Hendricks-ss}
\HFa(Y,\spinc)\otimes\FF_2[\theta,\theta^{-1}]\Rightarrow \bigoplus_{\spinc_r\in\spinc}\HFRa(Y,\spinc_r,\tau)\otimes\FF_2[\theta,\theta^{-1}]
\end{equation}
\cite[Theorem 1.2]{Hendricks:HFR-loc}. She proves that if $Y$ admits a symmetric almost complex structure achieving transversality for the nonequivariant moduli spaces, then the $d_1$-differential is given by $(1+\iota_*\tau_*)\theta$. Thus:
\begin{theorem}\label{thm:Hendricks-differential}
  Suppose that $(Y,\tau)$ is a real 3-manifold so that the fixed set of $\tau$ is a single circle. Then the $d_1$-differential in Hendricks's spectral sequence~\eqref{eq:Hendricks-ss} is $(1+\iota_*\tau_*)\theta$.
\end{theorem}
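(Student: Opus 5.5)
Our plan is to reduce to Hendricks's criterion: it suffices to exhibit a real Heegaard diagram for $(Y,\tau)$, and a symmetric almost complex structure (or a split complex structure with a $\tau$-equivariant perturbation of the curves), for which the \emph{nonequivariant} moduli spaces of index-$1$ and index-$2$ curves are transversely cut out. Since the fixed set is a single circle, we use the decomposition~\eqref{eq:decompose-Y}, $Y = H_0\cup Y_\psi\cup Y(\AZ(\PMC))\cup Y_\psi\cup H_0$. For the diagram we take a nice diagram for $H_0\cup Y_\psi$ (obtained from the arcslide factorization of $\psi$ together with Sarkar--Wang nice moves away from the boundary), glued symmetrically to $\AZ(\PMC)$ and its mirror. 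The result is a real Heegaard diagram $\HD'\cup\AZ(\PMC)\cup(-\HD')^\beta$ in which every region not containing the basepoint is a bigon or rectangle. Since $\AZ(\PMC)$ is negative real-nice, the glued diagram is real-nice as well.

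The second step is to check transversality nonequivariantly in a nice diagram. In a nice diagram, every index-$1$ ordinary domain with a holomorphic representative is an empty embedded bigon or rectangle, and by Sarkar--Wang and~\cite[Proposition 8.4]{LOT1} such a domain has a unique representative whose boundary injects onto an $\alpha$-arc. Proposition~\ref{prop:bdy-injectivity}, applied in its unreal form, then shows these moduli spaces are cut out transversely for any small $\tau$-equivariant perturbation of $\alphas$ and the induced perturbation of $\betas$. Index-$2$ domains in a nice diagram decompose into bigons and rectangles. The ends of their moduli spaces involve only such curves, so the boundary-injectivity criterion (a coefficient $0$ on one side of some $\alpha$-arc and $1$ on the other) again applies to each nontrivial component. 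The key observation is that the perturbation can be chosen $\tau$-equivariantly. Genericity for the ordinary moduli spaces is an open dense condition in the perturbation, and the equivariant perturbations are rich enough because $\tau$ exchanges $\alphas$ with $\betas$. Perturbing $\alphas$ freely therefore already gives an arbitrary perturbation of the $\alpha$-side, which is all the Oh / Ozsv\'ath--Szab\'o argument uses.

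The third step is to feed this into Hendricks's theorem. Her hypothesis is met, so $d_1=(1+\iota_*\tau_*)\theta$ on $\HFa(Y,\spinc)\otimes\FF_2[\theta,\theta^{-1}]$. Because her spectral sequence is an invariant of $(Y,\tau)$ from $E_1$ on, this is independent of the diagram chosen.

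The main obstacle is step two. Hendricks's hypothesis concerns a symmetric almost complex structure, whereas our construction uses a split structure with perturbed curves, so we must check that her argument accepts this variant. We also need $\tau$-equivariant perturbations to achieve ordinary transversality for index-$2$ curves whose domains are not $\tau$-invariant and need not satisfy boundary injectivity, such as annuli arising in nice diagrams. We would first try to handle this by noting that in a nice diagram all index-$\le 2$ nonequivariant curves are unbranched disks, possibly after pairing. If that fails, we would appeal instead to the fact that $\bdy^2=0$ computations there are combinatorial, so the relevant counts are well defined irrespective of the perturbation.
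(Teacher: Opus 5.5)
Your overall strategy is the paper's. You decompose $Y$ as in~\eqref{eq:decompose-Y}, glue nice diagrams for $H_0$ and $Y_\psi$ symmetrically to $\AZ(\PMC)$ to get a real-nice diagram for $(Y,\tau)$, and then check the hypothesis of Hendricks's argument~\cite[Lemma 2.8]{Hendricks:HFR-loc}. The problem is step two, which is heavier than necessary and, in its index-$2$ part, not correct as written.

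\begin{itemize}
\item Knowing that the \emph{ends} of an index-$2$ moduli space consist of bigons and rectangles says nothing about boundary injectivity for the index-$2$ curves themselves. That injectivity is what Proposition~\ref{prop:bdy-injectivity} needs.
\item Your first fallback is false. The annuli of types~\ref{item:A1} and~\ref{item:A2} in Proposition~\ref{prop:classify-domains} are index-$2$ ordinary domains. Each has two convex and two concave corners, so $e=0$ and $n_\x+n_\y=2$. They have holomorphic representatives with annular source, for instance the real ones from Lemma~\ref{lem:nice-unbranched-domains-contribute}. So index-$2$ curves in nice diagrams are not all disks.
\item Even in index $1$, you claim a free perturbation of $\alphas$ is all the Oh/Ozsv\'ath--Szab\'o argument uses. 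That needs more care for the ordinary moduli spaces, since $\betas=\tau(\alphas)$ moves along with $\alphas$.
\end{itemize}

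The missing observation is that neither index-$2$ transversality nor any perturbation is needed. In a nice diagram there are no positive domains of index $\le 0$. The only index-$1$ ones are empty bigons and rectangles, and their moduli spaces are transversely cut out for \emph{every} complex structure on $\Sigma$.

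So take any complex structure $j$ on $\Sigma$ for which $\tau$ is anti-holomorphic. The induced split structure on $\Sigma\times[0,1]\times\RR$ is already symmetric, since $(s,t)\mapsto(1-s,t)$ is anti-holomorphic. This also removes your worry about split versus symmetric structures. With this $J$ the differential counts exactly the empty bigons and rectangles, so the complex is the Sarkar--Wang combinatorial complex and computes $\HFa(Y)$. That is precisely what the paper notes is enough for Hendricks's argument to go through.

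Your second fallback is essentially this point; it should replace the main line of step two rather than serve as a backup. The paper also does not invoke invariance of the spectral sequence from $E_1$ on. It simply exhibits one diagram and symmetric $J$ to which Hendricks's lemma applies.
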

\begin{proof}
  Decompose $Y$ as in Formula~\eqref{eq:decompose-Y} and choose nice diagrams for $H_0$ and $Y_\psi$. Gluing these to $\AZ(\PMC)$ gives a real nice diagram for $(Y,\tau)$. Since the diagram is, in particular, nice the moduli spaces of holomorphic curves of Maslov index $\leq 1$ are transversely cut out for any choice of almost complex structure on the Heegaard diagram---in particular, for symmetric ones. (There are no positive domains of Maslov index $\leq 0$, and the only ones of Maslov index $1$ are empty bigons and rectangles.) This is enough to guarantee that we may compute the Heegaard Floer homology using such an almost complex structure, and hence for Hendricks's argument~\cite[Lemma 2.8]{Hendricks:HFR-loc} to go through.
\end{proof}

\begin{remark}\label{rem:BGX-2}
  Binns-Guth-Xiao~\cite{BGX:real-sutured} have constructed nice diagrams for arbitrary real Heegaard diagrams, giving a proof of Theorem~\ref{thm:Hendricks-differential} without the hypothesis on the fixed set. While our construction of real nice diagrams is independent of theirs, we were aware that they had such a construction before beginning this project.
\end{remark}

\subsection{A surgery exact triangle}
\begin{theorem}\label{thm:exact-triangle}
  Let $(Y,\tau)$ be a real 3-manifold,
   and $\Sigma$ a connected, separating surface in $Y$ so that $\tau(\Sigma)=\Sigma$. Let $K$ be a framed knot in $Y\setminus\Sigma$ and $K'=\tau(K)$. Let $(Y_{00},\tau_{00})$ (respectively $(Y_{11},\tau_{11})$) be the result of performing $0$-surgery (respectively $1$-surgery) on both $K$ and $K'$ and $\tau_{00}$ (respectively $\tau_{11}$) the involution induced by $\tau$. Then there is an exact triangle
   \[
  \begin{tikzcd}[ampersand replacement=\&, alt={Exact triangle in real Heegaard Floer homology}]
    \HFRa(Y_{00},\tau_{00})\arrow[rr] \& \& \HFRa(Y_{11},\tau_{11})\arrow[dl]\\
    \& \HFRa(Y,\tau)\arrow[ul] \& .
  \end{tikzcd}
  \]
\end{theorem}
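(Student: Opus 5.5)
The plan is to reduce the statement to the ordinary bordered surgery exact triangle, using the real pairing theorem. Since $\Sigma$ is connected, separating and $\tau$-invariant, and $\tau$ is orientation-preserving on $Y$ while the fixed set is one-dimensional, $\tau$ must exchange the two sides of $\Sigma$. (If it preserved a side, the argument would need a different splitting; I would check the hypotheses force the exchanging case, or else choose a $\tau$-invariant neighborhood decomposition.) Write $Y=Y_1\cup_\Sigma Y_2$ with $\tau(Y_1)=Y_2$, and assume after relabeling that $K\subset Y_1$ and $K'=\tau(K)\subset Y_2$. Take a thin $\tau$-invariant collar $[0,1]\times\Sigma$ as the real bordered piece $(Y,\tau)$, i.e.\ a real thick surface as in Example~\ref{eg:real-surf-times-I}. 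Let $Y'$ be $Y_1$ minus the collar, viewed as a bordered $3$-manifold with $K$ in its interior. Then $Y$, $Y_{00}$ and $Y_{11}$ are all of the form $Y'_\bullet\cup([0,1]\times\Sigma)\cup Y'_\bullet$, where $Y'_\bullet$ is $Y'$, $Y'_0(K)$ or $Y'_1(K)$, and the involution exchanges the two copies. Choosing parametrizations compatibly, with the real thick surface given by an Auroux--Zarev-type real diagram or any real bordered diagram, Corollary~\ref{cor:pairing-to-closed} gives
\[
\CFRa(Y'_\bullet\cup Y\cup Y'_\bullet,\wt{\tau})\simeq \CFAa(Y'_\bullet)\DT\CFDRa(Y,\tau).
\]

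Next I would invoke the bordered surgery exact triangle from \cite{LOT1}. For a knot in the interior of a bordered manifold, there is a homotopy equivalence of type $A$ modules
\[
\CFAa(Y'_\infty)\simeq \mathrm{Cone}\bigl(\CFAa(Y'_0)\to\CFAa(Y'_1)\bigr),
\]
with $Y'_\infty=Y'$. This is obtained by gluing the solid-torus bimodule triangle for the three framings, and it is formal from the pairing theorem for a neighborhood of $K$. Since the type $D$ structure $\CFDRa(Y,\tau)$ is fixed across all three cases and $\DT$ with a fixed type $D$ structure preserves mapping cones and homotopy equivalences (using boundedness or admissibility from Lemma~\ref{lem:admis-finite}), we obtain a mapping cone of chain complexes
\[
\CFAa(Y')\DT\CFDRa \simeq \mathrm{Cone}\bigl(\CFAa(Y'_0)\DT\CFDRa\to\CFAa(Y'_1)\DT\CFDRa\bigr).
\]
Taking homology yields the exact triangle.

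The main obstacle is making the bordered triangle precise at the level of $\CFAa$ of a manifold with boundary. If the paper only has the triangle for $\CFDa$, I would instead use $\CFDAa$ of $Y'$ viewed with an extra torus boundary and the type $A$ framed solid tori, then tensor, using associativity of $\DT$. A secondary check is that the glued surgered manifolds carry the induced involutions $\tau_{00}$ and $\tau_{11}$. This holds because surgery is performed on both $K$ and $\tau(K)$ with $\tau$-matched framings, which is exactly the symmetric gluing $Y'_\bullet\cup Y\cup Y'_\bullet$ in the statement of the pairing theorem.
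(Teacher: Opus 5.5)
Your proposal is essentially the paper's proof, and your fallback route is exactly what the paper does: it writes $Y=T_\infty\cup Y'\cup([0,1]\times\Sigma)\cup Y'\cup T_\infty$ with $Y'$ the knot complement, obtains $0\to\CFAa(T_0)\to\CFAa(T_1)\to\CFAa(T_\infty)\to 0$ by tensoring the type $D$ sequence for $T_\infty,T_{-1},T_0$ from \cite{LOT1} with $\CFAAa(\Id)$ (a conversion step your appeal to a ready-made type $A$ triangle skips), and then tensors this sequence with $\CFDAa(Y')\DT\CFDRa([0,1]\times\Sigma,\tau)$ using Theorem~\ref{thm:gen-pairing}. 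One correction: the stated hypotheses do not by themselves force $\tau$ to exchange the two sides of $\Sigma$ (rotation of $S^3$ by $\pi$ about an unknot, with $\Sigma$ the boundary of an invariant tubular neighborhood, preserves both sides), so side-exchange, or equivalently $\tau|_\Sigma$ being orientation-reversing, must be read as part of the hypothesis, as the paper implicitly does when it calls the involution on its decomposition ``evident.''
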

\begin{proof}
  Let $T_0$, $T_{1}$, and $T_\infty$ be the $0$-framed, $1$-framed, and $\infty$-framed solid tori. There is a short exact sequence
  \begin{equation}\label{eq:CFA-surg-seq}
    0\to \CFAa(T_0)\to \CFAa(T_1)\to \CFAa(T_\infty)\to 0,
  \end{equation}
  as follows. In our first paper on bordered Floer homology, we constructed a short exact sequence
  \[
  0\to \CFDa(T_\infty)\to\CFDa(T_{-1})\to\CFDa(T_0)\to 0.
  \]
  Tensor this
  with the type \AAm\ bimodule of the identity, to obtain the desired sequence. (Recall that tensoring the type $D$ invariant of the $0$-framed solid torus with $\CFAAa(\Id)$ gives the type $A$ invariant of the $\infty$-framed solid torus, as tensoring $\CFAAa(\Id)$ with the type $D$ invariant of the $0$-framed solid torus on both sides gives $S^3$.)

  Now, decompose 
  \[
  Y = T_\infty\cup Y'\cup [0,1]\times\Sigma \cup Y'\cup T_\infty,
  \]
  so that $\tau$ is the evident involution. Choosing  
  diagrams for $Y'$, $[0,1]\times\Sigma$, and $T_i$ ($i\in\{0,1,\infty\}$), Theorem~\ref{thm:gen-pairing} gives
  \[
  \CFRa(Y_{ii},\tau_{ii})\simeq \CFAa(T_i)\DT \CFDAa(Y')\DT\CFDRa([0,1]\times\Sigma,\tau)
  \]
  Combining this with the exact sequence~\eqref{eq:CFA-surg-seq} gives the result.
\end{proof}

\subsection{Branched double covers of certain satellites}
Real bordered Floer homology gives satellite formulas for the real Floer homology of branched double covers, for even winding number patterns. Indeed, if $P\subset S^1\times D^2$ is a pattern with even winding number, then the branched double cover $\Sigma(S^1\times D^2,P)$ has boundary $T^2\amalg T^2$. Given a knot $K\subset S^3$, if $Y=S^3\setminus \nbd(K)$ denotes the exterior of $K$ and $P(K)$ denotes the satellite of $K$ with pattern $P$, then
\[
\Sigma(P(K))=Y\cup_{T^2}\Sigma(S^1\times D^2,P)\cup_{T^2}Y.
\]
In particular, by the pairing theorems,
\begin{align*}
\CFa(\Sigma(P(K)))&\simeq \CFAa(Y)\DT\CFDDa(\Sigma(S^1\times D^2,P))\DT\CFAa(Y)\\
\CFRa(\Sigma(P(K)),\tau)&\simeq \CFAa(Y)\DT\CFDRa(\Sigma(S^1\times D^2,P),\tau).
\end{align*}
On the other hand, $\CFDa(Y)$ is determined by $\CFKm(K)$~\cite[Theorem A.11]{LOT1}, and $\CFAa(Y)$ is determined by $\CFDa(Y)$, either by tensoring with $\CFAAa(\Id_{T^2})$ or Hedden-Levine's explicit formula~\cite[Theorem 2.2]{HeddenLevine16:splicing}.
Thus, the pairing theorem implies:
\begin{proposition}
  Let $P\subset S^1\times D^2$ be a pattern with even winding number, and let $K,K'\subset S^3$ be knots. If $\CFKm(K)\simeq\CFKm(K')$, then $\HFa(\Sigma(P(K)))\cong \HFa(\Sigma(P(K')))$ and $\HFRa(\Sigma(P(K)),\tau)\cong\HFRa(\Sigma(P(K')),\tau)$.
\end{proposition}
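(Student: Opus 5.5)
The proposition follows formally from the two pairing formulas displayed just before it. The plan is to show that each of $\CFa(\Sigma(P(K)))$ and $\CFRa(\Sigma(P(K)),\tau)$ is, up to homotopy, a function of the homotopy type of $\CFKm(K)$, via $\CFAa(Y_K)$, where $Y_K=S^3\setminus\nbd(K)$ with its standard bordered structure. Then $\CFKm(K)\simeq\CFKm(K')$ gives homotopy equivalent complexes for $K$ and $K'$, hence isomorphic homology.

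First, fix the framings. To make the satellite decomposition $\Sigma(P(K))=Y_K\cup_{T^2}\Sigma(S^1\times D^2,P)\cup_{T^2}Y_K$ hold, parametrize $\bdy Y_K$ by the Seifert (0-)framing. Choose a real bordered Heegaard diagram for $(\Sigma(S^1\times D^2,P),\tau)$, where $\tau$ is the deck involution, and an arced bordered diagram for $Y_K$; these can be glued as in Section~\ref{sec:HD}. The fixed set of the deck involution is the lift of $P$, which is nonempty, so the pieces satisfy the hypotheses.

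Second, apply the pairing theorems. By Corollary~\ref{cor:pairing-to-closed} (or Theorem~\ref{thm:gen-pairing}), $\CFRa(\Sigma(P(K)),\tau)\simeq\CFAa(Y_K)\DT\CFDRa(\Sigma(S^1\times D^2,P),\tau)$. By the ordinary bimodule pairing theorem \cite[Theorem 2]{LOT2}, $\CFa(\Sigma(P(K)))\simeq\CFAa(Y_K)\DT\CFDDa(\Sigma(S^1\times D^2,P))\DT\CFAa(Y_K)$. The right-hand factors are independent of $K$.

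Third, check that $\CFAa(Y_K)$ depends only on $\CFKm(K)$. By \cite[Theorem A.11]{LOT1}, $\CFDa(Y_K)$, with the chosen framing, is determined up to homotopy equivalence by the homotopy type of $\CFKm(K)$, where the homotopy equivalence is filtered and graded as in that theorem's hypotheses. Then $\CFAa(Y_K)\simeq\CFAAa(\Id_{T^2})\DT\CFDa(Y_K)$, and tensor products preserve homotopy equivalence as long as boundedness holds. Boundedness can be arranged: either use admissible diagrams, or note that $\CFAAa(\Id)$ is bounded, or use Hedden-Levine's bounded model \cite[Theorem 2.2]{HeddenLevine16:splicing}. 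So $\CFAa(Y_K)\simeq\CFAa(Y_{K'})$ as $A_\infty$-modules.

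Finally, tensoring these homotopy equivalences with the fixed $\CFDRa$ and $\CFDDa$ factors, which are bounded type $D$ structures for admissible diagrams by Lemma~\ref{lem:admis-finite}, preserves homotopy type. Taking homology gives both isomorphisms. The main obstacle is bookkeeping rather than a substantive step. One must make sure that the framing used in \cite[Theorem A.11]{LOT1} matches the gluing that produces $P(K)$, so that any framing change is absorbed into the pattern side independently of $K$. One must also check that the boundedness hypotheses needed for $\DT$ to respect homotopy equivalences are met.
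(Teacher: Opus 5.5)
Your proposal is correct and follows the same route as the paper. Both apply the real and ordinary pairing theorems to the decomposition $\Sigma(P(K))=Y\cup_{T^2}\Sigma(S^1\times D^2,P)\cup_{T^2}Y$, then use that $\CFDa$ of the knot complement is determined by $\CFKm(K)$ via \cite[Theorem A.11]{LOT1} and that $\CFAa$ is determined by $\CFDa$ (by tensoring with $\CFAAa(\Id_{T^2})$ or by Hedden--Levine's formula), with your framing and boundedness remarks supplying bookkeeping the paper leaves implicit.
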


As a first example, we consider the case of Whitehead doubling.
Figure~\ref{fig:Whitehead} is a real Heegaard diagram for the
Whitehead doubling pattern $P\subset S^1\times D^2$; we will discuss
its framing presently. The diagram has $9$ regions. The involution
preserves $D_1$, $D_2$ (the region with the basepoint), $D_3$, $D_4$,
and $D_5$, and exchanges $D_6$ with $D_9$ and $D_7$ with $D_8$. There are
$7$ real states: $ab$, $p_i x_j$, with $i,j\in \{1,2\}$, and $p_i y$,
with $i\in\{1,2\}$. (The Heegaard state $\x=st$ is
$\tau$-invariant, but both $\alpha$-arcs contain a
point in $\x$, so $st$ is not a real state.) These are all real
$\SpinC$-equivalent, as we shall see.

\begin{figure}
  \centering
  \includegraphics[alt={Branched double cover of the Whitehead doubling pattern}]{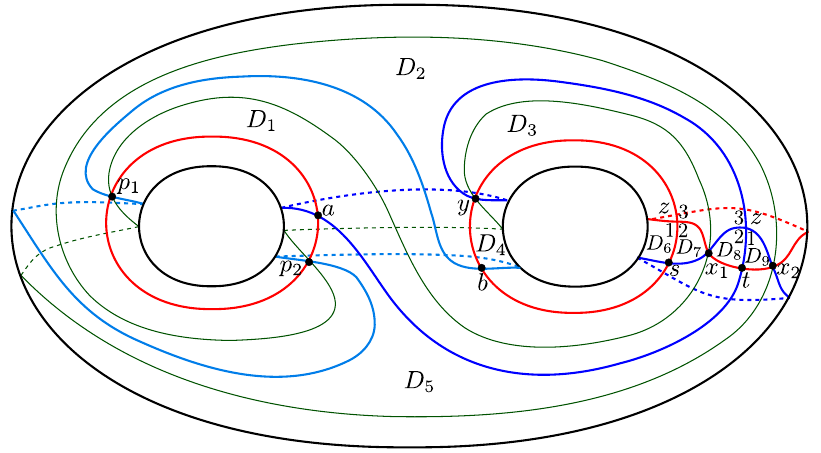}
  \caption{\textbf{Branched double cover of the Whitehead doubling pattern.} The \textcolor{darkgreen}{thin} curve is the fixed set. The boundary is drawn as two intersection points, one between the two $\alpha$-arcs and the other between the two $\beta$-arcs. The numbers $1,2,3$ indicate the chords $\rho_1,\rho_2,\rho_3$ around the boundary components.}
  \label{fig:Whitehead}
\end{figure}

The space of (unreal) periodic domains is generated by
\begin{align*}
\mathcal{P}_1&=D_6+D_7-D_8-D_9 &
\mathcal{P}_2&=D_1 + 2 D_3 - D_5 -2 D_4 - D_6 +D_7 + 2 D_8.
\end{align*}
The space of real periodic domains is generated by
\[ 
\mathcal{P}_1+2 \mathcal{P}_2 = 2D_1 + 4 D_3 - 2 D_5 - 4 D_4  -D_6 + 3 D_7 + 3 D_8 - D_9.
\]
The differential is as follows, where we list first the domain and then its contribution:
\begin{align*}
  D_4\co p_2y&\to 1\otimes ab &
  D_3 \co p_1y&\to \rho_3\otimes p_1x_1 \\
  D_3 \co p_2y &\to \rho_3\otimes  p_2x_1 &
  D_1+D_7+D_8 \co p_1x_1 &\to \rho_2\otimes ab \\
  D_4+D_5\co p_1y &\to \rho_1\otimes  p_2x_2. 
\end{align*}
Cancelling the differential coming from $D_4$, we find that the type $D$ structure
has $5$ generators and the following arrows:
\[
\begin{tikzpicture}[alt={Type D structure for the branched double cover of the Whitehead doubling pattern}]
  \node at (0,0) (YP1) {$\iota_0p_1y$};
  \node at (2,0) (X1P1) {$\iota_1p_1x_1$};
  \node at (4,0) (X1P2) {$\iota_1p_2x_1$};
  \node at (0,-2) (X2P2) {$\iota_1p_2x_2$};
  \node at (-2,-2) (X2P1) {$\iota_1p_1x_2$};
  \draw[->] (YP1) to node[above]{$\rho_3$} (X1P1);
  \draw[->] (X1P1) to node[above]{$\rho_{23}$} (X1P2);
  \draw[->] (YP1) to node[left]{$\rho_{1}$} (X2P2);
\end{tikzpicture}
\]
(We have also indicated the idempotents.)

Implicitly, in drawing Figure~\ref{fig:Whitehead}, we have chosen framings for the boundary components.
It is more convenient to frame the boundary as follows: identifying $\Sigma(P)$ with the left-handed $(2,4)$ torus link, choose the $0
$-framing of both boundary components. With this choice, gluing in a $0$-framed knot complement to each boundary component then gives the branched double cover of the untwisted Whitehead double of the knot. For this framing, if we fill both boundary components at slope $p/q$, then a presentation matrix for the first homology of the result is $\begin{bsmallmatrix}p&-2q\\-2q&p\end{bsmallmatrix}$, so the order of the first homology of this filling is $|p^2-4q^2|$.

By contrast, for the diagram in Figure~\ref{fig:Whitehead}, if we identify $H_1(F(\PMC))=H_1(T^2)=\ZZ^2$ by sending $[\rho_{23}]$ to $[1,0]^T$ and $[\rho_{12}]$ to $[0,1]^T$, then
\begin{align*}
(\bdy_L \mathcal{P}_1,\bdy_R\mathcal{P}_1)&=\left(\begin{bmatrix}0\\1\end{bmatrix},\begin{bmatrix}0\\-1\end{bmatrix}\right) & 
(\bdy_L \mathcal{P}_2,\bdy_R\mathcal{P}_2)&=\left(\begin{bmatrix}2\\-1\end{bmatrix},\begin{bmatrix}2\\0\end{bmatrix}\right).
\end{align*}
If we glue a bordered Heegaard diagram for the Dehn twist $\tau_\mu$ twice to both sides of the diagram, we obtain a diagram with new periodic domains $\mathcal{P}'_1,\mathcal{P}'_2$ with boundaries
\begin{align*}
(\bdy_L \mathcal{P}'_1,\bdy_R\mathcal{P}'_1)&=\left(\begin{bmatrix}2\\1\end{bmatrix},\begin{bmatrix}-2\\-1\end{bmatrix}\right) & 
(\bdy_L \mathcal{P}'_2,\bdy_R\mathcal{P}'_2)&=\left(\begin{bmatrix}0\\-1\end{bmatrix},\begin{bmatrix}2\\0\end{bmatrix}\right).
\end{align*}
In particular, if we fill both boundary components of this new diagram with $p/q$-framed solid tori, a presentation matrix for the first homology of the result is
\[
\begin{bmatrix}
p & 0 & 2 & 0\\
q & 0 & 1 & -1\\
0 & p & -2 & 2\\
0 & q & -1 & 0
\end{bmatrix},
\]
so the order of the first homology is again $|p^2-4q^2|$. Thus, we have found the desired framing.

So, to obtain $\CFDRa(\Sigma(P),\tau)$ for the untwisted Whitehead doubling pattern $P$, we should tensor the type $D$ structure above with $\CFDAa(\tau_\mu)$ twice. The invariant $\CFDAa(\tau_\mu)$ was computed in an earlier paper~\cite[Section 10.2]{LOT2}, and an easy computation of the tensor product gives that $\CFDRa(\Sigma(P))$ is the following, delightfully simple, type $D$ structure:
\[
  \begin{tikzpicture}[alt={Simplified type D structure after tensoring with tau-mu}]
  \node at (0,0) (r) {$\iota_1 r$};
  \node at (1.5,.5) (s) {$\iota_0s$};
  \node at (1.5,-.5) (t) {$\iota_1t$};
  \node at (2.1,-.6) (period) {.};
  \draw[->] (s) to node[right]{$\rho_1$} (t);
  \end{tikzpicture}
\]

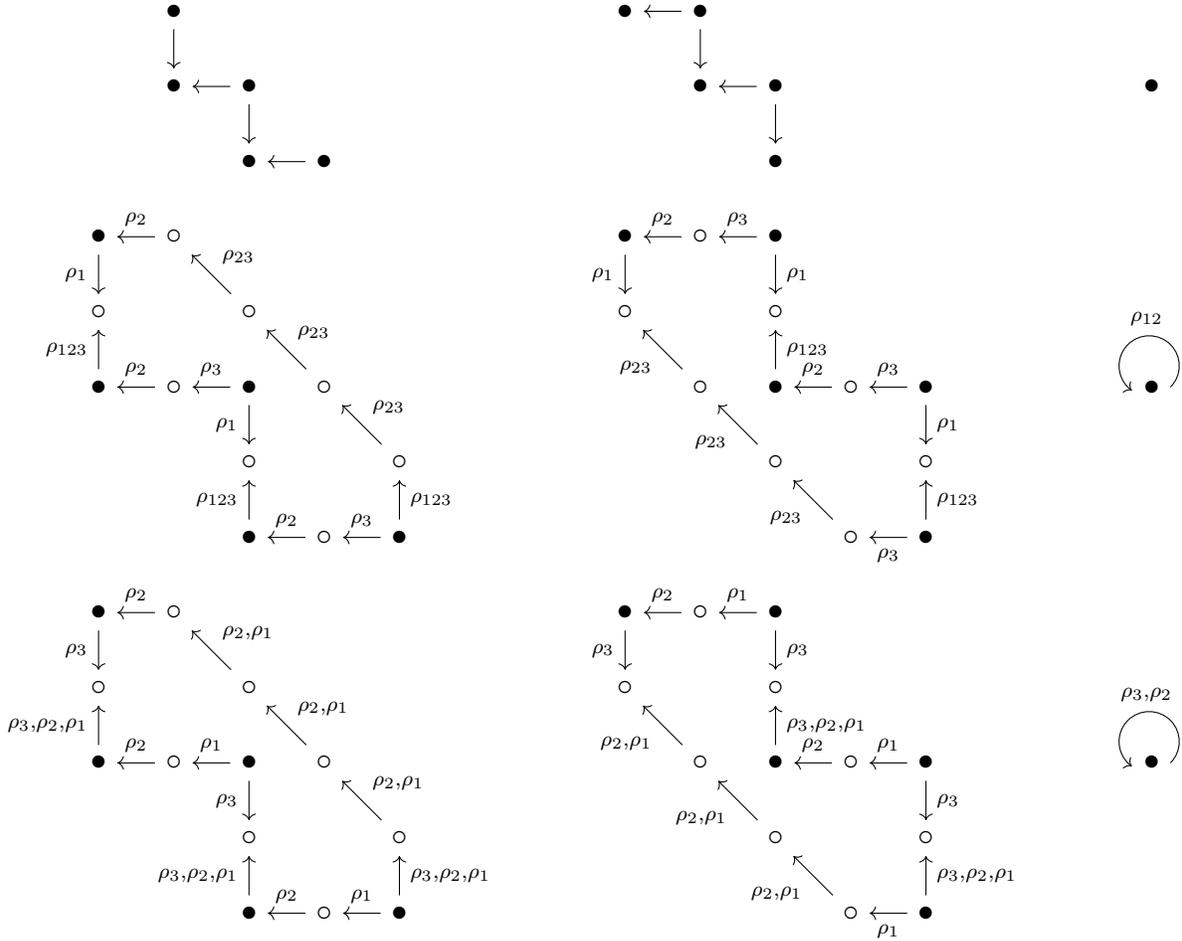
\begin{figure}
  \centering
  \begin{tikzpicture}[alt={Staircases}]
    \begin{scope}[xshift=-6cm]
        \foreach \x/\y in {0/0,0/-1,1/-1,1/-2,2/-2} {
          \node at (\x,\y) (n\x\y) {$\bullet$};
        }
        \foreach \x/\y/\z/\w in {0/0/0/-1,1/-1/0/-1,1/-1/1/-2,2/-2/1/-2} {
          \draw[->] (n\x\y) to (n\z\w);
        }
    \end{scope}
    \begin{scope}
        \foreach \x/\y in {0/0,1/0,1/-1,2/-1,2/-2} {
          \node at (\x,\y) (n\x\y) {$\bullet$};
        }
        \foreach \x/\y/\z/\w in {1/0/0/0,1/0/1/-1,2/-1/1/-1,2/-1/2/-2} {
          \draw[->] (n\x\y) to (n\z\w);
        }    
    \end{scope}
    \begin{scope}[xshift=7cm,yshift=-1cm]
      \node at (0,0) (imhere) {$\bullet$};
    \end{scope}
    \begin{scope}[xshift=-7cm,yshift=-3cm]
      \foreach\x/\y in {0/0,0/-2,2/-2,2/-4,4/-4} {
        \node at (\x,\y) (b\x\y) {$\bullet$};
      }
      \foreach\x/\y in {0/-1,1/-2,2/-3,3/-4,1/0,2/-1,3/-2,4/-3} {
        \node at (\x,\y) (c\x\y) {$\circ$};
      }
      \foreach\x/\y/\z/\w/\r in {0/0/0/-1/\rho_{1},0/-2/0/-1/\rho_{123},2/-2/2/-3/\rho_1,2/-4/2/-3/\rho_{123}} {
        \draw[->] (b\x\y) to node[left]{\lab{\r}} (c\z\w);
      }
      \foreach\x/\y/\z/\w/\r in {2/-2/1/-2/\rho_{3},4/-4/3/-4/\rho_{3}} {
        \draw[->] (b\x\y) to node[above]{\lab{\r}} (c\z\w);
      }
      \foreach\x/\y/\z/\w/\r in {1/-2/0/-2/\rho_{2},3/-4/2/-4/\rho_{2}} {
        \draw[->] (c\x\y) to node[above]{\lab{\r}} (b\z\w);
      }
      \foreach\x/\y/\z/\w in {4/-3/3/-2,3/-2/2/-1,2/-1/1/0} {
        \draw[->] (c\x\y) to node[above right]{\lab{\rho_{23}}} (c\z\w);
      }
      \draw[->] (c10) to node[above]{\lab{\rho_2}} (b00);
      \draw[->] (b4-4) to node[right]{\lab{\rho_{123}}} (c4-3);
    \end{scope}
    \begin{scope}[yshift=-3cm]
      \foreach\x/\y in {0/0,2/0,2/-2,4/-2,4/-4} {
        \node at (\x,\y) (b\x\y) {$\bullet$};
      }
      \foreach\x/\y in {1/0,2/-1,3/-2,4/-3,0/-1,1/-2,2/-3,3/-4} {
        \node at (\x,\y) (c\x\y) {$\circ$};
      }
      \foreach\x/\y/\z/\w/\r in {2/0/2/-1/\rho_{1},2/-2/2/-1/\rho_{123},4/-2/4/-3/\rho_1,4/-4/4/-3/\rho_{123}} {
        \draw[->] (b\x\y) to node[right]{\lab{\r}} (c\z\w);
      }
      \foreach\x/\y/\z/\w/\r in {2/0/1/0/\rho_{3},4/-2/3/-2/\rho_{3}} {
        \draw[->] (b\x\y) to node[above]{\lab{\r}} (c\z\w);
      }
      \foreach\x/\y/\z/\w/\r in {1/0/0/0/\rho_{2},3/-2/2/-2/\rho_{2}} {
        \draw[->] (c\x\y) to node[above]{\lab{\r}} (b\z\w);
      }
      \foreach\x/\y/\z/\w in {3/-4/2/-3,2/-3/1/-2,1/-2/0/-1} {
        \draw[->] (c\x\y) to node[below left]{\lab{\rho_{23}}} (c\z\w);
      }
      \draw[->] (b00) to node[left]{\lab{\rho_1}} (c0-1);
      \draw[->] (b4-4) to node[below]{\lab{\rho_{3}}} (c3-4);
    \end{scope}
    \begin{scope}[xshift=7cm,yshift=-5cm]
      \node at (0,0) (imhere) {$\bullet$};
      \draw [->] (imhere.0) arc (-45:235:4mm) node[pos=0.5,above] {\lab{\rho_{12}}} (imhere);
    \end{scope}
    \begin{scope}[xshift=-7cm,yshift=-8cm]
      \foreach\x/\y in {0/0,0/-2,2/-2,2/-4,4/-4} {
        \node at (\x,\y) (b\x\y) {$\bullet$};
      }
      \foreach\x/\y in {0/-1,1/-2,2/-3,3/-4,1/0,2/-1,3/-2,4/-3} {
        \node at (\x,\y) (c\x\y) {$\circ$};
      }
      \foreach\x/\y/\z/\w/\r in {0/0/0/-1/\rho_{3},0/-2/0/-1/{\rho_3,\rho_2,\rho_1},2/-2/2/-3/\rho_3,2/-4/2/-3/{\rho_3,\rho_2,\rho_1}} {
        \draw[->] (b\x\y) to node[left]{\lab{\r}} (c\z\w);
      }
      \foreach\x/\y/\z/\w/\r in {2/-2/1/-2/\rho_{1},4/-4/3/-4/\rho_{1}} {
        \draw[->] (b\x\y) to node[above]{\lab{\r}} (c\z\w);
      }
      \foreach\x/\y/\z/\w/\r in {1/-2/0/-2/\rho_{2},3/-4/2/-4/\rho_{2}} {
        \draw[->] (c\x\y) to node[above]{\lab{\r}} (b\z\w);
      }
      \foreach\x/\y/\z/\w in {4/-3/3/-2,3/-2/2/-1,2/-1/1/0} {
        \draw[->] (c\x\y) to node[above right]{\lab{\rho_{2},\rho_1}} (c\z\w);
      }
      \draw[->] (c10) to node[above]{\lab{\rho_2}} (b00);
      \draw[->] (b4-4) to node[right]{\lab{\rho_3,\rho_2,\rho_1}} (c4-3);
    \end{scope}
    \begin{scope}[yshift=-8cm]
      \foreach\x/\y in {0/0,2/0,2/-2,4/-2,4/-4} {
        \node at (\x,\y) (b\x\y) {$\bullet$};
      }
      \foreach\x/\y in {1/0,2/-1,3/-2,4/-3,0/-1,1/-2,2/-3,3/-4} {
        \node at (\x,\y) (c\x\y) {$\circ$};
      }
      \foreach\x/\y/\z/\w/\r in {2/0/2/-1/\rho_{3},2/-2/2/-1/{\rho_3,\rho_2,\rho_1},4/-2/4/-3/\rho_3,4/-4/4/-3/{\rho_3,\rho_2,\rho_1}} {
        \draw[->] (b\x\y) to node[right]{\lab{\r}} (c\z\w);
      }
      \foreach\x/\y/\z/\w/\r in {2/0/1/0/\rho_{1},4/-2/3/-2/\rho_{1}} {
        \draw[->] (b\x\y) to node[above]{\lab{\r}} (c\z\w);
      }
      \foreach\x/\y/\z/\w/\r in {1/0/0/0/\rho_{2},3/-2/2/-2/\rho_{2}} {
        \draw[->] (c\x\y) to node[above]{\lab{\r}} (b\z\w);
      }
      \foreach\x/\y/\z/\w in {3/-4/2/-3,2/-3/1/-2,1/-2/0/-1} {
        \draw[->] (c\x\y) to node[below left]{\lab{\rho_2,\rho_1}} (c\z\w);
      }
      \draw[->] (b00) to node[left]{\lab{\rho_3}} (c0-1);
      \draw[->] (b4-4) to node[below]{\lab{\rho_1}} (c3-4);
    \end{scope}
    \begin{scope}[xshift=7cm,yshift=-10cm]
      \node at (0,0) (imhere) {$\bullet$};
      \draw [->] (imhere.0) arc (-45:235:4mm) node[pos=0.5,above] {\lab{\rho_3,\rho_2}} (imhere);
    \end{scope}
  \end{tikzpicture}
  \caption{\textbf{Staircases.} Top row: staircases with $\tau=-2$, $\tau=2$, and $\tau=0$. Middle row: the corresponding type $D$ structures. Bottom row: the corresponding type $A$ modules. For the type $A$ structures, we have not shown a full set of operations, but just operations that generate the type $A$ structure, under an evident composition operation. Solid dots have idempotent $\iota_0$, and empty dots have idempotent $\iota_1$.}
  \label{fig:staircases}
\end{figure}

\begin{figure}
  \centering
  \begin{tikzpicture}[alt={Bordered invariants for a box}]
    \begin{scope}
      \foreach\x/\y in {0/0,1/0,0/1,1/1} {
        \node at (\x,\y) (n\x\y) { $\bullet$ };
        }
      \foreach \x/\y/\z/\w in {1/1/0/1,1/1/1/0,1/0/0/0,0/1/0/0} {
        \draw[->] (n\x\y) to (n\z\w);
      }
    \end{scope}
    \begin{scope}[xshift=4cm]
      \foreach\x/\y in {0/0,2/0,0/2,2/2} {
        \node at (\x,\y) (b\x\y) { $\bullet$ };
        }
      \foreach\x/\y in {1/0,0/1,2/1,1/2} {
        \node at (\x,\y) (c\x\y) {$\circ$};
      }
      \draw[->] (b22) to node[above]{\lab{\rho_3}} (c12);
      \draw[->] (b20) to node[above]{\lab{\rho_3}} (c10);
      \draw[->] (b22) to node[right]{\lab{\rho_1}} (c21);
      \draw[->] (b02) to node[right]{\lab{\rho_1}} (c01);
      \draw[->] (c12) to node[above]{\lab{\rho_2}} (b02);
      \draw[->] (c10) to node[above]{\lab{\rho_2}} (b00);
      \draw[->] (b00) to node[right]{\lab{\rho_{123}}} (c01);
      \draw[->] (b20) to node[right]{\lab{\rho_{123}}} (c21);
    \end{scope}
    \begin{scope}[xshift=9cm]
      \foreach\x/\y in {0/0,2/0,0/2,2/2} {
        \node at (\x,\y) (b\x\y) { $\bullet$ };
        }
      \foreach\x/\y in {1/0,0/1,2/1,1/2} {
        \node at (\x,\y) (c\x\y) {$\circ$};
      }
      \draw[->] (b22) to node[above]{\lab{\rho_1}} (c12);
      \draw[->] (b20) to node[above]{\lab{\rho_1}} (c10);
      \draw[->] (b22) to node[right]{\lab{\rho_3}} (c21);
      \draw[->] (b02) to node[right]{\lab{\rho_3}} (c01);
      \draw[->] (c12) to node[above]{\lab{\rho_2}} (b02);
      \draw[->] (c10) to node[above]{\lab{\rho_2}} (b00);
      \draw[->] (b00) to node[right]{\lab{\rho_3,\rho_2,\rho_1}} (c01);
      \draw[->] (b20) to node[right]{\lab{\rho_3,\rho_2,\rho_1}} (c21);
    \end{scope}
  \end{tikzpicture}
  \caption{\textbf{Bordered invariants for a box.} Left: a $1\times 1$ box, as appearing in the knot Floer complex. Center: the corresponding type $D$ structure. Right: the corresponding type $A$ module. Again, we have not drawn all the operations, just a generating set.}
  \label{fig:box}
\end{figure}
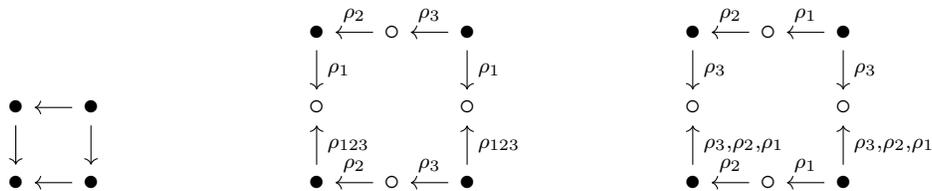

To compare the real and unreal invariants of the branched covers, the following lemma will be convenient.
To pin down conventions, we use the conventions on the signature $\sigma(K)$ for which the right-handed trefoil knot has $\sigma=-2$ and  $\tau=1$; in general, for alternating knots, $\sigma(K)=-2\tau(K)$.
We let $\det(K)=|\Delta(-1)|$. (We apologize that $\tau$ is being used both for the involution and the concordance invariant~\cite{OS03:4BallGenus}, and hope this will not cause confusion.)
\begin{lemma}\label{lem:surg-comp}
  Let $K$ be an alternating knot in $S^3$.
  Then the dimension of $\HFa(S^3_1(K))$ is given by the formula
  \begin{equation}
    \label{eq:OneSurgery}
    \dim\HFa(S^3_1(K))=\frac{1}{2}\left(
    \det(K)
    +6|\tau(K)|+
      \begin{cases}
        -7 & \tau(K)>0 \\ 
        1 &\tau(K)=0 \\ 
        -3 &\tau(K)<0
      \end{cases}\right).
  \end{equation}
  The dimension of $\HFa(S^3_{1/2}(K))$ is given by
  \begin{equation}
    \label{eq:HalfSurgery}
    \dim\HFa(S^3_{1/2}(K))=
    \det(K)
    +6|\tau(K)|
      +\begin{cases}
        -6 &\tau(K)>0\\
        0 &\tau(K)=0 \\ 
        -4 & \tau(K)<0. 
      \end{cases}
  \end{equation}
\end{lemma}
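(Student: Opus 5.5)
Since $K$ is alternating, its knot Floer complex is determined by $\tau(K)$ and $\det(K)$ (Petkova's result, building on Ozsv\'ath--Szab\'o for alternating knots). Up to homotopy, $\CFKm(K)$ splits as a staircase of step length one and height $|\tau(K)|$ (in the direction dictated by the sign of $\tau$), as in Figure~\ref{fig:staircases}, plus some number $b$ of $1\times 1$ boxes, as in Figure~\ref{fig:box}. Counting generators gives $\det(K)=(2|\tau(K)|+1)+4b$, so
\[
b=\bigl(\det(K)-2|\tau(K)|-1\bigr)/4 .
\]

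The plan is to compute $\dim\HFa(S^3_{p/q}(K))$ by summing contributions from these summands. I will use the pairing theorem $\HFa(S^3_{p/q}(K))\cong H_*\bigl(\CFAa(T_{p/q})\DT\CFDa(S^3\setminus\nbd(K))\bigr)$. Here $\CFDa$ of the complement is obtained from $\CFKm(K)$ by \cite[Theorem A.11]{LOT1}, with framing chosen to be $0$ since $\tau$ is small, and it decomposes as:
\begin{itemize}
\item the staircase piece in the middle row of Figure~\ref{fig:staircases};
\item one copy of the box type $D$ structure of Figure~\ref{fig:box} for each box;
\item the unstable chain determined by $\tau$ and the framing $0$.
\end{itemize}
Because $\DT$ is additive, and boxes contribute to neither the staircase nor the unstable chain, the answer is a sum over pieces.

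For the boxes, I tensor the box type $D$ structure with $\CFAa$ of the $1$-framed and $\tfrac12$-framed solid tori. This is a direct computation from the standard genus-one diagrams, or equivalently it follows from the mapping-cone formula applied to an acyclic box. I expect each box to contribute $2$ to $\HFa(S^3_1)$ and $4$ to $\HFa(S^3_{1/2})$, since a box contributes twice its $H_*$ in each copy of $A_s$. Hence the boxes contribute $2b$ and $4b$ respectively, which produces the $\det(K)/2$ term in \eqref{eq:OneSurgery} and the $\det(K)$ term in \eqref{eq:HalfSurgery}.

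For the staircase plus unstable chain, the cleanest route is the rational surgery mapping cone (Ozsv\'ath--Szab\'o), or the immersed-curve picture. For $1/q$ surgery the dimension reduces to a count over $s$ of $H_*(\hat A_s)$ and of $H_*(\hat B)$ together with the maps $v_s,h_s$. For a staircase with $|\tau|=n$ one has $\dim H_*(\hat A_s)=1$ for all $s$. The maps $v_s$ are isomorphisms exactly for $s\ge n$, and $h_s$ is an isomorphism exactly for $s\le -n$; the reverse holds when $\tau<0$, with some care needed at the sign conventions for $\tau>0$ versus $\tau<0$. I will then evaluate the cone case by case for $\tau>0$, $\tau=0$, and $\tau<0$. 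For $1/2$-surgery each $\hat A_s$ appears twice. Finally I add the box contributions, substitute $b$, and check that the constants match \eqref{eq:OneSurgery}--\eqref{eq:HalfSurgery}. A consistency check is the trefoil: for $\tau=1$, \eqref{eq:OneSurgery} gives $\tfrac12(3+6-7)=1$ for the Poincar\'e sphere, which is correct.

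The main obstacle will be the asymmetry between positive and negative $\tau$, which produces the different constants $-7$ versus $-3$ and $-6$ versus $-4$. I will track it through the rank of the cone near $s=0$, where for $\tau>0$ the surgery is ``large'' relative to $2\tau-1$ only at the boundary. As a safeguard, I will cross-check the staircase counts against known values: the right-handed trefoil gives $1$ and $2$, and the left-handed trefoil gives $3$ and $4$.
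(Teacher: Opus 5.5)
Your outline matches the paper's. You split $\CFKm(K)$ into one staircase and $b=(\det(K)-2|\tau(K)|-1)/4$ boxes, let each box contribute $2$ to $\HFa(S^3_1(K))$ and $4$ to $\HFa(S^3_{1/2}(K))$, and get the staircase's contribution from the integer and rational mapping cones. The box part is fine. The gap is the staircase for $\tau(K)<0$. You claim $\dim H_*(\hat A_s)=1$ for every $s$; this holds for the $\tau>0$ staircases, which look like $\CFK^\infty$ of an L-space knot, but fails for the mirrored ones.

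For the left-handed trefoil, take generators $x$ at $(0,1)$, $y$ at $(0,0)$, $z$ at $(1,0)$ with $\partial x=\partial z=y$. The subquotient $\hat A_0=C\{\max(i,j)=0\}$ is spanned by $Ux$, $y$, $Uz$ with zero differential, since $\partial(Ux)=\partial(Uz)=Uy$ lies in $C\{\max(i,j)<0\}$. So it has rank $3$. In general, for the staircase with $\tau=-n<0$:
\begin{itemize}
\item $\nu=1-n$;
\item $\dim H_*(\hat A_s)=3$ for $|s|\le n-1$;
\item $\hat v_s$ and $\hat h_s$ are both nonzero on homology for those $s$.
\end{itemize}
These $2(2n-1)$ extra classes enlarge $H_*(\mathbb{A})$ while the cone map stays surjective onto $H_*(\mathbb{B})$. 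That is exactly what produces the contributions $4|\tau|-1=1+2(2n-1)$ and $8|\tau|-3=1+4(2n-1)$, since each $\hat A_s$ appears once for slope $+1$ and twice for slope $+1/2$. With rank-one $\hat A_s$ and the correct behavior of $\hat v_s,\hat h_s$, you would instead get $1$ for both slopes.

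So the asymmetry $-7$ versus $-3$ (and $-6$ versus $-4$) is not a truncation effect near $s=0$. For $\tau>0$, the $2\tau-1$ groups with $|s|<\tau$ have rank one and both maps vanish on them. For $\tau<0$, the corresponding groups have rank three. Alternatively, you can write $S^3_{1/q}(K)=-S^3_{-1/q}(\overline{K})$ and run the negative-slope cone on a $\tau>0$ staircase, where the groups really are rank one.

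Your proposed safeguard would also mislead you. The values $2$ and $4$ you quote for $1/2$-surgery on the right- and left-handed trefoils are impossible. $S^3_{1/2}(K)$ is an integer homology sphere, so $\chi(\HFa)=\pm 1$ and $\dim\HFa$ must be odd. The correct values are $3$ and $5$ (these manifolds are $\pm\Sigma(2,3,11)$ and $\pm\Sigma(2,3,13)$), which is what \eqref{eq:HalfSurgery} gives. Comparing against $2$ and $4$ would make the lemma look false.

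Two smaller points, neither of which matters once you switch to the mapping cone. First, \cite[Theorem A.11]{LOT1} allows any framing, so no smallness of $\tau$ is needed to use framing $0$. Second, the unstable chain is not a separate direct summand of $\CFDa$; it is already drawn in the middle row of Figure~\ref{fig:staircases}.
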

\begin{proof}
  This can be deduced from the knot Floer homology for alternating
  knots~\cite{AltKnots}, combined with surgery formulas for Heegaard
  Floer homology~\cite{IntSurg,OS11:RatSurg}.

  Specifically, the knot
  Floer homology $\HFKa(K)$ of an alternating knot $K$ is determined
  by its Alexander polynomial $\Delta_K(t)=\sum_{i} a_i T^i$, and its
  signature $\sigma$, by the formula
  \begin{equation}
    \label{eq:AltKnots}
    \HFKa_d(K,s)=\begin{cases}
      \Field^{|a_s|} & {\text{if $d=s+\frac{\sigma}{2}$}} \\
      0 &{\text{otherwise.}}
    \end{cases}
  \end{equation}
  The knot Floer complex splits as a direct sum of $1\times 1$ boxes (Figure~\ref{fig:box}) and a staircase (Figure~\ref{fig:staircases})~\cite{AltKnots,Petkova13:cableofthin}.

  A staircase is determined by its dimension (which is odd) and its sign.
  The dimension of the staircase is given by $2|\tau(K)|+1$, and the sign is the sign of $\tau(K)$.

  The surgery formula~\cite{IntSurg} expresses $\HFa(S^3_{+1}(K))$ as the homology of a mapping cone of a chain map
  $\mathbb{A}\to \mathbb{B}$, which can be described in terms of the knot Floer complex.
  
  Each box gives rise to a 2-dimensional subspace in $H_*({\mathbb A})$ which maps trivially to $H_*({\mathbb B})$.
  By Equation~\eqref{eq:AltKnots}, the number of boxes is
  \[ \frac{\det(K)-(2|\tau(K)|+1)}{4}.\]
  A straightforward computation shows that when $\tau=0$, the trivial staircase contributes $1$;
  when $\tau>0$, the staircase contributes $4\tau-3$; and when $\tau<0$,
  the staircase contributes $-4\tau-1$. Equation~\eqref{eq:OneSurgery} follows.

  For $1/2$ surgery, we rely on the rational surgery
  formula~\cite{OS11:RatSurg}. In that case, each box in $\HFKa$ contributes $4$ to the rank.
  When $\tau=0$, the trivial staircase contributes $\Field$;
  when $\tau>0$, the staircase contributes $\Field^{-5+8\tau}$;
  and when $\tau<0$, the staircase contributes $\Field^{-8\tau-3}$.
\end{proof}

\begin{proof}[Proof of Theorem~\ref{thm:Whitehead-nontriv}]
  As in the previous proof, since $K$ is alternating, its knot Floer complex is the direct sum of a staircase and a collection of $1\times 1$ boxes.
  The type $D$ structures for staircases are shown in Figure~\ref{fig:staircases}. The corresponding type $A$ modules are also shown in Figure~\ref{fig:staircases}, following the algorithm from Hedden-Levine~\cite[Theorem 2.2]{HeddenLevine16:splicing}.
  The type $D$ structure for a $1\times 1$ box and the corresponding type $A$ module are shown in Figure~\ref{fig:box}. Taking the box product with $\CFDRa(\Sigma(P))$:
  \begin{itemize}
  \item A $\tau>0$ staircase contributes $4\tau+2\tau+1+4\tau-2\tau-2=8\tau-1$ to $\HFRa(\Sigma(P(K)),\tau)$.
  \item A $\tau<0$ staircase contributes $4|\tau|+2|\tau|+1+4|\tau|-2|\tau|=8|\tau|+1$ to $\HFRa(\Sigma(P(K)),\tau)$.
  \item A $\tau=0$ staircase (a single dot) contributes $1$ to $\HFRa(\Sigma(P(K)),\tau)$.
  \item A $1\times 1$ box contributes $4+8-4=8$ to $\HFRa(\Sigma(P(K)),\tau)$.
  \end{itemize}
  Combining these,
  \[
  \dim\HFRa(\Sigma(P(K)),\tau)=\begin{cases}
  2\det(K)+4\tau-3 & \tau>0\\
  2\det(K)+4|\tau|-1 & \tau\leq0.
  \end{cases}
  \]

  Next, we compute unreal $\HFa$ of the branched double cover.
  By Montesinos's trick, the branched double cover of the Whitehead double of any knot $K$ is $S^3_{1/2}(K\# r(K))$, where $r$ denotes the reverse. Since $K\# K^r$ is an 
  alternating knot, Lemma~\ref{lem:surg-comp} implies that
  \[
    \dim\HFa(\Sigma(P(K)))=
    \det(K)^2
    +12|\tau(K)|
      +\begin{cases}
        -6 &\tau(K)>0\\
        0 &\tau(K)=0 \\ 
        -4 & \tau(K)<0. 
      \end{cases}
  \]
  Finally, using $\det(K)\geq 2|\tau|+1$, it is easy to verify the inequality $\dim\HFRa(\Sigma(P(K)),\tau)\leq \dim\HFa(\Sigma(P(K)))$, with equality holding if and only if 
  $(\det(K),\tau(K))=(1,0)$.
\end{proof}

\begin{figure}
  \centering
  \includegraphics[alt={A diagram for the (2,1)-cable on a torus}]{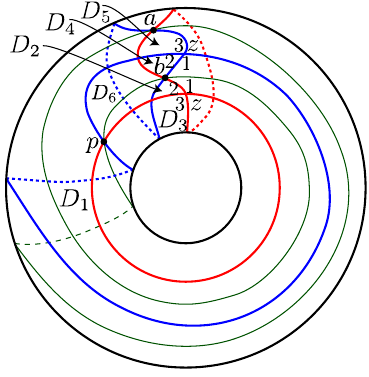}\qquad\qquad
  \includegraphics[alt={A diagram for the (2,1)-cable on a rectangle with opposite edges identified}]{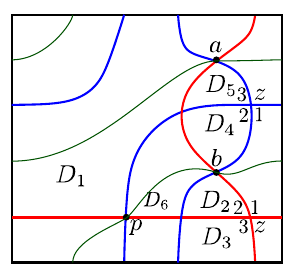}
  \caption{\textbf{A diagram for the $(2,1)$-cable.} We have drawn the diagram on a torus and on the rectangle with opposite edges identified.}
  \label{fig:cable}
\end{figure}

As a second example, we consider $(2,1)$-cables. A real bordered Heegaard diagram for the branched double cover of the cabling pattern is shown in Figure~\ref{fig:cable}; this is, of course, $[0,1]\times T^2$, with boundary parametrizations differing by a Dehn twist, and involution coming from the involution on $T^2$ with quotient a M\"obius band. Since this is a genus $1$ diagram, it is easy to compute its invariants; its $\CFDRa$ is given by:
\[
\begin{tikzcd}[ampersand replacement=\&, alt={Type D structure for the (2,1)-cable pattern}]
\iota_1p\arrow[r,"\rho_2"] \& \iota_0b\arrow[r,"\rho_{12}"] \& \iota_0a.
\end{tikzcd}
\]
To assist with analyzing the framing later, observe that the space of periodic domains is spanned by
\begin{align*}
\mathcal{P}_1&=D_2+D_3-D_4-D_5 & \mathcal{P}_2=D_1+D_2+2D_4+D_5+2D_6.
\end{align*}
The elements of $H_1(F(\PMC_L))$ and $H_1(F(\PMC_R))$ given by the boundaries of these domains are
\begin{align*}
(\bdy_L \mathcal{P}_1,\bdy_R\mathcal{P}_1)&=\left(\begin{bmatrix}1\\0\end{bmatrix},\begin{bmatrix}-1\\0\end{bmatrix}\right) & 
(\bdy_L \mathcal{P}_2,\bdy_R\mathcal{P}_2)&=\left(\begin{bmatrix}0\\1\end{bmatrix},\begin{bmatrix}1\\1\end{bmatrix}\right)
\end{align*}
(listing multiplicities at $\rho_3,\rho_1$, as we did for the Whitehead double).

The branched double cover of the $(2,1)$-cable pattern is the complement of the $(2,2)$ torus link (that is, the Hopf link) in $S^3$, with each component $0$-framed. The order of the first homology of the result of doing $p/q$ surgery to both components of the Hopf link is $|p^2-q^2|$. We obtain this framing by tensoring the module above with $\tau_{\lambda}$, which transforms the boundaries of the periodic domains into 
\begin{align*}
(\bdy_L \mathcal{P}'_1,\bdy_R\mathcal{P}'_1)&=\left(\begin{bmatrix}1\\-1\end{bmatrix},\begin{bmatrix}-1\\1\end{bmatrix}\right) & 
(\bdy_L \mathcal{P}'_2,\bdy_R\mathcal{P}'_2)&=\left(\begin{bmatrix}0\\1\end{bmatrix},\begin{bmatrix}1\\0\end{bmatrix}\right).
\end{align*}
The corresponding module is
\[
\begin{tikzcd}[ampersand replacement=\&, alt={Type D structure for the framed (2,1)-cable}]
\iota_1x\arrow[r,"\rho_2"] \& \iota_0y.
\end{tikzcd}
\]

\begin{theorem}
  Given a knot $K$, let $P(K)$ be the $(2,1)$-cable of $K$.
  Then, for any nontrivial alternating knot $K$, $\dim\HFa(\Sigma(P(K)))>\dim\HFRa(\Sigma(P(K)),\tau)$.
\end{theorem}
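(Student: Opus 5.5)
We follow the proof of Theorem~\ref{thm:Whitehead-nontriv} closely. Since $K$ is alternating, $\CFKm(K)$ splits as one staircase of dimension $2|\tau(K)|+1$ (with sign the sign of $\tau(K)$) plus $(\det(K)-2|\tau(K)|-1)/4$ boxes. The corresponding type $A$ modules $\CFAa(S^3\setminus\nbd(K))$ are those of Figures~\ref{fig:staircases} and~\ref{fig:box}. By Corollary~\ref{cor:pairing-to-closed},
\[
\CFRa(\Sigma(P(K)),\tau)\simeq\CFAa(S^3\setminus\nbd(K))\DT\CFDRa(\Sigma(P),\tau).
\]
We compute this using the framed two-generator model $\iota_1x\xrightarrow{\rho_2}\iota_0y$. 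Its simplicity makes the box tensor product easy. Each $\iota_1$ generator of $\CFAa$ pairs with $x$ and each $\iota_0$ generator pairs with $y$. The only differentials come from $m_2(\cdot,\rho_2)$ operations in $\CFAa$, which are the $\rho_2$ arrows between a hollow dot and a solid dot, and from $m_1$, which is zero in these models.

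So the first step is a count, summand by summand. A box has four $\iota_0$ and four $\iota_1$ generators and two $\rho_2$ arrows, and should contribute $8-4=4$. A staircase has $2|\tau|+1$ solid dots and $4|\tau|$ hollow dots. We then subtract twice the number of $\rho_2$ operations, reading these off Figure~\ref{fig:staircases}. We must be careful about longer sequences containing $\rho_2$, since only single-input $\rho_2$ operations pair with this module. For $\tau>0$ there are $2\tau$ such arrows, and for $\tau<0$ there are $2|\tau|$ from $c\to b$, with the $\rho_{23}$ and $(\rho_2,\rho_1)$ arrows not contributing. Summing gives $\dim\HFRa(\Sigma(P(K)),\tau)$ as an explicit function of $\det(K)$ and $\tau(K)$. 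We expect this to be roughly linear in $\det(K)$, of the form $\det(K)+c|\tau|+O(1)$.

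Next we compute $\dim\HFa(\Sigma(P(K)))$. The branched double cover of the $(2,1)$-cable of $K$ is obtained by gluing two copies of the knot complement along the Hopf link complement, with $0$-framings. This is the splice of $K$ with $K$, which is $0$-surgery on $K\#K^r$ (compare the Montesinos trick used for the Whitehead double); we should double-check the surgery slope using the framing computation above, since $|p^2-q^2|$ at slope $p/q=1/1$ gives $0$. The cleanest route is probably to avoid the surgery identification entirely and compute $\HFa$ directly by pairing: $\CFAa(Y)\DT\CFDDa(\Sigma(S^1\times D^2,P))\DT\CFAa(Y)$. Alternatively, use $\HFa$ of a splice of two knot complements via Hedden--Levine. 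Either way the answer will be quadratic in $\det(K)$: each pair of summands (box or staircase) from the two copies contributes independently. If we use the surgery picture instead, it is $\HFa(S^3_0(K\#K^r))$, which for the alternating knot $K\#K^r$ (with $\det=\det(K)^2$ and $\tau=2\tau(K)$) follows from the integer surgery formula as in Lemma~\ref{lem:surg-comp}.

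Finally we compare the two answers, using $\det(K)\ge 2|\tau(K)|+1$, and $\det(K)\ge 3$ for nontrivial alternating $K$. The quadratic term $\det(K)^2$ dominates the linear real count whenever $\det(K)\ge3$, giving strict inequality. The main obstacle is the unreal computation: pinning down the correct framing or slope identification and the exact lower-order terms. We would first try the direct bordered pairing with the DD bimodule of the framed Hopf-link complement, because it sidesteps the framing ambiguity.
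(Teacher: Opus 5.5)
You follow the paper's route: pair the staircase and boxes with the framed module $\iota_1x\xrightarrow{\rho_2}\iota_0y$, get $\dim\HFa$ from a surgery description and Lemma~\ref{lem:surg-comp}, then compare. The genuine gap is the unreal half, which rests on a wrong identification of $\Sigma(P(K))$.

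\textbf{The unreal side.} $\Sigma(P(K))$ cannot be $0$-surgery on $K\#K^r$. Since $\Delta_{P(K)}(t)=\Delta_K(t^2)$, we have $\det(P(K))=1$, so $\Sigma(P(K))$ is an integer homology sphere.

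It is also not the splice of $K$ with itself. Tracing lifts, $\lambda_K$ is glued to a meridian of the corresponding Hopf-link component, and $\mu_K$ to a $\pm1$-framed (not $0$-framed) longitude. So the two copies of $S^3\setminus\nbd(K)$ are glued meridian to meridian with a $\pm1$ twist in the longitudes. That is the gluing for $\pm1$-surgery on a connected sum, not the meridian-to-longitude gluing of a splice, so a Hedden--Levine splice computation would compute the wrong manifold.

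The missing input is the Montesinos trick, as for $\Wh(K)$ but with a single crossing in place of a clasp. It gives $\Sigma(P(K))\cong S^3_1(K\#K^r)$, which is what the paper uses. Then Formula~\eqref{eq:OneSurgery}, applied to the alternating knot $K\#K^r$ with $\det(K)^2$ and $2\tau(K)$ in place of $\det$ and $\tau$, gives
$\dim\HFa(\Sigma(P(K)))=\tfrac12\bigl(\det(K)^2+12|\tau(K)|+c\bigr)$, where $c=-7,1,-3$ according as $\tau(K)>0$, $=0$, $<0$.

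Your worry about $|p^2-q^2|$ vanishing at slope $1/1$ conflates the bordered filling slope with the surgery coefficient on $K\#K^r$. Gluing in a knot complement is homologically the filling in which the lift of $\lambda_K$ bounds, and that filling has $|H_1|=1$. The direct route via $\CFAa\DT\CFDDa\DT\CFAa$ would also work, but only once this framing is pinned down, and you have not carried it out.

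\textbf{The real side.} Your arrow count needs fixing. In Figure~\ref{fig:staircases}, the $\tau=2$ type $A$ module has exactly two arrows labeled by $\rho_2$ alone, and the $\tau=-2$ module has three. In general there are $\tau$ of them for $\tau>0$ and $|\tau|+1$ for $\tau<0$, not $2|\tau|$. So a staircase contributes $4\tau+1$, $1$, or $4|\tau|-1$ (for $\tau>0$, $=0$, $<0$), and each box contributes $4$. Writing $d=\det(K)$, this gives $\dim\HFRa(\Sigma(P(K)),\tau)=d+2\tau(K)$ if $\tau(K)\ge 0$ and $d+2|\tau(K)|-2$ if $\tau(K)<0$. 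Your count would give $d$ in every case, e.g.\ $3$ rather than $5$ for the right-handed trefoil.

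\textbf{The comparison.} ``Quadratic beats linear'' is not a proof at small $d$; you need the exact constants. With the correct formulas, $\dim\HFa-\dim\HFRa$ equals $\tfrac12(d^2-2d+8\tau-7)$, $\tfrac12(d-1)^2$, or $\tfrac12(d^2-2d+8|\tau|+1)$ in the three cases. Each is positive once $d\ge 3$, using $d\ge 2|\tau(K)|+1$.
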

\begin{proof}
  This is similar to the proof of Theorem~\ref{thm:Whitehead-nontriv}. Now, a staircase contributes to the dimension of $\HFRa$ as follows:
  \begin{itemize}
  \item $2\tau(K)+1+4\tau(K)-2\tau(K)=4\tau(K)+1$ if $\tau(K)>0$
  \item $1$ if $\tau(K)=0$, and
  \item $2|\tau(K)|+1+4|\tau(K)|-2|\tau(K)|-2=4|\tau(K)|-1$ if $\tau(K)<0$.
  \end{itemize}
  A box contributes $4$ to $\HFRa$. So,
  \[
  \dim\HFRa(\Sigma(P(K)),\tau)=\begin{cases}
  \det(K)+2|\tau(K)|-2 & \tau<0\\
  \det(K)+2\tau(K) & \tau\geq0.
  \end{cases}
  \]

  For the knot Floer homology computation, recall that $\Sigma(P(K))$ is $1$-surgery on $K\#K^r$. So, the dimension of $\HFa(\Sigma(P(K)))$ is given by Lemma~\ref{lem:surg-comp}, but with $\tau(K)$ replaced by $2\tau(K)$ and $\det(K)$ replaced by $\det(K)^2$. From these formulas, the result is straightforward.
\end{proof}

\subsection{Genus-1 splittings}
The next two propositions leverage computations for real thick tori to show that, in very special cases, $\HFRa$ can be identified with more familiar Heegaard Floer homology groups.

\begin{figure}
  \centering
  \includegraphics[alt={A Heegaard diagram for a solid torus}]{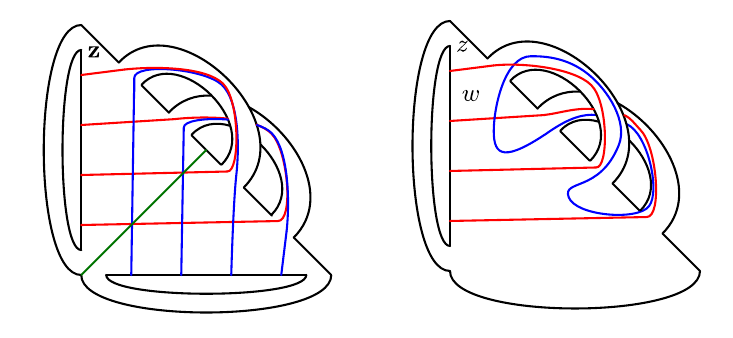}
  \caption{\textbf{A Heegaard diagram for a solid torus.} Left: the real Auroux-Zarev diagram for the genus 1 pointed matched circle. Right: a doubly-pointed bordered Heegaard diagram whose invariant is the same as the real bordered invariant of the Auroux-Zarev diagram. The fixed set in the first diagram and the $\beta$-circle in the second have the same intersection pattern with the $\alpha$-arcs, hence represent isotopic circles in the torus. The knot lies on the torus and intersects the meridional disk once, hence is a longitude.}
  \label{fig:g1-AZ-to-solid-torus}
\end{figure}

\begin{proposition}\label{prop:g1-one-fixed-circs}
  Suppose that $(Y,\tau)$ is a (closed) real $3$-manifold so that the fixed set $K$ of $\tau$ is connected, and there is a separating, genus $1$, $\tau$-invariant surface $\Sigma$ containing $K$. Write $Y=Y_1\cup_\Sigma Y_2$, so $Y_2=\tau(Y_1)$. Let $Y'$ be the result of Dehn filling $Y_1$ with slope $K$, and let $K'$ be the core (longitude) of the new solid torus, with framing induced by $\Sigma$. Then
  \[
  \HFRa(Y,\tau)\cong \HFKa(Y',K')=\SFH(Y_1,\Gamma)
  \]
  where the sutures $\Gamma$ consist of two parallel copies of $K$.
\end{proposition}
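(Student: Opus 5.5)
The plan is to decompose $(Y,\tau)$ along a $\tau$-invariant collar of $\Sigma$ and apply the pairing theorem, Corollary~\ref{cor:pairing-to-closed}.

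\emph{Step 1: identify the collar with $\AZ$.} Since $\tau|_\Sigma$ is orientation-reversing with connected fixed set $K$, and $\Sigma$ has genus $1$, the real surface $(\Sigma,\tau|_\Sigma)$ has $|C|=1$ and nonorientable quotient, a M\"obius band. By Lemma~\ref{lem:AZ-represents}, $(\AZ(\PMC),\tau)$ for the genus-$1$ pointed matched circle $\PMC$ represents a real thick torus with exactly this central real surface. Choose a parametrization $\phi\co F(\PMC)\to\Sigma$ intertwining the involutions. Then $Y=Y_1\cup Y(\AZ(\PMC))\cup \tau(Y_1)$, with $\wt\tau$ of the required form, and Corollary~\ref{cor:pairing-to-closed} gives
\[
\CFRa(Y,\tau)\simeq \CFAa(Y_1)\DT\CFDRa(\AZ(\PMC)).
\]

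\emph{Step 2: recognize $\CFDRa(\AZ(\PMC))$ as an unreal invariant.} Example~\ref{eg:genus-1-AZ-CFDR} computes this module explicitly. It has two generators, $\wt{\rho_2}$ in idempotent $\iota_0$ and $\wt{\rho_{123}}$ in idempotent $\iota_1$, and one differential $\delta^1(\wt{\rho_2})=\rho_1\otimes\wt{\rho_{123}}$. I would compare this with $\CFDa$ of the doubly-pointed genus-$1$ bordered diagram in Figure~\ref{fig:g1-AZ-to-solid-torus}. That diagram is a solid torus whose $\beta$-circle is isotopic to the fixed set, carrying a knot given by a longitude. Its type $D$ invariant, computed in the knot Floer sense (i.e.\ $\widehat{\CFD}$ of the pair, with both basepoints blocking domains), has two generators at the two intersections of $\beta$ with the $\alpha$-arcs and a single half-strip giving a $\rho_1$ (or the appropriately mirrored) arrow. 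So it agrees with the real module, provided the conventions match. Then gluing this solid torus to $Y_1$ gives $Y'$, the Dehn filling along $K$, with knot $K'$. The doubly-pointed pairing theorem of \cite{LOT1} identifies $\CFAa(Y_1)\DT\CFDa(\text{solid torus},K')$ with $\CFKa(Y',K')$.

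\emph{Step 3: sutured identification.} The equality $\HFKa(Y',K')=\SFH(Y_1,\Gamma)$ is the standard fact that knot Floer homology equals the sutured Floer homology of the knot complement with two meridional sutures. The complement of $K'$ in $Y'$ is $Y_1$, and the meridian of $K'$ is the filling slope, namely $K$. So $\Gamma$ is two parallel copies of $K$, and the sutured identification is immediate.

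\emph{Main obstacle.} I expect the bookkeeping in Step 2 to be the main obstacle: orientations, $\alpha$ versus $\beta$ boundary conventions, left versus right, and $\PMC$ versus $-\PMC$. I must check three things. First, the real module literally equals $\CFDa$ of the doubly pointed diagram, not its mirror. Second, the filling slope is $K$ itself. Third, the framing on $K'$ comes from $\Sigma$. I would verify the slope by comparing the $\alpha$-arc intersection pattern of the fixed set in $\AZ$ with that of the $\beta$-circle, as the figure caption indicates.
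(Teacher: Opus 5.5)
Your proposal is correct and follows the paper's own argument. The paper likewise pairs $\CFAa(Y_1)$ with $\CFDRa(\AZ(\PMC))$ for the genus-one pointed matched circle, using the two-generator module from Example~\ref{eg:genus-1-AZ-CFDR}, and identifies that module with $\CFDa$ of the doubly-pointed solid-torus diagram in Figure~\ref{fig:g1-AZ-to-solid-torus}; it reads off the filling slope and the longitude exactly as you plan, by comparing how the fixed set and the $\beta$-circle meet the $\alpha$-arcs. Your Step~3, identifying $\HFKa(Y',K')$ with $\SFH(Y_1,\Gamma)$, is left implicit in the paper but is exactly the standard fact you cite.
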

\begin{proof}
  This follows from the computation of $\CFDRa(\AZ(Z))$ where $Z$ is the genus $1$ pointed matched circle, from Example~\ref{eg:genus-1-AZ-CFDR}. Figure~\ref{fig:g1-AZ-to-solid-torus} shows a doubly-pointed bordered Heegaard diagram $\HD$ so that $\CFDa(\HD)\cong\CFDRa(\AZ(Z))$. It is clear that $\HD$ represents some knot in some bordered solid torus; the figure sketches a proof that the solid torus is the one filling $K$, and that the knot is the longitude.
\end{proof}

\begin{proposition}\label{prop:g1-two-fixed-circs}
  Suppose that $(Y,\tau)$ is a (closed) real $3$-manifold so that the fixed set $L$ of $\tau$ consists of two circles, and there is a separating, genus $1$, $\tau$-invariant surface $\Sigma$ containing $L$. Write $Y=Y_1\cup_\Sigma Y_2$. Let $Y'$ be the result of Dehn filling $Y_1$ along the slope specified by $L$, and let $K'$ be the core (longitude) of the new solid torus (with framing induced by $\Sigma$). Then
  \[
  \HFRa(Y,\tau)\cong \HFa(Y')\oplus\HFKa(Y',K').
  \]
\end{proposition}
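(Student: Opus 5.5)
We will argue exactly as in Proposition~\ref{prop:g1-one-fixed-circs}, with the Auroux--Zarev piece replaced by the real thick torus of Example~\ref{eg:T2-two-fixed-circles-HD} (Figure~\ref{fig:T2-diag-two-fixed-circles}), whose central real surface has two fixed circles.

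\emph{Step 1: decompose $Y$.} Take a collar $[0,1]\times\Sigma$ of $\Sigma$, with the involution of Example~\ref{eg:real-surf-times-I}. This writes
\[
Y=Y_1\cup[0,1]\times\Sigma\cup\tau(Y_1).
\]
Parametrize $\Sigma$ by the genus $1$ pointed matched circle so that the real thick torus is represented by the diagram $\HD$ of Figure~\ref{fig:T2-diag-two-fixed-circles}. By Corollary~\ref{cor:pairing-to-closed},
\[
\CFRa(Y,\tau)\simeq\CFAa(Y_1)\DT\CFDRa(\HD,\tau).
\]
Section~\ref{sec:CFDR-example} computed $\CFDRa(\HD,\tau)$: it has generators $\x$ and $\y$, with $\delta^1(\x)=0$ and $\delta^1(\y)=\rho_1\rho_2\otimes\y=\rho_{12}\otimes\y$. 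Thus it splits as a direct sum of two type $D$ structures:
\begin{itemize}
\item the one-generator structure $\langle\x\rangle$ with $\delta^1=0$;
\item the one-generator structure $\langle\y\rangle$ with $\delta^1(\y)=\rho_{12}\otimes\y$.
\end{itemize}

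\emph{Step 2: identify the summands with solid-torus invariants.}
\begin{itemize}
\item $\langle\x\rangle$ is $\CFDa$ of a framed solid torus $T$, given by a genus $1$ bordered diagram with a single $\beta$-circle meeting one $\alpha$-arc once.
\item $\langle\y\rangle$ is $\CFDa$ of the doubly-pointed bordered diagram for the core of a solid torus, as in Figure~\ref{fig:g1-AZ-to-solid-torus}. There, the $\beta$-circle meets one $\alpha$-arc once and the second basepoint sits so that the chord $\rho_{12}$ from $\y$ to itself survives.
\end{itemize}
Pairing then gives $\CFAa(Y_1)\DT\langle\x\rangle\simeq\CFa(Y_1\cup T)$ and $\CFAa(Y_1)\DT\langle\y\rangle\simeq\CFKa(Y_1\cup T,\text{core})$, via the usual pairing theorem and its doubly-pointed version~\cite{LOT1}.

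\emph{Step 3: check the filling slope and the knot.} This is the main obstacle: we must match the meridian of $T$ with the slope of $L$, and the core with the longitude $K'$ framed by $\Sigma$. We would do this as in the proof of Proposition~\ref{prop:g1-one-fixed-circs}, by comparing intersection patterns with the $\alpha$-arcs.
\begin{itemize}
\item In Figure~\ref{fig:T2-diag-two-fixed-circles}, each fixed circle meets the $\alpha$-arcs in the same pattern as the $\beta$-circle in the solid-torus diagram. So the $\beta$-circle is isotopic to a component of $L$, and the filling is along the slope of $L$.
\item The second basepoint in the doubly-pointed diagram should be placed so that the knot crosses the meridian disk once. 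This makes the knot a longitude, with framing read off from $\Sigma$.
\item The real periodic domain $P$ of Figure~\ref{fig:genus-1-domains} has boundary along the fixed-circle direction. This is consistent with the $\SpinC$/grading bookkeeping and confirms the slope.
\end{itemize}
We also need the two summands to be distinguished by the obstruction $\zeta$ (Example~\ref{eg:Sigma-times-I-zeta}), so that they do not interact. Combining the two pairings then gives
\[
\HFRa(Y,\tau)\cong\HFa(Y')\oplus\HFKa(Y',K').
\]
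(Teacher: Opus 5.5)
Your overall route is the paper's: split $\CFDRa(\HD,\tau)$ for the real thick torus of Section~\ref{sec:CFDR-example} into the one-generator summands $\langle\x\rangle$ and $\langle\y\rangle$. Then recognize them as $\CFDa$ of a singly pointed and a doubly pointed genus-$1$ solid-torus diagram whose meridian is a fixed circle, and pair with $\CFAa(Y_1)$ via Corollary~\ref{cor:pairing-to-closed}.

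However, Step~2 matches the summands the wrong way round, and as written both of its bullets are false.

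\begin{itemize}
\item In a genus-$1$ bordered diagram whose $\beta$-circle meets the $\alpha$-arcs in a single point, $\Sigma\setminus(\alphas\cup\betas)$ has two regions. The one not containing $z$ is a periodic domain with a holomorphic representative, and it is exactly what produces the self-arrow $\rho_{12}\otimes(\cdot)$ (or $\rho_{23}$). So the singly pointed solid torus $\HD_1$ has $\CFDa(\HD_1)\cong\langle\y\rangle$. The summand $\langle\x\rangle$, with $\delta^1=0$, cannot be $\CFDa$ of a plain solid torus. If it were, $\HFa(Y_1\cup T)$ would agree with $\HFKa$ of the core for every $Y_1$.
\item For the extra basepoint $w$ to give the core, it must lie in that non-$z$ region, which is what you yourself require in Step~3 when you ask the knot to meet the meridian disk once. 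But then $w$ kills the arrow, so the doubly pointed diagram $\HD_2$ has $\CFDa(\HD_2)\cong\langle\x\rangle$ with zero differential.
\item Placing $w$ ``so that $\rho_{12}$ survives'' would put it in the $z$-region. The resulting knot is trivial, not $K'$.
\end{itemize}

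So your Steps~2 and~3 contradict each other.

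The fix is simply to swap the identifications: $\CFAa(Y_1)\DT\langle\y\rangle\simeq\CFa(Y')$ and $\CFAa(Y_1)\DT\langle\x\rangle\simeq\CFKa(Y',K')$. Because the conclusion is a direct sum, the stated isomorphism then follows exactly as in the paper.

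Your requirement that the two summands be separated by $\zeta$ is unnecessary. There are no arrows between $\x$ and $\y$, so $\CFDRa(\HD,\tau)=\langle\x\rangle\oplus\langle\y\rangle$ already as type $D$ structures. Since $\CFAa(Y_1)\DT(-)$ preserves direct sums, no real $\SpinC$ argument on the closed manifold is needed.
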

\begin{proof}
  This follows from the computation of $\CFDRa(\HD)$ for the real thick torus $\HD$ in Section~\ref{sec:CFDR-example}. 
  Observe that $\CFDRa(\HD)\cong \CFDa(\HD_1)\oplus\CFDa(\HD_2)$ where $\HD_1$ is a bordered Heegaard diagram for the solid torus in which the fixed set of $\tau$ bounds a disk, and $\HD_2$ is the same, but with an extra basepoint; see Figure~\ref{fig:solid-torus-hds}. It is clear that this extra basepoint specifies the longitude of the solid torus.

  \begin{figure}
    \centering
    \includegraphics[alt={Heegaard diagrams for the solid torus and a knot}]{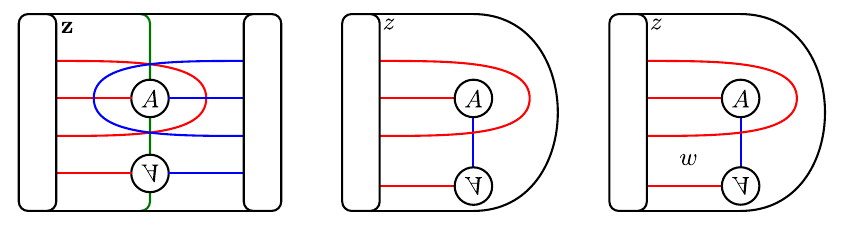}
    \caption{\textbf{Heegaard diagram for the solid torus and a knot in it.} Left: the diagram $\HD$ again, with the two components of the fixed set indicated. Center: the diagram $\HD_1$ from the proof of Proposition~\ref{prop:g1-two-fixed-circs}. Right: the diagram $\HD_2$ from that proof.}
    \label{fig:solid-torus-hds}
  \end{figure}

  It follows from the pairing theorem, Corollary~\ref{cor:pairing-to-closed}, that 
  \begin{align*}
  \CFRa(Y,\tau)&\simeq \CFAa(Y_1)\DT \CFDRa(\HD)\\
  &\cong \bigl(\CFAa(Y_1)\DT\CFDa(\HD_1)\bigr)\oplus \bigl(\CFAa(Y_1)\DT\CFDa(\HD_2)\bigr)\\
  &\simeq \CFa(Y')\oplus \CFKa(Y',K').
  \end{align*}
  This proves the result.
\end{proof}

\subsection{Splittings with connected fixed sets and orientable quotients}

\begin{proposition}
  Let $(Y,\tau)$ be a (closed) real $3$-manifold.
  Suppose that the fixed set $K$ of $\tau$ is connected and $(Y,\tau)$ admits a $\tau$-invariant separating surface $\Sigma$ containing the fixed set, so that $\Sigma/\tau$ is orientable. Let $K'$ and $K''$ be pushoffs of $K$ in $\Sigma$, disjoint from $K$ and so that $\tau(K')=K''$, and choose a $\tau$-invariant arc $A$ from $K'$ to $K''$. There is an induced real sutured manifold $(Y\setminus\nbd(K'\cup K''\cup A),\Gamma,\tau)$, where $\Gamma$ consists of three sutures: a meridian of $K'$, a meridian of $K''$, and a suture near $A$ such that $\Gamma$ is separating. Then 
  \[
  \HFRa(Y,\tau)\cong \mathit{SFHR}(Y\setminus\nbd (K'\cup K''\cup A),\Gamma,\tau).
  \]
\end{proposition}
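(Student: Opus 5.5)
Write $Y=Y_1\cup_\Sigma Y_2$ with $Y_2=\tau(Y_1)$. The plan follows Propositions~\ref{prop:g1-one-fixed-circs} and~\ref{prop:g1-two-fixed-circs}: express both sides as a box tensor product of $\CFAa(Y_1)$ with a type $D$ structure associated to a neighborhood of $\Sigma$, and then identify the two type $D$ structures.

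\textbf{Real side.} Since $K$ is connected and $\Sigma/\tau$ is orientable, the classification in Example~\ref{eg:real-surf-times-I} shows that $(\Sigma,\tau)$ is the real surface associated to a real pointed matched circle $\PMC=\PMC'\#(-\PMC')$ with $\PMC'$ split (Lemma~\ref{lem:AZ-represents}). So $\nbd(\Sigma)$ is represented by $\AZ(\PMC)$, and Corollary~\ref{cor:pairing-to-closed} gives
\[
\CFRa(Y,\tau)\simeq\CFAa(Y_1)\DT\CFDRa(\AZ(\PMC)).
\]
Corollary~\ref{cor:AZ-simplest-model} then replaces $\CFDRa(\AZ(\PMC))$ by its small model. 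That model has generators $[a\otimes ra]$ for $a$ ranging over multiplicity-one strands diagrams in $\Alg(\PMC')$, and its differential is the differential of $\Alg(\PMC')$ plus left and right multiplication by chords in the two halves.

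\textbf{Sutured side.} I would decompose the real sutured manifold $(Y\setminus\nbd(K'\cup K''\cup A),\Gamma,\tau)$ along the same surface $\Sigma$, or rather along $\Sigma$ with a neighborhood of $K'\cup K''\cup A$ removed. This produces the two copies of $Y_1$ and a middle piece: the thickened surface with tunnels drilled along $K'$, $K''$ and $A$, carrying the three sutures. The middle piece is a real bordered-sutured manifold, and its real bordered-sutured invariant should be computable directly. My first attempt would use a real nice diagram obtained from $\AZ(\PMC)$ by drilling. The natural guess is that, after the cancellation of Lemma~\ref{lem:mult-2-contractible}, its real invariant has the same form as the model above, namely symmetric generators indexed by $\Alg(\PMC')$. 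This matches the way the doubly pointed diagrams in Figures~\ref{fig:g1-AZ-to-solid-torus} and~\ref{fig:solid-torus-hds} reproduced the genus $1$ computations.

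\textbf{Gluing and the expected obstacle.} A pairing theorem for real sutured Floer homology, of the same form as Theorem~\ref{thm:gen-pairing}, then gives $\mathit{SFHR}\simeq\CFAa(Y_1)\DT(\text{middle invariant})$, and the identification of the two middle invariants finishes the proof. The main obstacle is this identification together with the required pairing theorem, which is not established in the paper. To avoid needing it, I would instead look for an ordinary, non-real sutured or bordered diagram $\HD$ with $\CFDa(\HD)\cong\CFDRa(\AZ(\PMC))$ in the small model. I would then recognize the sutured manifold it represents as the quotient of the drilled complement in $Y_1$, arguing on Heegaard diagrams exactly as in Proposition~\ref{prop:g1-one-fixed-circs}. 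A point requiring care is the role of the arc $A$: the suture near $A$ should correspond to the identity (diagonal) pairing between $\PMC'$ and $-\PMC'$ in Corollary~\ref{cor:AZ-simplest-model}, and I would check this first in genus $2$ using Figure~\ref{fig:CFDR-AZ-2-split-small}.
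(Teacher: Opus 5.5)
Your reduction of the real side to $\CFAa(Y_1)\DT\CFDRa(\AZ(\PMC))$ and its small model is correct. The step that carries all the content, however, is computing the real invariant of the drilled middle piece and showing it agrees with $\CFDRa(\AZ(\PMC))$. You only record this as a ``natural guess,'' so as written the proposal does not prove the statement.

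The missing idea is that the drilled piece needs no new diagram. It is $\AZ(\PMC)$ itself with two $\tau$-symmetric extra basepoints: one in the region adjacent to $\bdy_L\AZ(\PMC)$ at the midpoint $w$ of the pointed matched circle, and its image adjacent to $\bdy_R\AZ(\PMC)$. Delete neighborhoods of these two points and of a point of $\bbpt$. The result is a real bordered sutured diagram which, glued to diagrams for the two copies of $Y_1$, represents $(Y\setminus\nbd(K'\cup K''\cup A),\Gamma,\tau)$; the knots $K'$ and $K''$ pass through the new basepoints, cf.\ Figure~\ref{fig:AZ-add-basepoints}.

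The new basepoints only forbid domains whose Reeb chords cross $w$, and this changes nothing up to homotopy:
\begin{enumerate}
\item The strands diagrams of multiplicity greater than $1$ still span a contractible sub-structure. The proof of Lemma~\ref{lem:mult-2-contractible} uses only the provincial differential, which never touches boundary regions.
\item Nothing in the small model of Corollary~\ref{cor:AZ-simplest-model} is lost. Its generators $[a\otimes ra]$ have no strand crossing $w$, and all of its algebra coefficients lie in $\Alg(\PMC')\otimes\Alg(-\PMC')$.
\end{enumerate}
So the homotopy type of $\CFDRa$ is unchanged. The closed diagram differs from the original only by basepoints, which merely restrict the allowed domains, so the existing pairing argument applies verbatim. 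You do not need a real sutured pairing theorem of a new shape.

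Your fallback does not close the gap. Suppose you found an ordinary diagram $\HD$ with $\CFDa(\HD)\cong\CFDRa(\AZ(\PMC))$. As in Proposition~\ref{prop:g1-one-fixed-circs}, this would identify $\HFRa(Y,\tau)$ with an \emph{ordinary} sutured Floer group of some manifold built from $Y_1$. The proposition instead asserts an isomorphism with the \emph{real} group $\mathit{SFHR}$ of the drilled real sutured manifold. You would still have to compute $\mathit{SFHR}$ separately, which is exactly the identification you were trying to avoid.
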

\begin{proof}
  By Corollary~\ref{cor:AZ-simplest-model} (or its proof), placing an extra basepoint in $\AZ(\PMC)$ adjacent to the chord $[2k+1,2k+2]$ (the middle of the pointed matched circle) in $\bdy_L\AZ(\PMC)$, and an extra basepoint adjacent to the corresponding chord in $\bdy_R\AZ(\PMC)$, does not change the homotopy type of $\CFDRa$. Deleting a neighborhood of these basepoints and a point on $\bbpt$ gives a real bordered sutured diagram representing $(Y\setminus\nbd (K'\cup K''\cup A),\Gamma,\tau)$. See Figure~\ref{fig:AZ-add-basepoints}.
\end{proof}

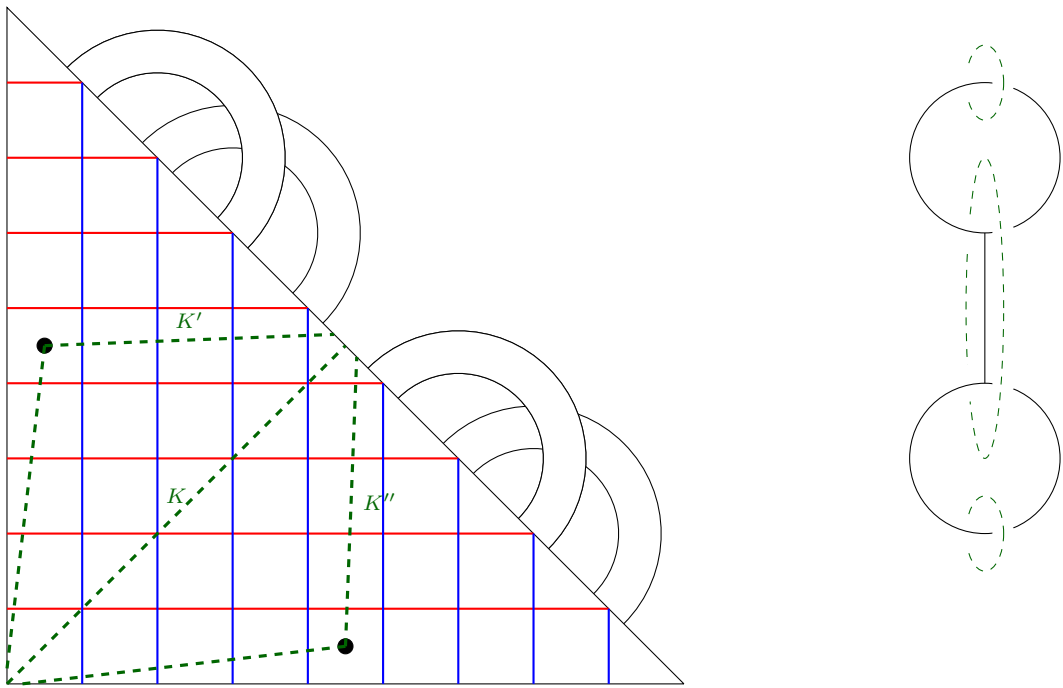
\begin{figure}
  \centering
  \begin{tikzpicture}[alt={Adding basepoints to AZ(PMC)}]
  \begin{scope}[rotate=180]
    \draw (.8,8.2) arc (-225:-45:1.414*1.2);
    \draw (1.2,7.8) arc (-225:-45:1.414*.8);
    \draw (2.2,6.8) arc (-225:-45:1.414*.8);
    \draw (1.8,7.2) arc (-225:-45:1.414*1.2);
    \begin{scope}[even odd rule]
    \draw[fill=white] (1.8,7.2) arc (-225:-45:1.414*1.2) (2.2,6.8) arc (-225:-45:1.414*.8);
    \end{scope}
    \draw (4.8,4.2) arc (-225:-45:1.414*1.2);
    \draw (5.2,3.8) arc (-225:-45:1.414*.8);
    \draw (6.2,2.8) arc (-225:-45:1.414*.8);
    \draw (5.8,3.2) arc (-225:-45:1.414*1.2);
    \begin{scope}[even odd rule]
    \draw[fill=white] (5.8,3.2) arc (-225:-45:1.414*1.2) (6.2,2.8) arc (-225:-45:1.414*.8);
    \end{scope}
    \draw (0,9) to (9,9) to (9,0) to (0,9);
    \foreach \y in {1,2,3,4,5,6,7,8}
      \draw[red, thick] (9,\y) to (9-\y,\y); 
    \foreach \x in {1,2,3,4,5,6,7,8}
      \draw[blue,thick] (\x,9) to (\x,9-\x);
    \draw[darkgreen,dashed, very thick] (9,9) to node[above]{\lab{K}} (4.5,4.5);
    \filldraw (4.5,8.5) circle (.1);
    \filldraw (8.5,4.5) circle (.1);
    \draw[darkgreen, dashed, very thick] (4.5,8.5) -- node[right]{\lab{K''}} (4.35,4.65);
    \draw[darkgreen, dashed, very thick] (4.5,8.5) -- (8.8,9);
    \draw[darkgreen, dashed, very thick] (8.5,4.5) -- node[above]{\lab{K'}} (4.65,4.35);
    \draw[darkgreen, dashed, very thick] (8.5,4.5) -- (9,8.8);
  \end{scope}
  \begin{scope}[xshift = 4cm, yshift=-2cm]
    \draw[darkgreen, dashed] (0,1) ellipse (.25 cm and .5 cm);
    \fill[white] (-.25,1) circle (.25);
    \draw[darkgreen, dashed] (0,-5) ellipse (.25 cm and .5 cm);
    \fill[white] (-.25,-5) circle (.25);
    \draw[darkgreen, dashed] (0,-2) ellipse (.25 cm and 2 cm);
    \fill[white] (-.25,-3) circle (.25);
    \fill[white] (-.25,-1) circle (.25);
    \draw (0,0) circle (1);
    \draw (0,-4) circle (1);
    \fill[white] (.25,1) circle (.15);
    \fill[white] (.25,-5) circle (.15);
    \fill[white] (.25,-3) circle (.15);
    \fill[white] (.25,-1) circle (.15);
    \draw [-] (0,-1) -- (0,-3);
    \begin{scope}
      \clip (.25,1) circle (.15);
      \draw[darkgreen, dashed] (0,1) ellipse (.25 cm and .5 cm);
    \end{scope}
    \begin{scope}
      \clip (.25,-5) circle (.15);
      \draw[darkgreen, dashed] (0,-5) ellipse (.25 cm and .5 cm);
    \end{scope}
    \begin{scope}
      \clip (.25,-3) circle (.15);
      \draw[darkgreen, dashed] (0,-2) ellipse (.25 cm and 2 cm);
    \end{scope}
    \begin{scope}
      \clip (.25,-1) circle (.15);
      \draw[darkgreen, dashed] (0,-2) ellipse (.25 cm and 2 cm);
    \end{scope}
  \end{scope}
  \end{tikzpicture}

  \caption{\textbf{Adding basepoints to $\AZ(\PMC)$.} Left: the diagram with the extra basepoints, and the knots $K$, $K'$, and $K''$ indicated (\textcolor{darkgreen}{dashed}). Recall that there is a disk glued to the outer boundary of the diagram; $K$, $K'$, and $K''$ close up in that disk. Right: a schematic of the sutures; in this picture, $K'\cup K''\cup A$ is solid and the sutures are \textcolor{darkgreen}{dashed}.}
  \label{fig:AZ-add-basepoints}
\end{figure}

\section{Detours}\label{sec:not-here}

\begin{center}
  \textit{Yet all experience is an arch wherethro'\\
  Gleams that untravell'd world whose margin fades\\
  For ever and for ever when I move.} ---Tennyson, ``Ulysses'' 
\end{center}

Our goal in this paper has been to give a usable algorithm to compute $\HFRa(Y)$ for branched double covers $Y$ of knots in (closed) 3-manifolds, and we have developed the general theory of real bordered Heegaard Floer homology only as needed for that computation. We conclude by noting results we have not covered, which a researcher interested in further developing real bordered Heegaard Floer homology might (need to) explore. A topic being on this list does not imply there is some unexpected difficulty with it: we have avoided addressing some presumably easy problems to streamline this paper.

\begin{enumerate}
\item Prove that every real bordered 3-manifold admits a real bordered Heegaard diagram. Equivalently, prove that for any real bordered 3-manifold $(Y,\tau)$, there is a connected, separating,  $\tau$-invariant surface $F\subset Y$ (containing the fixed set). (To deduce the first statement from the second, start with a real bordered Heegaard diagram for a real thick copy of $F$, and then glue on bordered Heegaard diagrams for the rest of $Y$.)
\item Define $\CFARa(Y,\tau)$ for a real bordered 3-manifold $(Y,\tau)$, satisfying a pairing theorem with $\CFDAa$. (Proposition~\ref{prop:CFAR-AZ} will presumably be a special case; in fact, by gluing on a bordered manifold, Proposition~\ref{prop:CFAR-AZ} induces a definition of $\CFARa(Y,\tau)$ whenever the fixed set is connected and the manifold admits a connected, separating, $\tau$-invariant surface.)
\item Determine a complete set of real bordered Heegaard moves, and prove that $\CFDRa$ and $\CFARa$ are invariant under them.
\item Compute the invariants of enough real thick surfaces to extend the algorithm from Section~\ref{sec:compute-HFRa} to compute the real bordered Floer homology of all closed real 3-manifolds, not just ones with connected fixed set.
\end{enumerate}

The paper also suggests some natural questions:
\begin{enumerate}[resume]
\item Propositions~\ref{prop:g1-one-fixed-circs} and~\ref{prop:g1-two-fixed-circs} gave interpretations of $\HFRa$ in very special cases in terms of ordinary sutured Floer homology groups. Are there similar interpretations in other cases, or in general?
\item Is there a version of real bordered Floer homology where the fixed sets contain arcs with endpoints on the bordered boundary, instead of just closed 1-manifolds in the interior?
\item What is the natural algebraic or Fukaya-categorical framework for real bordered Heegaard Floer homology? That is, is there a purely algebraic setting in which analogues of the real bordered invariants of thickened surfaces are defined?
\end{enumerate}

\begin{center}
  \textit{How dull it is to pause, to make an end}
\end{center}

\bibliographystyle{hamsalpha}\bibliography{heegaardfloer}
\end{document}